\theoremstyle{plain}
\newtheorem{theorem}{Theorem}[section]
\newtheorem{definition}[theorem]{Definition}
\newtheorem{assumption}[theorem]{Assumption}
\newtheorem{lemma}[theorem]{Lemma}
\newtheorem{corollary}[theorem]{Corollary}
\newtheorem{proposition}[theorem]{Proposition}
\theoremstyle{remark}
\newtheorem{remark}[theorem]{Remark}
\newtheorem{claim}[theorem]{Claim}
\numberwithin{equation}{section}
\def\dd{\,\mathrm  d}
\newcommand{\loc}{\rm loc}
\renewcommand{\div}{\operatorname{div}}
\newcommand{\curl}{\operatorname{curl}}
\newcommand{\supp}{\operatorname{supp}}
\renewcommand{\leq}{\leqslant}
\renewcommand{\geq}{\geqslant}
\renewcommand{\epsilon}{\varepsilon}
\renewcommand{\phi}{\varphi}
\newcommand{\norm}[1]{\left\lVert#1\right\rVert}
\def\Tend#1#2{\mathop{\longrightarrow}\limits_{#1\rightarrow#2}}
\def\wstarcv#1#2{\underset{#1 \rightarrow #2}{\xrightharpoonup{\; \; \ast \; \;}}}
\def\jbracket#1{<\! #1 \!>}
\date\today
\author[M. Ménard]{Matthieu Ménard}
\email{matthieu.menard@univ-grenoble-alpes.fr}
\address{Univ. Grenoble Alpes, CNRS, Institut Fourier, F-38000 Grenoble, France.}
\title{Mean-field limit of point vortices for the lake equations.}
\begin{document}
\begin{abstract}
In this paper we study the mean-field limit of a system of point vortices for the lake equations. These equations model the evolution of the horizontal component of the velocity field of a fluid in a lake of non-constant depth, when its vertical component can be neglected.  As for the axisymmetric Euler equations there are non-trivial self interactions of the vortices consisting in the leading order of a transport term along the level sets of the depth function. 

If the self-interactions are negligible, we show that the system of point vortices converges to the lake equations as the number of points becomes very large. If the self-interactions are of order one, we show that it converges to a forced lake equations and if the self-interactions are predominant, then up to time rescaling we show that it converges to a transport equation.

The proof is based on a modulated energy approach introduced by Duerinckx and Serfaty in (Duke Math. J., 2020) that we adapt to deal with the heterogeneity of the lake kernel.
\end{abstract}

\maketitle

\section{Introduction}

\subsection{Lake equations}

The purpose of this article is to investigate the mean-field limit of point vortices (which are dirac masses in the vorticity field of a fluid) in a lake of non-constant depth. Namely we want to establish the convergence of an empirical distribution of point vortices to a continuous density solving the lake equations, as the number of vortices becomes very large. These equations describe the evolution of the horizontal velocity field of an incompressible fluid in a lake, when:
\begin{itemize}
\item The depth is small with respect to the lengthscale of horizontal variations of the fluid velocity.
\item The surface of the fluid is almost flat (small Froude number).
\item The vertical velocity is small with respect to the horizontal velocity.
\end{itemize}

For a rigorous derivation of these equations from the shallow water system we refer to the work of Bresch, Gisclon and Lin in \cite{BreschGisclonLin}. A more general introduction to depth-averaged models can be found in \cite[Chapter~5]{Greenspan} and a discussion on the three upper hypothesis can be found in \cite{Richardson}.

These equations are similar to the planar Euler equations, but they take into account the depth of the lake, given by a positive function $b$. If $b$ is constant, then one recovers the usual planar Euler equations. The well-posedness of the lake equations on bounded domains have been first investigated by Levermore, Oliver and Titi in \cite{LevermoreOliverTiti}. In this paper they studied an analogue of the Yudovich theorem for Euler equations (see \cite{Yudovich}). This result was extended later by Bresch and Métivier in \cite{BreschMetivier} to include the case where the depth function $b$ vanishes at the boundary and by Lacave, Nguyen and Pausader in \cite{LacaveNguyenPausader} to deal with the case of rough bottoms. The existence and uniqueness of global classical solutions have been established by Al Taki and Lacave in \cite{AlTakiLacave}.

In this paper we will study the case of an infinite lake modeled by the whole plane $\mathbb{R}^2$. We are interested in the following vorticity form of the equations:

\begin{equation}\label{lake_equation_vorticity}
\left\{
\begin{aligned}
& \partial_t \omega + \div\left(\left(u-\alpha\frac{\nabla^\bot b}{b}\right)\omega\right) = 0 \\
& \div(bu) = 0 \\
& \curl(u) = \omega
\end{aligned}\right.
\end{equation}
where 
\begin{itemize}

\item $\bot$ denotes the rotation by $\displaystyle{\frac{\pi}{2}}$ (that is $(x_1,x_2)^\bot := (-x_2,x_1)$).
\item $\alpha \in [0,+\infty)$ is a forcing parameter.
\item $b : \mathbb{R}^2 \longrightarrow (0,+\infty)$ is the depth function satisfying Assumption~\ref{assumption_b} below.
\item $u : [0,+\infty)\times \mathbb{R}^2 \longrightarrow \mathbb{R}^2$ is the velocity field of the fluid.
\item $\omega : [0,+\infty)\times \mathbb{R}^2 \longrightarrow \mathbb{R}$ is the vorticity field of the fluid, defined by 
\begin{equation*}
\omega = \curl(u) := \partial_1 u_2 - \partial_2 u_1. 
\end{equation*}
\end{itemize} 

The true lake equations have no forcing term ($\alpha=0$), but we will study this more general model as it could arise as a mean-field limit of point vortices (in the regime where the self-interaction of the vortices are not negligible). It is a particular case of a model studied by Duerinckx and Fischer (see \cite[Equation~(1.9)]{DuerinckxFischer}). In this work the authors proved the global existence and uniqueness of weak solutions and the local well-posedness of strong solutions. We will consider the following definition of weak solutions:

\begin{definition}\label{definition_weak_solution}
Let $T > 0$ and $\omega_0 \in L^\infty(\mathbb{R}^2)$ with compact support. We say that $(\omega,u)$ is a weak solution of \eqref{lake_equation_vorticity} on $[0,T]$ with initial condition $\omega_0$ if $\omega \in L^1([0,T],L^\infty(\mathbb{R}^2,\mathbb{R}^2)) \cap \mathcal{C}^0([0,T],L^\infty(\mathbb{R}^2)-w^\ast)$ with compact support in space for all $t \in [0,T]$, $u \in L^2_{\loc}([0,T]\times\mathbb{R}^2,\mathbb{R}^2)$, for almost every $t \in [0,T]$, $\div(bu) = 0$ and $\curl(u) = \omega$ distributionally and for all $\phi$ smooth with compact support in $[0,T)\times\mathbb{R}^2$ and $t \in [0,T)$, 
\begin{equation}\label{formulation_faible}
\iint_{[0,t]\times\mathbb{R}^2} \partial_t \phi \omega + \nabla \phi \cdot\left(u-\alpha\frac{\nabla^\bot b}{b}\right)\omega = \int_{\mathbb{R}^2} \phi(t)\omega(t)-\int_{\mathbb{R}^2} \phi(0)\omega_0.
\end{equation}
\end{definition}

In the regime where the self-interaction of the point vortices is predominant, the system of point vortices will converge in an accelerated timescale to a transport equation along the level sets of the topography:
\begin{equation}\label{transport_equation}
\partial_t \overline{\omega} - \div\left(\frac{\nabla^\bot b}{b}\overline{\omega}\right) = 0.
\end{equation}

For this equation we will use the following definition of weak solutions:
\begin{definition}\label{definition_weak_solution_transport}
Let $T > 0$ and $\overline{\omega}_0 \in L^\infty(\mathbb{R}^2)$ with compact support. We say that is a weak solution of \eqref{transport_equation} on $[0,T]$ with initial condition $\overline{\omega}_0$ if $\overline{\omega}\in L^1([0,T],L^\infty(\mathbb{R}^2,\mathbb{R}^2))\cap  \mathcal{C}^0([0,T],L^\infty(\mathbb{R}^2)-w^\ast)$ with compact support  in space for all $t \in [0,T]$ and for all $\phi$ smooth with compact support in $[0,T)\times\mathbb{R}^2$ and $t \in [0,T)$,
\begin{equation}\label{formulation_faible_transport}
\iint_{[0,t]\times\mathbb{R}^2} \partial_t \phi \overline{\omega}- \nabla \phi \cdot\frac{\nabla^\bot b}{b}\overline{\omega} = \int_{\mathbb{R}^2} \phi(t)\overline{\omega}(t)-\int_{\mathbb{R}^2} \phi(0)\overline{\omega}_0.
\end{equation}
\end{definition}

\subsection{Point vortices for the lake equations}

The forced lake equations \eqref{lake_equation_vorticity} have been derived as the mean-field limit of complex Ginzburg-Landau vortices with forcing and pinning effects by Duerinckx and Serfaty in \cite{DuerinckxSerfaty}. The dynamics of these vortices comes from the physics of supraconductors or superfluids and is very different from the dynamics of vortices in a lake. In this paper we are interested in deriving Equations \eqref{lake_equation_vorticity} as the mean-field limit of a model introduced by Richardson in \cite{Richardson}. In that work he established by a formal computation the equation followed by the center of vorticity $q(t)$ of a small vortex of size $\epsilon$ in a lake of depth $b$. To leading order in $\epsilon$, this equation gives

\begin{equation}\label{equation_pvs_richardson}
\dot q(t) \approx -\frac{\Gamma|\ln(\epsilon)|}{4\pi}\frac{\nabla^\bot b(q(t))}{q(t)}
\end{equation}
where $\Gamma$ is the intensity of vorticity (that is $\displaystyle{\Gamma =\int_{B(q(0),\epsilon)} \omega}$).

This means that to leading order in $\epsilon$, a very small vortex follows the level lines of the topography without seeing the interaction with other vortices remaining far from him. The latter equation was rigourously justified by Dekeyser and Van Schaftingen in \cite{DekeyserVanSchaftingen} for the motion of a single vortex and this result was extended later to the case of a finite number of vortices by Hientzsch, Lacave and Miot in \cite{HientzschLacaveMiot}. 

We want to investigate the behavior of $N$ point  vortices of intensity $N^{-1}$ as $N$ becomes large. We will see in Section~\ref{section:2} that the elliptic problem
\begin{equation*}
\left\{
\begin{aligned}
& \div(bu) = 0 \\
& \curl(u) = \omega
\end{aligned}\right.
\end{equation*}
has a unique solution given by the kernel
\begin{equation*}
g_b(x,y) := \sqrt{b(x)b(y)}g(x-y) + S_b(x,y)
\end{equation*}
where $S_b$ is a function solving a certain elliptic equation (see Equation \eqref{definition_S_b}) and $\displaystyle{g(x) := -\frac{1}{2\pi}\ln|x|}$ is the opposite of the Green kernel of the Laplacian on the plane $\mathbb{R}^2$. More precisely, we have
\begin{equation*}
u(x) = -\frac{1}{b(x)}\int_{\mathbb{R}^2} \nabla_x^\bot g_b(x,y)\omega(y)\dd y.
\end{equation*}
Recall that a point vortex is asymptotically represented by a dirac mass of vorticity. Therefore using the kernel $\nabla_x^\bot g_b$ we can compute the velocity field generated by $N-1$ vortices $\delta_{q_j}$ of intensity $ \displaystyle{\frac{1}{N}}$ on a vortex $\delta_{q_i}$: 
\begin{equation*}
-\frac{1}{N}\underset{j\neq i}{\sum_{j=1}^N}\frac{1}{b(q_i)}\nabla^\bot_x g_b(q_i,q_j).
\end{equation*}
This term correspond to the term $u_{reg}$ given by Richardson in \cite[Equation~(2.90)]{Richardson}. Combining this equation with the self-interaction term of \eqref{equation_pvs_richardson} we get the model of point vortices we will study in this paper:
\begin{equation}\label{equation_pv}
\dot q_i = -\alpha_N\frac{\nabla^\bot b(q_i)}{b(q_i)}
-\frac{1}{N}\underset{j\neq i}{\sum_{j=1}^N}\frac{1}{b(q_i)}\nabla^\bot_x g_b(q_i,q_j)
\end{equation}
where we have denoted
\begin{equation*}
\alpha_N := \frac{|\ln(\epsilon_N)|}{4\pi N}
\end{equation*}
where $\epsilon_N$ is the size of the vortices.

\begin{remark}
Up to now there is no mathematical justification of Equation \eqref{equation_pv}: We do not even expect this equation to describe precisely the motion of a fixed number of small vortices as we have neglected all self-interaction terms of order smaller than $|\ln(\epsilon)|$. However Theorem~\ref{MFL_theorem} will justify that this simplified model is statistically relevant when $N$ becomes very large.
\end{remark}

\begin{remark}
There are several works establishing approximate analytical trajectories of vortices in a lake for some specific depth profiles, and also other numerical and experimental results related to vortex dynamics in lakes. For more details we refer to the results of \cite{Richardson} and the associated bibliography.
\end{remark}

Two quantities will be of interest for the study of this system. The interaction energy
\begin{equation*}
E_N(t) := \frac{1}{N^2}\sum_{i=1}^N\underset{j \neq i}{\sum_{j=1}^N} g_b(q_i(t),q_j(t))
\end{equation*}
and the moment of inertia
\begin{equation*}
I_N(t) := \frac{1}{N}\sum_{i=1}^N|q_i(t)|^2.
\end{equation*}
One could prove that the total energy
\begin{equation*}
E_N^{tot} := E_N + \frac{\alpha_N}{N}\sum_{i=1}^N b(q_i)
\end{equation*}
is a conserved quantity for the point vortex system \eqref{equation_pv} or that if $\omega$ is a solution of \eqref{lake_equation_vorticity} with enough regularity and decay, the quantity 
\begin{equation*}
\iint_{\mathbb{R}^2\times\mathbb{R}^2} g_b(x,y)\omega(t,x)\omega(t,y)\dd x \dd y + \alpha \int_{\mathbb{R}^2} b(x)\omega(t,x) \dd x
\end{equation*}
is conserved by the flow. The moment of inertia $I_N$ and the interaction energy $E_N$ are not conserved quantity but they are bounded in time, and this will be useful both for our mean-field limit result and for the well-posedness of System \eqref{equation_pv} (see Section~\ref{section:3}).

If $\alpha_N \Tend{N}{+\infty} +\infty$ the self-interactions are predominant. In order to study this regime we will study an accelerated timescale as it was done in \cite{DekeyserVanSchaftingen} and \cite{HientzschLacaveMiot} to study the motion of a finite number of vortices. Therefore we define:
\begin{equation*}
\overline{q_i}(t) := q_i(\alpha_N^{-1}t).
\end{equation*}
This gives
\begin{equation}\label{equation_pv_rescaled}
\dot{\overline{q_i}} = -\frac{\nabla^\bot b(\overline{q_i})}{b(\overline{q_i})}
-\frac{1}{N\alpha_N}\underset{j\neq i}{\sum_{j=1}^N}\frac{1}{b(\overline{q_i})}\nabla^\bot_x g_b(\overline{q_i},\overline{q_j})
\end{equation}

We also define the rescaled interaction energy
\begin{equation*}
\overline{E_N}(t) := E_N(\alpha_N^{-1}t)
\end{equation*}
and the rescaled moment of inertia
\begin{equation*}
\overline{I_N}(t) := I_N(\alpha_N^{-1}t).
\end{equation*}

\subsection{Mean-field limits}

Mean-field limits consist in studying the convergence of a system of ordinary differential equations modeling the evolution of a finite number of particles
\begin{equation}\label{edo_generic_order_1}
\dot x_i = \frac{1}{N}\sum_{i=1}^NK(x_i - x_j)
\end{equation}
to a Euler-like equation modeling the evolution of a continuous density $\mu(t,x)$:
\begin{equation}\label{edp_generic_order_1}
\partial_t \mu + \div((K*\mu)\mu) = 0
\end{equation}
when the number of particles becomes large (here $K : \mathbb{R}^d \longrightarrow \mathbb{R}^d$ is an interaction kernel). For systems of order two we are interested in the convergence of a system of particles following Newton's second law
\begin{equation*}
\ddot x_i = \frac{1}{N}\sum_{i=1}^NK(x_i - x_j)
\end{equation*}
to a Vlasov-like equation modeling the evolution of a continuous density $f(t,x,v)$:
\begin{equation}\label{edp_generic_order_2}
\left\{
\begin{aligned}
& \partial_t f + \div_x(fv) + \div_v((K*\mu)f) = 0 \\
& \mu(t,x) = \int_{\mathbb{R}^d} f(t,x,v)\dd v.
\end{aligned}\right.
\end{equation}

A mean-field limit result consists in proving that if at time zero, the empirical distribution of the particles 
\begin{equation*}
\frac{1}{N}\sum_{i=1}^N \delta_{x_i(t)} \qquad \left(\text{respectively} \quad \frac{1}{N}\sum_{i=1}^N \delta_{(x_i(t),\dot x_i(t))}\right)
\end{equation*}
converges to the continuous density $\mu(t,x)$ solution of \eqref{edp_generic_order_1} (respectively $f(t,x,v)$ solution of \eqref{edp_generic_order_2}) then the convergence also holds for any finite time.

When $K$ is Lipschitz the mean-field limit of the upper system was established by compactness arguments in \cite{BraunHepp,NeuzertWick} or by optimal transport theory and Wasserstein distances by Dobrushin in \cite{Dobrushin}. If $K$ is singular there are numerous results establishing the mean-field limit of systems of order one:

Schochet proved in \cite{Schochet1} the mean-field convergence of the point vortex system (that is $\displaystyle{K = \frac{1}{2\pi}\frac{x^\bot}{|x|^2}}$ in dimension 2) to a measure-valued solution of Euler equations up to a subsequence, using arguments previously developed in \cite{Delort} and \cite{Schochet2} to prove existence of such solutions.

For sub-coulombic interactions, that is $|K(x)|, |x||\nabla K(x)| \leq C|x|^{-\alpha}$ with $0 < \alpha < d-1$, the mean-field limit of \eqref{edo_generic_order_1} was proved by Hauray in \cite{Hauray} assuming $\div(K) = 0$ and using a Dobruschin-type approach (following the idea of \cite{HaurayJabin2,HaurayJabin1}). It was also used by Carillo, Choi and Hauray to study with the mean-field limit of some aggregation models in \cite{CarrilloChoiHauray}.

In \cite{Duerinckx} Duerinckx gave another proof of the mean-field limit of several Riesz interaction gradient flows using a "modulated energy" that was introduced by Serfaty in \cite{Serfaty2017}.

In \cite{Serfaty}, Serfaty used this modulated energy approach to prove the mean-field convergence of such systems where $K$ was a kernel given by Coulomb, logarithmic or Riesz interaction, that is $K = \nabla g$ for $g(x) = |x|^{-s}$ with $\max(d-2,0) \leq s < d$ for $d \geq 1$ or $g(x) = -\ln|x|$ for $d = 1$ or $2$. For this purpose $K*\mu$ was supposed to be Lipschitz.

Rosenzweig proved in \cite{Rosenzweig} the mean-field convergence of the point vortex system without assuming Lipschitz regularity for the limit velocity field, using the same energy as in \cite{Serfaty} with refined estimates. Remark that it ensures that the point vortex system converges to any Yudovich solutions of the Euler equations (see \cite{Yudovich}). This result was extended later for higher dimensional systems ($d \geq 3$) in \cite{Rosenzweig4} by the same author.

In \cite{NguyenRosenzweigSerfaty} Nguyen, Rosenzweig and Serfaty extended the modulated energy approach to a more general class of potentials $g$ using the commutator structure of the equations.

With a modulated energy approach, Bresch, Jabin and Wang defined a modulated entropy functionnal which allowed them to prove mean-field limit of interacting particles with noise in  \cite{BreschJabinWang,BreschJabinWang2,BreschJabinWang3}. This method was used later to obtain uniform in time convergence for Riesz-type flows by Rosenzweig and Serfaty in \cite{RosenzweigSerfaty} and by Rosenzweig, Serfaty and Chodron de Courcel in \cite{ChodronDeCourcelRosenzweigSerfaty}.

For systems of order two, the mean-field limit has been established for several singular kernels:

In \cite{HaurayJabin2,HaurayJabin1}, Hauray and Jabin dealt with the case of some sub-coulombian interactions (or more precisely $|K(x)| \leq c|x|^{-s}$ with $0 < s < 1$) by using a Dobrushin-type approach.

In \cite{JabinWang2,JabinWang1}, Jabin and Wang studied the case of bounded and $W^{-1,\infty}$ gradients.

In \cite{BoersPickl,HuangLiuPickl,Lazarovici,LazaroviciPickl} the same kind of results is proved with some cutoff of the interaction kernel.

In the appendix of \cite{Serfaty}, Duerinckx and Serfaty studied the case of particles interacting with a Coulomb or a Riesz interaction kernel to the Vlasov equation in the monokinetic regime, that is the pressureless Euler-Poisson equations. The same method have been used to study the mean-field limit of more general models coming from quantum physics, biology or fluid dynamics (see for example \cite{CarilloChoi,M1preprint,BenPorat}).

In \cite{HanKwanIacobelli}, Han-Kwan and Iacobelli proved the mean-field limit of particles following Newton's second law to the Euler equation in a quasineutral regime or in the gyrokinetic limit. This result was extended later by Rosenzweig in \cite{Rosenzweig2} to allow a larger choice of scaling between the number of particles and the coupling constant.

Recently, Bresch, Jabin and Soler were able in \cite{BreschJabinSoler} to prove the mean-field limit derivation of the Vlasov-Fokker-Planck equation with the true Coulomb interactions using the BBGKY hierarchy and the diffusivity in the velocity variables to get estimates on the marginals. 

Numerous other mean-field limit results were proved for interacting particles with noise with regular or singular kernels. See for example \cite{BermanOnheim,BolleyChafaiJoaquin,CarilloFerreiraPrecioso,FournierHaurayMischler,JabinWang2,JabinWang1,Lacker,LiLiuYu,NguyenRosenzweigSerfaty,Osada}. For a more complete bibliography on the mean-field limit of interacting particles with noise we refer to the bibliography of \cite{ChodronDeCourcelRosenzweigSerfaty}.

For a general introduction to the subject of mean-field limits we refer to the reviews \cite{Golse,Jabin}.

\subsection{Notations and assumptions}

\subsubsection{Notations}

\begin{itemize}

\item For $u \in L^1_{\loc}(\mathbb{R}^2,\mathbb{R}^2)$, we denote $\curl(u) = \partial_1 u_2 - \partial_2 u_1$.

\item For $h \in \dot H^1(\mathbb{R}^2)$, we denote
\begin{equation}\label{definition_SE}
[h,h]_{i,j} := 2\partial_i h \partial_j h - |\nabla h|^2\delta_{i,j}.
\end{equation}
It is the stress-energy tensor  used in \cite{Serfaty} to prove the mean-field limit of several singular ODE's. Remark that for $h$ smooth enough, we have
\begin{equation*}
\div [h,h] = 2\Delta h \nabla h.
\end{equation*}

\item We denote $\jbracket{x} = (1+|x|^2)^\frac{1}{2}$.

\item $g$ is the opposite of the Green function of the laplacian: 
\begin{equation*}
g(x) := -\frac{1}{2\pi}\ln|x|.
\end{equation*}

\item $|\cdot|_{\mathcal{C}^{0,s}}$ is the semi-norm associated to the Hölder space $\mathcal{C}^{0,s}$:
\begin{equation*}
|f|_{\mathcal{C}^{0,s}} = \sup_{x \neq y} \frac{|f(x) - f(y)|}{|x-y|^s}.
\end{equation*}

\item When $1 \leq p \leq +\infty$, $p'$ denotes the dual exponent of $p$.

\item If $\nu$ is a probability measure on $\mathbb{R}^2$, we will denote $\nu^{\otimes 2} := \nu \otimes \nu$.

\item $C$ is a generic constant. We will denote $C_{A,B}$ when a constant depends on some quantities $A$ and $B$.

\item $\mathcal{P}(\mathbb{R}^2)$ is the space of probability measures on $\mathbb{R}^2$.

\item For $Q_N = (q_1,...,q_N) \in (\mathbb{R}^2)^N$ we denote $\displaystyle{I(Q_N) = \frac{1}{N}\sum_{i=1}^N |q_i|^2}$. 

\end{itemize}

\subsubsection{Assumptions}

We will make the following assumption on the depth function $b$:
\begin{assumption}\label{assumption_b}
We assume that $b$ is a smooth function, $\inf{b} > 0$, $\sup{b} < +\infty$ and that there exists $\gamma > 0$ such that 
\begin{equation*}
(1+|x|)^{4+\gamma}(|\nabla b(x)| + |D^2 b(x)|) < +\infty.
\end{equation*}
\end{assumption}

We will consider regular solutions of \eqref{lake_equation_vorticity} and \eqref{transport_equation} in the following sense:
\begin{assumption}\label{assumption_omega}
We say that a function $\omega(t,x)$ satisfies Assumption~\ref{assumption_omega} if $\omega \in L^{\infty}([0,T],L^\infty(\mathbb{R}^2)\cap \mathcal{P}(\mathbb{R}^2))\cap \mathcal{C}^0([0,T],L^\infty(\mathbb{R}^2)-w^\ast)$, if there exists a compact $K$ such that for every $t \in [0,T]$, $\supp(\omega(t)) \subset K$ and if $\nabla G_b[\omega] \in L^\infty([0,T],W^{1,\infty})$ where $G_b$ is the operator defined by Equation \eqref{existence_noyau_green}. 
\end{assumption}

\begin{remark}
A weak solution of \eqref{lake_equation_vorticity} in the sense of Definition \ref{definition_weak_solution} (or a weak solution of \eqref{transport_equation} in the sense of  Definition \ref{definition_weak_solution_transport}) does not necessarily verify Assumption \ref{assumption_omega} because of the regularity we ask for the velocity field $\nabla G_b[\omega]$. This assumption will be crucial to apply Proposition \ref{controle_terme_principal_gronwall} and prove the mean-field limit Theorem \ref{MFL_theorem}. The existence and uniqueness of sufficiently regular solutions of \eqref{lake_equation_vorticity} locally in time is ensured by \cite[Theorem~2]{DuerinckxFischer}. One could also prove that $\omega \in L^\infty([0,T],\mathcal{C}^{0,s})$ is sufficient to have $\nabla G_b[\omega] \in L^\infty([0,T],W^{1,\infty})$.
\end{remark}

\subsection{Main result and plan of the paper}

The main result of this paper is the following theorem which gives the mean-field limit of the point vortex system \eqref{equation_pv} and its rescaled version \eqref{equation_pv_rescaled} (we recall that the kernel $g_b$ is defined by \eqref{definition_g_b}):

\begin{theorem}\label{MFL_theorem}

Assume that $b$ satisfies Assumption~\ref{assumption_b}. We have mean-field convergence of the point-vortex system in the two following regimes:

\begin{enumerate}

\item Let $\omega$ be a solution of \eqref{lake_equation_vorticity} with initial datum $\omega_0$ in the sense of Definition~\ref{definition_weak_solution}, satisfying Assumption~\ref{assumption_omega} and $(q_1,...,q_N)$ be a solution of \eqref{equation_pv}. Assume that:

\begin{itemize}

\item $(I_N(0))_N$ is bounded.

\item $\displaystyle{\frac{1}{N}\sum_{i=1}^N \delta_{q_i^0}} \wstarcv{N}{+\infty} \omega_0$ for the weak-$\ast$ topology of probability measures.

\item $\alpha_N \Tend{N}{+\infty} \alpha$.

\item  $\displaystyle{\frac{1}{N^2}\sum_{1 \leq i \neq j \leq N} g_b(q_i^0,q_j^0) \Tend{N}{+\infty} \iint_{\mathbb{R}^2\times\mathbb{R}^2} g_b(x,y)\omega_0(x)\omega_0(y)\dd x \dd y}$.

\end{itemize}

Then for all $t \in [0,T]$, $\displaystyle{\frac{1}{N}\sum_{i=1}^N \delta_{q_i(t)}} \wstarcv{N}{+\infty} \omega(t)$ for the weak-$\ast$ topology of probability measures and
\begin{equation*}
\frac{1}{N^2}\sum_{1 \leq i \neq j \leq N} g_b(q_i(t),q_j(t)) \Tend{N}{+\infty} \iint_{\mathbb{R}^2\times\mathbb{R}^2} g_b(x,y)\omega(t,x)\omega(t,y)\dd x \dd y.
\end{equation*} 

\item Let $\overline{\omega}$ be a solution of \eqref{transport_equation} with initial datum $\omega_0$ in the sense of Definition~\ref{definition_weak_solution_transport}, satisfying Assumption~\ref{assumption_omega} and $(\overline{q_1},...,\overline{q_N})$ be a solution of \eqref{equation_pv_rescaled}. Assume that:
\begin{itemize}

\item $(\overline{I_N}(0))_N$ is bounded.

\item $\displaystyle{\frac{1}{N}\sum_{i=1}^N \delta_{q_i^0}} \wstarcv{N}{+\infty}  \omega_0$ for the weak-$\ast$ topology of probability measures.

\item $\alpha_N \Tend{N}{+\infty} +\infty$.

\item  $\displaystyle{\frac{1}{N^2}\sum_{1 \leq i \neq j \leq N} g_b(q_i^0,q_j^0)  \Tend{N}{+\infty} \iint_{\mathbb{R}^2\times\mathbb{R}^2} g_b(x,y)\omega_0(x)\omega_0(y)\dd x \dd y}$.

\end{itemize}
Then for all $t \in [0,T]$, $\displaystyle{\frac{1}{N}\sum_{i=1}^N \delta_{\overline{q_i}(t)}} \wstarcv{N}{+\infty} \overline{\omega}(t)$ for the weak-$\ast$ topology of probability measures and
\begin{equation*}
\frac{1}{N^2}\sum_{1 \leq i \neq j \leq N} g_b(\overline{q_i}(t),\overline{q_j}(t)) \Tend{N}{+\infty} \iint_{\mathbb{R}^2\times\mathbb{R}^2} g_b(x,y)\overline{\omega}(t,x)\overline{\omega}(t,y)\dd x \dd y.
\end{equation*} 

\end{enumerate}
\end{theorem}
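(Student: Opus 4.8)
The plan is to follow the modulated energy strategy of Duerinckx--Serfaty, adapted to the heterogeneous kernel $g_b$. The central object is the \emph{modulated energy}
\begin{equation*}
F_N(t) := \iint_{(\mathbb{R}^2)^2 \setminus \Delta} g_b(x,y)\,\mathrm d\!\left(\tfrac1N\textstyle\sum_i \delta_{q_i(t)} - \omega(t)\right)(x)\,\mathrm d\!\left(\tfrac1N\textstyle\sum_i \delta_{q_i(t)} - \omega(t)\right)(y),
\end{equation*}
which measures the discrepancy between the empirical measure and the limiting density in a negative Sobolev-type norm weighted by $b$. Using the decomposition $g_b(x,y) = \sqrt{b(x)b(y)}\,g(x-y) + S_b(x,y)$ with $S_b$ smooth (bounded in $W^{1,\infty}$ on the relevant compact sets by Assumption~\ref{assumption_b} and the elliptic theory of Section~\ref{section:2}), one shows $F_N$ essentially controls the same quantities as in the constant-depth case up to lower-order errors coming from $S_b$. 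The first step is therefore to establish the \emph{coercivity} of $F_N$: modulo the usual additive error of size $\tfrac{\ln N}{N}$ (from the excised diagonal and the renormalization), $F_N(t) + C\tfrac{\ln N}{N} \geq 0$, and $F_N(t) \to 0$ together with the moment bounds implies the claimed weak-$\ast$ convergence and energy convergence. This uses that $g$ is (up to the weight $\sqrt{b}$) the logarithmic kernel, so $F_N$ behaves like $\|\tfrac1N\sum\delta_{q_i} - \omega\|_{\dot H^{-1}}^2$ in dimension two, plus a careful treatment of the compact support to handle the non-sign-definiteness of the logarithm at infinity (this is where the moment of inertia $I_N$ enters).

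The second and main step is the \emph{Gronwall estimate} for $\tfrac{\mathrm d}{\mathrm dt}F_N(t)$. Differentiating along the flow \eqref{equation_pv} and using the weak formulation \eqref{formulation_faible} for $\omega$, the time derivative splits into: (i) a transport/advection term involving the velocity field $u - \alpha\tfrac{\nabla^\perp b}{b}$ acting on the modulated energy, which after symmetrization produces an expression of the form $\iint \nabla v(x) : [h_N, h_N](x,y)$ type commutator, where $v$ is the (Lipschitz) limiting velocity and $[h_N,h_N]$ is built from the stress-energy tensor \eqref{definition_SE} applied to the modulated potential; (ii) error terms coming from the heterogeneity, i.e. from $\nabla b$, $\nabla\sqrt{b}$ and $\nabla S_b$ hitting the kernel; and (iii) the commutator / renormalization error controlled by the now-standard estimates of Serfaty (and their sharpening by Rosenzweig) giving a bound of the form $|\tfrac{\mathrm d}{\mathrm dt}F_N| \leq C\big(\|\nabla v\|_{L^\infty} + \text{(data)}\big)\big(F_N + \tfrac{\ln N}{N}\big) + o_N(1)$. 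The heterogeneous terms are the genuinely new contribution: because $g_b$ is not translation-invariant, $\nabla_x g_b(x,y) + \nabla_y g_b(x,y)$ does not vanish but equals a smooth remainder, and one must check that all such remainders are either smooth (hence harmless, bounded by Assumption~\ref{assumption_b}) or can be absorbed into $F_N$ itself via the coercivity; the $\alpha$-forcing term is likewise a smooth drift. I expect the combinatorial bookkeeping of these heterogeneous error terms — making sure each one is estimated either by $F_N$, by $\tfrac{\ln N}{N}$, or by a quantity tending to zero — to be the principal technical obstacle, together with justifying the differentiation of $F_N$ rigorously (the point vortices can collide a priori, so one needs the a priori bounds on $E_N$ and $I_N$ from Section~\ref{section:3} to guarantee global existence and to control the trajectories).

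Granting the differential inequality, the third step is routine: since $F_N(0) \to 0$ by the hypothesis on convergence of the initial energy together with the weak-$\ast$ convergence of the initial empirical measures (one shows these two facts imply $F_N(0) = \tfrac{1}{N^2}\sum_{i\neq j} g_b(q_i^0,q_j^0) - 2\iint g_b\,\mathrm d(\tfrac1N\sum\delta_{q_i^0})\,\mathrm d\omega_0 + \iint g_b\,\mathrm d\omega_0^{\otimes2} \to 0$, using the smoothness of $S_b$ and the fact that $g$ against a fixed $L^\infty$-compactly-supported density is continuous), Gronwall's lemma gives $F_N(t) \leq (F_N(0) + C_T\tfrac{\ln N}{N}) e^{C_T t} \to 0$ uniformly on $[0,T]$. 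Then coercivity upgrades this to the stated weak-$\ast$ convergence of $\tfrac1N\sum\delta_{q_i(t)}$ and the convergence of the interaction energies. For part (2) one repeats the argument verbatim with the rescaled system \eqref{equation_pv_rescaled}: the limiting equation is now the pure transport equation \eqref{transport_equation}, the ``velocity field'' is just $-\tfrac{\nabla^\perp b}{b}$ (smooth, hence automatically Lipschitz so no analogue of Assumption~\ref{assumption_omega} regularity is needed beyond what is stated), and the interaction term now carries the prefactor $\tfrac{1}{N\alpha_N}\to 0$, so the entire $g_b$-interaction contributes only to the error terms and is killed in the limit; the same modulated energy $\overline{F_N}$ and the same coercivity and Gronwall steps close the argument.
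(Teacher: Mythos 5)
Your proposal is correct and follows essentially the same strategy as the paper: the same modulated energy $\mathcal{F}_b$, its coercivity and its equivalence with weak-$\ast$ convergence plus convergence of the interaction energy, the computation of its time derivative via the weak formulation and the point-vortex ODE, the Serfaty-type stress-energy/commutator estimate for the main term together with the bookkeeping of the heterogeneous $\nabla \sqrt{b}$ and $S_b$ remainders, the a priori bounds on $E_N$ and $I_N$ for global well-posedness and moment control, and a final Gr\"onwall argument, repeated for the rescaled system in part (2). The only caveats are cosmetic and are supplied by the paper's estimates (for instance, the heterogeneity remainder $\nabla_x g_b + \nabla_y g_b$ is logarithmically singular rather than smooth, and the $O(\alpha_N^{-1})$ self-interaction term in the rescaled regime is not killed by the prefactor alone but requires the Lipschitz bound on $\nabla G_b[\overline{\omega}]$ from Assumption~\ref{assumption_omega} via antisymmetrization together with the bounds on $\overline{E_N}$ and $\overline{I_N}$).
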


Remark that in the case $\alpha_N \Tend{N}{+\infty} 0$ we recover the classical lake equations (\eqref{lake_equation_vorticity} with $\alpha = 0$).

The boundedness of $(I_N(0))$ is a technical assumption made to ensure that not too much vorticity is going to infinity. This assumption was not necessary in  the original papers of Duerinckx in \cite{Duerinckx} and of Serfaty in \cite{Serfaty} but we will need it to deal with the heterogeneity of the kernel $g_b$ (defined in \eqref{definition_g_b}).

The convergence of the interaction energy and the weak$-\ast$ convergence of $(\omega_N)$ to $\omega$ ensure the convergence of $(\omega_N)$ to $\omega$ in a stronger sense: We will prove in Corollary~\ref{corollary_weak_star_cv} that provided certain technical assumptions are satisfied, it is equivalent to the convergence to zero of a "modulated energy" functionnal. For an empirical measure of point vortices $(q_1,...,q_N)$ and a vorticity field $\omega \in L^\infty$ with compact support, this modulated energy is defined by:

\begin{multline}\label{definition_modulated_energy}
\mathcal{F}_b(Q_N,\omega) := \\ \iint_{(\mathbb{R}^2\times\mathbb{R}^2)\backslash\Delta} g_b(x,y)\dd\left(\frac{1}{N}\sum_{i=1}^N \delta_{q_i} - \omega\right)(x)\dd\left(\frac{1}{N}\sum_{i=1}^N \delta_{q_i} - \omega\right)(y)
\end{multline}
where
\begin{equation*}
\Delta := \{(x,x) \; ; \; x \in \mathbb{R}^2\}.
\end{equation*}

We will use this energy to control the distance between solutions $\omega$ and $Q_N$ of \eqref{lake_equation_vorticity} and \eqref{equation_pv} or solutions $\overline{\omega}$ and $\overline{Q_N}$ of \eqref{transport_equation} and \eqref{equation_pv_rescaled} at any given time $t$:

\begin{equation}\label{definition_F_b_N}
\mathcal{F}_{b,N}(t) := \mathcal{F}_{b}(Q_N(t),\omega(t))
\end{equation}
and
\begin{equation}\label{definition_F_b_N_rescaled}
\overline{\mathcal{F}}_{b,N}(t) := \mathcal{F}_{b}(\overline{Q_N}(t),\overline{\omega}(t)).
\end{equation}

The proof of Theorem~\ref{MFL_theorem} relies on Grönwall-type estimates on these two quantities. The paper is organised as follows:

\begin{itemize}

\item In Section~\ref{section:2} we prove the well-posedness of the elliptic problem linking a velocity field satisfying $\div(bu) = 0$ and its vorticity, the existence of a Green kernel for this elliptic problem and we establish several regularity estimates.

\item In Section~\ref{section:3} we prove that the point-vortex system is well-posed and give some estimates on the interaction energy and on the moment of inertia of the system that we will need in Section~\ref{section:7}.

\item In Section~\ref{section:4} we compute the time derivative of $\mathcal{F}_{b,N}$ and of $\overline{\mathcal{F}}_{b,N}$.

\item In Section~\ref{section:5} we state several properties of the modulated energy. We prove that it controls the convergence in $H^s$ for $s < -1$ (see Corollary~\ref{corollary_coercivity}) and that having the convergence of the modulated energy is equivalent to have weak-$\ast$ convergence of the point vortex system and convergence of its interaction energy (see Corollary~\ref{corollary_weak_star_cv}).

\item In Section~\ref{section:6} we bound the main term appearing in the derivatives of the modulated energies.

\item In Section~\ref{section:7} we use the results of the other sections to prove Theorem~\ref{MFL_theorem}.

\end{itemize}

The modulated energy $\mathcal{F}_b$ is similar to the modulated energy defined in \cite[Equation~(1.16)]{Serfaty} and the proofs of Sections \ref{section:4} to \ref{section:7} follow the same global ideas. The main difference between Theorem~\ref{MFL_theorem} and other mean-field limit results using modulated energies is that the kernel $g_b$ is not of the form $a(x,y) = a(x-y)$. Most of the difficulties adressed by this paper consist in dealing with the heterogeneity of the kernel $g_b$.

\subsection*{Funding}

This work is supported by the French National Research Agency in the framework of the project “SINGFLOWS” (ANR-18-CE40-0027-01).

\section{Velocity reconstruction}\label{section:2}

There exists a Biot-Savart type law to reconstruct a velocity field $u$ satisfying $\div(bu) = 0$ from its vorticity. In this section we prove several results concerning this reconstruction. In Subsection~\ref{subsection:21} we prove that the elliptic equations linking $u$ with its vorticity are well-posed. In Subsection~\ref{subsection:22} we prove some results related to the asymptotic behavior of the velocity field as $|x| \longrightarrow \infty$. In Subsection~\ref{subsection:23}, we give an analogue of the Biot-Savart law for a velocity field satisfying System \eqref{elliptic_problem_velocity}. Finally, in Subsection~\ref{subsection:24} we define some regularisations of the Coulomb kernel and of the dirac mass that we will need in Sections~\ref{section:5} and \ref{section:6}.

\subsection{Well-posedness of the elliptic problem}\label{subsection:21}

In this subsection we justify the well-posedness of the elliptic equations satisfied by the velocity field:
\begin{equation}\label{elliptic_problem_velocity}
\left\{
\begin{aligned}
& \div(bu) = 0 \\
& \curl(u) = \omega.
\end{aligned}\right.
\end{equation}
As we will write $\displaystyle{u = -\frac{1}{b}\nabla^\bot \psi}$ we will also consider the "stream function" formulation of the upper system:
\begin{equation}\label{elliptic_problem_stream}
-\div\left(\frac{1}{b}\nabla \psi\right) = \omega.
\end{equation}
For this purpose we will consider the following weighted Sobolev spaces:
\begin{definition}
For $1 < p < \infty$ we consider the Banach space $W^{2,p}_{-1}(\mathbb{R}^2)$ defined by
\begin{equation*}
W^{2,p}_{-1}(\mathbb{R}^2) := \{u \in \mathcal{D}'(\mathbb{R}^2) \; ; \; \forall \alpha \in \mathbb{N}^2, |\alpha| \leq 2, \jbracket{\cdot}^{|\alpha|-1}D^\alpha u \in L^p(\mathbb{R}^2)\}
\end{equation*}
and equipped with its natural norm
\begin{equation*}
\norm{u}_{W^{2,p}_{-1}} := \left(\sum_{|\alpha| \leq 2} \norm{\jbracket{\cdot}^{|\alpha|-1}D^\alpha u}^p_{L^p}\right)^\frac{1}{p}.
\end{equation*}
\end{definition}

These weighted spaces were first introduced by Cantor in \cite{Cantor} and have been investigated to study elliptic equations on unbounded domains. For a more precise study of these spaces and further references we refer to \cite{LockhartMcOwen,McOwen79,McOwen80}. The following proposition is a straightforward consequence of \cite[Theorem~2]{LockhartMcOwen} (which is the combination of two theorems proved in \cite{McOwen79} and \cite{McOwen80}) and states that Equations \eqref{elliptic_problem_velocity} and \eqref{elliptic_problem_stream} are well-posed.

\begin{proposition}\label{proposition_well_posedness_elliptic}
Let $2 < p < +\infty$, assume that $\jbracket{\cdot}\omega \in L^p(\mathbb{R}^2)$, then there exists a unique solution $\psi$ of \eqref{elliptic_problem_stream} in $W^{2,p}_{-1}(\mathbb{R}^2) \slash \mathbb{R}$. Morever if $u \in L^p(\mathbb{R}^2,\mathbb{R}^2)$ is a solution of \eqref{elliptic_problem_velocity} in the sense of distributions, then
\begin{equation*}
u = -\frac{1}{b}\nabla^\bot \psi.
\end{equation*}
\end{proposition}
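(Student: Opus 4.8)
The plan is to reduce everything to the cited result \cite[Theorem~2]{LockhartMcOwen} on the Laplacian in the weighted spaces $W^{2,p}_{-1}(\mathbb{R}^2)$, treating the coefficient $\frac1b$ as a perturbation that is harmless thanks to Assumption~\ref{assumption_b}. First I would observe that the operator $L\psi := -\div\!\left(\frac1b\nabla\psi\right)$ can be written as $L\psi = -\frac1b\Delta\psi - \nabla\!\left(\frac1b\right)\!\cdot\nabla\psi$, or equivalently $\frac1b(-\Delta)\psi + (\text{lower order})$. Since $\inf b>0$ and $\sup b<\infty$, multiplication by $b$ is an isomorphism of $L^p$ that also preserves the weighted classes $\jbracket{\cdot}^{k}L^p$; so solving $L\psi=\omega$ with $\jbracket{\cdot}\omega\in L^p$ is equivalent to solving $-\Delta\psi = b\omega + b\,\nabla(1/b)\cdot\nabla\psi$, where the right-hand side lies in $\jbracket{\cdot}^{-1}L^p$: indeed $b\omega\in\jbracket{\cdot}^{-1}L^p$ trivially, and $b\nabla(1/b) = -\nabla b/b$ decays like $\jbracket{x}^{-4-\gamma}$ by Assumption~\ref{assumption_b}, so it maps $\jbracket{\cdot}^{0}L^p$ (which contains $\nabla\psi$ when $\psi\in W^{2,p}_{-1}$, by definition of the space with $|\alpha|=1$) into $\jbracket{\cdot}^{-1}L^p$ with a small or at least bounded operator norm.

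Next I would invoke \cite[Theorem~2]{LockhartMcOwen}: for $2<p<\infty$ the Laplacian $-\Delta: W^{2,p}_{-1}(\mathbb{R}^2)/\mathbb{R}\to \jbracket{\cdot}^{-1}L^p(\mathbb{R}^2)$ is an isomorphism (the quotient by constants and the restriction $p>2$ being exactly the conditions under which the indicial/Fredholm obstruction disappears in dimension $2$). Denote its inverse by $(-\Delta)^{-1}$. Then $\psi$ solves \eqref{elliptic_problem_stream} iff $\psi = (-\Delta)^{-1}\!\left(b\omega\right) + (-\Delta)^{-1}\!\left(b\,\nabla(1/b)\cdot\nabla\psi\right) =: \Phi + T\psi$, a fixed-point equation in $W^{2,p}_{-1}/\mathbb{R}$. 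The operator $T$ is bounded on $W^{2,p}_{-1}/\mathbb{R}$, being the composition of $\psi\mapsto\nabla\psi$ (bounded $W^{2,p}_{-1}\to\jbracket{\cdot}^{0}L^p$), multiplication by the fast-decaying field $-\nabla b/b\in L^\infty\cap \jbracket{\cdot}^{-4-\gamma}L^\infty$ (bounded $\jbracket{\cdot}^{0}L^p\to\jbracket{\cdot}^{-1}L^p$), and $(-\Delta)^{-1}$ (bounded $\jbracket{\cdot}^{-1}L^p\to W^{2,p}_{-1}/\mathbb{R}$). To get existence and uniqueness I need $I-T$ invertible. If one does not want a smallness hypothesis, the clean way is: $T$ is compact. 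Compactness follows because the weight $\jbracket{\cdot}^{-4-\gamma}$ gives decay at infinity while $(-\Delta)^{-1}$ gains two derivatives locally, so $T$ factors through a compact embedding (Rellich plus the decay) of $W^{2,p}_{-1}$ into itself; then Fredholm alternative reduces existence to uniqueness. Uniqueness: if $L\psi=0$ then $-\Delta\psi = b\nabla(1/b)\cdot\nabla\psi \in \jbracket{\cdot}^{-3-\gamma}L^p \subset \jbracket{\cdot}^{-1}L^p$, so $\psi\in W^{2,p}_{-1}/\mathbb{R}$ with $-\Delta\psi$ of this form; multiplying the equation $\div(\frac1b\nabla\psi)=0$ by $\psi$ and integrating by parts (justified by the decay of $\psi$ and $\nabla\psi$ coming from membership in the weighted space) gives $\int \frac1b|\nabla\psi|^2=0$, hence $\nabla\psi\equiv 0$ and $\psi\equiv 0$ in the quotient. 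This yields the first assertion.

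For the second assertion, let $u\in L^p(\mathbb{R}^2,\mathbb{R}^2)$ solve \eqref{elliptic_problem_velocity} distributionally, and let $\psi\in W^{2,p}_{-1}/\mathbb{R}$ be the solution just constructed. Set $v := u + \frac1b\nabla^\bot\psi$. Then $\curl v = \curl u + \curl(\frac1b\nabla^\bot\psi) = \omega - \div(\frac1b\nabla\psi) = \omega - \omega = 0$ (using $\curl(\frac1b\nabla^\bot\psi) = \partial_1(\frac1b\partial_1\psi)+\partial_2(\frac1b\partial_2\psi) = \div(\frac1b\nabla\psi)$), and $\div(bv) = \div(bu) + \div(\nabla^\bot\psi) = 0 + 0 = 0$. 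So $bv$ is divergence-free and $v$ is curl-free on $\mathbb{R}^2$; writing $bv = \nabla^\bot h$ for some $h$ (simple connectedness) and using $\curl v=0$ gives $\div(\frac1b\nabla h) = 0$, i.e.\ $h$ solves \eqref{elliptic_problem_stream} with zero right-hand side. To conclude $h$ is constant hence $v=0$ I need $h$ to land in the uniqueness class; here I would check that $v\in L^p$ (since $u\in L^p$ and $\frac1b\nabla^\bot\psi\in L^p$ because $\nabla\psi\in\jbracket{\cdot}^0 L^p\subset L^p$ and $\frac1b\in L^\infty$), deduce $\nabla h = b v^\bot \in L^p$, and then either appeal again to \cite[Theorem~2]{LockhartMcOwen} in the appropriate weighted space or argue directly via Liouville-type reasoning that a curl-free, $b$-divergence-free $L^p$ field on $\mathbb{R}^2$ with $p>2$ vanishes. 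The main obstacle is this last point: the distributional solution $u$ is only assumed $L^p$, not a priori in the weighted space, so one must carefully propagate just enough decay (via the equation and elliptic regularity, turning $L^p$ control of $u$ into $\jbracket{\cdot}^{0}L^p$ control of $\nabla\psi$-type quantities) to be allowed to integrate by parts and run the uniqueness argument; matching the weights of Lockhart--McOwen to what the hypotheses actually give is the delicate bookkeeping here, and is exactly where $p>2$ is used.
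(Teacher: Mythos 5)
Your proposed route diverges from the paper's, and both places where it diverges contain a genuine gap. The first is the uniqueness step. Membership of a homogeneous solution $\psi$ in $W^{2,p}_{-1}(\mathbb{R}^2)$ with $p>2$ only yields $|\nabla\psi(x)|\lesssim |x|^{-2/p}$ and $|\psi(x)|\lesssim |x|^{1-2/p}$ (the weight $\jbracket{\cdot}^{-1}$ even tolerates logarithmic growth of $\psi$), so neither the finiteness of $\int \frac1b|\nabla\psi|^2$ nor the vanishing of the boundary flux is "justified by the decay coming from membership in the weighted space": the boundary term over $\partial B(0,R)$ is only $O(R\cdot R^{1-2/p}\cdot R^{-2/p})=O(R^{2-4/p})$, and $2-4/p>0$ for every $p>2$. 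To close this you would first have to upgrade the asymptotics of the homogeneous solution (e.g. show $\int \frac{\nabla b}{b}\cdot\nabla\psi=0$ by a flux identity and deduce $\nabla\psi=O(|x|^{-2})$, in the spirit of the paper's Proposition~\ref{theorem_asymptotic_velocity_field_lake}), or simply take uniqueness from the cited reference. In fact \cite[Theorem~2]{LockhartMcOwen} applies \emph{directly} to the perturbed operator $-\Delta-b\nabla(1/b)\cdot\nabla$ — its hypotheses are precisely the decay of $b\nabla(1/b)$ from Assumption~\ref{assumption_b} together with the weight conditions $-1\le -2/p$, $1-\frac2p\notin\mathbb{N}$ — which is what the paper does; the flat-Laplacian isomorphism you invoke is their Theorem~1, and with Theorem~2 available your Fredholm/compactness detour (whose compactness claim is itself only sketched) is unnecessary.

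The second gap is in the identification $u=-\frac1b\nabla^\perp\psi$, exactly at the point you flag as "the main obstacle". Reducing to "a curl-free, $b$-divergence-free $L^p$ field vanishes" is not a reduction: with only $\nabla h=(bv)^\perp\in L^p$ you cannot place $h$ in $W^{2,p}_{-1}$ (Morrey gives $|h|\lesssim|x|^{1-2/p}$, which sits precisely at the borderline where $\jbracket{\cdot}^{-1}h\in L^p$ fails), so the weighted uniqueness cannot be quoted, and the same divergent boundary term blocks the energy argument; this Liouville property is equivalent in difficulty to the uniqueness you are trying to prove. The paper's proof avoids it by correcting $bu$ rather than $u$: since $\jbracket{\cdot}\curl(bu)=\jbracket{\cdot}(b\omega+\nabla^\perp b\cdot u)\in L^p$ thanks to the decay of $\nabla b$ and $u\in L^p$, one solves $-\Delta\pi=\curl(bu)$ in $W^{2,p}_{-1}$ by \cite[Theorem~1]{LockhartMcOwen}; then $bu+\nabla^\perp\pi$ is simultaneously divergence- and curl-free and in $L^p$, hence harmonic componentwise and zero by Liouville, after which one checks that $\pi$ solves \eqref{elliptic_problem_stream} and concludes $\nabla\pi=\nabla\psi$ by uniqueness. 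That correction is the missing idea: it produces a genuinely harmonic residual field, where the $L^p$ Liouville theorem is immediate, instead of a $b$-weighted div–curl system whose Liouville property is the very thing in question.
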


\begin{proof}
We can rewrite Equation \eqref{elliptic_problem_stream} as
\begin{align*}
-\Delta \psi - b\nabla\left(\frac{1}{b}\right)\cdot\nabla \psi = b\omega.
\end{align*}
We have that:
\begin{itemize}

\item $-\Delta$ is an elliptic operator with constant coefficients and homogeneous of degree $2$.

\item $\displaystyle{b\nabla\left(\frac{1}{b}\right) \in \mathcal{C}^0}$ and
\begin{equation*}
\underset{|x|\rightarrow+\infty}{\lim} \left|\jbracket{x}^{2-1+0}b(x)\nabla\left(\frac{1}{b}\right) (x)\right| = 0 
\end{equation*} 
since $b$ satisfies Assumption~\ref{assumption_b}.

\item $\jbracket{\cdot} b\omega \in L^p$.

\item $\displaystyle{-1\leq -\frac{2}{p}}$ and $\displaystyle{1-\frac{2}{p} \notin \mathbb{N}}$.

\end{itemize}
Therefore by \cite[Theorem~2]{LockhartMcOwen}, there exists a unique solution $\psi$ (up to a constant) of Equation \eqref{elliptic_problem_stream} in $W^{2,p}_{-1}(\mathbb{R}^2)$.

Now if $u \in L^p$ is a solution of \eqref{elliptic_problem_velocity}, then
\begin{align*}
\norm{\jbracket{\cdot}\curl(bu)}_{L^p} &= \norm{\jbracket{\cdot}b\omega}_{L^p}+\norm{\jbracket{\cdot}\nabla^\bot b\cdot u} \\
&\leq C_b(\norm{\jbracket{\cdot}\omega}_{L^p}+\norm{u}_{L^p})
\end{align*}
since $b$ satisfies Assumption~\ref{assumption_b}. Let us consider $\pi \in W^{2,p}_{-1}(\mathbb{R}^2)$ to be the unique solution (up to a constant) of $-\Delta \pi = \curl(bu)$ given by \cite[Theorem~1]{LockhartMcOwen}. Then $bu + \nabla^\bot \pi$ is a div-curl free vector field in $L^p$ so it is zero. Moreover,
\begin{align*}
-\div\left(\frac{1}{b}\nabla \pi\right) &= -\curl\left(\frac{1}{b}\nabla^\bot \pi\right) 
= \curl(u) = \omega
\end{align*} 
so $\nabla \pi = \nabla \psi$ by uniqueness of solutions of \eqref{elliptic_problem_stream} in $W^{2,p}_{-1}(\mathbb{R}^2)/\mathbb{R}$.
\end{proof}

Now we state several estimates for solutions of Equation \eqref{elliptic_problem_stream}, proved by Duerinckx in \cite{DuerinckxFischer}:   

\begin{lemma}\label{lemma_link_solutions_duerinckx}[From  \cite[Lemma~2.6]{DuerinckxFischer}]
Let $p>2$, $\omega$ be such that $\jbracket{\cdot}\omega \in L^p(\mathbb{R}^2)$. If $\psi \in W^{2,p}_{-1}(\mathbb{R}^2)$ is the solution of \eqref{elliptic_problem_stream} given by Proposition~\ref{proposition_well_posedness_elliptic}, then: 
\begin{enumerate}

\item There exists $p_0 >2$ depending only on $b$ such that for all $2 < p \leq p_0$,
\begin{equation*}
\norm{\nabla \psi}_{L^p} \leq C_p\norm{\omega}_{L^\frac{2p}{p+2}}.
\end{equation*}

\item For all $0 < s < 1$, 
\begin{align*}
|\nabla \psi|_{\mathcal{C}^{0,s}} \leq C_s \norm{\omega}_{L^\frac{2}{1-s}}.
\end{align*}

\item $\norm{\nabla \psi}_{L^\infty} \leq C\norm{\omega}_{L^1\cap L^\infty}$. 

\end{enumerate}
\end{lemma}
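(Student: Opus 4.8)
The plan is to reduce all three estimates to standard potential‑theoretic bounds for the Laplacian, corrected by a perturbative treatment of the first‑order term generated by the non‑constant coefficient $1/b$. Writing $W := b\nabla(1/b) = -\nabla b/b$, Equation~\eqref{elliptic_problem_stream} is equivalent to $-\Delta\psi = b\omega + W\cdot\nabla\psi$, and by Assumption~\ref{assumption_b} we have $|W(x)|\le C_b(1+|x|)^{-4-\gamma}$, so that $W\in L^r(\mathbb{R}^2)$ for every $r\in[1,\infty]$. Since Proposition~\ref{proposition_well_posedness_elliptic} gives $\psi\in W^{2,p}_{-1}$, the gradient $\nabla\psi$ decays at infinity and may be represented through the Newtonian potential, $\nabla\psi = K*(b\omega + W\cdot\nabla\psi)$ with $K := \nabla g$ and $|K(z)| = (2\pi|z|)^{-1}$. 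Everything then comes down to (i) mapping properties of $f\mapsto K*f$, and (ii) absorbing, resp. bootstrapping, the correction $W\cdot\nabla\psi$.

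For (i) I would use three standard estimates for convolution with the order‑one kernel $K$ in dimension two. First, $K$ is (up to a constant) the Riesz potential $I_1$, so Hardy--Littlewood--Sobolev gives $\|K*f\|_{L^p}\le C_p\|f\|_{L^{2p/(p+2)}}$ for every $2<p<\infty$ (note $\tfrac{2p}{p+2}\in(1,2)$, the admissible range). Second, for $0<s<1$ set $q := \tfrac{2}{1-s}>2$; splitting the integral defining $K*f(x)-K*f(x')$ into the region $|x-y|\le 2|x-x'|$ and its complement and applying Hölder (using $q'<2$, so $|z|^{-1}\in L^{q'}_{\loc}$) yields the Morrey‑type bound $|K*f|_{\mathcal C^{0,s}}\le C_s\|f\|_{L^q}$. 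Third, decomposing $K = K\mathbf 1_{|z|<1}+K\mathbf 1_{|z|\ge 1}$, the first piece lies in $L^1$ (indeed in $L^r$ for all $r<2$) and the second in $L^\infty$, so Young's inequality gives $\|K*f\|_{L^\infty}\le C\|f\|_{L^1\cap L^\infty}$, and more generally $\|K*f\|_{L^\infty}\le C_p(\|f\|_{L^1}+\|f\|_{L^p})$ for any $p>2$.

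To close: for item (1), applying the HLS bound to $\nabla\psi = K*(b\omega)+K*(W\cdot\nabla\psi)$ controls the first term by $C_b\|\omega\|_{L^{2p/(p+2)}}$, and since $\tfrac{p+2}{2p}=\tfrac12+\tfrac1p$ and $W\in L^2$ one has $\|W\cdot\nabla\psi\|_{L^{2p/(p+2)}}\le\|W\|_{L^2}\|\nabla\psi\|_{L^p}$; the first‑order term is then removed by a perturbation argument — writing $W = W\mathbf 1_{B_R}+W\mathbf 1_{B_R^c}$, the far piece is absorbed into the left‑hand side for $R$ large (as $\|W\mathbf 1_{B_R^c}\|_{L^2}\to 0$), while the compactly supported near piece is handled by a Fredholm‑type argument combined with the $L^p$ a priori bound and the uniqueness statement of Proposition~\ref{proposition_well_posedness_elliptic}, the restriction $p\in(2,p_0]$ reflecting the range on which this scheme is uniform. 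Item (3) then follows: $b\omega\in L^1\cap L^\infty$, and by item (1) and interpolation $\|\nabla\psi\|_{L^p}\le C\|\omega\|_{L^1\cap L^\infty}$ for $p\in(2,p_0]$, whence $W\cdot\nabla\psi\in L^1\cap L^p$ with the same bound, so the third kernel estimate applied to $f = b\omega+W\cdot\nabla\psi$ gives $\|\nabla\psi\|_{L^\infty}\le C\|\omega\|_{L^1\cap L^\infty}$. Finally, for item (2): with $\nabla\psi\in L^\infty$ in hand, the decay of $W$ gives $W\cdot\nabla\psi\in L^{2/(1-s)}$, hence $f = b\omega+W\cdot\nabla\psi\in L^{2/(1-s)}$, and the Morrey estimate yields $|\nabla\psi|_{\mathcal C^{0,s}}\le C_s\|\omega\|_{L^{2/(1-s)}}$. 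The hard part — the only point beyond standard potential theory — is the first‑order correction $W\cdot\nabla\psi$: making the perturbation/Fredholm step work and tracking which norm of $\omega$ dominates $W\cdot\nabla\psi$ is what dictates the order $(1)\Rightarrow(3)\Rightarrow(2)$, and it is precisely the strong decay of $\nabla b$ in Assumption~\ref{assumption_b} that makes this correction controllable.
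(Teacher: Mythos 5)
Your reduction to the Newtonian potential and the three kernel estimates (HLS, the Morrey-type splitting, Young for $L^1\cap L^p$) are fine, and the order $(1)\Rightarrow(3)$ works; but the derivation of item (2) has a genuine gap. You bound the first-order correction by $\norm{W\cdot\nabla\psi}_{L^{2/(1-s)}}\le \norm{W}_{L^{2/(1-s)}}\norm{\nabla\psi}_{L^\infty}$ and then invoke item (3), so what your chain actually yields is $|\nabla\psi|_{\mathcal C^{0,s}}\le C_{b,s}\big(\norm{\omega}_{L^{2/(1-s)}}+\norm{\omega}_{L^1\cap L^\infty}\big)$, which is strictly weaker than the stated $C_s\norm{\omega}_{L^{2/(1-s)}}$. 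This is not a cosmetic slip that a better Hölder exponent fixes: $L^{2/(1-s)}$ control of $\omega$ is supercritical for $\nabla\psi$ in $L^\infty$, so no bound of the form $\norm{W\cdot\nabla\psi}_{L^{2/(1-s)}}\lesssim\norm{\omega}_{L^{2/(1-s)}}$ is available inside your perturbative scheme. Concretely, for $\omega_R(x)=R^{-(1-s)}\chi(x/R)$ the norm $\norm{\omega_R}_{L^{2/(1-s)}}$ stays fixed while $|\nabla\psi_R|$ on the bounded region where $\nabla b$ is not negligible grows like $R^{s}$, so the induced source $W\cdot\nabla\psi_R$ has $L^{2/(1-s)}$-norm of order $R^{s}$ and your Morrey step cannot deliver the claimed inequality. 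To get (2) as stated you would need either additional information on $\omega$ (the paper only ever applies (2) to explicit, decaying right-hand sides, as in Lemma~\ref{estimees_S_b_not_necessarily_symmetric}) or a different mechanism; note also that the paper itself does not prove this lemma at all — it cites \cite[Lemma~2.6]{DuerinckxFischer} together with a density remark — so there is no argument in the paper that closes this step for you.

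Secondary, but still real: in item (1) the "Fredholm-type argument" is where the work actually lies and it is only gestured at. You need (a) compactness of $v\mapsto \nabla g\ast(W\mathbf{1}_{B_R}\cdot v)$ on $L^p$ — the local Rellich embedding must be run at a non-critical exponent (the natural one, $W^{1,2p/(p+2)}_{\loc}\hookrightarrow L^p_{\loc}$, is critical) together with tightness at infinity from the $|x|^{-1}$ decay — and (b) triviality of the kernel: any $v\in L^p$ with $v=\nabla g\ast(W\cdot v)$ must be identified as $\nabla\phi$ for a $\phi$ lying in $W^{2,p}_{-1}$ solving $\div(b^{-1}\nabla\phi)=0$, so that the uniqueness of Proposition~\ref{proposition_well_posedness_elliptic} applies; neither point is automatic and both must be written out. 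It is also worth noting that the $p_0$ in the statement is the signature of a Meyers-type argument: perturbing around the Laplacian, the smallness needed to absorb the coefficient term comes from the Calderón--Zygmund constant being close to $1$ for $p$ near $2$ together with the ellipticity contrast of $1/b$, not from the decay of $\nabla b$; that route gives (1) directly on $(2,p_0]$ without any Fredholm/uniqueness machinery, whereas your route, if completed, proves a stronger range of $p$ at the price of the unproven steps above. Items (3) and your three kernel estimates are correct as stated.
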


\begin{remark}
In \cite{DuerinckxFischer}, this lemma was stated for any solution of \eqref{elliptic_problem_stream} with decreasing gradient (which is the case for a solution given by Proposition~\ref{proposition_well_posedness_elliptic} since its gradient is in $W^{1,p}$) and for $\omega$ smooth with compact support but by density it can be extended to all $\omega$ such that $\jbracket{\cdot}\omega \in L^p(\mathbb{R}^2)$ and such that the upper inequalities make sense. 
\end{remark}

\subsection{Asymptotic behavior of the velocity field}\label{subsection:22}

The main result of this subsection is the following proposition giving the asymptotic behavior of a velocity field satisfying \eqref{elliptic_problem_velocity}.

\begin{proposition}\label{theorem_asymptotic_velocity_field_lake}
Let $\omega \in L^\infty$ with compact support and $\displaystyle{u = -\frac{1}{b}\nabla^\bot \psi}$ where $\psi$ is the solution of \eqref{elliptic_problem_stream} given by Proposition~\ref{proposition_well_posedness_elliptic}. There exists $C > 0$ depending only on $b$ and $\omega$ such that for all $x \in \mathbb{R}^2\backslash\{0\} $,

\begin{equation}\label{asymptotic_bound_velocity_field}
\left|u(x) - \frac{1}{2\pi}\left(\int_{\mathbb{R}^2} \omega\right) \frac{x^\bot}{|x|^2}\right| \leq \frac{C}{|x|^2}.
\end{equation}
Moreover there exists $\delta \in (0,1)$ and $C$ such that
\begin{equation}\label{bound_stream_function}
|\psi(x)| \leq C(1+|x|^\delta).
\end{equation}

\end{proposition}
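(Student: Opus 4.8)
The plan is to work directly with the stream function $\psi$, reducing the variable–coefficient equation \eqref{elliptic_problem_stream} to a Poisson equation with a fast–decaying right–hand side. Expanding $\div(\tfrac1b\nabla\psi)=\tfrac1b\Delta\psi+\nabla(\tfrac1b)\cdot\nabla\psi$ and multiplying \eqref{elliptic_problem_stream} by $b$ gives $-\Delta\psi=f$ with $f:=b\omega-\tfrac{\nabla b}{b}\cdot\nabla\psi$. By Lemma~\ref{lemma_link_solutions_duerinckx} one has $\nabla\psi\in L^\infty$, and since $\omega$ has compact support and $b$ satisfies Assumption~\ref{assumption_b}, it follows that $f\in L^\infty$ with $|f(x)|\le C\jbracket{x}^{-(4+\gamma)}$ outside $\supp\omega$; in particular $f\in L^1\cap L^\infty$ and $\jbracket{\cdot}^{2}f\in L^1$. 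Moreover, since $\nabla b$ decays integrably (radially and tangentially on large circles), $b$ has a limit $b_\infty\ge\inf b>0$ at infinity, with $|b(x)-b_\infty|\le C\jbracket{x}^{-(3+\gamma)}$.

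Next I would identify $\psi$ with the Newtonian potential of $f$. The convolution $g*f$ is well defined by the decay of $f$, satisfies $-\Delta(g*f)=f$ distributionally, and grows at most logarithmically, so $\jbracket{\cdot}^{-1}(g*f)\in L^p$; since also $\jbracket{\cdot}^{-1}\psi\in L^p$ by Proposition~\ref{proposition_well_posedness_elliptic}, the harmonic function $h:=\psi-g*f$ satisfies $\jbracket{\cdot}^{-1}h\in L^p$ for some $p>2$, and the mean–value property then forces $h$ to have sublinear growth, hence $h$ is constant by Liouville. Therefore $\nabla\psi=(\nabla g)*f$.

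The core estimate is the asymptotics of $(\nabla g)*f$. Writing $\tfrac{x-y}{|x-y|^2}=\tfrac{x}{|x|^2}+R(x,y)$ and splitting the integral over $\{|y|\le|x|/2\}$ and $\{|y|>|x|/2\}$, using $|R(x,y)|\le C|y|/|x|^2$ on the first region together with $\jbracket{\cdot}f\in L^1$, and the fast decay of $f$ on the second, I would obtain $\nabla\psi(x)=-\tfrac{1}{2\pi}\tfrac{x}{|x|^2}\int f+O(|x|^{-2})$ for $|x|$ large, uniformly in the direction. To identify the constant $\int f$, integrate $-\div(\tfrac1b\nabla\psi)=\omega$ over a disc $B_R$ and apply the divergence theorem: by the previous expansion and the convergence $b\to b_\infty$, the flux of $\tfrac1b\nabla\psi$ across $\partial B_R$ equals $-\tfrac{1}{b_\infty}\int f+O(1/R)$, and letting $R\to\infty$ gives $\int f=b_\infty\int\omega$. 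This is the step I expect to be the main obstacle: it requires the sharp $O(|x|^{-2})$ remainder — which is precisely why the weight $(1+|x|)^{4+\gamma}$ on $\nabla b$ and $D^2b$ is imposed in Assumption~\ref{assumption_b} — and a careful treatment of the apparent circularity between $f$ and $\nabla\psi$.

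Finally, combining these facts, $u(x)=-\tfrac1{b(x)}\nabla^\bot\psi(x)=\tfrac{b_\infty}{b(x)}\cdot\tfrac{1}{2\pi}\bigl(\int\omega\bigr)\tfrac{x^\bot}{|x|^2}+O(|x|^{-2})$, and since $\tfrac{b_\infty}{b(x)}=1+O(|x|^{-(3+\gamma)})$ this yields \eqref{asymptotic_bound_velocity_field} for $|x|\ge R_0$; for $0<|x|\le R_0$ the inequality follows from $\nabla\psi\in L^\infty$ and $\inf b>0$, absorbing the bounded quantities into $C|x|^{-2}$. For \eqref{bound_stream_function}, the expansion gives $|\nabla\psi(x)|\le C/|x|$ for $|x|\ge R_0$; integrating the gradient along radial segments from $\partial B_{R_0}$ then gives $|\psi(x)|\le C+C\ln|x|$, which is $\le C(1+|x|^\delta)$ for any $\delta\in(0,1)$, while $\psi$ is continuous hence bounded on the compact set $\{|x|\le R_0\}$.
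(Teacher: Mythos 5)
Your proof is correct, but it takes a genuinely different route from the paper's for the key step. The paper works at the level of the velocity field: it sets $\mu=\div(u)=-\frac{\nabla b\cdot u}{b}$, writes the Helmholtz decomposition $u=-\nabla g\ast\mu-\nabla^\bot g\ast\omega$, applies the Biot--Savart asymptotics (its Lemma~\ref{theorem_asymptotic_velocity_field_euler}) to both pieces, and then shows $\int\mu=0$ through an inductive identity, $\sum_{k=0}^n\frac{\ln^k(b_\infty)}{k!}\int\mu=\frac{1}{n!}\int\ln^n(b)\mu$, which in the limit $n\to\infty$ gives $e^{\ln b_\infty}\int\mu=0$. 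You instead work with the stream function: rewriting \eqref{elliptic_problem_stream} as $-\Delta\psi=f$ with $f=b\omega-\frac{\nabla b}{b}\cdot\nabla\psi$, using the a priori bounds $\nabla\psi\in L^\infty$ and the decay of $\nabla b$ to control $f$ (which resolves the apparent circularity exactly as the paper's a priori $u\in L^p$ does for $\mu$), identifying $\nabla\psi=\nabla g\ast f$ via a Liouville argument, and then pinning down the total mass $\int f=b_\infty\int\omega$ by one flux computation of $\frac1b\nabla\psi$ across large circles. Your flux identity is more elementary and transparent than the paper's series/induction trick, at the modest cost of having to justify the divergence theorem for the $W^{1,p}$ (in fact $\mathcal{C}^{0,s}$) field $\frac1b\nabla\psi$ and the existence and rate of $b_\infty$, both of which you correctly extract from Assumption~\ref{assumption_b}. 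For \eqref{bound_stream_function} you also diverge: integrating the resulting bound $|\nabla\psi(x)|\leq C/|x|$ radially gives a logarithmic growth bound on $\psi$, which is stronger than the paper's Morrey-based $|x|^{1-2/p}$ estimate and of course suffices for $C(1+|x|^\delta)$.
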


To prove this proposition we will need to use the following result about the asymptotic behavior of a velocity field given by the usual Biot-Savart law:

\begin{lemma}\label{theorem_asymptotic_velocity_field_euler}
Let us assume that $\mu$ is a measurable function such that $\mu \in L^1((1+|x|^2)\dd x)$ and $|\cdot|^2\mu \in L^p$ for some $p > 2$. Then there exists $C,R >0$ depending only on $\mu$ such that for all $x \in \mathbb{R}^2\backslash\{0\}$,

\begin{equation*}
\left|\int_{\mathbb{R}^2}\frac{x-y}{|x-y|^2}\dd \mu(y)-\left(\int_{\mathbb{R}^2} \dd \mu(y)\right)\frac{x}{|x|^2}\right| 
\leq \frac{C}{|x|^2}.
\end{equation*}

In particular if $\displaystyle{\int_{\mathbb{R}^2} \dd\mu = 0}$, then

\begin{equation*}
\int_{\mathbb{R}^2}\frac{x-y}{|x-y|^2}\dd \mu(y) = \underset{|x|\rightarrow+\infty}{\mathcal{O}}(|x|^{-2}).
\end{equation*}

\end{lemma}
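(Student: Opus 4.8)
The plan is to split the integral into a near-field region and a far-field region, expand the kernel $\frac{x-y}{|x-y|^2}$ around $y=0$ in the far field, and handle the near-field tail by an integrability argument. Write $\displaystyle J(x) := \int_{\mathbb{R}^2}\frac{x-y}{|x-y|^2}\dd\mu(y)-\left(\int_{\mathbb{R}^2}\dd\mu(y)\right)\frac{x}{|x|^2}$ and fix $x$ with $|x|$ large (the case of bounded $|x|$ is trivial since the bound $C/|x|^2$ can be adjusted on compacts, using that $\frac{x-y}{|x-y|^2}$ is locally integrable against $\mu$ because $\mu\in L^p$ with $p>2$). First I would write the pointwise identity
\begin{equation*}
\frac{x-y}{|x-y|^2}-\frac{x}{|x|^2} = R(x,y)
\end{equation*}
and observe that a Taylor expansion of $y\mapsto \frac{x-y}{|x-y|^2}$ at $y=0$ gives, for $|y|\le |x|/2$, the bound $|R(x,y)|\le C|y|/|x|^2$ (the first-order term is $O(|y|/|x|^2)$ and the remainder is $O(|y|^2/|x|^3)$, which is even smaller in this regime). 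Then
\begin{equation*}
\int_{|y|\le |x|/2} R(x,y)\dd\mu(y)
\end{equation*}
is bounded by $\frac{C}{|x|^2}\int_{\mathbb{R}^2}|y|\,|\dd\mu(y)| \le \frac{C}{|x|^2}\norm{\mu}_{L^1((1+|x|^2)\dd x)}^{1/2}\cdot(\text{const})$, which is the desired estimate. (More simply, $\int |y|\,d|\mu| \le \int (1+|y|^2)\,d|\mu| < \infty$ by hypothesis.)

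It remains to control the far region $|y| > |x|/2$. There, $|y|$ is comparable to or larger than $|x|$, so each of the two terms $\frac{x-y}{|x-y|^2}$ and $\frac{x}{|x|^2}$ must be estimated separately. The term $\bigl(\int\dd\mu\bigr)\frac{x}{|x|^2}$ restricted to this region contributes $\frac{1}{|x|}\int_{|y|>|x|/2}\dd|\mu|(y)$, and since $|y|>|x|/2$ implies $1 \le \frac{4|y|^2}{|x|^2}$, we get $\int_{|y|>|x|/2}\dd|\mu|(y) \le \frac{4}{|x|^2}\int |y|^2\dd|\mu|(y) \le \frac{C}{|x|^2}$; multiplying by $1/|x|$ gives even better than $C/|x|^2$. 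For the genuinely singular term $\int_{|y|>|x|/2}\frac{x-y}{|x-y|^2}\dd\mu(y)$, I would use Hölder's inequality with the hypothesis $|\cdot|^2\mu\in L^p$, $p>2$: write $\mu(y) = |y|^{-2}\cdot(|y|^2\mu(y))$, so on $\{|y|>|x|/2\}$ we have $|\mu(y)| \le \frac{4}{|x|^2}|(|y|^2\mu)(y)|$, and then
\begin{equation*}
\left|\int_{|y|>|x|/2}\frac{x-y}{|x-y|^2}\mu(y)\dd y\right| \le \frac{4}{|x|^2}\left\|\frac{1}{|x-\cdot|}\right\|_{L^{p'}(|y|>|x|/2)}\norm{|\cdot|^2\mu}_{L^p};
\end{equation*}
since $p>2$ gives $p'<2$, the function $z\mapsto |z|^{-1}$ is in $L^{p'}$ near the origin and one checks $\bigl\|\,|x-\cdot|^{-1}\bigr\|_{L^{p'}(\{|y|>|x|/2\})}$ is bounded uniformly in $x$ for $|x|$ large (the $L^{p'}$ norm of $|z|^{-1}$ over any half-space-like region at distance $O(|x|)$ from the pole, minus a unit ball around the pole where it is a fixed finite constant, stays bounded — one may split into $|y-x|<1$ and $|y-x|\ge 1$, the latter giving a bounded contribution since $p'<2$ is not quite enough at infinity but the region is contained in $\{|y|>|x|/2\}$ where we additionally gain; alternatively restrict first to a large ball using compact-support-like decay of $\mu$, but here $\mu$ need not have compact support, so the cleanest route is the two-piece split of $\{|x-y|<1\}\cup\{|x-y|\ge1\}$ combined with Hölder on each). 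Collecting the three contributions yields $|J(x)| \le C/|x|^2$.

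The main obstacle is the far-field singular term: one must simultaneously exploit the decay of $\mu$ at infinity (to kill the prefactor and get the $|x|^{-2}$) and the $L^p$ integrability with $p>2$ (equivalently $p'<2$, so that the singularity $|x-y|^{-1}$ is $L^{p'}_{\loc}$) to control the integral near the pole $y=x$, while checking that the resulting Hölder constant does not grow with $|x|$. Everything else is a routine Taylor expansion plus the elementary inequality $\mathbf{1}_{|y|>|x|/2}\le 4|y|^2/|x|^2$. The final ``in particular'' statement is immediate: if $\int\dd\mu=0$ the subtracted term vanishes identically, so the displayed bound becomes $\bigl|\int\frac{x-y}{|x-y|^2}\dd\mu(y)\bigr|\le C|x|^{-2}$, which is exactly the claimed $\mathcal{O}(|x|^{-2})$.
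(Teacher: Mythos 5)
Your argument is correct, but it takes a different route from the paper, so let me compare. The paper does not split the $y$-integration into near and far regions: it uses the exact algebraic identity $|x|^2(x-y)-x|x-y|^2=-y|x-y|^2-2[(x-y)\cdot y](x-y)-|y|^2(x-y)$, which extracts the factor $|x|^{-2}$ for every $x\neq 0$ at once and reduces the whole estimate to the two quantities $\int |y|\,\dd|\mu|(y)$ and $\int \frac{|y|^2|\mu(y)|}{|x-y|}\dd y$, the latter being bounded uniformly in $x$ by quoting the interpolation inequality $\int\frac{|f(y)|}{|x-y|}\dd y\leq \norm{f}_{L^1}^{\frac{p-2}{2p-2}}\norm{f}_{L^p}^{\frac{p}{2p-2}}$ (Iftimie) with $f=|\cdot|^2\mu$. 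Your Taylor expansion on $\{|y|\leq |x|/2\}$ together with the bound $\mathbf{1}_{|y|>|x|/2}\leq 4|y|^2/|x|^2$ plays the role of that identity, and your split $\{|x-y|<1\}\cup\{|x-y|\geq 1\}$ with H\"older is in effect a hands-on proof of the quoted interpolation inequality; so both proofs ultimately run on the same two ingredients, namely $\int(1+|y|^2)\dd|\mu|<\infty$ and $|\cdot|^2\mu\in L^p$ with $p'<2$ to tame the singularity at $y=x$. What the paper's identity buys is a bound valid for all $x\neq 0$ with no case distinction; your version needs the separate discussion of bounded $|x|$. Two points to fix in your write-up, neither fatal: first, your initial justification of the far-field H\"older constant is wrong as stated, since $|z|^{-1}\notin L^{p'}$ at infinity when $p'<2$ --- you notice this yourself, and the two-piece split $\{|x-y|<1\}\cup\{|x-y|\geq1\}$ (H\"older near the pole, the trivial bound $|x-y|^{-1}\leq 1$ plus $\norm{|\cdot|^2\mu}_{L^1}$ far from it) is the correct repair; second, for bounded $|x|$ you cannot invoke ``$\mu\in L^p$'', which is not a hypothesis (only $|\cdot|^2\mu\in L^p$ is, and near $y=0$ this gives no $L^p$ control of $\mu$), and moreover one needs uniformity of $|x|^2|J(x)|$ as $x\to 0$, not just finiteness of $J(x)$ for each fixed $x$. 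That uniformity does follow from estimates you already have: on $\{|y|<|x|/2\}$ use $|x-y|\geq |x|/2$ and $\mu\in L^1$ (giving $C/|x|\leq C_R/|x|^2$ on $|x|\leq R$), and on $\{|y|\geq |x|/2\}$ repeat verbatim your $4|y|^2/|x|^2$ reduction and the near/far-pole split.
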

This lemma is a classical result in fluid dynamics (see for example \cite[Proposition~3.3]{MajdaBertozzi}) that we will prove for the sake of completeness. 

\begin{proof}
If $x \neq 0$, we have

\begin{equation*}
\int_{\mathbb{R}^2} \mu(y)\left(\frac{x-y}{|x-y|^2}-\frac{x}{|x|^2}\right)\dd y
= \frac{1}{|x|^2}\int_{\mathbb{R}^2} \mu(y)\frac{|x|^2(x-y) - x|x-y|^2}{|x-y|^2}\dd y.
\end{equation*}
Now remark that
\begin{align*}
|x|^2&(x-y) - x|x-y|^2 \\ &= |x|^2(x-y) - (x-y)(|x|^2 + |y|^2 -2x\cdot y) -y|x-y|^2 \\
&= (x-y)(|y|^2 -2(x-y)\cdot y -2|y|^2)  -y|x-y|^2 \\
&= -y|x-y|^2 - 2[(x-y)\cdot y](x-y) -|y|^2(x-y).
\end{align*}
Thus
\begin{align*}
\left|\int_{\mathbb{R}^2} \mu(y)\left(\frac{x-y}{|x-y|^2}-\frac{x}{|x|^2}\right)\dd y\right| \leq& \frac{C}{|x|^2}\bigg(\int_{\mathbb{R}^2} |y||\mu(y)|\dd y \\
&+ \int_{\mathbb{R}^2} \frac{|y|^2|\mu(y)|}{|x-y|}\dd y\bigg).
\end{align*}
Now we have that for any $p > 2$,
\begin{align*}
\int_{\mathbb{R}^2} \frac{|y|^2|\mu(y)|}{|x-y|}\dd y &\leq \norm{|\cdot|^2\mu}^\frac{p-2}{2p-2}_{L^1}\norm{|\cdot|^2\mu}_{L^p}^\frac{p}{2p-2}
\end{align*}
(see for example \cite[Lemma~1]{Iftimie}) and therefore we get the proof of Lemma~\ref{theorem_asymptotic_velocity_field_euler}.
\end{proof}

With this result we can now study the asymptotic behavior of a velocity field satisfying System \eqref{elliptic_problem_velocity}:
\begin{proof}[Proof of Proposition~\ref{theorem_asymptotic_velocity_field_lake}]
We write
\begin{equation}\label{expression_mu_bu}
\mu := \div(u) = \div\left(\frac{1}{b}bu\right) = \nabla\left(\frac{1}{b}\right)\cdot bu = -\frac{\nabla b\cdot u}{b}.
\end{equation}
By Helmholtz decomposition we can write
\begin{equation}\label{helmoltz_u}
u = -\nabla g\ast\mu -\nabla^\bot g\ast\omega.
\end{equation}
Let $2 < p < +\infty$, then by Assumption~\ref{assumption_b},
\begin{align*}
\int_{\mathbb{R}^2} (1+|y|^2) |\mu(y)|\dd y &\leq C_b\int_{\mathbb{R}^2} \frac{1+|y|^2}{(1+|y|)^{4+\gamma}}|u(y)|\dd y \\
&\leq C_b \norm{(1+|\cdot|)^{-(2+\gamma)}}_{L^{p'}}\norm{u}_{L^p} < +\infty
\end{align*}
and
\begin{align*}
\int_{\mathbb{R}^2} |y|^{2p}|\mu(y)|^p\dd y &\leq C_b\int_{\mathbb{R}^2} |y|^{2p}(1+|y|)^{-p(4+\gamma)}|u(y)|^p \dd y \\
&\leq C_b \int_{\mathbb{R}^2} |u(y)|^p \dd y < + \infty.
\end{align*}
If we apply Lemma~\ref{theorem_asymptotic_velocity_field_euler} on each term of \eqref{helmoltz_u} we only need to show that $\displaystyle{\int \mu = 0}$ to obtain \eqref{asymptotic_bound_velocity_field}. We define 
\begin{equation*}
b_\infty := \underset{|x| \rightarrow +\infty}{\lim} b(x).
\end{equation*}
Remark that the existence of this limit is guaranteed by Assumption~\ref{assumption_b}. Let us prove by induction that for any integer $n$,

\begin{equation}\label{hyp_recurrence_asymptotique}
\sum_{k=0}^n \frac{\ln^k(b_\infty)}{k!}\int_{\mathbb{R}^2} \mu = \frac{1}{n!}\int_{\mathbb{R}^2} \ln^n(b)\mu.
\end{equation}

If $n = 0$ then this equality reduces to $\displaystyle{\int \mu = \int \mu}$. Now let us assume that it holds for some $n \geq 0$. Using Equation \eqref{expression_mu_bu}, we get
\begin{equation*}
\ln^n(b)\mu = -\frac{1}{n+1}\nabla \ln^{n+1}(b) \cdot u.
\end{equation*}
Inserting Equation \eqref{helmoltz_u}, we get
\begin{equation*}
\ln^n(b)\mu = \frac{1}{n+1}\nabla \ln^{n+1}(b)\cdot(\nabla g\ast\mu + \nabla^\bot g\ast\omega).
\end{equation*}
Integrating over a ball of center $0$ and radius $R$ and integrating by parts we get
\begin{equation}\label{ipp_recurrence}
\begin{aligned}
\int_{B(0,R)} \ln^n(b)\mu
=& \frac{1}{n+1}\bigg(\int_{\partial B(0,R)} \ln^{n+1}(b)(\nabla g\ast\mu + \nabla^\bot g\ast\omega)\cdot \dd \vec{S}  \\
&- \int_{B(0,R)}\ln^{n+1}(b)\div(\nabla g\ast\mu + \nabla^\bot g\ast\omega)\bigg) \\
=&\frac{1}{n+1}\bigg(\int_{\partial B(0,R)} \ln^{n+1}(b)(\nabla g\ast\mu + \nabla^\bot g\ast\omega)\cdot \dd \vec{S}  \\
&+ \int_{B(0,R)}\ln^{n+1}(b)\mu\bigg) \\
\end{aligned}
\end{equation}
where $\dd \vec{S}(x) = 2\pi x \dd\sigma(x)$ and $\sigma$ is the uniform probability measure on $\partial B(0,R)$. Using Lemma~\ref{theorem_asymptotic_velocity_field_euler}, we get that for $x \in \partial B(0,R)$,
\begin{align*}
(\nabla g\ast\mu &+ \nabla^\bot g\ast\omega)(x)\cdot x \\
&= -\frac{1}{2\pi}\left(\left(\int_{\mathbb{R}^2} \mu\right)\frac{x}{|x|^2}+\left(\int_{\mathbb{R}^2} \omega\right)\frac{x^\bot}{|x|^2} + \mathcal{O}(R^{-2})\right)\cdot x \\
&= -\frac{1}{2\pi}\left(\int_{\mathbb{R}^2} \mu\right)+ \mathcal{O}(R^{-1}).
\end{align*}
Thus we get that
\begin{equation*}
\frac{1}{n+1}\int_{\partial B(0,R)} \ln^{n+1}(b)(\nabla g\ast\mu + \nabla^\bot g\ast\omega)\cdot \dd \vec{S} \Tend{R}{+\infty} -\frac{\ln^{n+1}(b_\infty)}{n+1}\int_{\mathbb{R}^2} \mu.
\end{equation*}
Combining the upper equality with Equations \eqref{hyp_recurrence_asymptotique} and \eqref{ipp_recurrence} we get that 
\begin{equation*}
\sum_{k=0}^{n+1} \frac{\ln^k(b_\infty)}{k!}\int_{\mathbb{R}^2} \mu = \frac{1}{(n+1)!}\int_{\mathbb{R}^2} \ln^{n+1}(b)\mu
\end{equation*}
which ends the proof of Equality \eqref{hyp_recurrence_asymptotique}. Now if $n$ goes to infinity, this gives
\begin{equation*}
e^{\ln(b_\infty)}\int_{\mathbb{R}^2} \mu = 0
\end{equation*}
and thus
\begin{equation*}
\int_{\mathbb{R}^2} \mu = 0.
\end{equation*}
Now by Lemma~\ref{lemma_link_solutions_duerinckx} and Morrey's inequality (see for example \cite[Theorem~9.12]{Brezis}), for any $2 < p \leq p_0$,
\begin{align*}
|\psi(x)| &\leq |\psi(x)- \psi(0)| + |\psi(0)| \\
&\leq C_p\norm{\nabla \psi}_{L^p}|x|^{1-\frac{2}{p}} + |\psi(0)|.
\end{align*}
Taking $\displaystyle{\delta=1-\frac{2}{p}}$ we obtain \eqref{bound_stream_function}.
\end{proof}

\subsection{Construction of the Green kernel}\label{subsection:23}

The main result of this subsection is a Biot-Savart type law for the lake equations, given by Proposition~\ref{proposition_bs_law}. Let us begin by giving the definition and some estimates on the function $S_b$ that appears in the definition of the kernel $g_b$ (see Equation \eqref{definition_g_b}):

\begin{lemma}\label{estimees_S_b_not_necessarily_symmetric}

For $y \in \mathbb{R}^2$, let $S_b(\cdot,y)$ be a solution of
\begin{equation}\label{definition_S_b}
-\div\left(\frac{1}{b}\nabla S_b(\cdot,y)\right) = -g(\cdot-y)\sqrt{b(y)}\Delta\left(\frac{1}{\sqrt{b}}\right)
\end{equation}
given by Proposition~\ref{proposition_well_posedness_elliptic} applied to $\displaystyle{\omega = -g(\cdot-y)\sqrt{b(y)}\Delta\left(\frac{1}{\sqrt{b}}\right)}$ and $\psi = S_b(\cdot,y)$. Then:
\begin{enumerate}

\item For any $y \in \mathbb{R}^2$ and $2 < p \leq +\infty$, $\nabla_x S_b(\cdot,y) \in L^p$ and
\begin{equation*}
\norm{\nabla_x S_b(\cdot,y)}_{L^p} \leq C_{b,p}(1+|y|).
\end{equation*}

\item There exists $s_0 \in (0,1)$ such that for all $0 < s < s_0$,
\begin{align*}
|\nabla_x S_b(x,\cdot)|_{\mathcal{C}^{0,s}(B(y,1))} &\leq C_{b,s}(1+|y|) \\
|\nabla_x S_b(\cdot,y)|_{\mathcal{C}^{0,s}(\mathbb{R}^2)} &\leq C_{b,s}(1+|y|).
\end{align*}

\end{enumerate}

\end{lemma}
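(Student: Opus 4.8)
The plan is to read off the estimates from the elliptic equation \eqref{definition_S_b} by combining the regularity estimates of Lemma~\ref{lemma_link_solutions_duerinckx} with careful bookkeeping of the right-hand side, which is the product of the logarithmic kernel $g(\cdot-y)$ and the fixed source term $\sqrt{b(y)}\,\Delta\!\left(\tfrac{1}{\sqrt b}\right)$. First I would record that under Assumption~\ref{assumption_b} the function $\Delta\!\left(\tfrac{1}{\sqrt b}\right)$ is smooth with decay $\langle x\rangle^{-(4+\gamma)}$, so that for the source term $\omega_y(x) := -g(x-y)\sqrt{b(y)}\,\Delta\!\left(\tfrac{1}{\sqrt b}\right)(x)$ one controls the relevant Lebesgue norms: using $|g(x-y)| = \tfrac{1}{2\pi}|\ln|x-y||$ one splits the integral into the region $|x-y|\le 1$ (where $\ln$ is integrable against any $L^q$ weight) and $|x-y|>1$ (where $|\ln|x-y||\lesssim |x-y|^\eta$ for any $\eta>0$, absorbed by the decay of $\Delta(1/\sqrt b)$ since $\gamma>0$). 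This yields $\|\langle\cdot\rangle\,\omega_y\|_{L^p}\le C_{b,p}(1+|y|)$ for the exponents $p$ needed, and similarly $\|\omega_y\|_{L^1\cap L^\infty}\le C_b(1+|y|)$ and $\|\omega_y\|_{L^{2/(1-s)}}\le C_{b,s}(1+|y|)$; the linear growth in $|y|$ comes from the factor $|g(x-y)|$ near the far end of the support, i.e. from $|\ln|x-y||$ being $O(\ln|y|)=O(|y|^\eta)$, which I absorb into $1+|y|$.

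With these bounds on $\omega_y$, item~(1) follows immediately: by Proposition~\ref{proposition_well_posedness_elliptic} the stream function $S_b(\cdot,y)$ is the unique $W^{2,p}_{-1}$-solution, and Lemma~\ref{lemma_link_solutions_duerinckx}(1) gives $\|\nabla_x S_b(\cdot,y)\|_{L^p}\le C_p\|\omega_y\|_{L^{2p/(p+2)}}\le C_{b,p}(1+|y|)$ for $2<p\le p_0$, while Lemma~\ref{lemma_link_solutions_duerinckx}(3) gives the $L^\infty$ bound $\|\nabla_x S_b(\cdot,y)\|_{L^\infty}\le C\|\omega_y\|_{L^1\cap L^\infty}\le C_b(1+|y|)$; interpolation covers the intermediate $p$. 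For the second inequality of item~(2), Lemma~\ref{lemma_link_solutions_duerinckx}(2) gives, for $0<s<1$, $|\nabla_x S_b(\cdot,y)|_{\mathcal C^{0,s}(\mathbb R^2)}\le C_s\|\omega_y\|_{L^{2/(1-s)}}\le C_{b,s}(1+|y|)$ — but one must check $\omega_y\in L^{2/(1-s)}$, and the logarithm $g(\cdot-y)$ is in $L^q_{\mathrm{loc}}$ only for finite $q$, so I restrict to $s<s_0$ for a suitable $s_0\in(0,1)$ making $2/(1-s)<\infty$ compatible with the integrability of $\ln$ against the weight; this is where the threshold $s_0$ enters.

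The first inequality of item~(2) — Hölder regularity in the $y$ variable, locally near $y$ — is the one genuine obstacle, because differentiating or taking difference quotients in $y$ acts on the source term $\omega_y$ through $g(x-y)$, whose $y$-dependence is again only logarithmically singular. The plan is: write $\nabla_x S_b(x,y_1)-\nabla_x S_b(x,y_2)$ as $\nabla_x$ applied to the solution of \eqref{definition_S_b} with right-hand side $-(g(\cdot-y_1)-g(\cdot-y_2))\sqrt{b(y_1)}\,\Delta(1/\sqrt b)$ plus a term from the smooth dependence of $\sqrt{b(y)}$ on $y$; the second piece is harmless since $y\mapsto\sqrt{b(y)}$ is Lipschitz with the stated decay. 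For the first piece I apply the linear estimates again to the difference source, so the task reduces to bounding $\|g(\cdot-y_1)-g(\cdot-y_2)|\Delta(1/\sqrt b)|\|_{L^{2/(1-s)}}$ by $C_{b,s}(1+|y|)|y_1-y_2|^s$ for $y_1,y_2\in B(y,1)$. Since $g(x)=-\tfrac1{2\pi}\ln|x|$ is itself $\mathcal C^{0,s}_{\mathrm{loc}}$-regular away from a neighbourhood handled by the $L^q$ bound on $\ln$, one estimates $|g(x-y_1)-g(x-y_2)|\lesssim |y_1-y_2|^s\big(|x-y_1|^{-s}+|x-y_2|^{-s}\big)$ (valid by the mean value theorem for $|x-y_i|$ bounded below, and by the triangle inequality/$L^q$ control otherwise), and the weight $\Delta(1/\sqrt b)$ with its decay makes the resulting integral finite and $O(1+|y|)$ provided $2/(1-s)$ stays below the integrability threshold — hence the same $s_0$. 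Shrinking $s_0$ once more if needed to accommodate both arguments simultaneously finishes the proof; the main care is exactly in choosing $s_0$ so that the logarithmic source is in every Lebesgue space the three sub-estimates of Lemma~\ref{lemma_link_solutions_duerinckx} require.
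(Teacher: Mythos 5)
There is a genuine (though repairable) error in your item (1): the source term $\omega_y:=-g(\cdot-y)\sqrt{b(y)}\,\Delta\!\left(\frac{1}{\sqrt{b}}\right)$ is \emph{not} in $L^\infty$, since $g(x-y)=-\frac{1}{2\pi}\ln|x-y|$ blows up at $x=y$ and $\Delta\!\left(\frac{1}{\sqrt{b}}\right)(y)$ has no reason to vanish there. Hence your claimed bound $\norm{\omega_y}_{L^1\cap L^\infty}\le C_b(1+|y|)$ is false, and claim (3) of Lemma~\ref{lemma_link_solutions_duerinckx} cannot be invoked; this breaks your proof of the case $p=+\infty$ and, with it, the interpolation you use to reach the intermediate exponents $p>p_0$. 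The paper never uses that estimate here: it gets the $L^p$ bound for $2<p\le p_0$ from claim (1), the $\mathcal{C}^{0,s}$ bound from claim (2), and then obtains $\norm{\nabla_x S_b(\cdot,y)}_{L^\infty}\le C_b(1+|y|)$ from the embedding $\norm{\cdot}_{L^\infty}\le C(\norm{\cdot}_{L^p}+|\cdot|_{\mathcal{C}^{0,s}})$ (the Morrey-type argument), before interpolating. Since you prove the Hölder seminorm bound anyway, the fix is immediate, but as written the step is wrong. A smaller point: your reason for the threshold $s_0$ in the second inequality of (2) is spurious — $2/(1-s)$ is finite for every $s<1$ and the logarithm lies in every $L^q_{\mathrm{loc}}$ with $q<\infty$, so that estimate holds for all $0<s<1$ (as in the paper); the restriction on $s$ genuinely comes only from the $y$-Hölder part.

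For the first inequality of (2) your route differs from the paper's and is essentially sound, modulo one missing step. The paper estimates $\norm{(g(\cdot+z)-g)\,\Delta(1/\sqrt{b})}_{L^p}$ by a rearrangement argument on $B(0,|z|^\alpha)$ plus the mean value theorem outside, producing the bound $C_b(|z|^{2\alpha/p}g(z)+|z|^{1-\alpha})$ and then taking $s$ small; you instead use the pointwise fractional estimate $|g(x-y_1)-g(x-y_2)|\le C_s|y_1-y_2|^s\big(|x-y_1|^{-s}+|x-y_2|^{-s}\big)$, which is correct for all $x$ (not only where $|x-y_i|$ is bounded below, via $|\ln a-\ln b|\le \frac{1}{s}|a-b|^s\min(a,b)^{-s}$), and which yields the cleaner bound $C_{b,s}|y_1-y_2|^s$ once integrated against $\Delta(1/\sqrt{b})$, provided $s<1/2$ so that $|x|^{-s}\in L^{2/(1-s)}_{\mathrm{loc}}$. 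The caveat is that a bound on the $\mathcal{C}^{0,s}$ seminorm of $\nabla_x\big(S_b(\cdot,y_1)-S_b(\cdot,y_2)\big)$ alone does not control its pointwise values: to conclude the stated estimate you must also bound the difference source in $L^{2p/(p+2)}$ (your same pointwise inequality does this, with no further restriction on $s$), apply claim (1) of Lemma~\ref{lemma_link_solutions_duerinckx}, and then use the same $L^\infty$ embedding as above — exactly the mechanism the paper invokes with "using Lemma~\ref{lemma_link_solutions_duerinckx} as for the first claim". With that step made explicit, your argument is valid and arguably simpler than the paper's rearrangement computation.
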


\begin{proof}
For any $p$ such that $1 \leq p < + \infty$, we have
\begin{equation*}
\norm{\sqrt{b(y)}\jbracket{\cdot}\Delta\left(\frac{1}{\sqrt{b}}\right)g(\cdot-y)}_{L^p}
\leq \norm{b}_{L^\infty}^\frac{1}{2}\norm{g(\cdot-y)\jbracket{\cdot}\Delta\left(\frac{1}{\sqrt{b}}\right)}_{L^p}
\end{equation*}
and
\begin{align*}
&\norm{\jbracket{\cdot} g(\cdot-y)\Delta\left(\frac{1}{\sqrt{b}}\right)}^p_{L^p} \\
\leq& \int_{B(y,1)} \jbracket{x}^p|g(x-y)|^p\left|\Delta\left(\frac{1}{\sqrt{b}}\right)(x)\right|^p  \dd x \\
&+ \int_{B(y,1)^c} \jbracket{x}^p|g(x-y)|^p\left|\Delta\left(\frac{1}{\sqrt{b}}\right)(x)\right|^p  \dd x \\
\leq& C\norm{g}^p_{L^p(B(0,1))}\norm{\jbracket{\cdot} \Delta\left(\frac{1}{\sqrt{b}}\right)}^p_{L^\infty} \\
&+ \int_{B(y,1)^c} (1+|x|^2)^\frac{p}{2}(|x|+|y|)^p\left|\Delta\left(\frac{1}{\sqrt{b}}\right)(x)\right|^p  \dd x.
\end{align*}
By Assumption~\ref{assumption_b}, we have that
\begin{align*}
\int_{B(y,1)^c} (1+|x|^2)^\frac{p}{2}(|x|+|y|)^p&\left|\Delta\left(\frac{1}{\sqrt{b}}\right)(x)\right|^p  \dd x \\
&\leq  \int_{\mathbb{R}^2} \frac{(1+|x|^2)^\frac{p}{2}(|x|+|y|)^p}{(1+|x|)^{(4+\gamma)p}}\dd x \\
&\leq C_b(1+|y|)^p.
\end{align*}
Therefore we can apply Proposition~\ref{proposition_well_posedness_elliptic} to show that there exists a solution $S_b(\cdot,y)$ of \eqref{definition_S_b} in $W^{2,p}_{-1}(\mathbb{R}^2)$, unique up to a constant. Since $\jbracket{x} \geq 1$ we also have that
\begin{align*}
\norm{\sqrt{b(y)}g(\cdot-y)\Delta\left(\frac{1}{\sqrt{b}}\right)}_{L^p} \leq C_{b,p}(1+|y|).
\end{align*}
By Lemma~\ref{lemma_link_solutions_duerinckx}, there exists $p_0$ such that for any $2 < p \leq p_0$ and $0 < s < 1$:
\begin{align*}
\norm{\nabla_x S_b(\cdot,y)}_{L^p} &\leq \norm{\sqrt{b(y)}\Delta\left(\frac{1}{\sqrt{b}}\right)g(\cdot-y)}_{L^\frac{2p}{p+2}} \\
&\leq  C_{b,p}(1+|y|)
\end{align*}
and
\begin{align*}
|\nabla_x S_b(\cdot,y)|_{\mathcal{C}^{0,s}} &\leq C_s \norm{\sqrt{b(y)}\Delta\left(\frac{1}{\sqrt{b}}\right)g(\cdot-y)}_{L^{\frac{2}{1-s}}} \\
&\leq C_{b,s}(1+|y|)
\end{align*}
that is the second inequality of Claim (2). Using that
\begin{equation*}
\norm{\cdot}_{L^\infty} \leq C(\norm{\cdot}_{L^p} + |\cdot|_{\mathcal{C}^{0,s}})
\end{equation*}
(see for example the proof of Morrey's embedding theorem in \cite[Theorem~9.12]{Brezis}), we get the bound we want on $\nabla_x S_b$:
\begin{align*}
\norm{\nabla_x S_b(\cdot,y)}_{L^\infty} \leq C_b(1+|y|).
\end{align*}
If we interpolate the inequalities on $\norm{\nabla_x S_b(\cdot,y)}_{L^\infty}$ and $\norm{\nabla_x S_b(\cdot,y)}_{L^p}$ for $2 < p \leq p_0$ we find that for any $p > 2$,
\begin{align*}
\norm{\nabla_x S_b(\cdot,y)}_{L^p} \leq C_{b,p}(1+|y|).
\end{align*}

For the first inequality of Claim (2), let us consider $z$ such that $|z|$ is small and remark that $S_b(x,y+z) - S_b(x,y)$ solves
\begin{align*}
\div&\left(\frac{1}{b}(\nabla_x S_b(\cdot,y+z) -\nabla_x S_b(\cdot,y))\right) \\
&= \left(\sqrt{b(y+z)}g(y+z-\cdot) - \sqrt{b(y)}g(y-\cdot)\right)\Delta\left(\frac{1}{\sqrt{b}}\right).
\end{align*}
Let us find a bound for the second member in $L^p$:
\begin{align*}
\bigg(&\sqrt{b(y+z)}g(y+z-x) - \sqrt{b(y)}g(y-x)\bigg)\Delta\left(\frac{1}{\sqrt{b}}\right)(x) \\
=& (\sqrt{b(y+z)} - \sqrt{b(y)})g(y-x) \Delta\left(\frac{1}{\sqrt{b}}\right)(x) \\
&+ \sqrt{b(y+z)}(g(y+z-x) - g(y-x))\Delta\left(\frac{1}{\sqrt{b}}\right)(x).
\end{align*}
For the first term,
\begin{align*}
\left|(\sqrt{b(y+z)} - \sqrt{b(y)})g(y-x) \Delta\left(\frac{1}{\sqrt{b}}\right)(x)\right|
&\leq C_b|z|\left|g(y-x)\Delta\left(\frac{1}{\sqrt{b}}\right)\right|
\end{align*}
and we can bound its $L^p$ norms by $C_b(1+|y|)|z|$ as in the proof of Claim $(1)$. For the second term,
\begin{align*}
\int_{\mathbb{R}^2}& \left|\sqrt{b(y+z)}(g(y+z-x) - g(y-x))\Delta\left(\frac{1}{\sqrt{b}}\right)(x)\right|^p \dd x \\
\leq& C_b \int_{\mathbb{R}^2} \left|(g(x+z) - g(x))\Delta\left(\frac{1}{\sqrt{b}}\right)(y-x)\right|^p \dd x \\
\leq& C_b\int_{B(0,|z|^\alpha)}|g(x+z)-g(x)|^p\dd x \\
&+ C_b\int_{B(0,|z|^\alpha)^c}|g(x+z)-g(x)|^p\left|\Delta\left(\frac{1}{\sqrt{b}}\right)(y-x)\right|^p\dd x
\end{align*}
for any $0 < \alpha < 1$. Now, if $|z|$ is small enough,
\begin{align*}
\int_{B(0,|z|^\alpha)}|g(x+z)-g(x)|^p\dd x 
&\leq C\int_{B(0,|z|^\alpha)}g(x+z)^p+g(x)^p\dd x. 
\end{align*}
Now we use a classical rearrangement procedure to bound
\begin{align*}
\int_{B(0,|z|^\alpha)}g(x+z)^p &- \int_{B(0,|z|^\alpha)}g(x)^p\dd x \\
=& \int_{B(z,|z|^\alpha)}g(x)^p - \int_{B(0,|z|^\alpha)}g(x)^p\dd x \\
=& \int_{B(0,|z|^\alpha)}g(x)^p(\mathbf{1}_{B(z,|z|^\alpha)}(x) - 1)\dd x \\
&+ \int_{B(0,|z|^\alpha)^c\cap B(z,|z|^\alpha)} g(x)^p \dd x
\end{align*}
Now remark that for $x \in B(0,|z|^\alpha)$, $\displaystyle{g(x)^p \geq -\frac{1}{2\pi}\ln^p(|z|^\alpha)}$ and therefore
\begin{align*}
\int_{B(0,|z|^\alpha)}&g(x)^p(\mathbf{1}_{B(z,|z|^\alpha)}(x) - 1)\dd x \\ 
&\leq -\frac{1}{2\pi}\ln^p(|z|^\alpha)\int_{B(0,|z|^\alpha)}(\mathbf{1}_{B(z,|z|^\alpha)}(x) - 1)\dd x \\
&\leq -\frac{1}{2\pi}\ln^p(|z|^\alpha)(|B(0,|z|^\alpha)\cap B(z,|z|^\alpha)| - |B(0,|z|^\alpha)|) 
\end{align*}
and on $B(0,|z|^\alpha)^c$, $g(x) \leq -\frac{1}{2\pi}\ln(|z|^\alpha)$ so
\begin{align*}
\int_{B(0,|z|^\alpha)^c\cap B(z,|z|^\alpha)} g(x)^p \dd x \leq -\frac{1}{2\pi}\ln^p(|z|^\alpha)|B(0,|z|^\alpha)^c\cap B(z,|z|^\alpha)|.
\end{align*}
We get
\begin{align*}
\int_{B(0,|z|^\alpha)}g(x+z)^p - \int_{B(0,|z|^\alpha)}g(x)^p\dd x \leq 0
\end{align*}
and therefore
\begin{align*}
\int_{B(0,|z|^\alpha)}|g(x+z)-g(x)|^p\dd x 
&\leq 2\int_{B(0,|z|^\alpha)}g(x)^p\dd x \\
&\leq C|z|^{2\alpha}\int_{B(0,1)}g(|z|^{\alpha}y)^p\dd y \\
&\leq C|z|^{2\alpha}\int_{B(0,1)}(\alpha g(z) + g(y))^p \dd y \\
&\leq C_b|z|^{2\alpha}g(z)^p.
\end{align*}
Now if $|z|$ is small enough,
\begin{align*}
C_b&\int_{B(0,|z|^\alpha)^c}|g(x+z)-g(x)|^p\dd x \left|\Delta\left(\frac{1}{\sqrt{b}}\right)(y-x)\right|^p \\
&\leq C_b\bigg(|z|\frac{C}{|z|^\alpha}\bigg)^p\int_{\mathbb{R}^2} \left|\Delta\left(\frac{1}{\sqrt{b}}\right)(y-x)\right|^p \dd x  \\
&\leq C_b|z|^{p(1-\alpha)}
\end{align*}
by Assumption~\ref{assumption_b}. Finally, using Lemma~\ref{lemma_link_solutions_duerinckx} as for the first claim, we get that for any $0 < \alpha < 1$ and some $p > 2$,
\begin{align*}
|\nabla_x S_b(x,y+z) - \nabla_x S_b(x,y)| \leq C_b(1+|y|)|z| + C_b(|z|^{\frac{2\alpha}{p}}g(z) + |z|^{1-\alpha}).
\end{align*}
Dividing both sides by $|z|^s$ for $s$ small enough proves the first inequality of Claim $(2)$.
\end{proof}

With this lemma we are now able to construct the lake kernel. The construction is similar to the one established in \cite[Proposition~3.1]{DekeyserVanSchaftingen} for bounded domains.

\begin{proposition}\label{proposition_bs_law}
There exists a symmetric solution $S_b$ of Equation \eqref{definition_S_b} such that $S_b(0,0) = 0$. We define $g_b$ as
\begin{equation}\label{definition_g_b}
g_b(x,y) := \sqrt{b(x)b(y)}g(x-y) + S_b(x,y).
\end{equation}
Let $\omega \in L^\infty$ with compact support. We define
\begin{equation}\label{existence_noyau_green}
G_b[\omega](x) = \int_{\mathbb{R}^2} g_b(x,y)\dd\omega(y).
\end{equation}
Then $G_b[\omega]$ is a distributional solution of \eqref{elliptic_problem_stream}.

Moreover for $2 < p <+\infty$, $G_b[\omega]$ is the unique solution (up to a constant) of \eqref{elliptic_problem_stream} in $W^{2,p}_{-1}(\mathbb{R}^2)$ given by Proposition~\ref{proposition_well_posedness_elliptic}.
\end{proposition}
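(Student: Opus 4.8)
The plan is to verify four things in sequence: that $g_b(\cdot,y)$ is a fundamental solution of the operator $L:=-\div(\tfrac1b\nabla\,\cdot\,)$; that $S_b$ from Lemma~\ref{estimees_S_b_not_necessarily_symmetric} can be normalized so as to be symmetric with $S_b(0,0)=0$; that $G_b[\omega]$ solves \eqref{elliptic_problem_stream} in $\mathcal D'$; and that $G_b[\omega]$ in fact lies in $W^{2,p}_{-1}$, so that the uniqueness part of Proposition~\ref{proposition_well_posedness_elliptic} identifies it with the solution produced there.

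\emph{Step 1 (fundamental solution).} Writing $\beta=\sqrt b$ and expanding $\tfrac1{\beta^2}\nabla_x(\beta(x)\beta(y)g(x-y))$, the two terms proportional to $\nabla\beta\cdot\nabla g$ cancel once $\div_x$ is applied; using $-\Delta g=\delta_0$ and $\div\nabla(\tfrac1\beta)=\Delta(\tfrac1\beta)$ this yields, in $\mathcal D'(\mathbb R^2)$ and for \emph{any} choice of $S_b$ from Lemma~\ref{estimees_S_b_not_necessarily_symmetric},
\[
-\div_x\Bigl(\tfrac1b\nabla_x\bigl[\sqrt{b(x)b(y)}\,g(x-y)\bigr]\Bigr)=\delta_y+\sqrt{b(y)}\,\Delta\bigl(\tfrac1{\sqrt b}\bigr)(x)\,g(x-y),
\]
and adding \eqref{definition_S_b} cancels the last term, so $-\div_x(\tfrac1b\nabla_x g_b(\cdot,y))=\delta_y$.

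\emph{Step 2 (symmetry — the main obstacle).} Since $\sqrt{b(x)b(y)}g(x-y)$ is already symmetric, it suffices to normalize $S_b$ to be symmetric. First I would record the behaviour at infinity of $x\mapsto g_b(x,y)-g_b(x,y')$: analysing the explicit part $\sqrt{b(x)b(y)}g(x-y)-\sqrt{b(x)b(y')}g(x-y')$ by hand, and the remaining part $S_b(\cdot,y)-S_b(\cdot,y')$ by the argument of Proposition~\ref{theorem_asymptotic_velocity_field_lake} (which applies since the associated field is divergence-free after multiplication by $b$ and the source $f_y-f_{y'}$, with $f_y:=-g(\cdot-y)\sqrt{b(y)}\Delta(\tfrac1{\sqrt b})$, is admissible) — the logarithmic contributions of the two parts cancelling precisely because $\delta_y-\delta_{y'}$ has total mass $0$ — one finds that $g_b(\cdot,y)-g_b(\cdot,y')$ is bounded, converges as $|x|\to\infty$ to a constant of the form $c(y)-c(y')$, and has gradient $\mathcal O(|x|^{-2})$; in addition $g_b(\cdot,y)$ itself grows at most like $|x|^{\delta}$ for some $\delta<1$ (Morrey's inequality with the first claim of Lemma~\ref{estimees_S_b_not_necessarily_symmetric}). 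I would then replace $S_b(x,y)$ by $S_b(x,y)-c(y)$ — an admissible, $x$-independent modification of \eqref{definition_S_b} — so that $c\equiv0$, and apply Green's second identity for $L$ on $B_R\setminus(\overline{B_\varepsilon(y)}\cup\overline{B_\varepsilon(y')})$ to the $L$-harmonic functions $g_b(\cdot,y)$ and $g_b(\cdot,y')$: letting $\varepsilon\to0$ the two small circles contribute $-g_b(y,y')$ and $g_b(y',y)$ (one factor being $\sim-\tfrac{\sqrt{b\,b(y)}}{2\pi}\ln\varepsilon$ while the conormal derivative of the other blows up like $(2\pi\varepsilon)^{-1}$ and concentrates as a Dirac mass), and letting $R\to\infty$ the large circle contributes $c(y')-c(y)=0$, using $\int_{\partial B_R}\tfrac1b\partial_\nu g_b(\cdot,y')\,\dd\sigma=-1$ (divergence theorem applied to Step~1) and the decay just established. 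This gives $g_b(y,y')=g_b(y',y)$; a final subtraction of the scalar $S_b(0,0)$ arranges $S_b(0,0)=0$. The heart of the matter — and what is absent from the bounded-domain construction of \cite[Proposition~3.1]{DekeyserVanSchaftingen} — is controlling the circle-at-infinity term of Green's identity, which is exactly what forces the preliminary asymptotics and the $y$-dependent normalization of $S_b$; this is the step I expect to require the most care.

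\emph{Steps 3 and 4.} For $\phi\in\mathcal C^\infty_c(\mathbb R^2)$, Fubini (legitimate since $g_b$ is locally integrable on $\mathbb R^2\times\mathbb R^2$, the only singularity being logarithmic on the diagonal, and $\omega,\phi$ are compactly supported) together with Step~1 gives $\int G_b[\omega]\,L\phi=\int\bigl(\int g_b(x,y)\,L_x\phi(x)\,\dd x\bigr)\dd\omega(y)=\int\phi\,\dd\omega$, i.e.\ $L\,G_b[\omega]=\omega$ in $\mathcal D'$. To see $G_b[\omega]\in W^{2,p}_{-1}$ I would split $G_b[\omega]=\sqrt b\,\bigl(g*(\sqrt b\,\omega)\bigr)+\int S_b(\cdot,y)\,\dd\omega(y)$: the Newtonian potential $g*(\sqrt b\,\omega)$ of an $L^\infty$ compactly supported function has bounded gradient decaying like $|x|^{-1}$, a Hessian given by a Calderón--Zygmund operator (hence in $L^p$) decaying like $|x|^{-2}$, and at most logarithmic growth, so by Leibniz' rule and the decay of $\nabla b,D^2b$ from Assumption~\ref{assumption_b} the first term lies in $W^{2,p}_{-1}$ for $2<p<\infty$; for the second, the bound $\|\jbracket{\cdot}f_y\|_{L^p}\le C_{b,p}(1+|y|)$ obtained in the proof of Lemma~\ref{estimees_S_b_not_necessarily_symmetric} combined with the a priori estimate underlying Proposition~\ref{proposition_well_posedness_elliptic} gives $\|S_b(\cdot,y)\|_{W^{2,p}_{-1}}\le C_{b,p}(1+|y|)$ for the origin-normalized representative, which integrates against the compactly supported $\omega$. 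Hence $G_b[\omega]\in W^{2,p}_{-1}$ solves \eqref{elliptic_problem_stream}, and the uniqueness statement of Proposition~\ref{proposition_well_posedness_elliptic} forces it to coincide, up to an additive constant, with the solution constructed there.
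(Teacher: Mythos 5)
Your Steps 1 and 3 are essentially the paper's argument (the paper proves the fundamental-solution property in the dual form, testing $g_b(\cdot,y)$ against $\div(\tfrac1b\nabla\phi)$, and then applies Fubini exactly as you do). For the identification in Step 4 the paper is more economical: it only shows $\nabla G_b[\omega]\in L^p$ and invokes the $L^p$-velocity uniqueness clause of Proposition~\ref{proposition_well_posedness_elliptic}, which avoids a point your version glosses over — the Lockhart--McOwen estimate controls $S_b(\cdot,y)$ only modulo constants, so your bound $\norm{S_b(\cdot,y)}_{W^{2,p}_{-1}}\le C(1+|y|)$ for a \emph{specific} representative would also require a bound on the normalizing constants $c(y)$, which you never establish. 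That is repairable; the genuine gap is in Step 2, the symmetry, which you yourself flag as the hard part. You assert that $g_b(\cdot,y)-g_b(\cdot,y')$ is bounded, tends to a constant $c(y)-c(y')$ at infinity, and has gradient $\mathcal O(|x|^{-2})$, handling ``the explicit part by hand'' and ``the remaining part by the argument of Proposition~\ref{theorem_asymptotic_velocity_field_lake}''. Neither piece separately has these properties when $b(y)\neq b(y')$: the explicit part grows like $\sqrt{b_\infty}\,(\sqrt{b(y)}-\sqrt{b(y')})\,g(x)$ with gradient decaying only like $|x|^{-1}$, and $S_b(\cdot,y)-S_b(\cdot,y')$ solves the equation with source $f_y-f_{y'}$ whose integral is \emph{not} zero (a computation using $g*\Delta(1/\sqrt b)=-(1/\sqrt b-1/\sqrt{b_\infty})$ gives $\int f_y=1-\sqrt{b(y)/b_\infty}$), so the Proposition~\ref{theorem_asymptotic_velocity_field_lake}-type expansion yields for it a matching logarithmic growth and $|x|^{-1}$ gradient tail, not decay. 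The statement you need concerns the \emph{sum}, whose source is $\delta_y-\delta_{y'}$; but Proposition~\ref{theorem_asymptotic_velocity_field_lake} is stated for $\omega\in L^\infty$ with compact support, and its proof uses $u\in L^p$ with $p>2$, which fails near a Dirac singularity, so it cannot simply be cited. To make your Green-identity argument (and even the definition of $c(y)$) legitimate you must either redo the asymptotic analysis for these singular sources or verify quantitatively that the $|x|^{-1}$ tails of the two pieces cancel, which requires precisely the flux identity above; none of this is in your proposal, and without it the circle-at-infinity term in Green's identity is uncontrolled.

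For comparison, the paper avoids all asymptotics of $g_b(\cdot,y)$ itself. It tests $g_b$ against two smooth, compactly supported, mean-zero densities $\omega_1,\omega_2$, integrates by parts \emph{once} to obtain the symmetric Dirichlet form $\int\tfrac1b\nabla\psi_1\cdot\nabla\psi_2$ — here the boundary term is killed by Proposition~\ref{theorem_asymptotic_velocity_field_lake} exactly as stated, since the data are admissible and mean-zero, giving $|\psi|\le C(1+|x|^\delta)$ and $|\nabla\psi|\le C|x|^{-2}$ — and then upgrades ``symmetric against mean-zero pairs'' to $S_b(x,y)-S_b(y,x)=c(x)-c(y)$ by a short test-function argument, finally replacing $S_b(x,y)$ by $S_b(x,y)+c(y)$. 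If you want to keep your classical Green's-function route, you must supply the missing asymptotic expansion (constant at infinity plus $\mathcal O(|x|^{-2})$ gradient for differences of kernels), including the cancellation computation; otherwise the duality route is the one that closes with the estimates already available in Section~\ref{section:2}.
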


\begin{proof}[Proof of Proposition~\ref{proposition_bs_law}]
Let us first define
\begin{equation*}
g_b(x,y) := \sqrt{b(x)b(y)}g(x-y) + S_b(x,y)
\end{equation*}
where $S_b$ is a solution of Equation \eqref{definition_S_b} given by Proposition~\ref{proposition_well_posedness_elliptic} (not necessarily symmetric). Then we have the following result:

\begin{claim}\label{g_b_is_a_kernel_in_x}
If $\phi$ is smooth with compact support, then
\begin{equation*}
-\int_{\mathbb{R}^2} g_b(x,y)\div\left(\frac{1}{b}\nabla \phi\right)(x)\dd x = \phi(y).
\end{equation*}
\end{claim}

\begin{proof}[Proof of the Claim]
We have
\begin{align*}
-\int_{\mathbb{R}^2} &g_b(x,y)\div\left(\frac{1}{b}\nabla \phi\right)(x)\dd x \\
=& -\int_{\mathbb{R}^2} \sqrt{b(x)b(y)}g(x-y)\div\left(\frac{1}{b}\nabla \phi\right)(x)\dd x \\ 
&- \int_{\mathbb{R}^2} S_b(x,y)\div\left(\frac{1}{b}\nabla \phi\right)(x)\dd x \\
=:& T_1 + T_2.
\end{align*}
We have
\begin{align*}
T_1 =& - \sqrt{b(y)}\int_{\mathbb{R}^2}\sqrt{b(x)}g(x-y)\div\left(\frac{1}{b}\nabla \phi\right)(x)\dd x \\
=& \sqrt{b(y)}\int_{\mathbb{R}^2} g(x-y)\frac{1}{2b(x)\sqrt{b(x)}}\nabla b(x)\cdot \nabla \phi(x) \dd x \\
&+ \sqrt{b(y)}\int_{\mathbb{R}^2}\frac{1}{\sqrt{b(x)}}\nabla g(x-y)\cdot \nabla \phi (x) \dd x \\
=:& L_1 + L_2.
\end{align*}
Integrating by parts in the first integral we get
\begin{align*}
L_1 
=&- \sqrt{b(y)}\int_{\mathbb{R}^2} \phi(x) \frac{1}{2b(x)\sqrt{b(x)}}\nabla g(x-y)\cdot \nabla b(x) \dd x \\
&- \sqrt{b(y)}\int_{\mathbb{R}^2} \phi(x) g(x-y) \div\left(\frac{1}{2b\sqrt{b}}\nabla b\right)(x)\dd x.
\end{align*}
For $L_2$, we use 
\begin{equation*}
\nabla\left(\frac{1}{\sqrt{b}}\phi\right) = \frac{1}{\sqrt{b}}\nabla \phi - \phi\frac{1}{2b\sqrt{b}}\nabla b
\end{equation*}
to get
\begin{align*}
L_2
=& \sqrt{b(y)} \int_{\mathbb{R}^2} \phi(x) \frac{1}{2b(x)\sqrt{b(x)}}\nabla b(x) \cdot \nabla g(x-y) \dd x \\
&+ \sqrt{b(y)}\int_{\mathbb{R}^2} \nabla\left(\frac{1}{\sqrt{b(x)}}\phi(x)\right)\cdot\nabla g(x-y)\dd x \\
=&  \sqrt{b(y)} \int_{\mathbb{R}^2} \phi(x) \frac{1}{2b(x)\sqrt{b(x)}}\nabla b(x) \cdot \nabla g(x-y) \dd x \\
&+ \phi(y)
\end{align*}
since $-\Delta_x g(x-y) = \delta_y$ distributionally.
Now let us compute $T_2$:
\begin{align*}
T_2 &= -\int_{\mathbb{R}^2} S_b(x,y) \div\left(\frac{1}{b}\nabla \phi\right)(x)\dd x \\
&= -\int_{\mathbb{R}^2} \div\left(\frac{1}{b}\nabla_x S_b(\cdot,y)\right)(x)\phi(x)\dd x \\
&= -\sqrt{b(y)}\int_{\mathbb{R}^2} g(x-y)\Delta\left(\frac{1}{\sqrt{b}}\right)(x) \phi(x) \dd x
\end{align*}
where we used that $S_b$ is a solution of \eqref{definition_S_b} in the last line. Now just remark that
\begin{equation*}
\Delta\left(\frac{1}{\sqrt{b}}\right) = -\div\left(\frac{1}{2b\sqrt{b}}\nabla b\right)
\end{equation*}
and thus adding $L_1$ and $L_2$ we get
\begin{equation*}
-\int_{\mathbb{R}^2} g_b(x,y)\div\left(\frac{1}{b}\nabla \phi\right)(x)\dd x = \phi(y)
\end{equation*}
and we get the proof of Claim~\ref{g_b_is_a_kernel_in_x}.
\end{proof}

Now let $\omega \in L^\infty(\mathbb{R}^2)$ with compact support. We have
\begin{align*}
-\int_{\mathbb{R}^2}&\left(\int_{\mathbb{R}^2} g_b(x,y)\omega(y) \dd y\right) \div\left(\frac{1}{b}\nabla\phi\right)(x) \dd x \\
&= -\int_{\mathbb{R}^2} \left(\int_{\mathbb{R}^2} g_b(x,y) \div\left(\frac{1}{b}\nabla\phi\right)(x) \dd x\right) \omega(y) \dd y\\
&= \int_{\mathbb{R}^2} \phi(y) \omega(y) \dd y
\end{align*}
where we used Claim~\ref{g_b_is_a_kernel_in_x} in the last equality. Therefore $G_b[\omega]$ is a distributional solution of \eqref{elliptic_problem_stream}.

Now we prove that with this kernel we recover solutions in the sense of Proposition~\ref{proposition_well_posedness_elliptic}:
\begin{claim}\label{link_solutions_elliptic_kernel}
Let $\omega \in L^\infty$ with compact support, then for all $p \in (2,+\infty)$, we have that $\nabla G_b[\omega] \in L^p$. Moreover if $\psi$ is the solution of \eqref{elliptic_problem_stream} given by Proposition \eqref{proposition_well_posedness_elliptic}, then $\psi = G_b[\omega]$ up to a constant.
\end{claim}

\begin{proof}[Proof of the claim]
We have:
\begin{align*}
\nabla G_b[\omega](x) 
=& \int_{\mathbb{R}^2} \frac{\nabla b(x)}{2\sqrt{b(x)}}\sqrt{b(y)}g(x-y)\omega(y)\dd y\\
&+ \int_{\mathbb{R}^2} \sqrt{b(x)b(y)}\nabla g(x-y)\omega(y) \dd y\\
&+ \int_{\mathbb{R}^2} \nabla_x S_b(x,y) \omega(y)\dd y\\
=:& T_1 + T_2 + T_3.
\end{align*}
Now,
\begin{align*}
|T_1| \leq& C_b|\nabla b(x)|\bigg(\int_{B(x,1)} |(\ln|x-y|)\omega(y)|\dd y \\
&+ \int_{\supp(\omega)\backslash B(x,1)} (|x|+|y|)|\omega(y)|\dd y \bigg) \\
\leq& C_{b}\norm{\omega}_{L^1((1+|x|)\dd x)\cap L^\infty}(1+|x|)^{-(3+\gamma)}
\end{align*}
by Assumption~\ref{assumption_b}. Hence $T_1 \in L^p$. For the second term, we have
\begin{align*}
T_2 = \sqrt{b(x)}\nabla g\ast(\sqrt{b}\omega)
\end{align*}
and therefore $T_2 \in L^p$ by Hardy-Littlewood-Sobolev inequality (see for example \cite[Theorem~1.7]{BahouriCheminDanchin}). For the third term,
\begin{align*}
|T_3| \leq \left(\int_{\mathbb{R}^2} |\omega|\right)\int_{\mathbb{R}^2} |\nabla_x S_b(x,y)|\frac{|\omega(y)|\dd y}{\int |\omega|}
\end{align*}
and thus by Jensen inequality
\begin{align*}
\norm{T_3}^p_{L^p} \leq \left(\int_{\mathbb{R}^2} |\omega|\right)^{p-1} \iint_{\mathbb{R}^2\times\mathbb{R}^2} |\nabla_x S_b(x,y)|^p |\omega(y)| \dd y\dd x.
\end{align*}
We have that
\begin{equation*}
\norm{\nabla S_b(\cdot,y)}_{L^p} \leq C_b(1+|y|)
\end{equation*}
by Claim (1) of Lemma~\ref{estimees_S_b_not_necessarily_symmetric}. Therefore
\begin{equation*}
\norm{T_3}^p_{L^p} \leq C_b \left(\int_{\mathbb{R}^2} |\omega|\right)^{p-1} \int_{\mathbb{R}^2}(1+|y|)^p|\omega(y)|\dd y
\end{equation*}
and it follows that $\nabla G_b[\omega] \in L^p$. By Proposition~\ref{proposition_well_posedness_elliptic} we  get that $G_b[\omega] = \psi$ up to a constant.
\end{proof}

We are only left to justify that there exists a symmetric solution of \eqref{definition_S_b}. Consider $\omega_1,\omega_2$ two smooth functions with average zero, then by Claim~\ref{link_solutions_elliptic_kernel}, we have
\begin{align*}
\iint_{\mathbb{R}^2\times\mathbb{R}^2} g_b(x,y) \omega_1(x) \omega_2(y) \dd x \dd y 
&= \int_{\mathbb{R}^2} (\psi_2(x) +C)\omega_1(x) \dd x \\
&= - \int_{\mathbb{R}^2} \psi_2(x)\div\left(\frac{1}{b}\nabla \psi_1\right)(x)\dd x
\end{align*}
where $\psi_i$ is the solution of 
\begin{align*}
-\div\left(\frac{1}{b}\nabla \psi_i\right) = \omega_i
\end{align*} 
given by Proposition~\ref{proposition_well_posedness_elliptic}. If $R > 0$, we have that
\begin{align*}
- \int_{B(0,R)} \psi_2(x)\div\left(\frac{1}{b}\nabla \psi_1\right)(x)\dd x
=& - \int_{\partial B(0,R)}\frac{1}{b}\psi_2 \nabla \psi_1\cdot \dd \vec{S} \\
&+ \int_{B(0,R)} \frac{1}{b}\nabla \psi_2 \cdot \nabla \psi_1.
\end{align*}
Using Proposition~\ref{theorem_asymptotic_velocity_field_lake}, we obtain
\begin{equation*}
\left|\int_{\partial B(0,R)}\frac{1}{b}\psi_2 \nabla \psi_1\cdot \dd \vec{S}\right| \leq 2\pi R\norm{b^{-1}}_{L^\infty} C(1+R^\delta)\frac{C}{R^2} \Tend{R}{+\infty} 0
\end{equation*}
and therefore 
\begin{equation*}
\iint_{\mathbb{R}^2\times\mathbb{R}^2} g_b(x,y) \omega_1(x) \omega_2(y) \dd x \dd y = \int_{\mathbb{R}^2} \frac{1}{b}\nabla \psi_2 \cdot \nabla \psi_1
\end{equation*}
which is a symmetric expression of $\psi_1$ and $\psi_2$. It follows that
\begin{align*}
\iint_{\mathbb{R}^2\times\mathbb{R}^2} g_b(x,y) \omega_1(x) \omega_2(y) \dd x \dd y = \iint_{\mathbb{R}^2\times\mathbb{R}^2} g_b(y,x) \omega_1(x) \omega_2(y) \dd x \dd y.
\end{align*}
Since $\sqrt{b(x)b(y)}g(x-y)$ is symmetric we get that
\begin{equation*}
\iint_{\mathbb{R}^2\times\mathbb{R}^2} S_b(x,y) \omega_1(x) \omega_2(y) \dd x \dd y = \iint_{\mathbb{R}^2\times\mathbb{R}^2} S_b(y,x) \omega_1(x) \omega_2(y) \dd x \dd y
\end{equation*}
for any $\omega_1,\omega_2$ smooth with compact suport and average zero. Let us define
\begin{align*}
A(x,y) := S_b(x,y) - S_b(y,x).
\end{align*}
Now we fix $\chi, \omega_1, \omega_2$ smooth functions with compact support such that $\displaystyle{\int_{\mathbb{R}^2} \omega_2 = 0}$ and $\displaystyle{\int_{\mathbb{R}^2}\chi = 1}$. Remark that we no longer assume that $\displaystyle{\int_{\mathbb{R}^2} \omega_1 = 0}$. We define
\begin{equation*}
A_2(x) := \int_{\mathbb{R}^2} A(x,y) \omega_2(y) \dd y.
\end{equation*}
We have
\begin{align*}
\int_{\mathbb{R}^2} A_2 \omega_1 
&= \int_{\mathbb{R}^2} A_2\left(\omega_1 - \left(\int_{\mathbb{R}^2} \omega_1\right)\chi\right) + \left(\int_{\mathbb{R}^2} \omega_1\right)\int_{\mathbb{R}^2} A_2 \chi \\
&= 0 + \left(\int_{\mathbb{R}^2} \omega_1\right) \int_{\mathbb{R}^2} A_2 \chi.
\end{align*}
Thus $A_2$ is constant so for every $x \in \mathbb{R}^2$,  
\begin{align*}
\int_{\mathbb{R}^2} \nabla_x A(x,y) \omega_2(y)\dd y = 0
\end{align*}
for all $\omega_2$ with mean zero and therefore $\nabla_x A(x,y) = U(x)$. It follows that $A(x,y) = c(x) + d(y)$. Since $A(x,y) = - A(y,x)$, we have $d = -c$. Now let us set $\widetilde{S_b}(x,y) := S_b(x,y) + c(y)$. We have:
\begin{align*}
\widetilde{S_b}(x,y) - \widetilde{S_b}(y,x) &= S_b(x,y) - S_b(y,x) + c(y) - c(x) \\
&= c(x) - c(y) + c(y) - c(x) \\
&= 0
\end{align*}
which proves that $\widetilde{S_b}$ a symmetric solution of \eqref{definition_S_b}. Up to adding a constant we can also assume that $\widetilde{S_b}(0,0) = 0$. 
\end{proof}

The symmetry of $S_b$ allows us to obtain more regularity estimates:
\begin{lemma}\label{estimees_S_b_symmetric}
Let $S_b$ be the symmetric solution of Equation \eqref{definition_S_b} given by Proposition~\ref{proposition_bs_law}, then 
\begin{enumerate}

\item $S_b$ is smooth on  $\mathbb{R}^2\times\mathbb{R}^2\backslash \{(x,x) \; ; \; x \in\mathbb{R}^2\}$.

\item $|S_b(x,y)| \leq C_b(1+|x|^2+|y|^2)$.

\end{enumerate}
\end{lemma}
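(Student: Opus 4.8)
Since part~(2) is used in the proof of part~(1), I would establish it first; it is a quick consequence of Lemma~\ref{estimees_S_b_not_necessarily_symmetric}. For fixed $y$, $S_b(\cdot,y) \in W^{2,p}_{-1}(\mathbb{R}^2)$ with $p>2$, hence is of class $C^1$, and Claim~(1) of Lemma~\ref{estimees_S_b_not_necessarily_symmetric} (with $p=\infty$) gives $\norm{\nabla_x S_b(\cdot,y)}_{L^\infty} \le C_b(1+|y|)$. Then
\begin{equation*}
|S_b(x,y) - S_b(0,y)| = \left|\int_0^1 \nabla_x S_b(tx,y)\cdot x \dd t\right| \le C_b |x|(1+|y|),
\end{equation*}
while the symmetry of $S_b$ together with the normalisation $S_b(0,0)=0$ from Proposition~\ref{proposition_bs_law} gives $|S_b(0,y)| = |S_b(y,0) - S_b(0,0)| \le |y|\,\norm{\nabla_x S_b(\cdot,0)}_{L^\infty} \le C_b|y|$. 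Adding the two bounds and using $|x|(1+|y|) + |y| \le 1+|x|^2+|y|^2$ yields $|S_b(x,y)| \le C_b(1+|x|^2+|y|^2)$.

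For part~(1) I would apply interior elliptic regularity in both variables at once. Fix $(x_0,y_0)$ with $x_0\neq y_0$ and a ball $\Omega \subset (\mathbb{R}^2\times\mathbb{R}^2)\backslash\Delta$ centred at it, $\Delta$ being the diagonal. For each $y$, $S_b(\cdot,y)$ solves \eqref{definition_S_b}, whose right-hand side $-g(\cdot-y)\sqrt{b(y)}\Delta\big(1/\sqrt b\big)$ is $C^\infty$ on $\mathbb{R}^2\backslash\{y\}$; as $b$ is smooth with $\inf b>0$, the operator $-\div\big(\tfrac1b\nabla\cdot\big)$ has smooth coefficients and is uniformly elliptic, so $S_b(\cdot,y) \in C^\infty(\mathbb{R}^2\backslash\{y\})$ by classical interior elliptic regularity. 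By the symmetry of $S_b$, for each $x$ the function $S_b(x,\cdot)$ similarly belongs to $C^\infty(\mathbb{R}^2\backslash\{x\})$ and solves $-\div_y\big(\tfrac1b\nabla_y S_b(x,\cdot)\big) = -g(\cdot-x)\sqrt{b(x)}\Delta\big(1/\sqrt b\big)$. Testing these two equations against $\phi(\cdot,y)$, resp. $\phi(x,\cdot)$, for $\phi\in C^\infty_c(\Omega)$, integrating in the remaining variable and applying Fubini's theorem --- legitimate since $\nabla_x S_b$ and $\nabla_y S_b$ lie in $L^\infty_{\loc}(\mathbb{R}^2\times\mathbb{R}^2)$ (by Claim~(1) of Lemma~\ref{estimees_S_b_not_necessarily_symmetric} and symmetry) and $S_b \in L^\infty_{\loc}$ by part~(2) --- one sees that $S_b$ is a weak solution on $\Omega$ of
\begin{multline*}
-\div_x\left(\frac{1}{b(x)}\nabla_x S_b\right) - \div_y\left(\frac{1}{b(y)}\nabla_y S_b\right) \\
= -g(x-y)\left(\sqrt{b(y)}\,\Delta\left(\frac{1}{\sqrt b}\right)(x) + \sqrt{b(x)}\,\Delta\left(\frac{1}{\sqrt b}\right)(y)\right).
\end{multline*}
The operator on the left is uniformly elliptic on $\mathbb{R}^4$ with smooth coefficients --- its matrix is $\operatorname{diag}\big(b(x)^{-1}I_2,\,b(y)^{-1}I_2\big)$, bounded above and below by Assumption~\ref{assumption_b} --- and the right-hand side is $C^\infty$ on $\Omega$; interior elliptic regularity then gives $S_b \in C^\infty(\Omega)$. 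Since $(x_0,y_0)$ was arbitrary, $S_b \in C^\infty\big((\mathbb{R}^2\times\mathbb{R}^2)\backslash\Delta\big)$.

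The one delicate point is the passage to the joint weak formulation from the two one-variable equations; this is precisely why one works on a ball $\Omega$ strictly away from the diagonal, where both right-hand sides and all test functions are smooth with compact support, so the logarithmic singularity of $g$ on $\Delta$ plays no role and Fubini applies without any integrability subtlety. Alternatively one can avoid the $\mathbb{R}^4$ picture by a difference-quotient bootstrap in $y$: the quotients $t^{-1}\big(S_b(\cdot,y+te_k)-S_b(\cdot,y)\big)$ solve equations of the form \eqref{definition_S_b} whose right-hand sides converge, together with all $x$-derivatives, on compact subsets of $\mathbb{R}^2\backslash\{y\}$, so interior Schauder estimates permit passing to the limit and, by induction, producing every mixed derivative $\partial_y^\beta\partial_x^\alpha S_b$; the argument above is however shorter.
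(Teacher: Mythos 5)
Your proof of part~(2) is essentially the paper's: both use the normalisation $S_b(0,0)=0$, the symmetry, and the bound $\norm{\nabla_x S_b(\cdot,y)}_{L^\infty}\le C_b(1+|y|)$ from Claim~(1) of Lemma~\ref{estimees_S_b_not_necessarily_symmetric}, differing only in which two increments you telescope. For part~(1), however, you take a genuinely different route. The paper stays in two variables: it applies Schauder interior regularity (\cite[Theorem~6.13]{GilbargTrudinger}) to $S_b(\cdot,y)$ on annuli $B(y,R)\setminus B(y,r)$, invokes the symmetry to transfer regularity to the second variable, and then iterates by writing the elliptic equations satisfied by the derivatives of $S_b$ to upgrade to smoothness off the diagonal. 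You instead add the $x$-equation and (via symmetry) the $y$-equation to exhibit $S_b$ as a weak $H^1_{\loc}$ solution of a single uniformly elliptic divergence-form equation in four variables, with matrix $\operatorname{diag}(b(x)^{-1}I_2,b(y)^{-1}I_2)$ and smooth right-hand side away from $\Delta$, and conclude by interior elliptic regularity in $\mathbb{R}^4$. Your version has the advantage of delivering \emph{joint} smoothness in one pass, whereas the paper's "by symmetry" step really only gives regularity in each variable separately and leans on the subsequent iteration to produce mixed derivatives; the price you pay is the verification that the per-variable distributional gradients are the joint ones and that Fubini applies, which you justify correctly using the local boundedness (and, implicitly, the joint continuity furnished by Claim~(2) of Lemma~\ref{estimees_S_b_not_necessarily_symmetric}) of $\nabla_x S_b$ together with part~(2). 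Both arguments are sound; yours is arguably the more careful treatment of the two-variable regularity, while the paper's is shorter and stays within standard one-variable Schauder theory.
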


\begin{proof}
For $0 < r < R$, we define $C(y,r,R) := B(y,R)\backslash B(y,r)$. We have that $S_b(\cdot,y)$ is a solution of
\begin{equation*}
\left\{
\begin{aligned}
& \div\left(\frac{1}{b}\nabla S_b(\cdot,y)\right) = g(\cdot-y)\sqrt{b(y)}\Delta \left(\frac{1}{\sqrt{b}}\right) \qquad \text{in} \; C(y,r,R) \\
&  S_b(\cdot,y) =  S_b(\cdot,y) \in \mathcal{C}^{0,s} \qquad \text{in} \; \partial C(y,r,R).
\end{aligned}\right.
\end{equation*}
Thus by elliptic regularity (see for example \cite[Theorem~6.13]{GilbargTrudinger}) we obtain that $S_b(\cdot,y) \in \mathcal{C}^{2,s}(\mathring{C}(y,r,R))$ for all $y \in \mathbb{R}^2$ and $0 < r < R$. By symmetry we get that $S_b$ is $\mathcal{C}^{2,s}$ on 
\begin{equation*}
\mathbb{R}^2\times\mathbb{R}^2\backslash \{(x,x) \; ; \; x \in\mathbb{R}^2\}.
\end{equation*} 
We can iterate the argument by writing the elliptic system satisfied by the derivatives of $S_b$ to show that $S_b$ is smooth on
\begin{equation*}
\mathbb{R}^2\times\mathbb{R}^2\backslash \{(x,x) \; ; \; x \in\mathbb{R}^2\}.
\end{equation*} 

The second claim is just a consequence of Lemma~\ref{estimees_S_b_not_necessarily_symmetric}, since
\begin{align*}
|S_b(x,y)| &\leq |S_b(0,0) - S_b(x,0)| + |S_b(x,0) - S_b(x,y)| \\
&\leq |S_b(0,0) - S_b(x,0)| + |S_b(0,x) - S_b(y,x)| \\
&\leq \norm{\nabla_x S_b(\cdot,0)}_{L^\infty}|x| + \norm{\nabla_x S_b(\cdot,x)}_{L^\infty}|y| \\
&\leq C_b|x| + C_b(1+|x|)|y| \\
&\leq C_b(1+|x|^2 + |y|^2).
\end{align*}
\end{proof}

We finish this subsection by giving a straightforward consequence of Proposition~\ref{theorem_asymptotic_velocity_field_lake} and \cite[Lemma~2.7]{DuerinckxFischer} which will be useful to deal with the regularisation of the dirac mass we will introduce in Subsection~\ref{subsection:24} and use in Sections~\ref{section:5} and \ref{section:6}. 
\begin{lemma}\label{nabla_G_b_bounded_H_minus_one}
$\mu \mapsto \nabla G_b[\mu]$ extends into a bounded operator from $\dot H^ {-1}$ to $L^2$.
\end{lemma}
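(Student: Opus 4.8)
The plan is to establish the a priori bound $\norm{\nabla G_b[\mu]}_{L^2}\le (\sup b)\,\norm{\mu}_{\dot H^{-1}}$ first for $\mu$ smooth with compact support and zero mean, and then to extend $\mu\mapsto\nabla G_b[\mu]$ to all of $\dot H^{-1}(\mathbb{R}^2)$ by linearity and density, using that such $\mu$ are dense in $\dot H^{-1}(\mathbb{R}^2)$ (a smooth compactly supported function lies in $\dot H^{-1}(\mathbb{R}^2)$ only when its integral vanishes). So everything reduces to the a priori estimate.

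For such a $\mu$ I would set $\psi:=G_b[\mu]$, which by Proposition~\ref{proposition_bs_law} coincides, up to a constant, with the solution of $-\div\!\left(\frac{1}{b}\nabla\psi\right)=\mu$ furnished by Proposition~\ref{proposition_well_posedness_elliptic}; in particular $\nabla\psi\in L^p$ for all $p>2$. The heart of the argument is the global energy identity $\int_{\mathbb{R}^2}\frac{1}{b}|\nabla\psi|^2=\int_{\mathbb{R}^2}\mu\psi$, obtained by multiplying the equation by $\psi$ and integrating by parts over $B(0,R)$ before letting $R\to\infty$. This is where Proposition~\ref{theorem_asymptotic_velocity_field_lake} enters: since $\int_{\mathbb{R}^2}\mu=0$ it gives $|\nabla\psi(x)|\le C|x|^{-2}$ and $|\psi(x)|\le C(1+|x|^\delta)$ for some $\delta\in(0,1)$ (recall $u=-\frac{1}{b}\nabla^\bot\psi$ and $\inf b>0$), so the boundary term $\int_{\partial B(0,R)}\frac{1}{b}\psi\,\partial_n\psi$ is $\mathcal{O}(R^{\delta-1})\to 0$, while $\int_{B(0,R)}\mu\psi=\int_{\mathbb{R}^2}\mu\psi$ once $B(0,R)\supset\supp\mu$; moreover the decay $|\nabla\psi|=\mathcal{O}(|x|^{-2})$ combined with $\nabla\psi\in L^p_{\loc}$, $p>2$, shows $\nabla\psi\in L^2$, i.e. $\psi\in\dot H^1$, so the pairing below is legitimate.

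Once the identity is in hand, using $\sup b<+\infty$ and the duality between $\dot H^{-1}$ and $\dot H^1$ one concludes
\[
\frac{1}{\sup b}\,\norm{\nabla\psi}_{L^2}^2\ \le\ \int_{\mathbb{R}^2}\frac{1}{b}|\nabla\psi|^2\ =\ \langle\mu,\psi\rangle\ \le\ \norm{\mu}_{\dot H^{-1}}\,\norm{\nabla\psi}_{L^2},
\]
hence $\norm{\nabla G_b[\mu]}_{L^2}\le(\sup b)\,\norm{\mu}_{\dot H^{-1}}$; alternatively one may quote \cite[Lemma~2.7]{DuerinckxFischer}, which records the comparability of $\int_{\mathbb{R}^2}\mu\,G_b[\mu]$ with $\norm{\mu}_{\dot H^{-1}}^2$. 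Being linear and bounded by $\sup b$ on a dense subspace, $\mu\mapsto\nabla G_b[\mu]$ then extends uniquely to a bounded operator $\dot H^{-1}(\mathbb{R}^2)\to L^2(\mathbb{R}^2)$. The only point needing genuine care is the vanishing of the boundary term at infinity (equivalently, the global validity of the energy identity together with $\psi\in\dot H^1$), and this is exactly what the asymptotic estimates of Proposition~\ref{theorem_asymptotic_velocity_field_lake} provide; the rest is a routine density argument.
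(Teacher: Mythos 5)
Your proof is correct and follows essentially the same route as the paper: reduce by density to smooth, compactly supported, mean-zero $\mu$, use the asymptotics of Proposition~\ref{theorem_asymptotic_velocity_field_lake} to get $\nabla G_b[\mu]\in L^2$ and to kill the boundary term, and then bound $\norm{\nabla G_b[\mu]}_{L^2}$ by $C_b\norm{\mu}_{\dot H^{-1}}$. The only difference is cosmetic: where the paper simply cites \cite[Lemma~2.7]{DuerinckxFischer} for the energy comparability, you prove it by hand via the identity $\int_{\mathbb{R}^2}\frac{1}{b}|\nabla\psi|^2=\int_{\mathbb{R}^2}\mu\psi$ and $\dot H^{-1}$--$\dot H^1$ duality (the same computation the paper carries out in Claim~\ref{coerc_T1}), and you even note the citation as an alternative.
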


\begin{proof}
Let $\mu$ be a smooth function with compact support and average zero. By Proposition~\ref{theorem_asymptotic_velocity_field_lake}, $\nabla G_b[\mu] \in L^2$ and therefore it follows by \cite[Lemma~2.7]{DuerinckxFischer} that
\begin{align*}
\norm{\nabla G_b[\mu]}_{L^2} \leq C_b \norm{\mu}_{\dot H^{-1}}
\end{align*}
and the lemma follows from the density of smooth functions with compact support and average zero in $\dot H^{-1}$.
\end{proof}

\subsection{Regularisations of the Coulomb kernel and the dirac mass}\label{subsection:24}

To study our modulated energy we will need to have suitable regularisations of $g$ and of the dirac mass $\delta_y$. For that purpose, let us first define $g^{(\eta)}$ for any $0 < \eta < 1$ as
\begin{equation}\label{definition_g_eta}
g^{(\eta)}(x) :=
\left\{
\begin{aligned}
&-\frac{1}{2\pi}\ln(\eta) \qquad &\text{if} \; |x| \leq \eta \\
&g(x) \qquad &\text{if} \; |x| \geq \eta
\end{aligned}\right.
\end{equation}
and we define $\delta_{y}^{(\eta)}$ as the uniform probability measure on the circle $\partial B(y,\eta)$. We also define 
\begin{equation}\label{definition_delta_tilde_q}
\widetilde{\delta}_y^{(\eta)} := m_b(y,\eta)\frac{\dd \delta_{y}^{(\eta)}}{\sqrt{b}}
\end{equation}
where
\begin{equation}\label{definition_m_b}
m_b(y,\eta) := \left(\int\frac{\dd \delta_y^{(\eta)}}{\sqrt{b}}\right)^{-1}.
\end{equation}
In the following proposition we state several properties related to these regularisations.

\begin{proposition}\label{proposition_regularisation}

For any $0 < \eta < 1$ and $y \in \mathbb{R}^2$, we have
\begin{equation}\label{egalite_convolution_g_eta}
\int g(x-z) \dd \delta_y^{(\eta)}(z) = g^{(\eta)}(x-y)
\end{equation}
and
\begin{equation}\label{estimee_m_b}
|m_b(y,\eta)-\sqrt{b(y)}| \leq C_b\eta.
\end{equation}
\end{proposition}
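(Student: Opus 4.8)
The proposition makes two independent claims, and I would prove them separately. For the first identity \eqref{egalite_convolution_g_eta}, the key observation is that $g(\cdot-z)$ is harmonic away from $z$ and that $\delta_y^{(\eta)}$ is the uniform measure on the circle $\partial B(y,\eta)$. So $z \mapsto \int g(x-z)\dd\delta_y^{(\eta)}(z)$ is, by the mean value property of harmonic functions, nothing but the average of the fundamental solution over a sphere. Concretely, I would fix $x$ and distinguish two cases. If $|x-y| \geq \eta$, then the function $z \mapsto g(x-z)$ is harmonic on the closed ball $\overline{B(y,\eta)}$, so its average over the bounding circle equals its value at the center, namely $g(x-y)$; this matches the definition of $g^{(\eta)}$ on $\{|x-y|\ge\eta\}$. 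If $|x-y| \leq \eta$, I would instead use the explicit form $g(w) = -\frac{1}{2\pi}\ln|w|$ and compute the circle average directly: parametrising $z = y + \eta e^{i\theta}$, the average is $-\frac{1}{2\pi}\cdot\frac{1}{2\pi}\int_0^{2\pi}\ln|x-y-\eta e^{i\theta}|\dd\theta$, and the classical Jensen-type formula for the logarithmic potential of a circle gives that this equals $-\frac{1}{2\pi}\ln(\max(|x-y|,\eta)) = -\frac{1}{2\pi}\ln\eta$ when $|x-y|\le\eta$. This again matches the definition of $g^{(\eta)}$.

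For the second estimate \eqref{estimee_m_b}, I would start from the definition $m_b(y,\eta)^{-1} = \int \frac{\dd\delta_y^{(\eta)}}{\sqrt b} = \fint_{\partial B(y,\eta)} \frac{1}{\sqrt{b(z)}}\dd\sigma(z)$. Since $b$ is smooth, bounded above and bounded below away from zero (Assumption~\ref{assumption_b}), the function $1/\sqrt b$ is Lipschitz with a constant $C_b$ depending only on $b$ (controlled by $\inf b$ and $\sup|\nabla b|$). Hence for every $z \in \partial B(y,\eta)$ we have $\left|\frac{1}{\sqrt{b(z)}} - \frac{1}{\sqrt{b(y)}}\right| \le C_b|z-y| = C_b\eta$, and averaging over the circle yields $\left|m_b(y,\eta)^{-1} - b(y)^{-1/2}\right| \le C_b\eta$. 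Finally I would convert this into the claimed bound on $m_b(y,\eta) - \sqrt{b(y)}$ itself: writing
\begin{equation*}
m_b(y,\eta) - \sqrt{b(y)} = \sqrt{b(y)}\, m_b(y,\eta)\left(\frac{1}{\sqrt{b(y)}} - \frac{1}{m_b(y,\eta)}\right),
\end{equation*}
and using that both $\sqrt{b(y)}$ and $m_b(y,\eta)$ are bounded above by $\sqrt{\sup b}$ and below by $\sqrt{\inf b}$ (the latter because $1/\sqrt b \le (\inf b)^{-1/2}$ everywhere, so its circle average is too), the prefactor $\sqrt{b(y)}\,m_b(y,\eta)$ is bounded by a constant depending only on $b$, and the bound \eqref{estimee_m_b} follows.

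**Main obstacle.** Neither step is genuinely hard; the only point requiring a little care is the circle-average computation of the logarithmic potential in the case $|x-y| \le \eta$ of \eqref{egalite_convolution_g_eta}. One must either quote the standard formula $\fint_{\partial B(0,r)} \ln|x-z|\dd\sigma(z) = \ln(\max(|x|,r))$ or re-derive it, e.g.\ by noting that $h(x) := \fint_{\partial B(0,r)} g(x-z)\dd\sigma(z)$ is radial, harmonic on $B(0,r)$ hence constant there, equal to $g(r) = -\frac{1}{2\pi}\ln r$ by evaluating at the origin, while for $|x| > r$ it coincides with $g(x)$ by the exterior mean value property — and then matching at $|x| = r$. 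This is routine but is the one place where an explicit potential-theoretic fact is used rather than just Lipschitz estimates.
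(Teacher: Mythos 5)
Your proposal is correct. The estimate \eqref{estimee_m_b} is proved exactly as in the paper: the Lipschitz bound on $1/\sqrt{b}$ coming from Assumption~\ref{assumption_b} gives $|m_b(y,\eta)^{-1}-b(y)^{-1/2}|\leq C_b\eta$, and the factorisation $m_b(y,\eta)-\sqrt{b(y)}=m_b(y,\eta)\sqrt{b(y)}\bigl(b(y)^{-1/2}-m_b(y,\eta)^{-1}\bigr)$ together with the uniform two-sided bounds $\sqrt{\inf b}\leq m_b(y,\eta)\leq \sqrt{\sup b}$ yields the claim, which is the same computation as in the paper. For \eqref{egalite_convolution_g_eta} you take a different, though equally standard, route: you argue case by case, using for $|x-y|>\eta$ the mean value property of the harmonic function $z\mapsto g(x-z)$ on $B(y,\eta)$, and for $|x-y|\leq\eta$ the classical circle-average formula for the logarithmic potential (which you re-derive from radial plus harmonic in the ball implies constant), giving the constant value $-\frac{1}{2\pi}\ln\eta$. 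The paper instead works globally: it observes that $f:=\int g(\cdot-z)\dd\delta_y^{(\eta)}(z)$ and $g^{(\eta)}(\cdot-y)$ have the same distributional Laplacian $-\delta_y^{(\eta)}$, shows their difference is bounded and tends to zero at infinity, and concludes by a Liouville argument that the difference is a constant, which vanishes. Your version is more explicit and avoids identifying $\Delta g^{(\eta)}$ or discussing decay at infinity; the paper's version avoids quoting the Jensen-type average formula. One small point of care: when $|x-y|=\eta$ the kernel $z\mapsto g(x-z)$ is not harmonic on the closed ball (its singularity lies on the circle), so this borderline case should be settled by continuity of both sides in $x$ or directly from the formula with $\max(|x-y|,\eta)$, as your matching at $|x|=r$ in effect does; this is a cosmetic fix, not a gap.
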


\begin{proof}
By a change of variable we may assume that $y = 0$. The function
\begin{equation*}
f(x) := \int_{\partial B(0,\eta)} g(x-z) \dd \delta_0^{(\eta)}(z) 
\end{equation*}
is locally bounded and satisfies $\Delta f = -\delta_0^{(\eta)} = \Delta g^{(\eta)}$. Now if $|x| \geq \eta$, we have 

\begin{align*}
\int_{\partial B(0,\eta)} g(x-z)\dd \delta_0^{(\eta)}(z)  - g^{(\eta)}(x) &= \int_{\partial B(0,\eta)}(g(x-z)-g(x))\dd \delta_0^{(\eta)}(z) \\
&= \int_{\partial B(0,\eta)}g\bigg(\frac{x}{|x|}-\frac{z}{|x|}\bigg)\dd \delta_0^{(\eta)}(z) \\
&\Tend{|x|}{\infty} \int_{\partial B(0,\eta)} -\frac
{1}{2\pi}\ln(1) = 0
\end{align*}
by dominated convergence theorem. Thus $f - g^{(\eta)}$ is a harmonic bounded function so it is constant. Since $f(z) = g(\eta) = g^{(\eta)}(z)$ for any $z$ of norm $\eta$, we get that $f = g^{(\eta)}$.

Let us now prove \eqref{estimee_m_b}:
\begin{equation*}
m_b(y,\eta)-\sqrt{b(y)} = m_b(y,\eta)\sqrt{b(y)}\left(\frac{1}{\sqrt{b(y)}}-\int\frac{ \dd \delta_y^{(\eta)}(z)}{\sqrt{b(z)}}\right)
\end{equation*}
and thus
\begin{equation*}
|m_b(y,\eta)-\sqrt{b(y)}|\leq C_b \eta
\end{equation*}
by Assumption~\ref{assumption_b}.
\end{proof}

\section{Point vortices}\label{section:3}

To prove Theorem~\ref{MFL_theorem} we will need to control the evolution of the interaction energy and of the moment of inertia. We recall that the moment of inertia is not conserved for the lake equations, nor for the point vortex system. Due to the self-interactions, the interaction energy $E_N$ is also not conserved.

The following proposition gives bounds on the interaction energy and on the moment of inertia and the global well-posedness of the lake point-vortex system \eqref{equation_pv}. 

\begin{proposition}\label{proposition_pv_well_posed}
Let $T > 0$ and $(q_1^0,...,q_N^0)$ be such that $q_i^0 \neq q_j^0$ if $i \neq j$. There exists a unique smooth solution of \eqref{equation_pv} on $[0,T]$. Moreover, we have the following estimates:
\begin{equation}\label{bound_E_N}
|E_N(t)| \leq e^{C_b(1+\alpha_N)t}(|E_N(0)|+I_N(0)+1)
\end{equation}
\begin{equation}\label{bound_I_N}
I_N(t) \leq e^{C_b(1+\alpha_N)t}(|E_N(0)|+I_N(0)+1).
\end{equation}
We also have similar estimates for the rescaled moment of inertia and for the interaction energy:
\begin{equation}\label{bound_E_N_resc}
|\overline{E_N}(t)| \leq e^{C_b(1+\alpha_N^{-1})t}(|\overline{E_N}(0)| + \overline{I_N}(0)+1)
\end{equation}
\begin{equation}\label{bound_I_N_resc}
\overline{I_N}(t) \leq e^{C_b(1+\alpha_N^{-1})t}(|\overline{E_N}(0)| + \overline{I_N}(0)+1).
\end{equation}
\end{proposition}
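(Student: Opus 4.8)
The plan is to prove everything via a Grönwall argument on the functional $E_N(t) + I_N(t) + 1$ (and similarly on the rescaled quantities). First I would establish the \emph{local} existence and uniqueness of a smooth solution: the right-hand side of \eqref{equation_pv} is smooth as long as the $q_i$ remain distinct, because $\nabla_x^\bot g_b(x,y)$ is smooth off the diagonal by Lemma~\ref{estimees_S_b_symmetric}(1) combined with the explicit singularity $\sqrt{b(x)b(y)}\nabla g(x-y)$; so the Cauchy--Lipschitz theorem gives a maximal solution on $[0,T_*)$. The task is then to show $T_* > T$, for which it suffices to control the minimal distance $\min_{i\neq j}|q_i - q_j|$ from below and $\max_i |q_i|$ from above on $[0,T]$; both will follow from the energy bounds \eqref{bound_E_N}, \eqref{bound_I_N}, since $g_b(x,y) \to +\infty$ as $x\to y$ (logarithmically, with a positive coefficient $\sqrt{b(x)b(y)} \geq \inf b > 0$) while $g_b$ stays bounded below by $-C_b(1+|x|^2+|y|^2)$ thanks to Lemma~\ref{estimees_S_b_symmetric}(2).

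Next I would compute $\dot E_N(t)$ and $\dot I_N(t)$. For $I_N$, $\dot I_N = \frac{2}{N}\sum_i q_i\cdot\dot q_i$; plugging in \eqref{equation_pv}, the forcing term contributes $-\frac{2\alpha_N}{N}\sum_i q_i\cdot\frac{\nabla^\bot b(q_i)}{b(q_i)}$, which is bounded by $C_b\alpha_N$ using Assumption~\ref{assumption_b} (note $|x|\,|\nabla b(x)|$ is bounded); the interaction term is $-\frac{2}{N^2}\sum_{i\neq j}\frac{q_i}{b(q_i)}\cdot\nabla_x^\bot g_b(q_i,q_j)$, and symmetrising in $i \leftrightarrow j$ together with the splitting $g_b = \sqrt{b(x)b(y)}g(x-y) + S_b$ lets one bound it by $C_b(1 + I_N + |E_N|)$: the singular part $\nabla g(x-y)$ is antisymmetric so $\frac{q_i}{b(q_i)} - \frac{q_j}{b(q_j)}$ appears and is $O(|q_i| + |q_j|)$, and the $S_b$ part is controlled by Lemma~\ref{estimees_S_b_not_necessarily_symmetric}(1). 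For $E_N$, $\dot E_N = \frac{2}{N^2}\sum_{i\neq j}\nabla_x g_b(q_i,q_j)\cdot\dot q_i$; here the dangerous near-diagonal contributions $\nabla_x g_b(q_i,q_j)\cdot\nabla_x^\bot g_b(q_i,q_j)$ within the double sum must cancel or be controlled — the leading $|\nabla g|^2$-type term is killed because $\nabla g \cdot \nabla^\bot g = 0$ pointwise, and the cross terms between the $\sqrt{b}\,g$ part and $S_b$ are handled by the Hölder and $L^p$ bounds on $\nabla_x S_b$ from Lemma~\ref{estimees_S_b_not_necessarily_symmetric}, again yielding $|\dot E_N| \leq C_b(1+\alpha_N)(1 + |E_N| + I_N)$.

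Combining the two differential inequalities, $\frac{d}{dt}(|E_N| + I_N + 1) \leq C_b(1+\alpha_N)(|E_N| + I_N + 1)$ on $[0,T_*)$, and Grönwall gives \eqref{bound_E_N}--\eqref{bound_I_N} on $[0,T_*)$. This a priori bound forces $\min_{i\neq j}|q_i(t)-q_j(t)|$ to stay bounded below and $\max_i|q_i(t)|$ bounded above uniformly on any compact time interval, hence $T_* = +\infty$ and in particular the solution extends smoothly to $[0,T]$. The rescaled estimates \eqref{bound_E_N_resc}--\eqref{bound_I_N_resc} follow verbatim from the same computation applied to \eqref{equation_pv_rescaled}: there the forcing coefficient is $1$ and the interaction coefficient is $(N\alpha_N)^{-1}$, so the analogous bound reads $\frac{d}{dt}(|\overline{E_N}| + \overline{I_N} + 1) \leq C_b(1 + \alpha_N^{-1})(|\overline{E_N}| + \overline{I_N} + 1)$, and Grönwall closes it.

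The main obstacle I expect is the bookkeeping in $\dot E_N$: one has to separate the genuinely singular near-diagonal interactions (which do not cancel termwise but only after using $\nabla g \perp \nabla^\bot g$ and symmetry) from the regular far-field interactions, and then to estimate each of the several pieces arising from the expansion of $g_b$ and of its $x$-gradient, keeping the dependence linear in $E_N$ and $I_N$ so that Grönwall applies — the heterogeneity of $b$ means one cannot simply invoke translation invariance, so every term must be estimated with the weighted decay bounds of Assumption~\ref{assumption_b} and the $L^p$/Hölder control of $\nabla_x S_b$ from Lemma~\ref{estimees_S_b_not_necessarily_symmetric}.
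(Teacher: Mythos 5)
Your overall architecture (Cauchy--Lipschitz up to first collision, a priori Grönwall bound on $|E_N|+I_N+1$, lower bound on pairwise distances from the energy bound to exclude collisions, and the rescaled estimates by the same computation with coefficient $\alpha_N^{-1}$) matches the paper. But there is a genuine gap in your estimate of $\dot E_N$. After symmetrising, $\dot E_N=\frac{2}{N^2}\sum_i \dot q_i\cdot V_i$ with $V_i=\sum_{j\neq i}\nabla_x g_b(q_i,q_j)$, and inserting \eqref{equation_pv} produces, besides the $\alpha_N$-forcing term, the triple sum $-\frac{2}{N^3}\sum_i\frac{1}{b(q_i)}\sum_{k\neq i}\nabla_x^\perp g_b(q_i,q_k)\cdot\sum_{j\neq i}\nabla_x g_b(q_i,q_j)$. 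You only discuss the same-pair terms $\nabla_x^\perp g_b(q_i,q_j)\cdot\nabla_x g_b(q_i,q_j)$; these vanish identically (they are of the form $v^\perp\cdot v$ with $v=\nabla_x g_b(q_i,q_j)$, so in particular no H\"older estimates on $\nabla_x S_b$ are needed for them --- and your proposed termwise control of the $\sqrt{b}\,g$/$S_b$ cross terms would not close anyway, since each such cross term is of size $(1+|q_j|)|q_i-q_j|^{-1}$, which the logarithmic energy does not control before no-collision is known). The real danger, which your sketch never addresses, are the three-index terms $\nabla_x^\perp g_b(q_i,q_k)\cdot\nabla_x g_b(q_i,q_j)$ with $k\neq j$: individually these are products of two inverse-distance singularities and cannot be bounded by $C_b(1+\alpha_N)(1+|E_N|+I_N)$, so your claimed differential inequality for $E_N$ does not follow from the estimates you invoke. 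The point you are missing is structural: for each fixed $i$ the interaction part of $\dot q_i$ equals $-\frac{1}{Nb(q_i)}V_i^\perp$, and $V_i^\perp\cdot V_i=0$, so the \emph{entire} interaction contribution to $\dot E_N$ cancels exactly. Only the forcing term survives (and within it the piece proportional to $\nabla b(q_i)$ dies against $\nabla^\perp b(q_i)$), which is exactly how the paper obtains $|\dot E_N|\le C_b\,\alpha_N(1+I_N)$; with that in hand your Grönwall step and the rest of the argument go through.

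A secondary, fixable imprecision: in $\dot I_N$, bounding $\frac{q_i}{b(q_i)}-\frac{q_j}{b(q_j)}$ by $O(|q_i|+|q_j|)$ is not enough, because it multiplies $\nabla^\perp g(q_i-q_j)\sim|q_i-q_j|^{-1}$. You need the cancellation $(q_i-q_j)\cdot\nabla^\perp g(q_i-q_j)=0$ together with the Lipschitz bounds on $b$ (or, equivalently, the global Lipschitz continuity of $x\mapsto x/b(x)$ granted by Assumption~\ref{assumption_b}) to absorb the singularity, as the paper does; the remaining $g(q_i-q_j)$-weighted term is where the dependence on $|E_N|$ legitimately enters.
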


\begin{proof}
Since $b$ is regular (see Assumption~\ref{assumption_b}) and $S_b,g,\nabla g$ are regular outside of the diagonal (see Claim (1) of Lemma~\ref{estimees_S_b_symmetric}), System \eqref{equation_pv} is well-posed up to the first collision time by Cauchy-Lipschitz theorem. We will first prove the bounds on $E_N$ and $I_N$ and then deduce that there is no collision between the points (this is the classical strategy to prove that the Euler point vortex system is well-posed when all the vorticities are positive, as explained for example in \cite[Chapter~4.2]{MarchioroPulvirenti}). Let us assume that there is no collision up to some time $T^\ast \leq T$.

We first compute the time derivative of $E_N$. Since $g_b$ is symmetric, we have
\begin{align*}
\dot E_N =& \frac{1}{N^2}\sum_{i=1}^N\bigg(\underset{j\neq i}{\sum_{j=1}^N} \dot q_i \cdot \nabla_x g_b(q_i,q_j) + \dot q_j\nabla_y g_b(q_i,q_j)\bigg) \\
=& \frac{2}{N^2}\sum_{i=1}^N\bigg(-\alpha_N\frac{\nabla^\bot b(q_i)}{b(q_i)} -\frac{1}{Nb(q_i)}\underset{k\neq i}{\sum_{k=1}^N}\nabla_x^\bot g_b(q_i,q_k)\bigg) \cdot\underset{j\neq i}{\sum_{j=1}^N}\nabla_x g_b(q_i,q_j) \\
=& -\frac{2\alpha_N}{N^2}\sum_{i=1}^N\frac{\nabla^\bot b(q_i)}{b(q_i)}\cdot\bigg(\frac{\sqrt{b(q_i)}}{2\sqrt{b(q_j)}}g(q_i - q_j)\nabla b(q_j) \\
&+\sqrt{b(q_i)b(q_j)}\nabla g(q_i - q_j) +  \nabla_x S_b(q_i,q_j)\bigg)
\end{align*}
and thus we get that
\begin{equation}\label{derivative_E_N}
\dot E_N = -\frac{2\alpha_N}{N^2}\sum_{i=1}^N\frac{\nabla^\bot b(q_i)}{b(q_i)}
\cdot \bigg(\underset{j\neq i}{\sum_{j=1}^N}\sqrt{b(q_i)b(q_j)}\nabla g(q_i - q_j) + \nabla_x S_b(q_i,q_j)\bigg).
\end{equation}

Now let us bound the right-handside of the upper equality. Using Claim $(1)$ of Lemma~\ref{estimees_S_b_not_necessarily_symmetric} and Assumption~\ref{assumption_b}, we have
\begin{align*}
\left|\frac{\nabla^\bot b(q_i)}{b(q_i)}\cdot \nabla_x S_b(q_i,q_j)\right| &\leq C_b(1+|q_j|)
\end{align*}
and thus
\begin{equation}\label{bound_E_N_1}
\left|\frac{2\alpha_N}{N^2}\sum_{i=1}^N\frac{\nabla^\bot b(q_i)}{b(q_i)}
\cdot \underset{j\neq i}{\sum_{j=1}^N} \nabla_x S_b(q_i,q_j)\right| \leq C_b\alpha_N(1+I_N).
\end{equation}
Now remark that
\begin{align*}
\frac{2\alpha_N}{N^2}&\sum_{i=1}^N\frac{\nabla^\bot b(q_i)}{b(q_i)}
\cdot \bigg(\underset{j\neq i}{\sum_{j=1}^N}\sqrt{b(q_i)b(q_j)}\nabla g(q_i - q_j)\bigg) \\
&= \frac{\alpha_N}{N^2}\sum_{i=1}^N\underset{j\neq i}{\sum_{j=1}^N}\left(\sqrt{\frac{b(q_j)}{b(q_i)}}\nabla^\bot b(q_i) - \sqrt{\frac{b(q_i)}{b(q_j)}}\nabla^\bot b(q_j)\right)\cdot \nabla g(q_i - q_j).
\end{align*}
Moreover,
\begin{align*}
\sqrt{\frac{b(q_j)}{b(q_i)}}\nabla^\bot b(q_i) - \sqrt{\frac{b(q_i)}{b(q_j)}}\nabla^\bot b(q_j) =& \sqrt{\frac{b(q_j)}{b(q_i)}}(\nabla^\bot b(q_i) - \nabla^\bot b(q_j)) \\
&+ \frac{b(q_j) - b(q_i)}{\sqrt{b(q_i)b(q_j)}}\nabla^\bot b(q_j)
\end{align*}
and thus using the Lipschitz regularity of $b$ and $\nabla b$ (see Assumption~\ref{assumption_b}) and $|\nabla g(q_i-q_j)| = C|q_i-q_j|^{-1}$ we get that
\begin{equation}\label{bound_E_N_2}
\left|\frac{2\alpha_N}{N^2}\sum_{i=1}^N\frac{\nabla^\bot b(q_i)}{b(q_i)}
\cdot \bigg[\underset{j\neq i}{\sum_{j=1}^N}\sqrt{b(q_i)b(q_j)}\nabla g(q_i - q_j)\bigg]\right| \leq C_b\alpha_N.
\end{equation}
Combining inequalities \eqref{bound_E_N_1} and \eqref{bound_E_N_2} we get that
\begin{equation}\label{bound_E_N_dot}
|\dot E_N| \leq C_b(1+I_N)\alpha_N.
\end{equation}
Now we compute the time derivative of $I_N$:
\begin{align*}
\dot I_N
=& \frac{2}{N}\sum_{i=1}^N q_i \cdot \dot q_i \\
=& -\frac{2\alpha_N}{N}\sum_{i=1}^N q_i \cdot \frac{\nabla^\bot b(q_i)}{b(q_i)} \\
&-\frac{2}{N}\sum_{i=1}^N\underset{j\neq i}{\sum_{j=1}^N}\frac{\sqrt{b(q_j)}}{2b(q_i)\sqrt{b(q_i)}}g(q_i-q_j)q_i \cdot\nabla^\bot b(q_i) \\
&-\frac{2}{N}\sum_{i=1}^N\underset{j\neq i}{\sum_{j=1}^N}\frac{\sqrt{b(q_i)b(q_j)}}{b(q_i)}q_i \cdot\nabla^\bot g(q_i - q_j) \\
&-\frac{2}{N}\sum_{i=1}^N\underset{j\neq i}{\sum_{j=1}^N} q_i \cdot\nabla_x^\bot S_b(q_i,q_j) \\
=:& 2(T_1 + T_2 + T_3 + T_4).
\end{align*}
Using Assumption~\ref{assumption_b} we have
\begin{equation}\label{bound_IN1}
|T_1| \leq C_b\alpha_N.
\end{equation}
For the second term, using Assumption~\ref{assumption_b} we have
\begin{align*}
|T_2| &\leq \frac{C_b}{N^2}\sum_{i=1}^N \bigg(\underset{j\neq i}{\sum_{j=1}^N} |g(q_i - q_j)|\bigg) \\
&\leq \frac{C_b}{N^2}\sum_{i=1}^N\bigg(\underset{j\neq i}{\sum_{j=1}^N} g(q_i-q_j)\mathbf{1}_{|q_i-q_j| \leq 1}+|q_i|^2 + |q_j|^2\bigg)\\
&\leq C_b I_N + \frac{C_b}{N^2}\underset{|q_i - q_j| \leq 1}{\sum_{1 \leq i \neq j \leq N}}g(q_i-q_j).
\end{align*}
Now by Assumption~\ref{assumption_b}, we have that
\begin{align*}
\frac{1}{N^2}\underset{|q_i - q_j| \leq 1}{\sum_{1 \leq i \neq j \leq N}}g(q_i-q_j)
\leq& \frac{C_b}{N^2}\sum_{1 \leq i \neq j \leq N}\bigg(\sqrt{b(q_i)b(q_j)}g(q_i - q_j) \\
&+ S_b(q_i,q_j)\bigg) + \frac{C_b}{N^2}\underset{|q_i - q_j| \geq 1}{\sum_{1 \leq i \neq j \leq N}}|g(q_i-q_j)| \\
&+ \frac{C_b}{N^2}\sum_{1 \leq i \neq j \leq N} |S_b(q_i,q_j)| \\
\leq& C_b\bigg(E_N + \frac{1}{N^2}\underset{|q_i - q_j| \geq 1}{\sum_{1 \leq i \neq j \leq N}}|g(q_i-q_j)| \\
&+ \frac{1}{N^2}\sum_{1 \leq i \neq j \leq N} |S_b(q_i,q_j)|\bigg).
\end{align*}
Moreover,
\begin{align*}
\frac{C_b}{N^2}\underset{|q_i - q_j| \geq 1}{\sum_{1 \leq i \neq j \leq N}}|g(q_i-q_j)|
&\leq \frac{C_b}{N^2}\underset{|q_i - q_j| \geq 1}{\sum_{1 \leq i \neq j \leq N}}|q_i|^2 + |q_j|^2 \\
&\leq C_b I_N
\end{align*}
and using Claim $(2)$ of Lemma~\ref{estimees_S_b_symmetric},
\begin{align*}
\frac{1}{N^2}\sum_{1 \leq i \neq j \leq N} |S_b(q_i,q_j)| &\leq \frac{C_b}{N^2}\sum_{1 \leq i \neq j \leq N}(1+|q_i|^2+|q_j|^2) \leq C_b(1+I_N).
\end{align*}
Therefore
\begin{equation}\label{bound_IN2}
|T_2| \leq C_b(1+|E_N|+I_N).
\end{equation}
For the third term we write
\begin{align*}
T_3 =& -\frac{1}{N^2}\sum_{i=1}^N\underset{j\neq i}{\sum_{j=1}^N}\frac{\sqrt{b(q_j)}-\sqrt{b(q_i)}}{\sqrt{b(q_i)}}\nabla^\bot g(q_i - q_j) \cdot q_i \\
&-\frac{1}{2N^2}\sum_{i=1}^N\underset{j\neq i}{\sum_{j=1}^N}\nabla^\bot g(q_i - q_j) \cdot (q_i-q_j) \\
=& -\frac{1}{N^2}\sum_{i=1}^N\underset{j\neq i}{\sum_{j=1}^N}\frac{\sqrt{b(q_j)}-\sqrt{b(q_i)}}{\sqrt{b(q_i)}}\nabla^\bot g(q_i - q_j) \cdot q_i -0
\end{align*}
and thus using the Lipschitz regularity of $b$ (see Assumption~\ref{assumption_b}) we get
\begin{equation}\label{bound_IN3}
|T_3| \leq C_b(1+I_N).
\end{equation}
For the fourth term, using Claim (1) of Lemma~\ref{estimees_S_b_not_necessarily_symmetric} we get
\begin{equation}\label{bound_IN4}
\begin{aligned}
|T_4| &= \left|-\frac{1}{N^2}\sum_{i=1}^N\underset{j\neq i}{\sum_{j=1}^N} \frac{1}{b(q_i)}q_i\cdot\nabla_x^\bot S_b(q_i,q_j)\right| \\
&\leq C_b\frac{1}{N^2}\sum_{i=1}^N\sum_{j=1}^N |q_i|(1+|q_j|) \\
&\leq C_b(1+I_N).
\end{aligned}
\end{equation}
Combining with inequalities \eqref{bound_IN1}, \eqref{bound_IN2}, \eqref{bound_IN3} and  \eqref{bound_IN4} we get that 
\begin{equation}\label{bound_I_N_dot}
|\dot I_N| \leq C_b(1+ \alpha_N + |I_N| + |E_N|).
\end{equation}
Let us write $U_N := (E_N,I_N)$. By equations \eqref{bound_E_N_dot} and \eqref{bound_I_N_dot} we have
\begin{equation*}
|\dot U_N| \leq C_b(1+\alpha_N)(1+|U_N|)
\end{equation*}
therefore by Grönwall's lemma we have
\begin{equation*}
|U_N(t)| \leq e^{C_b(1+\alpha_N)t}(|U_N(0)|+1)-1
\end{equation*}
from which \eqref{bound_E_N} and \eqref{bound_I_N} follows.

Let us use these bounds to prove that there is no collision (and it will follow that System \eqref{equation_pv} is globally well-posed). If $i \neq j$, then
\begin{align*}
g(|q_i - q_j|) \leq& C_b\bigg(E_N + \frac{1}{N^2}\sum_{1 \leq k \neq l \leq N}{\sum}|S_b(q_k,q_l)| \\ 
&-\frac{1}{N^2}\underset{(k,l) \neq (i,j)}{\sum_{1 \leq k \neq l \leq N}}g(q_k - q_l)\bigg) \\
\leq& C_b\bigg(E_N + \frac{1}{N^2}\sum_{1 \leq k \neq l \leq N}(1+|q_k|^2 + |q_l|^2)\bigg).
\end{align*} 
where we used Claim $(2)$ of Lemma~\ref{estimees_S_b_symmetric} and $\ln|x-y| \leq |x| + |y|$. Thus by inequalities \eqref{bound_E_N} and \eqref{bound_I_N} we get
\begin{align*}
g(|q_i - q_j|) &\leq C_b(e^{C_b(1+\alpha_N)t}(|E_N(0)|+I_N(0)+1)+1)
\end{align*}
and therefore
\begin{equation*}
|q_i(t) - q_j(t)| \geq \exp\bigg(-2\pi C_b(e^{C_b(1+\alpha_N)t}(|E_N(0)|+I_N(0)+1)+1)\bigg) > 0.
\end{equation*}
It follows that there is no collision on $[0,T]$. The bounds on $\overline{E_N}$ and $\overline{I_N}$ follow directly from Inequalities \eqref{bound_E_N} and \eqref{bound_I_N} applied to $t = \alpha_N^{-1}\tau$.
\end{proof}

\section{Time derivatives of the modulated energies}\label{section:4}

The time derivatives of $\mathcal{F}_{b,N}$ and of $\overline{\mathcal{F}}_{b,N}$, defined in \eqref{definition_F_b_N} and \eqref{definition_F_b_N_rescaled}, are given by the two following propositions:
\begin{proposition}\label{time_derivative_F_N}
Let $\omega$ be a weak solution of \eqref{lake_equation_vorticity} in the sense of Definition~\ref{definition_weak_solution}, $(q_1,...,q_N)$ be solutions of \eqref{equation_pv}. We denote
\begin{equation*}
\omega_N = \frac{1}{N}\sum_{i=1}^N \delta_{q_i(t)}.
\end{equation*}

Assume that $\omega$ satisfies Assumption~\ref{assumption_omega}. Then $\mathcal{F}_{b,N}$ is Lipschitz and for almost every $t \in [0,T]$,
\begin{align*}
&\frac{\dd}{\dd t}\mathcal{F}_{b,N}(t)
= \\
&2\iint_{(\mathbb{R}^2\times\mathbb{R}^2)\backslash\Delta} \left(u(t,x) - \alpha\frac{\nabla^\bot b(x)}{b(x)}\right)\cdot \nabla_x g_b(x,y)\dd(\omega(t)-\omega_N(t))^{\otimes 2}(x,y) \\
&+ 2(\alpha_N - \alpha)\iint_{(\mathbb{R}^2\times\mathbb{R}^2)\backslash\Delta} \frac{\nabla^\bot b(x)}{b(x)}\cdot \nabla_x g_b(x,y) \dd \omega_N(t,x) \dd(\omega(t) - \omega_N(t))(y).
\end{align*}
\end{proposition}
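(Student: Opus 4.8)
The plan is to differentiate the modulated energy $\mathcal{F}_{b,N}(t) = \iint_{(\mathbb{R}^2\times\mathbb{R}^2)\backslash\Delta} g_b(x,y)\,\mathrm d(\omega(t)-\omega_N(t))^{\otimes 2}(x,y)$ directly, expanding the square into the ``pure'' terms $\iint g_b\,\mathrm d\omega^{\otimes 2}$, $\iint g_b\,\mathrm d\omega_N^{\otimes 2}$ (restricted off $\Delta$) and the cross term $-2\iint g_b\,\mathrm d\omega\,\mathrm d\omega_N$, and to differentiate each piece using the two evolution laws: the weak formulation \eqref{formulation_faible} of \eqref{lake_equation_vorticity} for $\omega$, and the ODE \eqref{equation_pv} for the points $q_i$. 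For the $\omega^{\otimes 2}$ term one uses that $x\mapsto G_b[\omega(t)](x)=\int g_b(x,y)\,\mathrm d\omega(t,y)$ is an admissible (in space) test function against $\partial_t\omega$ once cut off, together with the conservation-type identity (established in the introduction and via the symmetry of $g_b$ from Proposition~\ref{proposition_bs_law}) that $\frac{\mathrm d}{\mathrm dt}\iint g_b\,\mathrm d\omega^{\otimes2} = 2\iint (u-\alpha\nabla^\bot b/b)\cdot\nabla_x g_b\,\mathrm d\omega^{\otimes 2}$; the factor $2$ comes from symmetrizing in $x\leftrightarrow y$. For the $\omega_N^{\otimes 2}$ term, since the diagonal is removed and there are no collisions (Proposition~\ref{proposition_pv_well_posed}), one simply differentiates the finite sum $\frac{1}{N^2}\sum_{i\neq j} g_b(q_i,q_j)$ in time, uses $g_b(x,y)=g_b(y,x)$ to collect $\nabla_x$ and $\nabla_y$ contributions, and substitutes $\dot q_i$ from \eqref{equation_pv}; the self-interaction term $-\frac{1}{N}\sum_{k\neq i}\frac{1}{b(q_i)}\nabla_x^\bot g_b(q_i,q_k)$ contributes a term that, because $\nabla^\bot_x g_b\cdot\nabla_x g_b$ is antisymmetric in the pair, reorganizes into the $\alpha_N$-piece of the final formula. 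The cross term is handled by pairing the ODE for the $q_i$ against $\nabla_x G_b[\omega]$ and pairing $\partial_t\omega$ against $G_b[\omega_N]$ (again off-diagonal), using symmetry of $g_b$ to identify the two.

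Concretely I would organize the computation as follows. \textbf{Step 1:} Justify that $\mathcal{F}_{b,N}$ is Lipschitz in $t$: the $\omega_N^{\otimes 2}$ and cross terms are smooth in $t$ away from collisions (which do not occur), $\omega(t)$ has uniformly bounded mass and support (Assumption~\ref{assumption_omega}), $g_b$ has only a logarithmic singularity on $\Delta$ and $S_b$ grows at most quadratically (Lemma~\ref{estimees_S_b_symmetric}), and the moment of inertia $I_N$ is bounded on $[0,T]$ by Proposition~\ref{proposition_pv_well_posed}; hence all integrals are finite and their time derivatives are integrable. \textbf{Step 2:} Differentiate $\iint_{\mathbb{R}^2\times\mathbb{R}^2} g_b\,\mathrm d\omega^{\otimes2}$ using \eqref{formulation_faible} with the test function $\phi = \chi_R G_b[\omega]$ and letting $R\to\infty$, controlled by the asymptotic bounds of Proposition~\ref{theorem_asymptotic_velocity_field_lake} (which make the boundary/cutoff error vanish, exactly as in the symmetry proof inside Proposition~\ref{proposition_bs_law}); this yields $2\iint (u-\alpha\nabla^\bot b/b)\cdot\nabla_x g_b\,\mathrm d\omega^{\otimes2}$. \textbf{Step 3:} Differentiate $\frac{1}{N^2}\sum_{i\neq j}g_b(q_i,q_j)$, plug in \eqref{equation_pv}, and split each $\dot q_i$ into its forcing part $-\alpha_N\nabla^\bot b(q_i)/b(q_i)$ and its regular part $-\frac1N\sum_{k\neq i}\frac{1}{b(q_i)}\nabla^\bot_x g_b(q_i,q_k)$; the regular part produces a sum of the form $\frac1{b(q_i)}\nabla^\bot_x g_b(q_i,q_k)\cdot\nabla_x g_b(q_i,q_j)$ which is not obviously zero (it is antisymmetric only when $j=k$), so this is where one must be careful — but after symmetrizing over $i\leftrightarrow j$ and using $g_b(x,y)=g_b(y,x)$ it reassembles into $2\iint(u_N-\alpha_N\nabla^\bot b/b)\cdot\nabla_x g_b\,\mathrm d\omega_N^{\otimes2}$ where $u_N = -\frac1b\nabla^\bot G_b[\omega_N]$ is the (off-diagonal) velocity generated by the point vortices. \textbf{Step 4:} Differentiate the cross term $-\frac2N\sum_i G_b[\omega(t)](q_i(t))$ — note $G_b[\omega]$ is $W^{1,\infty}$ in space by Assumption~\ref{assumption_omega} so this is legitimate — using both $\dot q_i$ and $\partial_t\omega$ (the latter via \eqref{formulation_faible} against the test function $G_b[\omega_N(t)]$, which again requires a cutoff/limit argument); symmetry of $g_b$ makes the two velocity contributions combine. \textbf{Step 5:} Add the three contributions and reorganize: the $\alpha$-dependent and $\alpha_N$-dependent parts split into the stated ``$\omega-\omega_N$'' quadratic term (coefficient $\alpha$, matching the limit equation) plus the correction term $2(\alpha_N-\alpha)\iint\frac{\nabla^\bot b(x)}{b(x)}\cdot\nabla_x g_b(x,y)\,\mathrm d\omega_N(x)\,\mathrm d(\omega-\omega_N)(y)$ which collects precisely the mismatch between $\alpha_N$ (in the ODE) and $\alpha$ (in the PDE), using that $\iint\frac{\nabla^\bot b}{b}\cdot\nabla_x g_b\,\mathrm d\omega^{\otimes2}$ and the $\omega_N\otimes\omega_N$ analogue have opposite-sign cancellations by antisymmetry of $\nabla^\bot b\cdot\nabla$.

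The main obstacle I expect is \textbf{Step 4 together with the off-diagonal bookkeeping in Step 3}: one must rigorously justify using $G_b[\omega(t)](\cdot)$ and $G_b[\omega_N(t)](\cdot)$ as test functions in the weak formulation \eqref{formulation_faible} despite $g_b$ being only locally $L^1$ near the diagonal and growing at infinity — this forces a cutoff-and-limit argument whose error terms must be killed using the asymptotic decay of the velocity field (Proposition~\ref{theorem_asymptotic_velocity_field_lake}) and the bound $|\psi(x)|\le C(1+|x|^\delta)$; simultaneously one must verify that the removal of the diagonal in $\mathcal{F}_b$ is consistent across the three terms, i.e. that no residual ``diagonal'' contribution is created when one differentiates — this is clean for the point-vortex term (no collisions) but needs the heterogeneity of $g_b$, namely that $\sqrt{b(x)b(y)}g(x-y)$ has the standard logarithmic diagonal and $S_b$ is smooth off $\Delta$ (Lemma~\ref{estimees_S_b_symmetric}), so that the self-interaction gradient $\nabla_x g_b(q_i,q_i)$ never appears. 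Once these justifications are in place the rest is bookkeeping with the symmetry of $g_b$ and the antisymmetry of $v\mapsto v^\bot$.
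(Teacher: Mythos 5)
Your plan is essentially the paper's proof: the same splitting of $\mathcal{F}_{b,N}$ into the $\omega\otimes\omega$ term, the cross term and $E_N$, differentiation of the first two through the weak formulation \eqref{formulation_faible} with the $G_b$-potentials as (regularized) test functions, direct differentiation of $E_N$ from \eqref{equation_pv} (where the pairwise interaction part indeed cancels by the perpendicularity/antisymmetry you invoke, leaving only the $\alpha_N$-piece), and recombination using the symmetry of $g_b$, the identity $\nabla G_b[\omega]=b\,u^\bot$ and the orthogonality $v\cdot v^\bot=0$. The only divergence is technical and minor: the paper justifies the singular, time-dependent test functions by mollifying the kernel into $g_b^\eta$ (Claim~\ref{regularisation_g_b}) and passing to the limit by dominated convergence, whereas your appeal to the far-field decay of Proposition~\ref{theorem_asymptotic_velocity_field_lake} to kill cutoff errors is aimed at a non-issue (all measures involved have uniformly compact support) — the genuine work is the diagonal singularity and the $W^{1,\infty}$-in-time admissibility of the induced test function, which your ``cutoff-and-limit'' step would have to carry out exactly as the paper does.
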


\begin{proposition}\label{time_derivative_F_N_resc}
Let $(\overline{q_1},...,\overline{q_N})$ be solutions of \eqref{equation_pv_rescaled} and $\overline{\omega}$ be a solution of \eqref{transport_equation} in the sense of Definition~\ref{definition_weak_solution_transport}.

We denote
\begin{equation*}
\overline{\omega}_N = \frac{1}{N}\sum_{i=1}^N \delta_{\overline{q_i}(t)}.
\end{equation*}

Assume that $\overline{\omega}$ satisfies Assumption~\ref{assumption_omega}. Denote $v = \nabla G_b[\overline{\omega}]$. Then $\overline{\mathcal{F}}_{b,N}$ is Lipschitz and for almost every $t \in [0,T]$, we have
\begin{multline*}
\frac{\dd}{\dd t}\overline{\mathcal{F}}_{b,N}(t) = -2\iint_{(\mathbb{R}^2\times \mathbb{R}^2)\backslash \Delta} \frac{\nabla^\bot b(x)}{b(x)}\cdot \nabla_x g_b(x,y)\dd(\overline{\omega}(t)-\overline{\omega}_N(t))^{\otimes 2}(x,y) \\
+ \frac{2}{N^2\alpha_N} \sum_{i=1}^N\underset{j\neq i}{\sum_{j=1}^N} \frac{v(t,\overline{q_i})}{b(\overline{q_i})}\cdot\nabla_x g_b(\overline{q_i},\overline{q_j}).
\end{multline*}
\end{proposition}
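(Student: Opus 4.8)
The plan is to split the energy as $\overline{\mathcal{F}}_{b,N}(t) = \overline{E_N}(t) - 2B(t) + C(t)$, with $B(t) := \frac1N\sum_{i=1}^N G_b[\overline{\omega}(t)](\overline{q_i}(t))$ and $C(t) := \iint_{\mathbb{R}^2\times\mathbb{R}^2} g_b(x,y)\overline{\omega}(t,x)\overline{\omega}(t,y)\,\dd x\,\dd y$, and to differentiate the three pieces separately; this decomposition is exact because $\overline{\omega}(t)$ has an $L^\infty$ density, so the diagonal is $\overline{\omega}$-null and its removal only affects the $\overline{\omega}_N$-against-$\overline{\omega}_N$ part, which is $\overline{E_N}$. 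Write $F(x,y):=\frac{\nabla^\bot b(x)}{b(x)}\cdot\nabla_x g_b(x,y)$, which is in $L^1_{\loc}$ by Assumption~\ref{assumption_b} and Lemma~\ref{estimees_S_b_symmetric}; in particular $F(\overline{q_i},\overline{q_j})$ is well-defined for $i\neq j$ (no collisions, by Proposition~\ref{proposition_pv_well_posed}). The term $\overline{E_N}$ is $\mathcal{C}^1$ on $[0,T]$: by Proposition~\ref{proposition_pv_well_posed} the $\overline{q_i}$ are smooth, stay in a compact set and remain mutually separated, and $g_b$ is smooth off the diagonal (Lemma~\ref{estimees_S_b_symmetric}). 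Differentiating, using $g_b(x,y)=g_b(y,x)$ and \eqref{equation_pv_rescaled}, the contribution of the self-interaction part of $\dot{\overline{q_i}}$ tested against $w:=\sum_{j\neq i}\nabla_x g_b(\overline{q_i},\overline{q_j})$ has, for each $i$, the form $w^\bot\cdot w$, hence vanishes, and one is left with $\frac{\dd}{\dd t}\overline{E_N} = -\frac{2}{N^2}\sum_i\frac{\nabla^\bot b(\overline{q_i})}{b(\overline{q_i})}\cdot\sum_{j\neq i}\nabla_x g_b(\overline{q_i},\overline{q_j}) = -2\iint_{(\mathbb{R}^2\times\mathbb{R}^2)\backslash\Delta} F\,\dd\overline{\omega}_N^{\otimes 2}$.

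Next I would prove $C$ is Lipschitz and compute $\frac{\dd}{\dd t}C$. By symmetry of $g_b$, $C(t+h)-C(t) = 2\int G_b[\overline{\omega}(t)]\,(\overline{\omega}(t+h)-\overline{\omega}(t)) + R_h$, where $R_h := \iint g_b\,\dd(\overline{\omega}(t+h)-\overline{\omega}(t))^{\otimes 2} = \int \frac1b|\nabla G_b[\overline{\omega}(t+h)-\overline{\omega}(t)]|^2$ (the identity from the proof of Proposition~\ref{proposition_bs_law}). By Lemma~\ref{nabla_G_b_bounded_H_minus_one} and the bound $\norm{\overline{\omega}(t+h)-\overline{\omega}(t)}_{\dot H^{-1}} = O(h)$ — obtained by testing \eqref{formulation_faible_transport} against $\dot H^1$ functions, using that $\frac{\nabla^\bot b}{b}$ is bounded and $\overline{\omega}$ is bounded with fixed compact support — one gets $R_h = O(h^2)$. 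The linear term is computed by testing \eqref{formulation_faible_transport} against the time-independent function $G_b[\overline{\omega}(t)]$; this is legitimate after truncating at infinity (harmless since $\supp\overline{\omega}$ is compact and $G_b[\overline{\omega}(t)]$ grows at most sub-linearly by \eqref{bound_stream_function}) and approximating by smooth functions, and uses $\nabla G_b[\overline{\omega}]\in L^\infty([0,T],W^{1,\infty})$ from Assumption~\ref{assumption_omega}, giving $\int G_b[\overline{\omega}(t)]\,(\overline{\omega}(t+h)-\overline{\omega}(t)) = -\int_t^{t+h}\!\!\int v(t)\cdot\frac{\nabla^\bot b}{b}\overline{\omega}(\tau) = O(h)$. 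Dividing by $h$, letting $h\to 0$, and using $v(t,x)=\int\nabla_x g_b(x,y)\overline{\omega}(t,y)\,\dd y$, this yields $\frac{\dd}{\dd t}C = -2\int v(t)\cdot\frac{\nabla^\bot b}{b}\overline{\omega}(t) = -2\iint_{(\mathbb{R}^2\times\mathbb{R}^2)\backslash\Delta} F\,\dd\overline{\omega}^{\otimes 2}$.

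For $B$, the chain rule gives $\frac{\dd}{\dd t}B = \frac1N\sum_i(\partial_t G_b[\overline{\omega}(t)](\overline{q_i}) + v(t,\overline{q_i})\cdot\dot{\overline{q_i}})$, its validity (and the finite-difference estimate behind it) resting on the same arguments as for $C$ together with a second-order Taylor bound on $G_b[\overline{\omega}(t)]$ along $\overline{q_i}(\cdot)$, again coming from $\nabla G_b[\overline{\omega}]\in W^{1,\infty}$. The first term, handled as for $C$ but testing \eqref{formulation_faible_transport} against $y\mapsto g_b(\overline{q_i},y)$ after regularising its logarithmic singularity at $y=\overline{q_i}$, equals $-\int\nabla_y g_b(\overline{q_i},y)\cdot\frac{\nabla^\bot b(y)}{b(y)}\overline{\omega}(t,y)\,\dd y$; in the second term I substitute \eqref{equation_pv_rescaled}. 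Summing $\frac{\dd}{\dd t}\overline{\mathcal{F}}_{b,N} = \frac{\dd}{\dd t}\overline{E_N} - 2\frac{\dd}{\dd t}B + \frac{\dd}{\dd t}C$ and using $v(t,\overline{q_i}) = \int\nabla_x g_b(\overline{q_i},y)\overline{\omega}(t,y)\,\dd y$ together with $\nabla_y g_b(x,y) = \nabla_x g_b(y,x)$, one recognises $\frac2N\sum_i v(t,\overline{q_i})\cdot\frac{\nabla^\bot b(\overline{q_i})}{b(\overline{q_i})} = 2\iint_{(\mathbb{R}^2\times\mathbb{R}^2)\backslash\Delta} F\,\dd\overline{\omega}_N\otimes\dd\overline{\omega}$ and $\frac2N\sum_i\int\nabla_y g_b(\overline{q_i},y)\cdot\frac{\nabla^\bot b(y)}{b(y)}\overline{\omega}(t,y)\,\dd y = 2\iint_{(\mathbb{R}^2\times\mathbb{R}^2)\backslash\Delta} F\,\dd\overline{\omega}\otimes\dd\overline{\omega}_N$; together with the two "pure" terms coming from $\frac{\dd}{\dd t}\overline{E_N}$ and $\frac{\dd}{\dd t}C$ these four assemble exactly into $-2\iint_{(\mathbb{R}^2\times\mathbb{R}^2)\backslash\Delta} F\,\dd(\overline{\omega}-\overline{\omega}_N)^{\otimes 2}$. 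The only remaining contribution comes from the self-interaction part of $-\frac2N\sum_i v(t,\overline{q_i})\cdot\dot{\overline{q_i}}$ via \eqref{equation_pv_rescaled}, which is precisely the second term of the statement, and the Lipschitz property of $\overline{\mathcal{F}}_{b,N}$ follows from those of $\overline{E_N}$, $B$ and $C$.

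The hard part will be the analytic justification of the two differentiations involving $\overline{\omega}$, i.e. the right to insert the non-smooth, non-compactly-supported functions $G_b[\overline{\omega}(t)]$ and $y\mapsto g_b(\overline{q_i}(t),y)$ into the weak formulation \eqref{formulation_faible_transport}. This splits into: (i) truncation at infinity, harmless because $\supp\overline{\omega}(t)$ lies in a fixed compact set (Assumption~\ref{assumption_omega}) and these functions grow at most sub-linearly by \eqref{bound_stream_function}; (ii) regularisation of the logarithmic singularities (on the diagonal, resp. at $\overline{q_i}$), whose contribution is negligible since $g\in L^1_{\loc}$; and (iii) control of the quadratic-in-$h$ remainders, for which one needs the $\dot H^{-1}$-continuity in time of $\overline{\omega}$ together with Lemma~\ref{nabla_G_b_bounded_H_minus_one}, and the Lipschitz regularity of $v=\nabla G_b[\overline{\omega}]$ supplied by Assumption~\ref{assumption_omega}.
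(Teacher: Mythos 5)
Your proposal is correct and shares the paper's skeleton: the same three-term splitting of $\overline{\mathcal{F}}_{b,N}$ (your $C$, $-2B$, $\overline{E_N}$ are the paper's $T_1$, $T_2$, $\overline{E_N}$), the same treatment of $\overline{E_N}$ via the symmetry of $g_b$ and the $w^\bot\cdot w=0$ cancellation, and the same final reassembly into $-2\iint F\,\dd(\overline{\omega}-\overline{\omega}_N)^{\otimes 2}$ plus the self-interaction term. Where you genuinely differ is the analytic justification of the time derivatives of the $\overline{\omega}$-dependent terms: the paper regularises the kernel (Claim~\ref{regularisation_g_b}), inserts the induced potential $\phi(t,x)=\int g_b^\eta(x,y)\overline{\omega}(t,y)\dd y$ — a test function only $W^{1,\infty}$ in time — into \eqref{formulation_faible_transport} by density, and then lets $\eta\to 0$ by dominated convergence; you instead use a first-variation argument, expanding $C(t+h)-C(t)$ into a linear part plus the quadratic remainder $\iint g_b\,\dd(\overline{\omega}(t+h)-\overline{\omega}(t))^{\otimes 2}$, identified with $\int\frac{1}{b}|\nabla G_b[\overline{\omega}(t+h)-\overline{\omega}(t)]|^2$ and bounded by $O(h^2)$ through Lemma~\ref{nabla_G_b_bounded_H_minus_one} and the $O(h)$ $\dot H^{-1}$-in-time continuity read off from the weak formulation, while the linear part is computed by testing with the fixed-time potential $G_b[\overline{\omega}(t)]$ (after truncation and mollification), and $B$ by a chain rule resting on the same estimates plus the uniform $W^{1,\infty}$ bound on $v$. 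This route avoids manipulating time-dependent, merely Lipschitz-in-time test functions, at the price of the extra $\dot H^{-1}$ estimate and of a little care in the chain rule for $B$: the measure increment is naturally evaluated at the moving point $\overline{q_i}(t+h)$ (or the splitting must be reorganised), which is closed by the continuity, uniform in $\tau$, of $x\mapsto\int\nabla_y g_b(x,y)\cdot\frac{\nabla^\bot b(y)}{b(y)}\overline{\omega}(\tau,y)\dd y$ together with the $L^\infty_{\loc}$-in-time continuity of $v$ obtained by interpolating the $\dot H^{-1}$ bound with the uniform $W^{1,\infty}$ bound; your sketch leaves this implicit but the ingredients you cite suffice. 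One cosmetic point you share with the paper: substituting \eqref{equation_pv_rescaled} literally produces $\nabla_x^\bot g_b(\overline{q_i},\overline{q_j})$ in the self-interaction term, whereas the statement writes $\nabla_x g_b$; this is immaterial downstream since that term is only ever estimated in absolute value.
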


\begin{proof}[Proof of Proposition~\ref{time_derivative_F_N}]
We split $\mathcal{F}_{b,N}$ in three terms:
\begin{align*}
\mathcal{F}_{b,N} =& \iint_{\mathbb{R}^2\times\mathbb{R}^2} g_b(x,y) \omega(t,x)\omega(t,y) \dd x \dd y  \\
&-\frac{2}{N}\sum_{i=1}^N \int_{\mathbb{R}^2} g_b(x,q_i)\omega(t,x)\dd x + E_N \\
=:& T_1 + T_2 + E_N.
\end{align*}

Let us compute the time derivative of $T_1$. For that purpose, we will need to regularize the kernel $g_b$. The regularisation we will use is given by the following Claim:
\begin{claim}\label{regularisation_g_b}
There exists a familly of smooth functions $(g_b^{\eta})_{0 < \eta < 1}$ such that:
\begin{itemize}
\item $|g_b^{\eta}(x,y)| \leq C_b(|g(x-y)|+1+|x|^2+|y|^2)$
\item $|\nabla_x g_b^{\eta}(x,y)|,|\nabla_y g_b^{\eta}(x,y)| \leq C_b(|x-y|^{-1}+1+|x|+|y|)$.
\item For any $(x,y) \in (\mathbb{R}^2)^2$ such that $x \neq y$, 
\begin{align*}
g_b^{\eta}(x,y) &\Tend{\eta}{0} g_b(x,y) \\
\nabla_x g_b^{\eta}(x,y) &\Tend{\eta}{0} \nabla_x g_b(x,y) \\
\nabla_y g_b^{\eta}(x,y) &\Tend{\eta}{0} \nabla_y g_b(x,y).
\end{align*}
\end{itemize}
\end{claim}

\begin{proof}[Proof of the claim]
We define
\begin{equation*}
g_b^\eta(x,y) = \sqrt{b(x)b(y)}g^\eta(x-y) + S_b^\eta(x,y)
\end{equation*}
where $g^\eta$ is a smooth function satisfying:
\begin{itemize}
\item $g^\eta(x) = g(x)$ for $|x| \geq \eta$, 
\item $|g^\eta(x)| \leq |g(x)|$,
\item $|\nabla g^\eta(x)| \leq C|x|^{-1}$.
\end{itemize}
that we can obtain by extending $\displaystyle{\ln\vert_{x\geq \eta}}$ in a smooth function on $\mathbb{R^+}$.
We define $S_b^\eta := S_b\ast\chi_\eta$ where $\chi_\eta$ is a mollifier on $\mathbb{R}^4$. Since $S_b$ is locally Lipschitz (see Lemma~\ref{estimees_S_b_not_necessarily_symmetric}), $S_b^\eta$ is smooth and we get from Claim (1) of Lemma~\ref{estimees_S_b_not_necessarily_symmetric} and Claim (2) of Lemma~\ref{estimees_S_b_symmetric} that
\begin{itemize}
\item $|S_b^\eta(x,y)| \leq C_b(1+|x|^2+|y|^2)$,
\item $|\nabla_x S_b^\eta(x,y)|,|\nabla_y S_b^\eta(x,y)| \leq C_b(1+|x|+|y|)$.
\end{itemize}
Since $S_b$ is locally Lipschitz, $S_b^\eta$ and $\nabla S_b^\eta$ converge locally uniformly to $S_b$ and $\nabla S_b$ (see for example \cite[Proposition~4.21]{Brezis}) and therefore we get the convergence of $g_b^\eta(x,y)$ and $\nabla g_b^\eta(x,y)$ to $g_b(x,y)$ and $\nabla g_b(x,y)$ for any $x \neq y$. 
\end{proof}

With this regularisation we can compute the time derivative of $T_1$:
\begin{claim}\label{expression_derivative_T1}
$T_1 \in W^{1,\infty}([0,T])$ and for almost every $t \in [0,T]$, we have 
\begin{equation*}
\frac{\dd T_1}{\dd t} = 2 \iint_{\mathbb{R}^2\times\mathbb{R}^2} \left(u(t,x)-\alpha\frac{\nabla^\bot b(x)}{b(x)}\right)\cdot\nabla_x g_b(x,y)\omega(t,x)\omega(t,y)\dd x \dd y.
\end{equation*}
\end{claim}

\begin{proof}[Proof of the claim]
For $0 \leq s,t \leq T$ and $0 < \eta < 1$ we have:
\begin{equation*}
T_1(t) - T_1(s) = \iint_{\mathbb{R}^2\times\mathbb{R}^2} g_b(x,y)(\omega(t,x)\omega(t,y) - \omega(s,x)\omega(s,y))\dd x \dd y.
\end{equation*}
Now for almost all $x$ and $y$ such that $x \neq y$,
\begin{multline*}
|g_b^{\eta}(x,y)||\omega(t,x)\omega(t,y) - \omega(s,x)\omega(s,y)| \\
\leq C_b(|g(x-y)|+1+|x|^2+|y|^2) |\omega(t,x)\omega(t,y) - \omega(s,x)\omega(s,y)|
\end{multline*}
and
\begin{multline*}
\iint_{\mathbb{R}^2\times\mathbb{R}^2}(|g(x-y)|+1+|x|^2+|y|^2)\\
\times |\omega(t,x)\omega(t,y) - \omega(s,x)\omega(s,y)|\dd x \dd y < +\infty
\end{multline*}
because $\omega \in L^\infty$ with compact support. Therefore by dominated convergence theorem we get that
\begin{equation}\label{limit_T_1_b_eta}
T_1(t) - T_1(s) = \underset{\eta \rightarrow 0}{\lim}\iint_{\mathbb{R}^2\times\mathbb{R}^2} g_b^\eta(x,y)(\omega(t,x)\omega(t,y) - \omega(s,x)\omega(s,y))\dd x \dd y.
\end{equation}
Since $g_b^\eta$ is smooth and $\omega$ has compact support, we can use \eqref{formulation_faible} to get that
\begin{multline*}
\int_{\mathbb{R}^2} g_b^\eta(x,y)(\omega(t,y)-\omega(s,y))\dd y = \\ \int_s^t \int_{\mathbb{R}^2} \nabla_y g_b^\eta(x,y)\cdot \left(u(\tau,y)-\alpha\frac{\nabla^\bot b(y)}{b(y)}\right)\omega(\tau,y) \dd y \dd \tau.
\end{multline*}
Let us write 
\begin{equation*}
\phi(t,x) := \int_{\mathbb{R}^2} g_b^\eta(x,y)\omega(t,y)\dd y.
\end{equation*}
Since $g_b^\eta$ is smooth we have that for any compact $K \subset \mathbb{R}^2$,
\begin{multline*}
(t,x) \mapsto \\ \int_{\mathbb{R}^2} \nabla_y g_b^\eta(x,y)\cdot\left(u(t,y)-\alpha\frac{\nabla^\bot b(y)}{b(y)}\right) \omega(t,y)\dd y \in L^\infty([0,T],\mathcal{C}^\infty(K))
\end{multline*}
and thus $\phi \in W^{1,\infty}([0,T],\mathcal{C}^\infty(K))$ and for almost every $t \in [0,T]$,
\begin{equation*}
\partial_t \phi(t,x) = \int_{\mathbb{R}^2} \nabla_y g_b^\eta(x,y)\left(u(\tau,y)-\alpha\frac{\nabla^\bot b(y)}{b(y)}\right) \omega(t,y)\dd y.
\end{equation*}
Therefore we can use $\phi$ as a test function in \eqref{formulation_faible} (remark that we defined \eqref{formulation_faible} for smooth functions only but by density we can extend it to functions which are only $W^{1,\infty}$ in time) and we get that
\begin{align*}
&\iint_{\mathbb{R}^2\times\mathbb{R}^2} g_b^\eta(x,y)(\omega(t,x)\omega(t,y) - \omega(s,x)\omega(s,y))\dd x \dd y \\
=& \int_s^t\iint_{\mathbb{R}^2\times\mathbb{R}^2} \nabla_y g_b^\eta(x,y)\cdot \left(u(\tau,y)-\alpha\frac{\nabla^\bot b(y)}{b(y)}\right) \omega(\tau,y)\omega(\tau,x)\dd y \dd x \dd \tau \\
&+\int_s^t\iint_{\mathbb{R}^2\times\mathbb{R}^2} \nabla_x g_b^\eta(x,y)\cdot \left(u(\tau,x)-\alpha\frac{\nabla^\bot b(x)}{b(x)}\right)\omega(\tau,x)\omega(\tau,y)\dd x \dd y \dd \tau.
\end{align*}
Now we have that for almost every $x$ and $y$ such that $x \neq y$ and almost every $\tau \in [0,T]$,
\begin{multline*}
|\nabla_x g_b^\eta(x,y)\cdot \left(u(\tau,x)-\alpha\frac{\nabla^\bot b(x)}{b(x)}\right) \omega(\tau,x)\omega(\tau,y)| \\
\leq C_b (|x-y|^{-1}+1+|x|^2+|y|^2)|\left|u(\tau,x)-\alpha\frac{\nabla^\bot b(x)}{b(x)}\right||\omega(\tau,y)||\omega(\tau,x)|
\end{multline*}
and
\begin{multline*}
\int_s^t\iint_{\mathbb{R}^2\times\mathbb{R}^2} (|x-y|^{-1}+1+|x|^2+|y|^2)\\
\times\left|u(\tau,x)-\alpha\frac{\nabla^\bot b(x)}{b(x)}\right||\omega(\tau,y)||\omega(\tau,x)| \dd x \dd y < +\infty.
\end{multline*}
Applying dominated convergence theorem we find that
\begin{multline*}
\int_s^t\iint_{\mathbb{R}^2\times\mathbb{R}^2} \nabla_x g_b^\eta(x,y)\cdot \left(u(\tau,x)-\alpha\frac{\nabla^\bot b(x)}{b(x)}\right)\omega(\tau,x)\omega(\tau,y)\dd x \dd y \dd \tau \\
\Tend{\eta}{0} \int_s^t\iint_{\mathbb{R}^2\times\mathbb{R}^2} \nabla_x g_b(x,y)\cdot \left(u(\tau,x)-\alpha\frac{\nabla^\bot b(x)}{b(x)}\right)\omega(\tau,x)\omega(\tau,y)\dd x \dd y \dd \tau. 
\end{multline*}
We can do the same for the first term to get that
\begin{multline*}
\int_s^t\iint_{\mathbb{R}^2\times\mathbb{R}^2} \nabla_y g_b^\eta(x,y)\cdot\left(u(\tau,y)-\alpha\frac{\nabla^\bot b(y)}{b(y)}\right) \omega(\tau,y)\omega(\tau,x)\dd y \dd x \dd \tau \\
\Tend{\eta}{0} \int_s^t\iint_{\mathbb{R}^2\times\mathbb{R}^2} \nabla_y g_b(x,y)\cdot \left(u(\tau,y)-\alpha\frac{\nabla^\bot b(y)}{b(y)}\right) \omega(\tau,y)\omega(\tau,x)\dd y \dd x \dd \tau.
\end{multline*}
Using that $\nabla_y g_b(x,y) = \nabla_x g_b(y,x)$ and  \eqref{limit_T_1_b_eta} we get that $T_1 \in W^{1,\infty}([0,T])$ and for almost every $t \in [0,T]$, we get Claim~\ref{expression_derivative_T1}.
\end{proof}

We know by Equation~\ref{derivative_E_N} that
\begin{align*}
\dot E_N &= -\frac{2\alpha_N}{N^2}\sum_{i=1}^N\frac{\nabla^\bot b(q_i)}{b(q_i)}\cdot \underset{j\neq i}{\sum_{j=1}^N} \nabla_x g_b(q_i,q_j)
\end{align*}
and therefore 
\begin{equation}\label{expression_derivative_EN}
\dot E_N = -2 \alpha_N\iint_{\mathbb{R}^2\times \mathbb{R}^2 \backslash \Delta} \frac{\nabla^\bot b(x)}{b(x)}\cdot \nabla_x g_b(x,y) \dd \omega_N(x) \dd \omega_N (y).
\end{equation}
Now we compute the derivative of the second term:
\begin{claim}\label{expression_derivative_T2}
$T_2$ is Lipschitz and for almost every $t \in [0,T]$, we have
\begin{align*}
\frac{\dd }{\dd t}T_2(t)& \\
=& -2\iint_{\mathbb{R}^2\times \mathbb{R}^2} \left(u(t,x)-\alpha\frac{\nabla^\bot b(x)}{b(x)}\right)\cdot \nabla_x g_b(x,y) \omega(t,x)\dd x\dd \omega_N(t,y) \\
&+ 2\alpha_N\iint_{\mathbb{R}^2\times \mathbb{R}^2} \frac{\nabla^\bot b(x)}{b(x)}\cdot \nabla_x g_b(x,y) \dd \omega_N(x)\omega(t,y)\dd y \\
&+ 2\iint_{(\mathbb{R}^2\times \mathbb{R}^2)\backslash \Delta} u(t,x)\cdot \nabla_x g_b(x,y)\dd \omega_N(x) \dd \omega_N(y).
\end{align*}
\end{claim}

\begin{proof}[Proof of the Claim]
If we use the regularisation $g_b^\eta$ we defined in Claim~\ref{regularisation_g_b}, Equation \eqref{formulation_faible} and if we let $\eta$ tends to zero as we did for the proof of Claim~\ref{expression_derivative_T1}, we can show that $T_2$ is Lipschitz and that for almost every $t \in [0,T]$, we have
\begin{align*}
\frac{\dd T_2}{\dd t} = T_{2,1} + T_{2,2}
\end{align*}
where
\begin{equation}\label{expression_derivative_T21}
\begin{aligned}
T_{2,1} &:= -\frac{2}{N}\sum_{i=1}^N \int_{\mathbb{R}^2} \left(u(t,x)-\alpha\frac{\nabla^\bot b(x)}{b(x)}\right)\cdot \nabla_x g_b(x,q_i)\omega(t,x)\dd x \\
&= -2\iint_{\mathbb{R}^2\times \mathbb{R}^2} \left(u(t,x)-\alpha\frac{\nabla^\bot b(x)}{b(x)}\right)\cdot \nabla_x g_b(x,y) \omega(t,x)\dd x\dd \omega_N(t,y)
\end{aligned}
\end{equation}
and
\begin{align*}
T_{2,2} :=&- \frac{2}{N} \sum_{i=1}^N\dot q_i \cdot \int_{\mathbb{R}^2} \nabla_y g_b(x,q_i) \omega(t,x)\dd x \\
=& -\frac{2}{N} \sum_{i=1}^N\dot q_i \cdot \int_{\mathbb{R}^2} \nabla_x g_b(q_i,x)\omega(t,x)\dd x \\
=& \bigg[\frac{2\alpha_N}{N} \sum_{i=1}^N \frac{\nabla^\bot b(q_i)}{b(q_i)}\cdot \int_{\mathbb{R}^2} \nabla_x g_b(q_i,x) \omega(t,x)\dd x\bigg] \\
&+ \bigg[\frac{2}{N^2}\sum_{i=1}^N\underset{j\neq i}{\sum_{j=1}^N}\frac{1}{b(q_i)}\nabla_x^\bot g_b(q_i,q_j)\cdot \int_{\mathbb{R}^2} \nabla_x g_b(q_i,x) \omega(t,x)\dd x \bigg]\\
=:& T_{2,2,1} + T_{2,2,2}.
\end{align*}
Now we have
\begin{equation}\label{expression_derivative_T221}
\begin{aligned}
T_{2,2,1} &= \frac{2\alpha_N}{N} \sum_{i=1}^N \frac{\nabla^\bot b(q_i)}{b(q_i)}\cdot \int_{\mathbb{R}^2} \nabla_x g_b(q_i,x) \omega(t,x)\dd x  \\
&= 2\alpha_N\iint_{\mathbb{R}^2\times\mathbb{R}^2} \frac{\nabla^\bot b(x)}{b(x)}\cdot \nabla_x g_b(x,y) \dd \omega_N(x)\omega(t,y)\dd y 
\end{aligned}
\end{equation}
and using $\displaystyle{\int_{\mathbb{R}^2} \nabla_x g_b(q_i,y)\omega(y)\dd y = b(q_i)u^\bot(t,q_i)}$ (see Proposition~\ref{proposition_bs_law}), we get
\begin{align*}
T_{2,2,2} =&\frac{2}{N^2} \sum_{i=1}^N u(t,q_i)\cdot\underset{j\neq i}{\sum_{j=1}^N}\nabla_x^\bot g_b(q_i,q_j)\\
=& 2\iint_{(\mathbb{R}^2\times \mathbb{R}^2)\backslash \Delta} u(t,x)\cdot \nabla_x g_b(x,y)\dd \omega_N(x) \dd \omega_N(y).
\end{align*}
Combining the upper equality with \eqref{expression_derivative_T21} and\eqref{expression_derivative_T221} we get the proof of Claim \eqref{expression_derivative_T2}.
\end{proof}

Now remark that
\begin{align*}
\iint_{\mathbb{R}^2\times\mathbb{R}^2} u(x)\cdot \nabla_x g_b(x,y)\dd \omega_N(x) \dd \omega(y)
= \int_{\mathbb{R}^2} u \cdot bu^\bot \dd \omega_N = 0.
\end{align*}
Thus combining Claim~\ref{expression_derivative_T1}, Equation \eqref{expression_derivative_EN} and Claim~\ref{expression_derivative_T2} we obtain Proposition~\ref{time_derivative_F_N}.
\end{proof}

We now compute the derivative of the rescaled modulated energy:
\begin{proof}[Proof of Proposition~\ref{time_derivative_F_N_resc}]
We split $\overline{\mathcal{F}}_{b,N}$ in three terms:
\begin{align*}
\overline{\mathcal{F}}_{b,N} =& \iint_{\mathbb{R}^2\times\mathbb{R}^2} g_b(x,y) \overline{\omega}(t,x)\overline{\omega}(t,y) \dd x \dd y  \\
&-\frac{2}{N}\sum_{i=1}^N \int_{\mathbb{R}^2} g_b(x,\overline{q_i})\overline{\omega}(t,x)\dd x  + \overline{E_N}\\
=:& T_1 + T_2 + \overline{E_N}.
\end{align*}
Let us compute the time derivative of the first term. Using the regularisation $g_b^\eta$ we defined in Claim~\ref{regularisation_g_b} and using \eqref{formulation_faible} and letting $\eta$ tends to zero as we did for the proof of Claim~\ref{expression_derivative_T1}, one can show that $T_1$ is Lipschitz and that for almost every $t \in [0,T]$, we have
\begin{equation}\label{expression_derivative_resc_T1}
\begin{aligned}
\frac{\dd T_1}{\dd t} 
&= -2 \iint_{\mathbb{R}^2\times\mathbb{R}^2} \frac{\nabla^\bot b(x)}{b(x)}\cdot\nabla_x g_b(x,y)\overline{\omega}(t,x)\overline{\omega}(t,y)\dd x \dd y.
\end{aligned}
\end{equation}

For the derivative of $\overline{E_N}$ we rescale Equation \eqref{expression_derivative_EN} to get
\begin{equation}\label{expression_derivative_resc_EN}
\frac{\dd}{\dd t}\overline{E_N} = -2\iint_{\mathbb{R}^2\times \mathbb{R}^2 \backslash \Delta}\frac{\nabla^\bot b(x)}{b(x)}\cdot \nabla_x g_b(x,y) \dd \overline{\omega}_N(x) \dd \overline{\omega}_N (y).
\end{equation}
Now let us compute the derivative of the second term:
\begin{claim}
\begin{equation}\label{expression_derivative_resc_T2}
\begin{aligned}
\frac{\dd T_2}{\dd t} 
=&  2\iint_{\mathbb{R}^2\times\mathbb{R}^2} \frac{\nabla^\bot b(x)}{b(x)} \cdot \nabla_x g_b(x,y) \overline{\omega}(t,x)\dd x\dd \overline{\omega}_N(t,y) \\
&+ 2\iint_{\mathbb{R}^2\times\mathbb{R}^2} \frac{\nabla^\bot b(x)}{b(x)}\cdot \nabla_x g_b(x,y)\dd \overline{\omega}_N(x)\dd \overline{\omega}(y) \\
&+ \frac{2}{N^2\alpha_N} \sum_{i=1}^N\underset{j\neq i}{\sum_{j=1}^N} \frac{v(t,\overline{q_i})}{b(\overline{q_i})}\cdot\nabla_x g_b(\overline{q_i},\overline{q_j}).
\end{aligned}
\end{equation}
\end{claim}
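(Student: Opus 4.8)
The plan is to differentiate $T_2(t)=-\frac{2}{N}\sum_{i=1}^N\int_{\mathbb{R}^2}g_b(x,\overline{q_i}(t))\,\overline{\omega}(t,x)\dd x$ by disentangling its two sources of time dependence. Set $h(t,p):=\int_{\mathbb{R}^2}g_b(x,p)\,\overline{\omega}(t,x)\dd x$, so that $T_2=-\frac{2}{N}\sum_i h(t,\overline{q_i}(t))$. First I would establish that $h$ is locally Lipschitz on $[0,T]\times\mathbb{R}^2$, of class $C^1$ in $p$, with
\[
\nabla_p h(t,p)=v(t,p),\qquad \partial_t h(t,p)=-\int_{\mathbb{R}^2}\nabla_x g_b(x,p)\cdot\frac{\nabla^\bot b(x)}{b(x)}\,\overline{\omega}(t,x)\dd x.
\]
The first identity is differentiation under the integral sign together with the symmetry $g_b(x,y)=g_b(y,x)$ from Proposition~\ref{proposition_bs_law}: it rewrites $\int\nabla_y g_b(x,p)\,\overline{\omega}(t,x)\dd x$ as $\nabla_p\int g_b(p,x)\,\overline{\omega}(t,x)\dd x=\nabla G_b[\overline{\omega}(t)](p)=v(t,p)$, the continuity of $v(t,\cdot)$ being part of Assumption~\ref{assumption_omega}. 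The second identity is obtained exactly as in the proof of Claim~\ref{expression_derivative_T1}: one replaces $g_b$ by the smooth regularisation $g_b^\eta$ of Claim~\ref{regularisation_g_b}, uses $x\mapsto g_b^\eta(x,p)$ as a test function in the weak formulation \eqref{formulation_faible_transport} of the transport equation (extending \eqref{formulation_faible_transport} to smooth test functions of polynomial growth, which is legitimate since $\overline{\omega}$ is compactly supported in $x$), and passes to the limit $\eta\to0$ by dominated convergence, using the growth bounds of Claim~\ref{regularisation_g_b}.

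Once this is in hand, since each $\overline{q_i}$ is smooth on $[0,T]$ by Proposition~\ref{proposition_pv_well_posed}, the map $t\mapsto h(t,\overline{q_i}(t))$ is Lipschitz and, for a.e.\ $t$,
\[
\frac{\dd}{\dd t}h(t,\overline{q_i}(t))=\partial_t h(t,\overline{q_i}(t))+\dot{\overline{q_i}}(t)\cdot\nabla_p h(t,\overline{q_i}(t)).
\]
Summing over $i$, the $\partial_t h$ contribution gives at once the first term $2\iint\frac{\nabla^\bot b(x)}{b(x)}\cdot\nabla_x g_b(x,y)\,\overline{\omega}(t,x)\dd x\,\dd\overline{\omega}_N(t,y)$. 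For the $\dot{\overline{q_i}}$ contribution I substitute the rescaled ODE \eqref{equation_pv_rescaled} and use $\nabla_p h(t,\overline{q_i})=v(t,\overline{q_i})=\int\nabla_x g_b(\overline{q_i},y)\,\overline{\omega}(t,y)\dd y$: the part $-\frac{\nabla^\bot b(\overline{q_i})}{b(\overline{q_i})}$ of $\dot{\overline{q_i}}$ yields, after summing, the second term $2\iint\frac{\nabla^\bot b(x)}{b(x)}\cdot\nabla_x g_b(x,y)\,\dd\overline{\omega}_N(x)\,\dd\overline{\omega}(y)$, and the interaction part $-\frac{1}{N\alpha_N}\sum_{j\neq i}\frac{1}{b(\overline{q_i})}\nabla_x^\bot g_b(\overline{q_i},\overline{q_j})$ yields the last term $\frac{2}{N^2\alpha_N}\sum_i\sum_{j\neq i}\frac{v(t,\overline{q_i})}{b(\overline{q_i})}\cdot\nabla_x g_b(\overline{q_i},\overline{q_j})$, the only care being to keep track of the rotation $\bot$ through $a^\bot\cdot b^\bot=a\cdot b$.

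The main obstacle is, exactly as in the non-rescaled case treated just above, to make rigorous the use of the singular, non-compactly-supported kernel $g_b(\cdot,p)$ as a test function in \eqref{formulation_faible_transport}. This is handled by the smooth approximation $g_b^\eta$ together with a dominated-convergence passage to the limit, relying on the pointwise convergences $g_b^\eta\to g_b$ and $\nabla g_b^\eta\to\nabla g_b$ off the diagonal and on the uniform growth bounds of Claim~\ref{regularisation_g_b}, on Lemma~\ref{estimees_S_b_symmetric} to control the $S_b$-part of $g_b$ by $1+|x|^2+|y|^2$, and on the compact support of $\overline{\omega}$; since the finitely many points $\overline{q_i}(\cdot)$ stay in a fixed compact of $\mathbb{R}^2$ on $[0,T]$, all integrals against $\overline{\omega}_N$ are harmless. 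Everything else is routine bookkeeping once the regularity of $h$ and the two partial-derivative formulas are established.
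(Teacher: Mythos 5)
Your proposal is correct and follows essentially the same route as the paper: regularise $g_b$ by the smooth family $g_b^\eta$ of Claim~\ref{regularisation_g_b}, test the weak formulation \eqref{formulation_faible_transport} with $g_b^\eta(\cdot,p)$, pass to the limit $\eta\to 0$ by dominated convergence exactly as in Claim~\ref{expression_derivative_T1}, then differentiate along the trajectories, substitute the ODE \eqref{equation_pv_rescaled} and identify $\int\nabla_x g_b(\overline{q_i},x)\overline{\omega}(t,x)\dd x = v(t,\overline{q_i})$ through the symmetry of $g_b$. The only caveat (shared with the paper's own writing of the claim) is that a literal substitution of \eqref{equation_pv_rescaled} leaves $\nabla_x^\bot g_b(\overline{q_i},\overline{q_j})$ dotted with $v(t,\overline{q_i})/b(\overline{q_i})$ in the last term, and your identity $a^\bot\cdot b^\bot=a\cdot b$ does not remove this single $\bot$ since $v$ itself carries no rotation; this perp bookkeeping is immaterial for the subsequent Grönwall estimates, which only use bounds on $\norm{v}_{W^{1,\infty}}$ and $|\nabla_x g_b|$.
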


\begin{proof}[Proof of \eqref{expression_derivative_resc_T2}]
Using the regularisation $g_b^\eta$ we defined in Claim~\ref{regularisation_g_b} and using \eqref{formulation_faible} and letting $\eta$ tends to zero as we did for the proof of Claim~\ref{expression_derivative_T1}, one can show that $T_2$ is Lipschitz and that for almost every $t \in [0,T]$, we have
\begin{align*}
\frac{\dd T_2}{\dd t} = T_{2,1} + T_{2,2}
\end{align*}
where
\begin{equation*}
\begin{aligned}
T_{2,1} &:= \frac{2}{N}\sum_{i=1}^N \int_{\mathbb{R}^2} \frac{\nabla^\bot b(x)}{b(x)} \cdot \nabla_x g_b(x,q_i)\overline{\omega}(t,x)\dd x \\
&= 2\iint_{\mathbb{R}^2\times\mathbb{R}^2} \frac{\nabla^\bot b(x)}{b(x)} \cdot \nabla_x g_b(x,y) \overline{\omega}(t,x)\dd x\dd \overline{\omega}_N(t,y)
\end{aligned}
\end{equation*}
and
\begin{align*}
T_{2,2} :=&- \frac{2}{N} \sum_{i=1}^N \dot{\overline{q_i}} \cdot \int_{\mathbb{R}^2} \nabla_y g_b(x,q_i) \overline{\omega}(t,x)\dd x \\ 
=&-\frac{2}{N} \sum_{i=1}^N\dot{\overline{q_i}} \cdot v(t,\overline{q_i}) \\
=& \frac{2}{N}\sum_{i=1}^N v(t,\overline{q_i})\cdot\bigg[\frac{\nabla^\bot b(\overline{q_i})}{b(\overline{q_i})} 
+\frac{1}{N\alpha_N}\underset{j\neq i}{\sum_{j=1}^N}\frac{1}{b(\overline{q_i})}\nabla_x g_b(\overline{q_i},\overline{q_j})\bigg] \\
=& 2\iint_{(\mathbb{R}^2\times\mathbb{R}^2)\backslash\Delta} \frac{\nabla^\bot b(x)}{b(x)}\cdot \nabla_x g_b(x,y)\dd \overline{\omega}_N(x)\dd \overline{\omega}(y) \\
&+ \frac{2}{N^2\alpha_N} \sum_{i=1}^N\underset{j\neq i}{\sum_{j=1}^N} \frac{v(t,\overline{q_i})}{b(\overline{q_i})}\cdot\nabla_x g_b(\overline{q_i},\overline{q_j})
\end{align*}
and thus we have \eqref{expression_derivative_resc_T2}.
\end{proof}

Combining Equations \eqref{expression_derivative_resc_T1}, \eqref{expression_derivative_resc_EN} and \eqref{expression_derivative_resc_T2} we get \eqref{time_derivative_F_N_resc}.
\end{proof}

\section{Properties of the modulated energy}\label{section:5}

For $\displaystyle{0 < \eta < 1}$, we denote
\begin{equation*}
H_{N,\eta} := G_b\left[\frac{1}{N}\sum_{i=1}^N \widetilde{\delta}_{q_i}^{(\eta)} - \omega\right].
\end{equation*}
If $b=1$ this quantity is the electric potential introduced by Serfaty in \cite[Equation~(3.12)]{Serfaty} divided by $N$.

\begin{proposition}\label{proposition_pre_coerc}
Let $\omega \in \mathcal{P}(\mathbb{R}^2)\cap L^\infty(\mathbb{R}^2)$ with compact support and $q_1,...,q_N \in \mathbb{R}^2$ be such that $q_i \neq q_j$ if $i \neq j$. Then the following inequality holds:
\begin{multline*}
\int_{\mathbb{R}^2} \frac{1}{b}|\nabla H_{N,\eta}|^2 + \frac{C_b}{N^2}\sum_{1 \leq i \neq j \leq N}(g(q_i - q_j) - g^{(\eta)}(q_i-q_j)) \\
\leq \mathcal{F}_b(Q_N,\omega) + C_b\bigg(\frac{g(\eta)}{N} + I(Q_N)(\eta+N^{-1})
+ \norm{\omega}_{L^1((1+|x|)\dd x)\cap L^\infty}g(\eta)\eta\bigg)
\end{multline*}
where $g^{(\eta)}$ is defined by \eqref{definition_g_eta}.
\end{proposition}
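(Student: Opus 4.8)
The plan is to start from the identity $H_{N,\eta} = G_b[\mu_N^\eta - \omega]$, where $\mu_N^\eta := \frac1N\sum_{i=1}^N \widetilde\delta_{q_i}^{(\eta)}$. Each $\widetilde\delta_{q_i}^{(\eta)}$ has total mass $1$ by \eqref{definition_delta_tilde_q}--\eqref{definition_m_b}, so $\mu_N^\eta - \omega$ has zero average. Since $G_b$ is the Green operator of $-\div(\tfrac1b\nabla\cdot)$ (Proposition~\ref{proposition_bs_law}), $\nabla G_b$ maps $\dot H^{-1}$ to $L^2$ (Lemma~\ref{nabla_G_b_bounded_H_minus_one}), and $\nabla H_{N,\eta} = \mathcal{O}(|x|^{-2})$ with $H_{N,\eta}$ bounded at infinity by Proposition~\ref{theorem_asymptotic_velocity_field_lake} (the total charge vanishing), integrating the equation $-\div(\tfrac1b\nabla H_{N,\eta}) = \mu_N^\eta - \omega$ against $H_{N,\eta}$ and discarding the boundary term at infinity yields the energy identity
\[
\int_{\mathbb{R}^2}\frac1b|\nabla H_{N,\eta}|^2 = \iint_{\mathbb{R}^2\times\mathbb{R}^2} g_b(x,y)\,\dd(\mu_N^\eta - \omega)(x)\,\dd(\mu_N^\eta - \omega)(y),
\]
all integrals being finite because the only singular contribution, the self-interaction of a single smeared point $\iint g_b\,\dd\widetilde\delta_{q_i}^{(\eta)}(x)\,\dd\widetilde\delta_{q_i}^{(\eta)}(y)$, is of order $g(\eta)$.

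I would then expand both this quantity and $\mathcal{F}_b(Q_N,\omega)$ into their three bilinear pieces (point--point, point--$\omega$, $\omega$--$\omega$) and subtract; the $\omega$--$\omega$ terms cancel exactly and there remains
\[
\int_{\mathbb{R}^2}\frac1b|\nabla H_{N,\eta}|^2 - \mathcal{F}_b(Q_N,\omega) = (A - A') + B - 2(C - C'),
\]
where $A = \frac1{N^2}\sum_{i\ne j}\iint g_b(x,y)\,\dd\widetilde\delta_{q_i}^{(\eta)}(x)\,\dd\widetilde\delta_{q_j}^{(\eta)}(y)$, $A' = \frac1{N^2}\sum_{i\ne j}g_b(q_i,q_j)$, $B = \frac1{N^2}\sum_i\iint g_b(x,y)\,\dd\widetilde\delta_{q_i}^{(\eta)}(x)\,\dd\widetilde\delta_{q_i}^{(\eta)}(y)$ is the diagonal self-energy omitted by $\mathcal{F}_b$, $C = \iint g_b\,\dd\mu_N^\eta\,\dd\omega$ and $C' = \iint g_b\,\dd\big(\tfrac1N\sum_i\delta_{q_i}\big)\,\dd\omega$.

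The heart of the argument is the Coulomb part $\sqrt{b(x)b(y)}\,g(x-y)$ of $g_b$ inside $A - A'$ and $B$. Using $\sqrt{b}\,\dd\widetilde\delta_{q_i}^{(\eta)} = m_b(q_i,\eta)\,\dd\delta_{q_i}^{(\eta)}$ and a single application of \eqref{egalite_convolution_g_eta}, one sees that for $i\ne j$ the quantity $\iint \sqrt{b(x)b(y)}\,g(x-y)\,\dd\widetilde\delta_{q_i}^{(\eta)}(x)\,\dd\widetilde\delta_{q_j}^{(\eta)}(y)$ equals $m_b(q_i,\eta)\,m_b(q_j,\eta)$ times the average of $g^{(\eta)}$ over the sphere $\partial B(q_i-q_j,\eta)$, while the same formula with $i=j$ gives exactly $m_b(q_i,\eta)^2\,g(\eta)$, which controls $B$. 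Since $g^{(\eta)} = \min(g,g(\eta))$ is superharmonic, this spherical average is at most $g^{(\eta)}(q_i-q_j)$, hence at most $g(q_i-q_j)$; therefore
\[
\sum_{i\ne j}\sqrt{b(q_i)b(q_j)}\Big(\text{avg}_{\partial B(q_i-q_j,\eta)}\,g^{(\eta)} - g(q_i-q_j)\Big) \le -(\inf b)\sum_{i\ne j}\big(g(q_i-q_j) - g^{(\eta)}(q_i-q_j)\big),
\]
which is precisely the truncation term moved to the left-hand side. Replacing $m_b(q_i,\eta)m_b(q_j,\eta)$ by $\sqrt{b(q_i)b(q_j)}$ costs at most $C_b\eta$ times that spherical average per term by \eqref{estimee_m_b}, which after summation is $\le C_b(\eta g(\eta) + \eta(1+I(Q_N)))$ on splitting according to whether $|q_i-q_j|$ is below or above $1$ and using $|\ln r|\le r$ for $r\ge1$. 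I expect this superharmonicity/sign step, together with carefully tracking the non-translation-invariance of $g_b$ through every error term, to be the main difficulty.

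It then remains to bound the $S_b$-contributions to $A-A'$ and $B$ and the full difference $C - C'$, which is routine given: the Lipschitz bound $\norm{\nabla_x S_b(\cdot,y)}_{L^\infty}\le C_b(1+|y|)$ and the growth $|S_b(x,y)|\le C_b(1+|x|^2+|y|^2)$ (Lemmas~\ref{estimees_S_b_not_necessarily_symmetric} and \ref{estimees_S_b_symmetric}); the estimate $|m_b(y,\eta)-\sqrt{b(y)}|\le C_b\eta$ of \eqref{estimee_m_b}; the elementary $\int_{B(0,\eta)}(g - g(\eta)) = \tfrac{\eta^2}{4}$ together with $g^{(\eta)} - g = -(g-g(\eta))\mathbf{1}_{B(0,\eta)}$; and $\frac1N\sum_i|q_i|\le 1+I(Q_N)$. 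These give $|B|\le C_b\big(\tfrac{g(\eta)}{N} + \tfrac{I(Q_N)}{N}\big)$ and, after splitting $\int g_b(x,y)\,\dd\widetilde\delta_{q_i}^{(\eta)}(x) - g_b(q_i,y)$ into an $S_b$-Lipschitz part, an $(m_b-\sqrt{b})g^{(\eta)}$ part and a $\sqrt{b}\,(g^{(\eta)}-g)$ part integrated against $\omega$, the bound $|C-C'|\le C_b\big(\norm{\omega}_{L^1((1+|x|)\dd x)\cap L^\infty}\,g(\eta)\eta + \eta(1+I(Q_N))\big)$. Collecting all contributions, bringing the truncation term to the left, and using $g(\eta)\ge1$ and $\norm{\omega}_{L^1\cap L^\infty}\ge1$ for small $\eta$ to merge the leftover error terms, one obtains the claimed inequality with right-hand side $C_b\big(\tfrac{g(\eta)}{N} + I(Q_N)(\eta+N^{-1}) + \norm{\omega}_{L^1((1+|x|)\dd x)\cap L^\infty}\,g(\eta)\eta\big)$.
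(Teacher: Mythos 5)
Your proposal is correct and follows essentially the same route as the paper: smear each Dirac into $\widetilde{\delta}_{q_i}^{(\eta)}$, identify the smeared modulated energy with $\int \frac{1}{b}|\nabla H_{N,\eta}|^2$ via the $\dot H^{-1}\to L^2$ bound and the decay of Proposition~\ref{theorem_asymptotic_velocity_field_lake}, and compare term by term (off-diagonal, diagonal self-energy, cross terms) using \eqref{egalite_convolution_g_eta}, \eqref{estimee_m_b} and the $S_b$ estimates. Your superharmonicity argument for the spherical average of $g^{(\eta)}$ is just a rephrasing of the paper's step combining \eqref{egalite_convolution_g_eta} with $g \geq g^{(\eta)}$, so the two proofs coincide in substance.
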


From this proposition we see that even if it is not necessarily positive, the modulated energy is bounded from below by some negative power of $N$ (provided that $(I(Q_N))$ is bounded). We will also prove the three following corollaries:
\begin{corollary}\label{corollary_counting}
If $\omega$ and $Q_N$ satisfy the hypothesis of Proposition~\ref{proposition_pre_coerc}, then there exists $c > 0$ such that
\begin{align*}
\frac{c}{N^2}|\{(q_i,q_j) ; |q_i - q_j| \leq \epsilon\}| \leq& \mathcal{F}_b(Q_N,\omega) + C_b\bigg(\frac{g(\epsilon)}{N} + I(Q_N)(\epsilon+N^{-1}) \\
&+ \norm{\omega}_{L^1((1+|x|)\dd x)\cap L^\infty}g(\epsilon)\epsilon\bigg).
\end{align*}
\end{corollary}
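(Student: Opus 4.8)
The plan is to read this off directly from Proposition~\ref{proposition_pre_coerc}, the one idea being to choose the truncation scale $\eta$ to be a fixed multiple of $\epsilon$, slightly larger than $\epsilon$, so that the defect $g-g^{(\eta)}$ does not degenerate on the pairs $(q_i,q_j)$ one wants to count.

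First I would record that $g^{(\eta)}\le g$ pointwise: the two functions coincide on $|x|\ge\eta$, while for $|x|\le\eta$ one has $g(x)=-\frac{1}{2\pi}\ln|x|\ge-\frac{1}{2\pi}\ln\eta=g^{(\eta)}(x)$. Consequently, in Proposition~\ref{proposition_pre_coerc} both $\int_{\mathbb{R}^2}\frac1b|\nabla H_{N,\eta}|^2$ and every summand of $\frac{C_b}{N^2}\sum_{1\le i\neq j\le N}\big(g(q_i-q_j)-g^{(\eta)}(q_i-q_j)\big)$ are nonnegative. Discarding the gradient term and keeping in the sum only the pairs with $|q_i-q_j|\le\epsilon$, the proposition yields, for every $0<\eta<1$,
\begin{equation*}
\frac{C_b}{N^2}\sum_{\substack{1\le i\neq j\le N\\ |q_i-q_j|\le\epsilon}}\big(g(q_i-q_j)-g^{(\eta)}(q_i-q_j)\big)\le \mathcal{F}_b(Q_N,\omega)+C_b\Big(\tfrac{g(\eta)}{N}+I(Q_N)(\eta+N^{-1})+\norm{\omega}_{L^1((1+|x|)\dd x)\cap L^\infty}\,g(\eta)\eta\Big).
\end{equation*}

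Then I would set $\eta=e\epsilon$ (admissible for $\epsilon<e^{-1}$, which is the regime of interest): for a pair with $|q_i-q_j|\le\epsilon$ one gets $g(q_i-q_j)-g^{(e\epsilon)}(q_i-q_j)=\frac{1}{2\pi}\big(\ln(e\epsilon)-\ln|q_i-q_j|\big)\ge\frac{1}{2\pi}$, so the left-hand side above is bounded below by $\frac{C_b}{2\pi N^2}\,\big|\{(q_i,q_j)\,;\,|q_i-q_j|\le\epsilon\}\big|$. On the right-hand side, $g(e\epsilon)=g(\epsilon)-\frac{1}{2\pi}\le g(\epsilon)$, $e\epsilon\le 3\epsilon$ and $g(e\epsilon)\,e\epsilon\le e\,g(\epsilon)\,\epsilon$, so that the three error terms are, up to numerical factors absorbed into $C_b$, exactly those of the statement; setting $c:=C_b/(2\pi)$ completes the proof.

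I do not expect a real obstacle here: the statement is essentially a repackaging of Proposition~\ref{proposition_pre_coerc}. The only point needing care is that $\eta$ must be taken strictly (by a fixed factor) larger than $\epsilon$, since otherwise the defect $g(q_i-q_j)-g^{(\eta)}(q_i-q_j)$ vanishes at distance exactly $\epsilon$ and the lower bound on the count collapses; with that choice made, what remains is the routine bookkeeping of the three error terms under the substitution $\eta=e\epsilon$.
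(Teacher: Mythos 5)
Your proof is correct and follows essentially the same route as the paper: the paper applies Proposition~\ref{proposition_pre_coerc} with $\eta=2\epsilon$ (so the defect on close pairs is bounded below by $\frac{\ln 2}{2\pi}$, as in \cite[Lemma~3.7]{Rosenzweig}), while you take $\eta=e\epsilon$ and lower bound by $\frac{1}{2\pi}$ — an inessential difference, with the same use of the pointwise nonnegativity of $g-g^{(\eta)}$ and the same absorption of the factor relating $g(\eta),\eta$ to $g(\epsilon),\epsilon$ into $C_b$.
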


\begin{corollary}\label{corollary_coercivity}
Let $\alpha \in (0,1)$ and $\xi$ be a test function (for example smooth with compact support or in the Schwartz space), then if $\omega$ and $Q_N$ satisfy the hypothesis of Proposition~\ref{proposition_pre_coerc} we have

\begin{align*}
\left|\int_{\mathbb{R}^2} \xi\left(\frac{1}{N}\sum_{i=1}^N \delta_{q_i} - \omega\right)\right| \leq& C_b|\xi|_{\mathcal{C}^{0,\alpha}}N^{-\alpha}
+ C_b \left(\int_{\mathbb{R}^2} \frac{1}{b} |\nabla \xi|^2\right)^\frac{1}{2}\bigg(\mathcal{F}_b(\omega,Q_N) \\
&+ \frac{\ln(N)}{N} + I(Q_N)N^{-1}\\
&+ \norm{\omega}_{L^1((1+|x|)\dd x)\cap L^\infty}\frac{\ln(N)}{N}\bigg)^\frac{1}{2}.
\end{align*}
In particular, there exists $\beta > 0$ such that for all $s < -1$,
\begin{align*}
\norm{\frac{1}{N}\sum_{i=1}^N \delta_{q_i} - \omega}_{H^s} 
\leq& C_b((1+I(Q_N)+\norm{\omega}_{L^1((1+|x|)\dd x)\cap L^\infty})N^{-\beta} \\ &+ \mathcal{F}_b(\omega,Q_N)).
\end{align*}
\end{corollary}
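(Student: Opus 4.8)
The plan is to establish the displayed Hölder-type estimate first, and then to deduce the $H^s$ bound from it by duality. Write $\mu_N:=\frac1N\sum_{i=1}^N\delta_{q_i}$ and, for a parameter $0<\eta<1$ to be optimised later, split
\[
\int_{\mathbb{R}^2}\xi\,\dd(\mu_N-\omega)=\int_{\mathbb{R}^2}\xi\,\dd\Big(\tfrac1N\sum_{i=1}^N\widetilde{\delta}_{q_i}^{(\eta)}-\omega\Big)+\frac1N\sum_{i=1}^N\int_{\mathbb{R}^2}\xi\,\dd\big(\delta_{q_i}-\widetilde{\delta}_{q_i}^{(\eta)}\big).
\]
For the last sum, the point is that $\widetilde{\delta}_{q_i}^{(\eta)}$ is normalised exactly so that $m_b(q_i,\eta)\int\frac{1}{\sqrt{b}}\,\dd\delta_{q_i}^{(\eta)}=1$ (by \eqref{definition_delta_tilde_q}--\eqref{definition_m_b}); the constant value $\xi(q_i)$ is therefore reproduced and $\int\xi\,\dd(\delta_{q_i}-\widetilde{\delta}_{q_i}^{(\eta)})=-m_b(q_i,\eta)\int_{\partial B(q_i,\eta)}\frac{\xi(z)-\xi(q_i)}{\sqrt{b(z)}}\,\dd\delta_{q_i}^{(\eta)}(z)$. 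Since $|\xi(z)-\xi(q_i)|\le|\xi|_{\mathcal{C}^{0,\alpha}}\eta^\alpha$ on $\partial B(q_i,\eta)$, $m_b(q_i,\eta)\le C_b$ by \eqref{estimee_m_b}, and $\inf b>0$, this term is $O(C_b|\xi|_{\mathcal{C}^{0,\alpha}}\eta^\alpha)$.

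For the main term, $H_{N,\eta}=G_b[\tfrac1N\sum_i\widetilde{\delta}_{q_i}^{(\eta)}-\omega]$ solves $-\div(\tfrac1b\nabla H_{N,\eta})=\tfrac1N\sum_i\widetilde{\delta}_{q_i}^{(\eta)}-\omega$ by Proposition~\ref{proposition_bs_law}, and $\nabla H_{N,\eta}\in L^2$ because the mean-zero measure on the right-hand side lies in $\dot H^{-1}$, so Lemma~\ref{nabla_G_b_bounded_H_minus_one} applies. Testing this equation against $\xi$ and integrating by parts,
\[
\int_{\mathbb{R}^2}\xi\,\dd\Big(\tfrac1N\sum_{i=1}^N\widetilde{\delta}_{q_i}^{(\eta)}-\omega\Big)=\int_{\mathbb{R}^2}\frac1b\,\nabla\xi\cdot\nabla H_{N,\eta}\le\Big(\int_{\mathbb{R}^2}\frac1b|\nabla\xi|^2\Big)^{1/2}\Big(\int_{\mathbb{R}^2}\frac1b|\nabla H_{N,\eta}|^2\Big)^{1/2}
\]
by Cauchy--Schwarz. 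Now Proposition~\ref{proposition_pre_coerc}, together with the fact that $g\ge g^{(\eta)}$ pointwise (so the nonnegative term $\tfrac{C_b}{N^2}\sum_{i\ne j}(g(q_i-q_j)-g^{(\eta)}(q_i-q_j))$ it contains may be discarded), bounds $\int_{\mathbb{R}^2}\tfrac1b|\nabla H_{N,\eta}|^2$ by $\mathcal{F}_b(Q_N,\omega)+C_b\big(\tfrac{g(\eta)}{N}+I(Q_N)(\eta+N^{-1})+\norm{\omega}_{L^1((1+|x|)\dd x)\cap L^\infty}g(\eta)\eta\big)$. Choosing $\eta=N^{-1}$, so that $g(\eta)=\tfrac1{2\pi}\ln N$, and combining the three estimates produces the first displayed inequality.

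For the $H^s$ bound with $s<-1$ I would argue by duality, writing $\norm{\mu_N-\omega}_{H^s}=\sup\{|\langle\mu_N-\omega,\xi\rangle|:\norm{\xi}_{H^{-s}}\le1\}$. Since $-s>1$, the two-dimensional Sobolev embedding gives $|\xi|_{\mathcal{C}^{0,\alpha}}\le C\norm{\xi}_{H^{-s}}$ for any fixed $\alpha\in(0,1)$ with $1+\alpha<-s$, and also $\int_{\mathbb{R}^2}\tfrac1b|\nabla\xi|^2\le(\inf b)^{-1}\norm{\nabla\xi}_{L^2}^2\le C_b\norm{\xi}_{H^{-s}}^2$, using $|\zeta|^2\le(1+|\zeta|^2)^{-s}$ for $s\le-1$. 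Inserting these into the first inequality and taking the supremum over such $\xi$ yields
\[
\norm{\mu_N-\omega}_{H^s}\le C_bN^{-\alpha}+C_b\Big(\mathcal{F}_b(\omega,Q_N)+\tfrac{\ln N}{N}+I(Q_N)N^{-1}+\norm{\omega}_{L^1((1+|x|)\dd x)\cap L^\infty}\tfrac{\ln N}{N}\Big)^{1/2},
\]
and the stated form follows by distributing the square root over the nonnegative error part, using $\sqrt{\ln N/N}\le CN^{-\beta'}$ for some $\beta'\in(0,\tfrac12)$ and $\sqrt{aN^{-1}}\le(1+a)N^{-1/2}$, and relabelling $\beta$.

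I expect the only mildly delicate points to be: (i) the integration by parts, which requires $\nabla H_{N,\eta}\in L^2$; this is precisely where Lemma~\ref{nabla_G_b_bounded_H_minus_one} and the $\dot H^{-1}$-membership of the smeared empirical measure (already used in the proof of Proposition~\ref{proposition_pre_coerc}) enter; and (ii) the final bookkeeping, keeping track of the way the $I(Q_N)$ and $\norm{\omega}$ factors propagate once the square root is split over the error terms. Everything else is a direct assembly of Proposition~\ref{proposition_bs_law}, Proposition~\ref{proposition_regularisation}, Proposition~\ref{proposition_pre_coerc} and Lemma~\ref{nabla_G_b_bounded_H_minus_one}.
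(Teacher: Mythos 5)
Your proposal is correct and follows essentially the same route as the paper: the same splitting into the smearing error (bounded by $C_b|\xi|_{\mathcal{C}^{0,\alpha}}\eta^{\alpha}$ using the normalisation of $\widetilde{\delta}_{q_i}^{(\eta)}$) plus the term $\int_{\mathbb{R}^2}\frac{1}{b}\nabla\xi\cdot\nabla H_{N,\eta}$, then Cauchy--Schwarz, Proposition~\ref{proposition_pre_coerc} with the choice $\eta=N^{-1}$, and Sobolev embedding/duality for the $H^s$ statement. The only cosmetic differences are that the paper justifies the identity $\int\xi\,\dd\big(\frac{1}{N}\sum_i\widetilde{\delta}_{q_i}^{(\eta)}-\omega\big)=\int\frac{1}{b}\nabla\xi\cdot\nabla H_{N,\eta}$ by approximating the measure by smooth mean-zero densities in $\dot H^{-1}$ (as in Claim~\ref{coerc_T1}) rather than by a direct integration by parts, and that the final bookkeeping turning the square-root bound into the displayed $H^s$ estimate is left just as implicit in your write-up as in the paper's one-line ``follows from Sobolev embeddings''.
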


\begin{corollary}\label{corollary_weak_star_cv}
If $\omega$ and $Q_N$ satisfy the hypothesis of Proposition~\ref{proposition_pre_coerc} and if $(I(Q_N))$ is bounded, then the two following assertions are equivalent:
\begin{enumerate}

\item $\mathcal{F}_b(\omega,Q_N) \Tend{N}{+\infty} 0$.

\item $\displaystyle{\frac{1}{N}\sum_{i=1}^N \delta_{q_i}} \wstarcv{N}{+\infty} \omega$ for the weak-$\ast$ topology of probability measures and
\begin{equation*}
\frac{1}{N^2} \sum_{1 \leq i \neq j \leq N} g_b(q_i,q_j) \longrightarrow \iint_{\mathbb{R}^2\times \mathbb{R}^2} g_b(x,y)\omega(x)\omega(y)\dd x \dd y.
\end{equation*}

\end{enumerate}
\end{corollary}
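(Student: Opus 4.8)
The plan is to establish an algebraic identity relating the modulated energy, the interaction energy and the potential $G_b[\omega]$ tested against the empirical measure, and then to isolate a single convergence lemma that handles both implications. \textbf{Step 1 (the identity).} Write $\omega_N:=\frac{1}{N}\sum_{i=1}^N\delta_{q_i}$. Expanding \eqref{definition_modulated_energy} and using that $\omega\in L^\infty$ is atomless — so $\omega_N\otimes\omega$ and $\omega\otimes\omega$ both give zero mass to $\Delta$, while $\iint_{(\mathbb{R}^2\times\mathbb{R}^2)\backslash\Delta}g_b\dd\omega_N\dd\omega_N=\frac{1}{N^2}\sum_{i\neq j}g_b(q_i,q_j)$ since $q_i\neq q_j$ for $i\neq j$ — together with $\int g_b(\cdot,y)\dd\omega(y)=G_b[\omega]$ from \eqref{existence_noyau_green}, I get, writing $\mathcal{I}:=\iint_{\mathbb{R}^2\times\mathbb{R}^2}g_b(x,y)\omega(x)\omega(y)\dd x \dd y$,
\begin{equation*}
\mathcal{F}_b(Q_N,\omega)=\frac{1}{N^2}\sum_{1\le i\neq j\le N}g_b(q_i,q_j)-2\int_{\mathbb{R}^2}G_b[\omega]\dd\omega_N+\mathcal{I}.
\end{equation*}
This is legitimate since $G_b[\omega]$ is continuous ($G_b[\omega]\in W^{2,p}_{-1}$ for some $p>2$ by Proposition~\ref{proposition_bs_law}, hence $G_b[\omega]\in\mathcal{C}^1_{\loc}$ by Morrey's embedding), so $\int G_b[\omega]\dd\omega_N=\frac{1}{N}\sum_i G_b[\omega](q_i)$ makes sense.

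\textbf{Step 2 (the convergence lemma).} I claim that if $\omega_N\wstar\omega$ in $\mathcal{P}(\mathbb{R}^2)$ and $\sup_N I(Q_N)<\infty$, then $\int_{\mathbb{R}^2}G_b[\omega]\dd\omega_N\to\int_{\mathbb{R}^2}G_b[\omega]\dd\omega$. By \eqref{bound_stream_function} of Proposition~\ref{theorem_asymptotic_velocity_field_lake}, $|G_b[\omega](x)|\le C(1+|x|^\delta)$ for some $\delta\in(0,1)$. Fix a smooth cutoff $\chi_R$ with $\chi_R\equiv1$ on $B(0,R)$ and $\supp\chi_R\subset B(0,2R)$; then $G_b[\omega]\chi_R$ is bounded and continuous, so $\int G_b[\omega]\chi_R\dd\omega_N\to\int G_b[\omega]\chi_R\dd\omega$ for each fixed $R$, while, since $|x|^\delta\le R^{\delta-2}|x|^2$ for $|x|>R$,
\begin{equation*}
\left|\int_{\mathbb{R}^2}G_b[\omega](1-\chi_R)\dd\nu\right|\le C\int_{|x|>R}(1+|x|^\delta)\dd\nu(x)\le C R^{\delta-2}(1+I(Q_N))
\end{equation*}
for $\nu=\omega_N$, and likewise for $\nu=\omega$ (which has compact support, so $\int|x|^2\dd\omega<\infty$). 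Letting $R\to\infty$ after $N\to\infty$, using $\sup_N I(Q_N)<\infty$ and $\delta-2<0$, proves the claim.

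\textbf{Step 3 (conclusion).} For $(2)\Rightarrow(1)$: insert $(2)$ into the Step~1 identity; the first term converges to $\mathcal{I}$ by hypothesis, the middle term converges to $2\mathcal{I}$ by Step~2, and $\mathcal{I}$ is constant, so $\mathcal{F}_b(Q_N,\omega)\to\mathcal{I}-2\mathcal{I}+\mathcal{I}=0$. For $(1)\Rightarrow(2)$: Corollary~\ref{corollary_coercivity} bounds $\left|\int\xi\dd(\omega_N-\omega)\right|$, for every test function $\xi$, by a quantity that tends to $0$ once $\mathcal{F}_b(Q_N,\omega)\to0$ and $\sup_N I(Q_N)<\infty$, so $\omega_N\to\omega$ tested against $\mathcal{C}_c^\infty$; since $\omega_N(\{|x|>R\})\le R^{-2}I(Q_N)$, the sequence $(\omega_N)$ is tight, which upgrades this to weak-$\ast$ convergence of probability measures; Step~2 and the Step~1 identity, solved for $\frac{1}{N^2}\sum_{i\neq j}g_b(q_i,q_j)$, then give the convergence of the interaction energy.

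I expect the main obstacle to be exactly the unboundedness of $g_b$, and hence of $G_b[\omega]$: weak-$\ast$ convergence of probability measures does not by itself control $\int G_b[\omega]\dd\omega_N$ because $G_b[\omega]$ grows at infinity. It is precisely to get the uniform integrability of the tails in Step~2 that the hypothesis $\sup_N I(Q_N)<\infty$ (not needed in \cite{Duerinckx,Serfaty}) is imposed, and the sublinear growth rate $|x|^\delta$ with $\delta<1$ from Proposition~\ref{theorem_asymptotic_velocity_field_lake} is what makes a second-moment bound sufficient. A minor point is the bookkeeping of $\Delta$ in Step~1: removing the diagonal matters only for the $\omega_N\otimes\omega_N$ term (where $g_b(q_i,q_i)=+\infty$), producing the sum over $i\neq j$, since $\omega$ is atomless.
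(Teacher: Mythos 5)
Your proposal is correct and follows essentially the same route as the paper: the identity in your Step 1 is exactly the paper's decomposition \eqref{split_mod_energy_3}, your Step 2 reproduces its tail estimate based on the sublinear growth $|\psi(x)|\le C(1+|x|^{\delta})$ from Proposition~\ref{theorem_asymptotic_velocity_field_lake} together with the bound on $I(Q_N)$ and continuity of $\psi$ on a cutoff region, and the implication $(1)\Rightarrow(2)$ uses Corollary~\ref{corollary_coercivity} plus tightness exactly as in the paper. No gaps.
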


Proposition~\ref{proposition_pre_coerc} and Corollaries~\ref{corollary_counting}, \ref{corollary_coercivity} and \ref{corollary_weak_star_cv} are analogues of other results obtained  in \cite{Duerinckx,NguyenRosenzweigSerfaty,Serfaty}. Proposition~\ref{proposition_pre_coerc} is an equivalent of \cite[Proposition~3.3]{Serfaty} or \cite[Proposition~2.2]{NguyenRosenzweigSerfaty} and the proof will follow the same steps: regularise the modulated energy and control the remainders. Some terms are very similar to the ones obtained in the Coulomb case whereas other terms are specific to the lake kernel and will be handled using the estimates proved in Section~\ref{section:2}.

Corollary~\ref{corollary_counting} is an equivalent of \cite[Corollary~2.3]{NguyenRosenzweigSerfaty} and Corollary~\ref{corollary_coercivity} is an equivalent of \cite[Proposition~3.6]{Serfaty}. Both can be deduced from Proposition~\ref{proposition_pre_coerc} in the same way \cite[Corollary~2.3]{NguyenRosenzweigSerfaty} and \cite[Proposition~3.6]{Serfaty}  are deduced from \cite[Proposition~3.3]{Serfaty} or \cite[Proposition~2.2]{NguyenRosenzweigSerfaty}.

Corollary~\ref{corollary_weak_star_cv} is an equivalent of \cite[Lemma~2.6]{Duerinckx} and its proof proceeds in the same way. Due to the bound we assumed on the moment of inertia, tightness issues will be easier to handle.

Let us begin by proving the main proposition of this section:
\begin{proof}[Proof of Proposition~\ref{proposition_pre_coerc}]
Let us regularise the modulated energy \eqref{definition_modulated_energy} using the regularisation of the dirac mass $\widetilde{\delta}$ defined in \eqref{definition_delta_tilde_q}. We have
\begin{align*}
&\mathcal{F}_b(Q_N,\omega) = \\
&\iint_{\mathbb{R}^2\times \mathbb{R}^2} g_b(x,y)\dd \left(\frac{1}{N}\sum_{i=1}^N \widetilde{\delta}_{q_i}^{(\eta)} - \omega\right)(x)\dd\left(\frac{1}{N}\sum_{i=1}^N \widetilde{\delta}_{q_i}^{(\eta)} - \omega\right)(y) \\
&+\frac{1}{N^2}\underset{1 \leq i \neq j \leq N}{\sum}\iint_{\mathbb{R}^2\times \mathbb{R}^2}  (\sqrt{b(q_i)b(q_j)}g(q_i - q_j) \\
&- \sqrt{b(x)b(y)}g(x-y))\dd \widetilde{\delta}_{q_i}^{(\eta)}(x)\dd \widetilde{\delta}_{q_j}^{(\eta)}(y) \\
&+ \frac{1}{N^2}\underset{1 \leq i \neq j \leq N}{\sum} \iint_{\mathbb{R}^2\times \mathbb{R}^2} (S_b(q_i,q_j) - S_b(x,y))\dd \widetilde{\delta}_{q_i}^{(\eta)}(x)\dd \widetilde{\delta}_{q_j}^{(\eta)}(y) \\
&- \frac{1}{N^2}\sum_{i=1}^N\iint_{\mathbb{R}^2\times \mathbb{R}^2}g_b(x,y)\dd\widetilde{\delta}_{q_i}^{(\eta)}(x)\dd \widetilde{\delta}_{q_i}^{(\eta)}(y) \\
&+ \frac{2}{N}\sum_{i=1}^N \iint_{\mathbb{R}^2\times \mathbb{R}^2} \big(\sqrt{b(x)b(y)}g(x-y) \\
&- \sqrt{b(x)b(q_i)}g(x-q_i)\big)\omega(x)\dd x \dd\widetilde{\delta}_{q_i}^{(\eta)}(y) \\
&+ \frac{2}{N}\sum_{i=1}^N \iint_{\mathbb{R}^2\times \mathbb{R}^2} (S_b(x,y) - S_b(x,q_i))\omega(x)\dd x \dd\widetilde{\delta}_{q_i}^{(\eta)}(y) \\
=:& T_1 + T_2 + T_3 + T_4 + T_5 + T_6.
\end{align*}

\begin{claim}\label{coerc_T1}
We have
\begin{equation*}
T_1 = \int_{\mathbb{R}^2} \frac{1}{b}|\nabla H_{N,\eta}|^2.
\end{equation*}
\end{claim}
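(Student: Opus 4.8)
The plan is to identify $T_1$ with the energy $\int_{\mathbb{R}^2}\frac1b|\nabla H_{N,\eta}|^2$ of the potential attached to the smeared discrepancy $\nu:=\frac1N\sum_{i=1}^N\widetilde{\delta}_{q_i}^{(\eta)}-\omega$, and to derive the identity by testing the elliptic equation solved by $H_{N,\eta}$ against a truncation of $H_{N,\eta}$ itself. First I would record that $\nu$ is a compactly supported signed measure with $\nu(\mathbb{R}^2)=0$: each $\widetilde{\delta}_{q_i}^{(\eta)}$ is a probability measure by the normalisation \eqref{definition_m_b}, and $\omega\in\mathcal{P}(\mathbb{R}^2)$. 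Since $H_{N,\eta}=G_b[\nu]=\int g_b(\cdot,y)\dd\nu(y)$ and the kernel $g_b(x,y)=\sqrt{b(x)b(y)}g(x-y)+S_b(x,y)$ is locally integrable in each variable (the only singularity being the integrable logarithm on the diagonal, with $S_b$ locally bounded by Lemma~\ref{estimees_S_b_symmetric}), Fubini's theorem gives $T_1=\iint_{\mathbb{R}^2\times\mathbb{R}^2}g_b(x,y)\dd\nu(x)\dd\nu(y)=\int_{\mathbb{R}^2}H_{N,\eta}\,\dd\nu$, the circle-supported contributions being finite thanks to \eqref{egalite_convolution_g_eta}. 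The same Fubini argument combined with Claim~\ref{g_b_is_a_kernel_in_x} (that $g_b(\cdot,y)$ is the Green function of $-\div(\tfrac1b\nabla\,\cdot\,)$) shows that $-\div(\tfrac1b\nabla H_{N,\eta})=\nu$ in $\mathcal{D}'(\mathbb{R}^2)$.

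The heart of the proof is then the integration by parts $\int_{\mathbb{R}^2}H_{N,\eta}\,\dd\nu=\int_{\mathbb{R}^2}\frac1b|\nabla H_{N,\eta}|^2$. Since $\nu$ is mean-zero, compactly supported and has finite logarithmic self-interaction energy (the circle-supported pieces being harmless by \eqref{egalite_convolution_g_eta}), it belongs to $\dot H^{-1}(\mathbb{R}^2)$, hence $\nabla H_{N,\eta}\in L^2(\mathbb{R}^2)$ by Lemma~\ref{nabla_G_b_bounded_H_minus_one}. I would then pick a cutoff $\chi_R\in\mathcal{C}_c^\infty$ with $\chi_R\equiv1$ on $B(0,R)$, $\supp\chi_R\subset B(0,2R)$ and $|\nabla\chi_R|\le C/R$, test the equation against $\chi_R H_{N,\eta}\in H^1(\mathbb{R}^2)\cap\mathcal{C}_c(\mathbb{R}^2)$ (a legitimate test function after approximation by $\mathcal{C}_c^\infty$ functions converging both in $H^1$ and uniformly), and let $R\to+\infty$: the left-hand side stabilises to $\int H_{N,\eta}\,\dd\nu$ once $\chi_R\equiv1$ on $\supp\nu$, the bulk term converges to $\int\frac1b|\nabla H_{N,\eta}|^2$ by dominated convergence, and the cross term $\int H_{N,\eta}\,\frac1b\nabla\chi_R\cdot\nabla H_{N,\eta}$ is bounded by $\tfrac{C}{R}\int_{R\le|x|\le2R}(1+|x|^\delta)|x|^{-2}\dd x\le CR^{\delta-1}\Tend{R}{+\infty}0$, using the sublinear growth $|H_{N,\eta}(x)|\le C(1+|x|^\delta)$ for some $\delta\in(0,1)$ from \eqref{bound_stream_function} and the decay $|\nabla H_{N,\eta}(x)|\le C|x|^{-2}$ coming from Proposition~\ref{theorem_asymptotic_velocity_field_lake} (the $|x|^{-1}$ leading terms of the reconstructed velocity field cancel precisely because $\nu(\mathbb{R}^2)=0$).

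The main obstacle is supplying the far-field information needed to kill the boundary term, because Proposition~\ref{theorem_asymptotic_velocity_field_lake} and estimate \eqref{bound_stream_function} are stated for $\omega\in L^\infty$ with compact support, whereas $\nu$ contains the circle-supported measures $\widetilde{\delta}_{q_i}^{(\eta)}$. I would deal with this either by observing that the proofs of those two statements only use $L^1$-type moment bounds (via Lemma~\ref{theorem_asymptotic_velocity_field_euler} and Lemma~\ref{lemma_link_solutions_duerinckx}), which the measures $\widetilde{\delta}_{q_i}^{(\eta)}$ satisfy, or, more concretely, by using \eqref{egalite_convolution_g_eta} to write $G_b[\widetilde{\delta}_{q_i}^{(\eta)}](x)=m_b(q_i,\eta)\sqrt{b(x)}\,g^{(\eta)}(x-q_i)+m_b(q_i,\eta)\int\frac{S_b(x,y)}{\sqrt{b(y)}}\dd\delta_{q_i}^{(\eta)}(y)$ and reading the asymptotics off the explicit first term together with the gradient estimates for $S_b$ from Lemma~\ref{estimees_S_b_not_necessarily_symmetric}.
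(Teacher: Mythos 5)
Your first half is sound: the Fubini identification $T_1=\int_{\mathbb{R}^2}H_{N,\eta}\dd\nu$, the distributional equation $-\div(\tfrac1b\nabla H_{N,\eta})=\nu$, and the legitimacy of testing against $\chi_R H_{N,\eta}$ are all fine, and your cutoff strategy is genuinely different from the paper's: the paper first proves the identity for \emph{smooth}, compactly supported, mean-zero $\mu$ (where Proposition~\ref{theorem_asymptotic_velocity_field_lake} applies as stated, so the boundary term on $\partial B(0,R)$ dies), and then passes to the limit along a smooth approximation of $\frac{1}{N}\sum_i\widetilde{\delta}^{(\eta)}_{q_i}-\omega$ in $\dot H^{-1}$, using Lemma~\ref{nabla_G_b_bounded_H_minus_one} for the continuity of both sides. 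That detour is designed precisely to avoid what is the weak link in your argument: far-field control of $G_b[\nu]$ when the datum is a measure.

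Neither of your two fixes actually supplies the bounds you invoke to kill the cross term. Fix (b) only handles the explicit logarithmic piece: the $S_b$-contribution $\nabla_x\int S_b(x,y)b(y)^{-1/2}\dd\delta^{(\eta)}_{q_i}(y)$ is controlled by Lemma~\ref{estimees_S_b_not_necessarily_symmetric} only in $L^p$ and in Hölder seminorm, with constants $C_b(1+|q_i|)$; this gives no pointwise decay rate at infinity, let alone $|x|^{-2}$. Moreover the cancellation of the $O(|x|^{-1})$ tails for mean-zero data is not confined to the $\sqrt{b(x)b(y)}\,g(x-y)$ terms: each $S_b(\cdot,y)$ solves \eqref{definition_S_b} with a right-hand side that has no reason to have zero mean, so its gradient also carries an $O(|x|^{-1})$ tail, and the $|x|^{-2}$ decay is genuinely a statement about $G_b[\nu]$ as a whole, i.e.\ about extending Proposition~\ref{theorem_asymptotic_velocity_field_lake} to measure data. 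The sublinear bound $|H_{N,\eta}(x)|\le C(1+|x|^\delta)$ does not follow from the decomposition either, since $|S_b(x,y)|$ is only bounded by $C_b(1+|x|^2+|y|^2)$ (or, via the gradient bound, by a linearly growing quantity). Fix (a) is also too quick: \eqref{bound_stream_function} rests on Lemma~\ref{lemma_link_solutions_duerinckx}, whose hypotheses are Lebesgue norms of the vorticity and fail for circle measures; to rerun the proof of Proposition~\ref{theorem_asymptotic_velocity_field_lake} for $\nu$ you would first need $\nabla G_b[\nu]\in L^p$ for some $p>2$ (this does follow from the kernel decomposition and Lemma~\ref{estimees_S_b_not_necessarily_symmetric}), then redo the Helmholtz decomposition, the $\int\mu=0$ induction and the Morrey argument with measure vorticity. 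All of this is repairable, but as written it is a gap; alternatively, your own observation that $\nu\in\dot H^{-1}$ already lets you conclude the paper's way, by smoothing $\nu$ and using Lemma~\ref{nabla_G_b_bounded_H_minus_one}, without any far-field analysis of the singular potential.
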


\begin{proof}[Proof of the claim]
Let us first fix $\mu$ smooth with compact support and average zero and write $H_\mu = G_b[\mu]$. By Proposition~\ref{proposition_bs_law}, we have
\begin{align*}
\iint_{\mathbb{R}^2\times \mathbb{R}^2} g_b(x,y)\mu(x)\mu(y) \dd x \dd y  &= \int_{\mathbb{R}^2} H_{\mu}(x) \mu(x)\dd x \\
&= - \int_{\mathbb{R}^2} H_{\mu}(x) \div\left(\frac{1}{b}\nabla H_{\mu}\right)(x)\dd x.
\end{align*}
Let $R >0$, then integrating by parts we get 
\begin{align*}
-\int_{B(0,R)} H_{\mu} \div\left(\frac{1}{b}\nabla H_{\mu}\right)
= - \int_{\partial B(0,R)}\frac{1}{b}H_{\mu} \nabla H_{\mu}\cdot \dd \vec{S}
+ \int_{B(0,R)} \frac{1}{b}|\nabla H_{\mu}|^2.
\end{align*}
Using Proposition~\ref{theorem_asymptotic_velocity_field_lake} applied to $\omega = \mu$, $\displaystyle{u = -\frac{1}{b}\nabla^\bot H_\mu}$ and $\psi = H_\mu$, we have
\begin{equation*}
\left|\int_{\partial B(0,R)}\frac{1}{b}H_{\mu} \nabla H_{\mu}\cdot \dd \vec{S}\right| \leq \frac{C}{R^2}(1+R^\delta)R  \Tend{R}{+\infty} 0
\end{equation*}
and therefore
\begin{equation*}
\iint_{\mathbb{R}^2\times \mathbb{R}^2} g_b(x,y) \mu(x) \mu(y) \dd x \dd y = \int_{\mathbb{R}^2} \frac{1}{b}|\nabla H_{\mu}|^2.
\end{equation*}
Now consider a sequence $(\mu_k)$ of smooth functions with compact support and average zero converging to $\displaystyle{m := \frac{1}{N}\sum_{i=1}^N \widetilde{\delta}_{q_i}^{(\eta)} - \omega}$ in $\dot H^{-1}$, then by Lemma~\ref{nabla_G_b_bounded_H_minus_one},
\begin{align*}
\nabla H_{\mu_k} \Tend{k}{+\infty} \nabla H_{N,\eta} \; \text{in} \; L^2.
\end{align*}
and therefore
\begin{align*}
\int_{\mathbb{R}^2} \frac{1}{b}|\nabla H_{\mu_k}|^2 \Tend{k}{+\infty} \int_{\mathbb{R}^2} \frac{1}{b}|\nabla H_{N,\eta}|^2
\end{align*}
and 
\begin{align*}
\bigg|\iint_{\mathbb{R}^2\times \mathbb{R}^2} &g_b(x,y)\mu_k(x) \mu_k(y)\dd x \dd y - \iint_{\mathbb{R}^2\times \mathbb{R}^2} g_b(x,y)\dd m(x) \dd m(y)\bigg| \\
&= \bigg|\int_{\mathbb{R}^2}G_b[\mu_k-m]\dd \mu_k + \int_{\mathbb{R}^2}G_b[m]\dd(\mu_k-m)\bigg| \\
&\leq C\norm{\nabla G_b[\mu_k - m]}_{L^2}\norm{\mu_k}_{\dot H^{-1}} + C\norm{\nabla G_b[m]}_{L^2}\norm{\mu_k-m}_{\dot H^{-1}} \\
&\leq C\norm{\mu_k-m}_{\dot H^{-1}} 
\end{align*}
by Lemma~\ref{nabla_G_b_bounded_H_minus_one} so we get Claim~\ref{coerc_T1}.  
\end{proof}

Now let us bound the fourth term:
\begin{claim}\label{coerc_T4}
\begin{equation*}
|T_4| \leq \frac{C_b}{N}(g(\eta)+I(Q_N)).
\end{equation*}
\end{claim}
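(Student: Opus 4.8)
The plan is to substitute the definitions \eqref{definition_delta_tilde_q} and \eqref{definition_g_b} into
\[
T_4 = -\frac{1}{N^2}\sum_{i=1}^N\iint_{\mathbb{R}^2\times\mathbb{R}^2} g_b(x,y)\dd\widetilde{\delta}_{q_i}^{(\eta)}(x)\dd\widetilde{\delta}_{q_i}^{(\eta)}(y).
\]
Writing $\dd\widetilde{\delta}_{q_i}^{(\eta)}(x) = m_b(q_i,\eta)\,b(x)^{-1/2}\dd\delta_{q_i}^{(\eta)}(x)$, the factor $\sqrt{b(x)b(y)}$ multiplying $g(x-y)$ in $g_b$ cancels against the two weights $b(x)^{-1/2}b(y)^{-1/2}$, so that
\begin{align*}
\iint_{\mathbb{R}^2\times\mathbb{R}^2} g_b(x,y)\dd\widetilde{\delta}_{q_i}^{(\eta)}(x)\dd\widetilde{\delta}_{q_i}^{(\eta)}(y) = {}& m_b(q_i,\eta)^2\iint_{\mathbb{R}^2\times\mathbb{R}^2} g(x-y)\dd\delta_{q_i}^{(\eta)}(x)\dd\delta_{q_i}^{(\eta)}(y) \\
& {}+ m_b(q_i,\eta)^2\iint_{\mathbb{R}^2\times\mathbb{R}^2} \frac{S_b(x,y)}{\sqrt{b(x)b(y)}}\dd\delta_{q_i}^{(\eta)}(x)\dd\delta_{q_i}^{(\eta)}(y).
\end{align*}

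I would then bound the two pieces separately. For the first, \eqref{egalite_convolution_g_eta} gives $\int g(x-z)\dd\delta_{q_i}^{(\eta)}(z) = g^{(\eta)}(x-q_i)$, and since $\delta_{q_i}^{(\eta)}$ is supported on $\partial B(q_i,\eta)$, on which $g^{(\eta)}(\cdot-q_i) = g(\eta)$ by \eqref{definition_g_eta}, this double integral is exactly $g(\eta)$ — in particular it is finite, which is the whole reason for smearing the Dirac masses onto circles of radius $\eta$. For the second, $x,y\in\partial B(q_i,\eta)$ with $\eta<1$ forces $|x|,|y|\le|q_i|+1$, so the growth estimate $|S_b(x,y)|\le C_b(1+|x|^2+|y|^2)$ of Lemma~\ref{estimees_S_b_symmetric} together with $\inf b>0$ (Assumption~\ref{assumption_b}) bounds it in absolute value by $C_b(1+|q_i|^2)$. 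Finally \eqref{estimee_m_b} with $\sup b<\infty$ and $\eta<1$ gives $m_b(q_i,\eta)^2\le C_b$.

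Collecting these estimates yields $|\iint g_b\,\dd\widetilde{\delta}_{q_i}^{(\eta)}\,\dd\widetilde{\delta}_{q_i}^{(\eta)}| \le C_b(g(\eta)+1+|q_i|^2)$; summing over $i$ and dividing by $N^2$ then gives
\[
|T_4| \le \frac{1}{N^2}\sum_{i=1}^N C_b(g(\eta)+1+|q_i|^2) = \frac{C_b}{N}(g(\eta)+1+I(Q_N)),
\]
which is the claimed bound (the extra additive $C_b/N$ being harmless). There is essentially no obstacle in this computation; the single point requiring a little care is the self-interaction integral $\iint g(x-y)\dd\delta_{q_i}^{(\eta)}\dd\delta_{q_i}^{(\eta)}$, whose finiteness and exact value $g(\eta)$ rely on the identity \eqref{egalite_convolution_g_eta} and on $g^{(\eta)}$ being constant on the sphere $\partial B(q_i,\eta)$.
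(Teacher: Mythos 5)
Your proof is correct and follows essentially the same route as the paper: the same split of $T_4$ into the $\sqrt{b(x)b(y)}g(x-y)$ part (where the $b^{-1/2}$ weights cancel and \eqref{egalite_convolution_g_eta} reduces the self-interaction to $g(\eta)$, up to the bounded factor $m_b(q_i,\eta)^2$) and the $S_b$ part, bounded via Claim (2) of Lemma~\ref{estimees_S_b_symmetric}. The resulting bound $\frac{C_b}{N}(g(\eta)+1+I(Q_N))$ is exactly the paper's intermediate estimate, which it then absorbs into the stated form just as you do.
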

\begin{proof}
We write
\begin{align*}
T_4 =& -\frac{1}{N^2}\sum_{i=1}^N\iint_{\mathbb{R}^2\times \mathbb{R}^2}\sqrt{b(x)b(y)}g(x-y)\dd\widetilde{\delta}_{q_i}^{(\eta)}(x)\dd \widetilde{\delta}_{q_i}^{(\eta)}(y) \\
&-\frac{1}{N^2}\sum_{i=1}^N\iint_{\mathbb{R}^2\times \mathbb{R}^2} S_b(x,y)\dd\widetilde{\delta}_{q_i}^{(\eta)}(x)\dd \widetilde{\delta}_{q_i}^{(\eta)}(y) \\
=:& T_{4,1} + T_{4,2}.
\end{align*}
Using the definition of $\widetilde{\delta}_q$ \eqref{definition_delta_tilde_q}  and Equality \eqref{egalite_convolution_g_eta} we get
\begin{align*}
T_{4,1} &= -\frac{1}{N^2}\sum_{i=1}^N  m_b(q_i,\eta)^2 \iint_{\mathbb{R}^2\times \mathbb{R}^2} g(x-y) \dd \delta_{q_i}^{(\eta)}(x)\dd \delta_{q_i}^{(\eta)}(y) \\
&= -\frac{1}{N^2}\sum_{i=1}^Nm_b(q_i,\eta)^2\int_{\mathbb{R}^2} g^{(\eta)}(x-q_i) \dd \delta_{q_i}^{(\eta)}(x).
\end{align*}
Therefore,
\begin{equation*}
|T_{4,1}| \leq \frac{C_bg(\eta)}{N}.
\end{equation*}
Now by Claim $(2)$ of Lemma~\ref{estimees_S_b_symmetric}, we have
\begin{equation*}
|T_{4,2}| \leq \frac{C_b}{N^2}\sum_{i=1}^N(1+|q_i|^2).
\end{equation*}
We get that
\begin{equation*}
|T_4| \leq \frac{C_b}{N}(1+I(Q_N) + g(\eta)) \leq \frac{C_b}{N}(g(\eta)+I(Q_N)).
\end{equation*}
\end{proof}

Now we bound the third and the sixth term:
\begin{claim}\label{coerc_T36}
\begin{equation*}
|T_3| + |T_6| \leq C_b(\norm{\omega}_{L^1((1+|x|)\dd x)}+I(Q_N))\eta.
\end{equation*}
\end{claim}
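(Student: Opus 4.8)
The key observation is that both $T_3$ and $T_6$ are built out of differences of $S_b$ evaluated at pairs of points lying within distance $\eta<1$ of one another, since $\widetilde{\delta}_{q_i}^{(\eta)}$ is a probability measure supported on $\partial B(q_i,\eta)$ (indeed $\int\widetilde{\delta}_{q_i}^{(\eta)}=m_b(q_i,\eta)\int \frac{\dd\delta_{q_i}^{(\eta)}}{\sqrt{b}}=1$ by \eqref{definition_m_b}). We will control these differences with the global Lipschitz bound $\norm{\nabla_x S_b(\cdot,y)}_{L^\infty}\le C_b(1+|y|)$ from Claim~(1) of Lemma~\ref{estimees_S_b_not_necessarily_symmetric}, together with its counterpart in the second variable obtained from the symmetry $S_b(x,y)=S_b(y,x)$ established in Proposition~\ref{proposition_bs_law}.

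First I would treat $T_3$. For $x\in\partial B(q_i,\eta)$ and $y\in\partial B(q_j,\eta)$, write
\begin{equation*}
S_b(q_i,q_j)-S_b(x,y)=\big(S_b(q_i,q_j)-S_b(x,q_j)\big)+\big(S_b(q_j,x)-S_b(y,x)\big),
\end{equation*}
using $S_b(x,q_j)=S_b(q_j,x)$ and $S_b(x,y)=S_b(y,x)$. The first bracket is bounded by $\norm{\nabla_x S_b(\cdot,q_j)}_{L^\infty}|x-q_i|\le C_b(1+|q_j|)\eta$, and the second by $\norm{\nabla_x S_b(\cdot,x)}_{L^\infty}|q_j-y|\le C_b(1+|x|)\eta\le C_b(1+|q_i|)\eta$, since $|x-q_i|=\eta<1$. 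Hence $|S_b(q_i,q_j)-S_b(x,y)|\le C_b(1+|q_i|+|q_j|)\eta$ on the relevant supports, and since the $\widetilde{\delta}_{q_i}^{(\eta)}$ are probability measures,
\begin{equation*}
|T_3|\le\frac{C_b\eta}{N^2}\sum_{1\le i\neq j\le N}(1+|q_i|+|q_j|)\le C_b\eta\Big(1+\frac{2}{N}\sum_{i=1}^N|q_i|\Big)\le C_b\eta(1+I(Q_N)),
\end{equation*}
where the last step uses Cauchy--Schwarz, $\frac1N\sum_i|q_i|\le I(Q_N)^{1/2}\le 1+I(Q_N)$.

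For $T_6$, for $y\in\partial B(q_i,\eta)$ the symmetry gives $|S_b(x,y)-S_b(x,q_i)|=|S_b(y,x)-S_b(q_i,x)|\le\norm{\nabla_x S_b(\cdot,x)}_{L^\infty}|y-q_i|\le C_b(1+|x|)\eta$, so that, integrating against $|\omega(x)|\dd x$ and using that $\widetilde{\delta}_{q_i}^{(\eta)}$ is a probability measure,
\begin{equation*}
|T_6|\le\frac{2}{N}\sum_{i=1}^N C_b\eta\int_{\mathbb{R}^2}(1+|x|)|\omega(x)|\dd x=2C_b\eta\norm{\omega}_{L^1((1+|x|)\dd x)}.
\end{equation*}
Adding the two bounds and using that $\omega\in\mathcal{P}(\mathbb{R}^2)$, so $1=\norm{\omega}_{L^1}\le\norm{\omega}_{L^1((1+|x|)\dd x)}$, yields $|T_3|+|T_6|\le C_b(\norm{\omega}_{L^1((1+|x|)\dd x)}+I(Q_N))\eta$. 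The computation is essentially routine; the only points requiring a little care are invoking symmetry to transfer the Lipschitz estimate of Lemma~\ref{estimees_S_b_not_necessarily_symmetric} to the second slot of $S_b$, and observing that $|x|\le|q_i|+1$ on $\supp(\widetilde{\delta}_{q_i}^{(\eta)})$ so that the moment of inertia $I(Q_N)$, rather than an uncontrolled quantity, appears.
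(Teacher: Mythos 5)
Your proof is correct and follows essentially the same route as the paper: the identical triangle-inequality splitting of $S_b(q_i,q_j)-S_b(x,y)$, the Lipschitz bound of Claim (1) of Lemma~\ref{estimees_S_b_not_necessarily_symmetric} transferred to the second slot via the symmetry of $S_b$, and the observation that $|x|\le|q_i|+1$ on the support of $\widetilde{\delta}_{q_i}^{(\eta)}$. Your explicit Cauchy--Schwarz step and the slightly sharper bound on $T_6$ (without the $I(Q_N)$ term) are only cosmetic refinements of the paper's argument.
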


\begin{proof}
For $x \in \partial B(q_i,\eta), y \in \partial B(q_j,\eta)$, we use Claim $(1)$ of Lemma~\ref{estimees_S_b_not_necessarily_symmetric} and the symmetry of $S_b$ to get  
\begin{align*}
|S_b(q_i,q_j) -  S_b(x,y)| &\leq |S_b(q_i,q_j) - S_b(x,q_j)| + |S_b(x,q_j) - S_b(x,y)| \\
&\leq C_b(1+|q_j|)\eta + C_b(1+|q_i|)\eta \\
&\leq C_b(1+|q_i| + |q_j|)\eta.
\end{align*}
Thus we can bound the third term:
\begin{equation}\label{coerc_T3}
|T_3| \leq C_b(1+I(Q_N))\eta.
\end{equation}
The sixth term can be bounded in the same way:
\begin{equation*}
|T_6| \leq \frac{C_b}{N}\sum_{i=1}^N \iint_{\mathbb{R}^2\times \mathbb{R}^2} (1+|x|+|q_i|)\eta\omega(x) \dd x \dd\widetilde{\delta}_{q_i}^{(\eta)}(y).
\end{equation*}
We get that
\begin{equation}\label{coerc_T6}
|T_6| \leq C_b(\norm{\omega}_{L^1((1+|x|)\dd x)}+I(Q_N))\eta.
\end{equation}
and combining \eqref{coerc_T3} with \eqref{coerc_T6} we get Claim~\ref{coerc_T36}. 
\end{proof}

Now let us bound the fifth term:
\begin{claim}\label{coerc_T5}
\begin{equation*}
|T_5| \leq C_b\norm{\omega}_{L^1\cap L^\infty}\eta g(\eta).
\end{equation*}
\end{claim}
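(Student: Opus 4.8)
The plan is to collapse the $y$-integral in $T_5$ using the mean-value identity \eqref{egalite_convolution_g_eta}, reducing the whole estimate to explicit one-variable computations with $g$ and $g^{(\eta)}$. First I would observe that, directly from the definitions \eqref{definition_delta_tilde_q}--\eqref{definition_m_b}, one has $\sqrt{b(y)}\,\dd\widetilde\delta_{q_i}^{(\eta)}(y) = m_b(q_i,\eta)\,\dd\delta_{q_i}^{(\eta)}(y)$ and $\int\dd\widetilde\delta_{q_i}^{(\eta)} = 1$. Integrating first in $y$ and using \eqref{egalite_convolution_g_eta} then gives
\begin{align*}
&\iint \big(\sqrt{b(x)b(y)}\,g(x-y) - \sqrt{b(x)b(q_i)}\,g(x-q_i)\big)\,\omega(x)\,\dd x\,\dd\widetilde\delta_{q_i}^{(\eta)}(y) \\
&\qquad = \int \sqrt{b(x)}\,\omega(x)\,\big(m_b(q_i,\eta)\,g^{(\eta)}(x-q_i) - \sqrt{b(q_i)}\,g(x-q_i)\big)\,\dd x,
\end{align*}
so that $T_5 = \tfrac{2}{N}\sum_{i}\int \sqrt{b(x)}\,\omega(x)\big(m_b(q_i,\eta)g^{(\eta)}(x-q_i) - \sqrt{b(q_i)}g(x-q_i)\big)\,\dd x$, with no dependence on the circles $\partial B(q_i,\eta)$ left.

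I would then write $m_b(q_i,\eta) = \sqrt{b(q_i)} + \big(m_b(q_i,\eta) - \sqrt{b(q_i)}\big)$, splitting the integrand into $\sqrt{b(q_i)}\sqrt{b(x)}\,\omega(x)\,(g^{(\eta)}(x-q_i) - g(x-q_i))$ and $\big(m_b(q_i,\eta) - \sqrt{b(q_i)}\big)\sqrt{b(x)}\,\omega(x)\,g^{(\eta)}(x-q_i)$. By \eqref{definition_g_eta} the difference $g^{(\eta)}(z) - g(z)$ is supported in $\{|z|\leq\eta\}$, where it equals $\tfrac{1}{2\pi}\ln(|z|/\eta)$; bounding $\sqrt b$ by $C_b$ and $\omega$ by $\norm{\omega}_{L^\infty}$, the first piece is at most $C_b\norm{\omega}_{L^\infty}\int_{|z|\leq\eta}|\ln(|z|/\eta)|\,\dd z = \tfrac{\pi}{2}\,C_b\norm{\omega}_{L^\infty}\,\eta^2$ by a one-line polar-coordinate computation. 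For the second piece, \eqref{estimee_m_b} gives $|m_b(q_i,\eta) - \sqrt{b(q_i)}|\leq C_b\eta$, while \eqref{definition_g_eta} gives $|g^{(\eta)}(z)|\leq g(\eta) + \tfrac{1}{2\pi}\max(\ln|z|,0)$; hence this piece is at most $C_b\,\eta\big(g(\eta)\norm{\omega}_{L^1} + \int \max(\ln|x-q_i|,0)\,|\omega(x)|\,\dd x\big)$. Summing over $i$, dividing by $N$, and using $\eta^2\leq\eta g(\eta)$ for $\eta$ small, one reaches the claimed bound $|T_5|\leq C_b\norm{\omega}_{L^1\cap L^\infty}\,\eta g(\eta)$.

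The one point requiring care is the far-field logarithmic term $\int\max(\ln|x-q_i|,0)\,|\omega(x)|\,\dd x$ in the second piece: I would control it using the compact support of $\omega$ together with $\ln(1+|x-q_i|)\leq \ln(1+|x|) + \ln(1+|q_i|)$, which bounds it by $\norm{\omega}_{L^1((1+|x|)\dd x)} + \ln(1+|q_i|)\norm{\omega}_{L^1}$, and then note that after averaging the extra $q_i$-dependence is harmless since $\tfrac{1}{N}\sum_i\ln(1+|q_i|)\leq I(Q_N)^{1/2}$ is bounded under the hypotheses of Proposition~\ref{proposition_pre_coerc} (so this contribution is $o(1)$, and is in any event already absorbed by the $I(Q_N)$ and $\norm{\omega}_{L^1((1+|x|)\dd x)}$ terms appearing in that proposition). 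Everything else reduces to short explicit computations with $g$ and $g^{(\eta)}$, of the same flavour as the estimates already carried out for $T_3$, $T_4$ and $T_6$.
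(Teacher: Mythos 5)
Your argument is correct and is essentially the paper's own proof: the same collapse of the $y$-integral via \eqref{egalite_convolution_g_eta} and the definition of $\widetilde{\delta}_{q_i}^{(\eta)}$, the same split $m_b(q_i,\eta)=\sqrt{b(q_i)}+\big(m_b(q_i,\eta)-\sqrt{b(q_i)}\big)$ controlled by \eqref{estimee_m_b}, and the same treatment of $g^{(\eta)}-g$ on $B(0,\eta)$ and of the far-field logarithm. The extra contributions you are left with beyond the literal statement (of order $\big(1+I(Q_N)^{1/2}+\norm{\omega}_{L^1((1+|x|)\dd x)}\big)\eta$) also show up in the paper's own proof, which concludes with an additional $(1+I(Q_N))\eta$ term that is simply absorbed into the statement of Proposition~\ref{proposition_pre_coerc}, so your handling of them matches the paper's.
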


\begin{proof}
Using Proposition~\ref{proposition_regularisation} we write $T_5$ as
\begin{align*}
T_5 =& \frac{2}{N}\sum_{i=1}^N \int_{\mathbb{R}^2}(m_b(q_i,\eta)g^{(\eta)}(x-q_i)-\sqrt{b(q_i)}g(x-q_i))\sqrt{b(x)}\omega(x)\dd x \\
=& \frac{2}{N}\sum_{i=1}^N (m_b(q_i,\eta) - \sqrt{b(q_i)})\int_{\mathbb{R}^2} g^{(\eta)}(x-q_i)\sqrt{b(x)}\omega(x)\dd x \\
&+ \frac{2}{N}\sum_{i=1}^N \sqrt{b(q_i)}\int_{\mathbb{R}^2} (g^{(\eta)}(x-q_i) - g(x-q_i))\sqrt{b(x)}\omega(x)\dd x.
\end{align*}
and thus by \eqref{estimee_m_b} and since $|g^{(\eta)}(x-q_i)| \leq C(g(\eta) + |x| + |q_i|)$ we have
\begin{align*}
|T_5| \leq& C_b\norm{\omega}_{L^1}\eta g(\eta) + C_b\norm{\omega}_{L^1(|x|\dd x)}\eta + C_b\norm{\omega}_{L^1}(1+I(Q_N))\eta  \\
&+C_b\norm{\omega}_{L^\infty}\int_{B(0,\eta)}|g^{(\eta)}(x) - g(x)|\dd x.
\end{align*}
We get that
\begin{equation*}
|T_5| \leq C_b\norm{\omega}_{L^1((1+|x|)\dd x)\cap L^\infty}\eta g(\eta) + (1+I(Q_N))\eta
\end{equation*}
since $\omega$ is a probability density.
\end{proof}

We are only remained to estimate from below the second term:
\begin{claim}\label{coerc_T2}
\begin{equation*}
T_2 \geq \frac{C_b}{N^2}\sum_{1 \leq i \neq j \leq N}(g(q_i - q_j) - g^{(\eta)}(q_i-q_j)) - C_b\eta g(\eta).
\end{equation*}
\end{claim}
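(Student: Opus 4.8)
The plan is to make the definition of $\widetilde{\delta}_{q_i}^{(\eta)}$ explicit and then reduce the estimate to a \emph{smearing} inequality. Since $\widetilde{\delta}_{q_i}^{(\eta)} = m_b(q_i,\eta)\,\delta_{q_i}^{(\eta)}/\sqrt{b}$ is a probability measure (this is the point of the normalisation \eqref{definition_m_b}), in $T_2$ the weights $\sqrt{b(x)b(y)}$ exactly cancel the $1/\sqrt{b(x)}$, $1/\sqrt{b(y)}$ factors carried by the measures, so
\begin{equation*}
T_2 = \frac{1}{N^2}\sum_{1 \le i \ne j \le N}\Big(\sqrt{b(q_i)b(q_j)}\,g(q_i-q_j) - m_b(q_i,\eta)m_b(q_j,\eta)\,A_{ij}\Big),
\end{equation*}
where $A_{ij} := \iint_{\mathbb{R}^2\times\mathbb{R}^2} g(x-y)\dd\delta_{q_i}^{(\eta)}(x)\dd\delta_{q_j}^{(\eta)}(y)$. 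Integrating first in $y$ and using the identity \eqref{egalite_convolution_g_eta}, $A_{ij} = \int_{\mathbb{R}^2} g^{(\eta)}(x-q_j)\dd\delta_{q_i}^{(\eta)}(x)$, i.e. the average of the function $x\mapsto g^{(\eta)}(x-q_j)$ over the circle $\partial B(q_i,\eta)$.

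The key step is the smearing bound $A_{ij}\le g^{(\eta)}(q_i-q_j)$. By \eqref{egalite_convolution_g_eta} one has $g^{(\eta)} = g\ast\delta_0^{(\eta)}$, hence $-\Delta g^{(\eta)} = (-\Delta g)\ast\delta_0^{(\eta)} = \delta_0^{(\eta)} \ge 0$, so $x\mapsto g^{(\eta)}(x-q_j)$ is superharmonic on $\mathbb{R}^2$; its spherical average over $\partial B(q_i,\eta)$ is therefore at most its value at the centre $q_i$. Writing $R_{ij} := g^{(\eta)}(q_i-q_j) - A_{ij}\ge 0$ for the (nonnegative) defect — in fact $R_{ij}=0$ as soon as $|q_i-q_j|>2\eta$, by the ordinary mean value property, but only its sign is used — we may substitute $A_{ij} = g^{(\eta)}(q_i-q_j) - R_{ij}$ above.

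It then remains to assemble the pieces. For each pair $i\ne j$ one writes
\begin{align*}
&\sqrt{b(q_i)b(q_j)}\,g(q_i-q_j) - m_b(q_i,\eta)m_b(q_j,\eta)\,A_{ij} \\
&\qquad = \sqrt{b(q_i)b(q_j)}\,\big(g-g^{(\eta)}\big)(q_i-q_j) + \big(\sqrt{b(q_i)b(q_j)} - m_b(q_i,\eta)m_b(q_j,\eta)\big)\,g^{(\eta)}(q_i-q_j) \\
&\qquad\qquad + m_b(q_i,\eta)m_b(q_j,\eta)\,R_{ij}.
\end{align*}
The first term is $\ge (\inf b)\,(g-g^{(\eta)})(q_i-q_j)\ge 0$ (recall $g-g^{(\eta)}\ge 0$ by \eqref{definition_g_eta}) and yields the main contribution; the third term is $\ge 0$ and is discarded; the middle term has absolute value $\le C_b\,\eta\,|g^{(\eta)}(q_i-q_j)|$ by \eqref{estimee_m_b} together with $0<\inf b\le\sup b<+\infty$. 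Summing over $i\ne j$, dividing by $N^2$, and bounding $|g^{(\eta)}(z)|\le g(\eta)$ for $|z|\le1$ and $|g^{(\eta)}(q_i-q_j)|\le C(1+|q_i|^2+|q_j|^2)$ when $|q_i-q_j|>1$, one gets
\begin{equation*}
T_2 \ge \frac{\inf b}{N^2}\sum_{1\le i\ne j\le N}\big(g(q_i-q_j)-g^{(\eta)}(q_i-q_j)\big) - C_b\,\eta\, g(\eta) - C_b\big(1+I(Q_N)\big)\eta.
\end{equation*}
The last term is exactly of the type already produced by Claims~\ref{coerc_T4}, \ref{coerc_T36} and \ref{coerc_T5}, so it is absorbed into the other error terms of Proposition~\ref{proposition_pre_coerc}; this gives the stated inequality.

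The main obstacle is the smearing step of the second paragraph: morally, replacing the point masses $\delta_{q_i},\delta_{q_j}$ by the uniform measures on the circles $\partial B(q_i,\eta),\partial B(q_j,\eta)$ cannot increase their mutual logarithmic energy. This is the mechanism underpinning all Serfaty-type modulated-energy estimates; the only genuinely new ingredient here is keeping track of the mismatch between the weights $\sqrt{b(q_i)b(q_j)}$ and the normalising constants $m_b(q_i,\eta)m_b(q_j,\eta)$ built into $\widetilde{\delta}^{(\eta)}$, which is harmless thanks to the elementary bound \eqref{estimee_m_b}.
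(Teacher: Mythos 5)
Your proof is correct and follows essentially the same route as the paper: the same reduction of $T_2$ using that $\widetilde{\delta}^{(\eta)}_{q_i}$ is a probability measure whose $1/\sqrt{b}$ weight cancels $\sqrt{b(x)b(y)}$, the same use of \eqref{egalite_convolution_g_eta}, and the same splitting into a main term plus an $m_b$-mismatch term controlled by \eqref{estimee_m_b}. Two small differences are worth noting. First, you obtain the key inequality $A_{ij}\leq g^{(\eta)}(q_i-q_j)$ from the superharmonicity of $g^{(\eta)}$ and the super-mean-value property, whereas the paper gets it in one line from $g^{(\eta)}\leq g$ together with a second application of \eqref{egalite_convolution_g_eta}; these are equivalent mechanisms, and both are valid. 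Second, your bookkeeping of the mismatch term $(\sqrt{b(q_i)b(q_j)}-m_b(q_i,\eta)m_b(q_j,\eta))\,g^{(\eta)}(q_i-q_j)$ keeps track of far-apart pairs, for which $|g^{(\eta)}(q_i-q_j)|$ is not bounded by $g(\eta)$, and therefore produces an extra error $C_b(1+I(Q_N))\eta$ that is absent from the statement of Claim~\ref{coerc_T2} (and from the paper's bound on its $T_{2,1}$); this makes your conclusion formally slightly weaker than the claim as written, but the extra term is exactly of the type already absorbed in the error of Proposition~\ref{proposition_pre_coerc}, so nothing is lost downstream — if anything, your accounting of the distant pairs is the more careful one.
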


\begin{proof}
We also split $T_2$ in two terms:
\begin{align*}
T_2 =& \frac{1}{N^2}\sum_{1 \leq i \neq j \leq N} \sqrt{b(q_i)b(q_j)}g(q_i - q_j) \\
&- m_b(q_i,\eta) m_b(q_j,\eta) \iint_{\mathbb{R}^2\times\mathbb{R}^2} g(x-y) \dd \delta_{q_i}^{(\eta)}(x)\dd \delta_{q_j}^{(\eta)}(y) \\
=& \frac{1}{N^2}\sum_{1 \leq i \neq j \leq N} \sqrt{b(q_i)b(q_j)}g(q_i - q_j) \\
&- m_b(q_i,\eta) m_b(q_j,\eta) \int_{\mathbb{R}^2} g^{(\eta)}(q_i-y) \dd \delta_{q_j}^{(\eta)}(y) \\
=&  \frac{1}{N^2}\sum_{1 \leq i \neq j \leq N}(\sqrt{b(q_i)b(q_j)} - m_b(q_i,\eta) m_b(q_j,\eta)) \\
&\times\int_{\mathbb{R}^2} g^{(\eta)}(q_i-y) \dd \delta_{q_j}^{(\eta)}(y) \\
&+ \frac{1}{N^2}\sum_{1 \leq i \neq j \leq N} \sqrt{b(q_i)b(q_j)}\left(g(q_i - q_j) -\int_{\mathbb{R}^2} g^{(\eta)}(q_i-y) \dd \delta_{q_j}^{(\eta)}(y)\right) \\
=& T_{2,1} + T_{2,2}.
\end{align*}
Writing 
\begin{align*}
\sqrt{b(q_i)b(q_j)} - m_b(q_i,\eta) m_b(q_j,\eta)
&= \sqrt{b(q_i)}(\sqrt{b(q_j)} - m_b(q_j,\eta)) \\
&+ m_b(q_j,\eta)(\sqrt{b(q_i)} - m_b(q_i,\eta))
\end{align*}
and using \eqref{estimee_m_b} we get that
\begin{equation}\label{coerc_T21}
|T_{2,1}| \leq  C_b\eta g(\eta).
\end{equation}
Now by \eqref{egalite_convolution_g_eta},
\begin{align*}
g&(q_i - q_j) -\int_{\mathbb{R}^2} g^{(\eta)}(q_i-y) \dd \delta_{q_j}^{(\eta)}(y) \\
&= g(q_i - q_j) - g^{(\eta)}(q_i-q_j) + \int_{\mathbb{R}^2} (g(q_i - y) - g^{(\eta)}(q_i-y))\dd \delta_{q_j}^{(\eta)}(y) \\
&\geq g(q_i - q_j) - g^{(\eta)}(q_i-q_j) + 0
\end{align*}
and thus
\begin{equation}\label{coerc_T22}
T_{2,2} \geq \frac{C_b}{N^2}\sum_{1 \leq i \neq j \leq N}(g(q_i - q_j) - g^{(\eta)}(q_i-q_j)).
\end{equation}
We get Claim~\ref{coerc_T2} combining Equations \eqref{coerc_T21} with \eqref{coerc_T22}.
\end{proof}

Combining Claims~\ref{coerc_T1}, \ref{coerc_T4}, \ref{coerc_T36}, \ref{coerc_T5} and \ref{coerc_T2} we get the proof of Proposition~\ref{proposition_pre_coerc}. 
\end{proof}

Now we prove the "counting close particles" Corollary:
\begin{proof}[Proof of Corollary~\ref{corollary_counting}]
The proof is exactly the same as the proof of \cite[Lemma~3.7]{Rosenzweig}. If $|q_i - q_j| \leq \epsilon$ then 
\begin{align*}
g(q_i - q_j) - g^{(2\epsilon)}(q_i - q_j) &= -\frac{1}{2\pi}\ln|q_i - q_j| + \frac{1}{2\pi}\ln(2\epsilon) \\
&\geq -\frac{1}{2\pi}\ln(\epsilon) + \frac{1}{2\pi}\ln(2\epsilon) = \frac{1}{2\pi}\ln(2) > 0.
\end{align*}
Thus, since $g - g^{(2\epsilon)} \geq 0$,
\begin{align*}
&\frac{1}{2\pi N^2}\ln(2)|\{(q_i,q_j) ; |q_i - q_j| \leq \epsilon\}| \\ \leq& \frac{1}{N^2}\underset{|q_i - q_j| \leq \epsilon}{\sum_{1 \leq i \neq j\leq N}} (g(q_i - q_j) - g^{(2\epsilon)}(q_i - q_j)) \\
\leq& \frac{1}{N^2}\sum_{1 \leq i \neq j\leq N} (g(q_i - q_j) - g^{(2\epsilon)}(q_i - q_j)) \\
\leq& \mathcal{F}_b(Q_N,\omega) \\
&+ C_b\bigg(\frac{g(\epsilon)}{N} + I(Q_N)(\epsilon+N^{-1}) + \norm{\omega}_{L^1((1+|x|)\dd x)\cap L^\infty}g(\epsilon)\epsilon\bigg).
\end{align*}
where we used Proposition~\ref{proposition_pre_coerc} in the last inequality.
\end{proof}

Now we prove the coercivity result:
\begin{proof}[Proof of Corollary~\ref{corollary_coercivity}]
We have
\begin{align*}
\int_{\mathbb{R}^2} \xi\left(\frac{1}{N}\sum_{i=1}^N \delta_{q_i} - \omega\right)
=& \frac{1}{N}\int_{\mathbb{R}^2} \xi \left(\sum_{i=1}^N\delta_{q_i} - \widetilde{\delta}_{q_i}^{(\eta)}\right) \\
&+ \int_{\mathbb{R}^2} \xi\left(\frac{1}{N}\sum_{i=1}^N \widetilde{\delta}_{q_i}^{(\eta)} - \omega\right) \\
=&: T_1 + T_2.
\end{align*}
Now,
\begin{align*}
T_1 &= \frac{1}{N}\sum_{i=1}^N \xi(q_i) - m_b(q_i,\eta)\int_{\partial B(q_i,\eta)} \frac{\xi(x)}{\sqrt{b(x)}}\dd \delta_{q_i}^{(\eta)}(x) \\
&= \frac{1}{N}\sum_{i=1}^N m_b(q_i,\eta)\int_{\partial B(q_i,\eta)} \frac{\xi(q_i) - \xi(x)}{\sqrt{b(x)}}\dd\delta_{q_i}^{(\eta)}(x).
\end{align*}
Thus
\begin{equation*}
|T_1| \leq C_b|\xi|_{\mathcal{C}^{0,\alpha}}\eta^\alpha.
\end{equation*}
Using a sequence $(\mu_k)$ of smooth functions with compact support and average $0$ converging to $\displaystyle{\frac{1}{N}\sum_{i=1}^N \widetilde{\delta}_{q_i}^{(\eta)} - \omega}$ as we have done for Claim~\ref{coerc_T1} we can show that
\begin{align*}
T_2 &= \int_{\mathbb{R}^2} \frac{1}{b} \nabla \xi \cdot \nabla H_{N,\eta} 
\end{align*}
and therefore
\begin{align*}
|T_2| \leq& \left(\int_{\mathbb{R}^2} \frac{1}{b} |\nabla \xi|^2\right)^\frac{1}{2}\left(\int_{\mathbb{R}^2} \frac{1}{b} |\nabla  H_{N,\eta}|^2\right)^\frac{1}{2} \\
\leq& C_b \left(\int_{\mathbb{R}^2} \frac{1}{b} |\nabla \xi|^2\right)^\frac{1}{2}\bigg(\mathcal{F}_b(Q_N,\omega)+ \frac{g(\eta)}{N} \\ 
&+ I(Q_N)(\eta+N^{-1}) + \norm{\omega}_{L^1((1+|x|)\dd x)\cap L^\infty}g(\eta)\eta\bigg)^\frac{1}{2}.
\end{align*}
 by Proposition~\ref{proposition_pre_coerc}. We conclude by taking $\eta = N^{-1}$. The bound on
\begin{equation*}
\norm{\frac{1}{N}\sum_{i=1}^N \delta_{q_i} - \omega}_{H^s}
\end{equation*} 
follows from Sobolev embeddings.
\end{proof}

We finish this section by proving the weak-$\ast$ convergence result:

\begin{proof}[Proof of Corollary~\ref{corollary_weak_star_cv}]

Let us denote $\displaystyle{\omega_N = \frac{1}{N}\sum_{i=1}^N\delta_{q_i}}$ and prove that $(\omega_N)$ is a tight sequence of probability measures. Let $R > 1$, then
\begin{equation}\label{bound_tightness}
\begin{aligned}
|\{i \in [1,N]\; ; \; |q_i| \geq R \}|R^2
&\leq \underset{|q_i| \geq R}{\sum_{i=1}^N}|q_i|^2 \\
&\leq N I(Q_N).
\end{aligned}
\end{equation}
Dividing by $NR^2$ both sides of the inequality we get
\begin{equation*}
\int_{B(0,R)^c} \dd \omega_N \leq I(Q_N)R^{-2}
\end{equation*}
and since $(I(Q_N))$ is bounded we get that $(\omega_N)$ is tight. We will now prove the following Claim:
\begin{claim}
Assume that $(\omega_N)$ converges to $\omega$ for the weak-$\ast$ topology of probability measures and that $(I(Q_N))$ is bounded. Then $\mathcal{F}_b(Q_N,\omega) \Tend{N}{+\infty} 0$ if and only if we have 
\begin{equation*}
\frac{1}{N^2} \sum_{1 \leq i \neq j \leq N} g_b(q_i,q_j) \longrightarrow \iint_{\mathbb{R}^2\times \mathbb{R}^2} g_b(x,y)\omega(x)\omega(y)\dd x \dd y.
\end{equation*}
\end{claim}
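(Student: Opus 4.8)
The plan is to expand $\mathcal{F}_b(Q_N,\omega)$ algebraically and reduce the asserted equivalence to the convergence of a single ``cross'' term. Writing $\omega_N=\frac1N\sum_{i=1}^N\delta_{q_i}$ and $\nu_N=\omega_N-\omega$, expanding $\nu_N^{\otimes2}$ and using that the $q_i$ are pairwise distinct (so that $\omega_N^{\otimes2}$ charges the diagonal only through the terms $i=j$) and that $\omega$ has no atoms (so that $\omega_N\otimes\omega$ and $\omega^{\otimes2}$ do not charge $\Delta$), one obtains
\[
\mathcal{F}_b(Q_N,\omega)=\frac{1}{N^2}\sum_{1\le i\neq j\le N}g_b(q_i,q_j)-\frac{2}{N}\sum_{i=1}^N G_b[\omega](q_i)+\iint_{\mathbb{R}^2\times\mathbb{R}^2}g_b(x,y)\omega(x)\omega(y)\dd x\dd y,
\]
the last integral being a fixed finite constant since the logarithmic singularity of $g_b$ is integrable against $\omega^{\otimes2}\in L^\infty$ with compact support. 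As this constant and $\frac{1}{N^2}\sum_{i\neq j}g_b(q_i,q_j)$ are precisely the quantities appearing in the statement, it suffices to establish
\[
\int_{\mathbb{R}^2}G_b[\omega]\dd\omega_N\Tend{N}{+\infty}\int_{\mathbb{R}^2}G_b[\omega]\dd\omega=\iint_{\mathbb{R}^2\times\mathbb{R}^2}g_b(x,y)\omega(x)\omega(y)\dd x\dd y ,
\]
and then both implications of the equivalence follow at once.

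Set $\phi:=G_b[\omega]$. Two properties of $\phi$ will be used. First, $\phi$ is continuous on $\mathbb{R}^2$: its Coulomb part equals $x\mapsto\sqrt{b(x)}\,\big(g\ast(\sqrt{b}\,\omega)\big)(x)$, which is continuous because $g\in L^1_{\loc}$ and $\sqrt{b}\,\omega\in L^\infty$ with compact support, while $x\mapsto\int S_b(x,y)\omega(y)\dd y$ is continuous by dominated convergence, $S_b$ being locally Lipschitz by Lemmas~\ref{estimees_S_b_not_necessarily_symmetric} and~\ref{estimees_S_b_symmetric}. Second, by Proposition~\ref{proposition_bs_law} the function $\phi$ coincides, up to an additive constant, with the stream function $\psi$ of Proposition~\ref{proposition_well_posedness_elliptic}, so Proposition~\ref{theorem_asymptotic_velocity_field_lake} provides $\delta\in(0,1)$ and $C>0$ with $|\phi(x)|\le C(1+|x|^\delta)$ for all $x\in\mathbb{R}^2$.

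I would then pass to the limit by truncation. Fix a continuous cutoff $\chi_R$ with $\mathbf{1}_{B(0,R)}\le\chi_R\le\mathbf{1}_{B(0,2R)}$. The function $\phi\chi_R$ is bounded, continuous and compactly supported, hence $\omega_N\wstarcv{N}{+\infty}\omega$ gives $\int\phi\chi_R\dd\omega_N\to\int\phi\chi_R\dd\omega$. For the remainder, for $R\ge1$ one has $1\le R^{-2}|x|^2$ and $|x|^\delta\le R^{\delta-2}|x|^2$ on $B(0,R)^c$, so that
\[
\left|\int_{\mathbb{R}^2}\phi(1-\chi_R)\dd\omega_N\right|\le C\int_{B(0,R)^c}(1+|x|^\delta)\dd\omega_N\le C(R^{-2}+R^{\delta-2})\,I(Q_N)\le C(R^{-2}+R^{\delta-2})\sup_N I(Q_N),
\]
which tends to $0$ as $R\to\infty$ uniformly in $N$ since $(I(Q_N))$ is bounded; the analogous remainder against $\omega$ vanishes once $R$ exceeds the radius of $\supp\omega$. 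Letting first $N\to\infty$ and then $R\to\infty$ yields $\int\phi\dd\omega_N\to\int\phi\dd\omega$, which proves the Claim. The main obstacle is precisely the behaviour of $\phi=G_b[\omega]$ at infinity: the crude bound $|S_b(x,y)|\lesssim 1+|x|^2+|y|^2$ does not suffice, and the argument genuinely relies on the sublinear estimate $|\phi(x)|\le C(1+|x|^\delta)$ with $\delta<1$ from Proposition~\ref{theorem_asymptotic_velocity_field_lake}, which together with the uniform bound on $(I(Q_N))$ gives the uniform-in-$N$ tail control of $\phi$ against $\omega_N$ needed to interchange the two limits; the remaining ingredients — the expansion of $\mathcal{F}_b$ and the continuity of $\phi$ — are routine.
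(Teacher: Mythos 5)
Your proposal is correct and follows essentially the same route as the paper: expand $\mathcal{F}_b(Q_N,\omega)$ into the self-interaction sum, the cross term $\int G_b[\omega]\,\dd\omega_N$, and the fixed double integral, then show $\int G_b[\omega]\,\dd\omega_N\to\int G_b[\omega]\,\dd\omega$ using continuity of $G_b[\omega]$, the sublinear growth bound $|G_b[\omega](x)|\le C(1+|x|^\delta)$ from Proposition~\ref{theorem_asymptotic_velocity_field_lake}, the boundedness of $I(Q_N)$ for the tails, and weak-$\ast$ convergence against a cutoff. Your handling of the cutoff (absorbing the transition region $B(0,2R)\setminus B(0,R)$ into the uniform tail estimate, with the correct exponent $R^{\delta-2}$) is in fact a slightly cleaner version of the paper's argument.
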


\begin{proof}
Let $\epsilon > 0$. We write the modulated energy as the sum of three terms:
\begin{equation}\label{split_mod_energy_3}
\begin{aligned}
\mathcal{F}_b(Q_N,\omega) =& -\iint_{\mathbb{R}^2\times\mathbb{R}^2} g_b(x,y) \omega(x)\omega(y)\dd x \dd y  \\
&+ \frac{1}{N^2}\sum_{1 \leq i \neq j \leq N} g_b(q_i,q_j) \\
&-2 \int_{\mathbb{R}^2} \psi(y)\dd(\omega_N-\omega)(y)
\end{aligned}
\end{equation}
where $\psi = G_b[\omega]$. Let $R \geq 1$ be such that $\supp(\omega) \subset B(0,R)$. We have
\begin{align*}
\int_{\mathbb{R}^2}\psi\dd (\omega-\omega_N) = -\int_{B(0,R)^c}\psi\dd\omega_N + \int_{B(0,R)}\psi\dd(\omega-\omega_N).
\end{align*}
We bound the first term as we did to obtain \eqref{bound_tightness}:
\begin{align*}
\left|\int_{B(0,R)^c}\psi\dd \omega_N\right| 
&\leq \frac{1}{N}\underset{|q_i| \geq R}{\sum_{i=1}^N}|\psi(q_i)| \\
&\leq \frac{C_b}{N}\underset{|q_i| \geq R}{\sum_{i=1}^N}(1+|q_i|^\delta) \\
&\leq C_b(R^{-2}I(Q_N) + R^{2-\delta} I(Q_N))
\end{align*}
for some $0 < \delta < 1$ (by Proposition~\ref{theorem_asymptotic_velocity_field_lake}). Therefore, 
\begin{equation*}
\left|\int_{B(0,R)^c}\psi\dd \omega_N\right| \leq \epsilon
\end{equation*}
if $R$ is big enough. Now let $\chi_{R,\beta}$ be a smooth function such that $0 \leq \chi \leq 1$, $\chi_{R,\beta}(x) = 1$ if $|x| \leq R$ and $\chi_{R,\beta}(x) = 0$ if $|x| \geq R+\beta$. 
Then
\begin{align*}
\int_{B(0,R)}\psi\dd (\omega-\omega_N) = \int \chi_{R,\beta}\psi\dd (\omega-\omega_N) - \int_{R \leq |x| \leq R+\beta} \chi_{R,\beta}\psi\dd (\omega-\omega_N)
\end{align*}
Choosing $\beta$ small enough we have
\begin{align*}
\left|\int_{R \leq |x| \leq R+\beta} \chi_{R,\beta}\psi\dd (\omega-\omega_N)\right| \leq \epsilon.
\end{align*} 
Now $\psi$ is continuous (see Lemma~\ref{lemma_link_solutions_duerinckx}) so by weak-$\ast$ convergence of $(\omega_N)$ to $\omega$ we get that
\begin{equation*}
\int\psi\chi_{R,\beta}\dd (\omega-\omega_N) \Tend{N}{+\infty} 0
\end{equation*}
and therefore
\begin{equation*}
\underset{N\rightarrow +\infty}{\limsup}\left|\int_{\mathbb{R}^2}\psi\dd (\omega-\omega_N)\right| \leq 2\epsilon.
\end{equation*}
for all $\epsilon > 0$, so we get
\begin{equation*}
\int_{\mathbb{R}^2}\psi\dd (\omega-\omega_N) \Tend{N}{+\infty} 0. 
\end{equation*}
Using \eqref{split_mod_energy_3} we get that $\mathcal{F}_b(Q_N,\omega) \Tend{N}{+\infty} 0$ if and only if we have 
\begin{equation*}
\frac{1}{N^2} \sum_{1 \leq i \neq j \leq N} g_b(q_i,q_j) \longrightarrow \iint_{\mathbb{R}^2\times \mathbb{R}^2} g_b(x,y)\omega(x)\omega(y)\dd x \dd y.
\end{equation*} 
\end{proof}

It follows directly from the Claim that $(2) \implies (1)$. Now if we have $(1)$, using Corollary~\ref{corollary_coercivity} we have convergence of $(\omega_N)$ to $\omega$ in any $H^{s}$ for any $s < -1$. It follows by Prokhorov's theorem $(\omega_N)$ converges to $\omega$ for the weak-$\ast$ topology of probability measures. By the Claim we also have convergence of the interaction energy and therefore $(1) \implies (2)$.
\end{proof}

\section{Proof of the main Proposition \ref{controle_terme_principal_gronwall}}\label{section:6}

Let us recall that for $q \in \mathbb{R}^2$, $Q_N = (q_1,...,q_N) \in (\mathbb{R}^2)^N$ and $\displaystyle{0 < \eta < 1}$, we have denoted
\begin{equation*}
I(Q_N) = \frac{1}{N}\sum_{i=1}^N |q_i|^2,
\end{equation*}
\begin{equation*}
\widetilde{\delta}_q^{(\eta)} = m_b(q,\eta)\frac{\dd \delta_{q}^{(\eta)}}{\sqrt{b}}
\end{equation*}
and
\begin{equation*}
m_b(q,\eta) = \left(\int_{\mathbb{R}^2}\frac{\dd \delta_{q}^{(\eta)}}{\sqrt{b}}\right)^{-1}
\end{equation*}
where $\displaystyle{\delta_{q}^{(\eta)}}$ is the uniform probability measure on the circle $\partial B(q,\eta)$.

In this Section, we prove the following result:
\begin{proposition}\label{controle_terme_principal_gronwall}
Let $Q_N = (q_1,...,q_N) \in (\mathbb{R}^2)^N$ such that $q_i \neq q_j$ if $i \neq j$, $u \in W^{1,\infty}(\mathbb{R}^2,\mathbb{R}^2)$ and $\omega \in \mathcal{P}(\mathbb{R}^2)\cap L^\infty(\mathbb{R}^2)$ with compact support such that $\nabla G_b[\omega]$ is continuous and bounded. There exists $\beta \in (0,1)$ (independent of $\omega$, $u$ and $Q_N$) such that
\begin{align*}
\bigg|\iint_{(\mathbb{R}^2\times\mathbb{R}^2)\backslash\Delta} &u(x)\cdot \nabla_x g_b(x,y)\dd\left(\frac{1}{N}\sum_{i=1}^N\delta_{q_i}-\omega\right)^{\otimes 2}(x,y)\bigg| \\
\leq& C_b\norm{u}_{W^{1,\infty}}|\mathcal{F}_b(Q_N,\omega)| \\
&+ C_b(1+\norm{u}_{W^{1,\infty}})\norm{\omega}_{L^1((1+|x|)\dd x)\cap L^\infty}(1+I(Q_N))N^{-\beta}.
\end{align*}

\end{proposition}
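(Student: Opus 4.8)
### Proof proposal for Proposition 6.2

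The plan is to follow the strategy of \cite[Proposition~4.1]{Serfaty} (and its refinements in \cite{Rosenzweig,NguyenRosenzweigSerfaty}), adapting each step to the heterogeneous kernel $g_b$. Write $\omega_N = \frac1N\sum_{i=1}^N \delta_{q_i}$ and $m := \omega_N - \omega$. The first step is to regularise the diagonal singularity: replace each $\delta_{q_i}$ by the smeared measure $\widetilde\delta_{q_i}^{(\eta)}$ introduced in \eqref{definition_delta_tilde_q}, for a parameter $\eta$ to be optimised (ultimately a negative power of $N$), and correspondingly replace $g_b$ by the smooth kernel $g_b^\eta$ from Claim~\ref{regularisation_g_b}. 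The bilinear expression $\iint_{(\mathbb{R}^2\times\mathbb{R}^2)\setminus\Delta} u(x)\cdot\nabla_x g_b(x,y)\,\mathrm dm^{\otimes2}$ then splits into the regularised "smooth" integral $\iint u(x)\cdot\nabla_x g_b^\eta(x,y)\,\mathrm d\widetilde m^{(\eta)}(x)\,\mathrm d\widetilde m^{(\eta)}(y)$ (with $\widetilde m^{(\eta)} := \frac1N\sum_i\widetilde\delta_{q_i}^{(\eta)} - \omega$) plus a collection of commutator-type remainders, exactly as in the Coulomb case but with extra terms coming from the $\sqrt{b(x)b(y)}$ prefactor and from $S_b$. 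Each remainder is estimated by the Hölder and Lipschitz bounds on $g$, on $\nabla_x S_b(\cdot,y)$ (Lemma~\ref{estimees_S_b_not_necessarily_symmetric}(2), which gives $(1+|y|)$ growth), on $m_b(y,\eta)-\sqrt{b(y)}$ (estimate \eqref{estimee_m_b}), and by the counting Corollary~\ref{corollary_counting} to control the number of pairs with $|q_i-q_j|\le\eta$; these produce terms of the shape $C_b(1+\|u\|_{W^{1,\infty}})\|\omega\|_{L^1((1+|x|)\mathrm dx)\cap L^\infty}(1+I(Q_N))\,\eta^{\beta'}$ together with $C_b\|u\|_{W^{1,\infty}}|\mathcal F_b(Q_N,\omega)|$ (the latter arising when one bounds the number of nearby pairs via the modulated energy).

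The second, main step is to rewrite the regularised smooth integral in a "stress-energy / commutator" form. Introduce $h_\eta := H_{N,\eta} = G_b[\widetilde m^{(\eta)}]$, so that $-\div(\frac1b\nabla h_\eta) = \widetilde m^{(\eta)}$ distributionally, and note $\int \frac1b|\nabla h_\eta|^2 = T_1 \le |\mathcal F_b(Q_N,\omega)| + C_b(\text{error})$ by Claim~\ref{coerc_T1} and Proposition~\ref{proposition_pre_coerc}. Using $\nabla_x g_b^\eta(x,y)$ as (essentially) the kernel of $\frac1b\nabla h_\eta$, one integrates by parts to express the double integral as
\begin{equation*}
\iint u(x)\cdot\nabla_x g_b^\eta(x,y)\,\mathrm d\widetilde m^{(\eta)}(x)\,\mathrm d\widetilde m^{(\eta)}(y)
= \int_{\mathbb{R}^2} u(x)\cdot \Big(\tfrac1{b(x)}\nabla h_\eta(x)\Big)^{?}\,\widetilde m^{(\eta)}(x)\,\mathrm dx
\end{equation*}
modulo symmetrisation in $(x,y)$; more precisely, symmetrising in the two variables and using $\nabla_y g_b(x,y)=\nabla_x g_b(y,x)$, the bilinear form becomes $\frac12\int \big(u(x)-u(y)\big)\cdot\nabla_x g_b^\eta(x,y)\,\mathrm d\widetilde m^{(\eta)}(x)\,\mathrm d\widetilde m^{(\eta)}(y)$ up to a term that pairs $\nabla u$ against the stress-energy tensor $[h_\eta,h_\eta]$ of \eqref{definition_SE}. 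The key identity is $\div[h_\eta,h_\eta] = 2\Delta h_\eta\,\nabla h_\eta$; here $\Delta h_\eta$ is not directly $\widetilde m^{(\eta)}$ because of the variable coefficient $\frac1b$, so one writes $\Delta h_\eta = -b\,\widetilde m^{(\eta)} + \nabla(\ln b)\cdot\nabla h_\eta$ and tracks the extra lower-order term, which is controlled since $\nabla\ln b$ decays (Assumption~\ref{assumption_b}) and $\nabla h_\eta\in L^2$. This reduces the whole estimate to bounding $\int |\nabla u|\,|\nabla h_\eta|^2\,\frac1b$ plus $\int|\nabla u|\,|\nabla G_b[\omega]|\,|\nabla h_\eta|$-type cross terms (using $\nabla G_b[\omega]\in L^\infty$) — each $\le C_b\|u\|_{W^{1,\infty}}\big(\int\frac1b|\nabla h_\eta|^2\big) + C_b\|u\|_{W^{1,\infty}}\|\omega\|_{\cdot}\big(\int\frac1b|\nabla h_\eta|^2\big)^{1/2}$, and then invoking $\int\frac1b|\nabla h_\eta|^2 \le |\mathcal F_b| + C_b(\text{error})$ together with Young's inequality to absorb the square-root into $C_b|\mathcal F_b| + C_b(\text{error})$.

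The third step is bookkeeping of the error terms. Collecting all remainders from the regularisation step and from the variable-coefficient corrections, one gets a bound of the form $C_b\|u\|_{W^{1,\infty}}|\mathcal F_b(Q_N,\omega)| + C_b(1+\|u\|_{W^{1,\infty}})\|\omega\|_{L^1((1+|x|)\mathrm dx)\cap L^\infty}\big(1+I(Q_N)\big)\big(\eta^{\beta'} + \eta^{-c}N^{-1} + \frac{g(\eta)}{N} + \dots\big)$; choosing $\eta = N^{-a}$ for a suitable small $a>0$ makes the parenthesis $\le C N^{-\beta}$ for some $\beta\in(0,1)$ independent of $u,\omega,Q_N$, which is the claimed inequality. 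The hard part will be the second step: getting the commutator/stress-energy manipulation to work cleanly when the ``Laplacian'' is the non-constant-coefficient operator $\div(\frac1b\nabla\cdot)$ and the kernel carries the extra $\sqrt{b(x)b(y)}$ and $S_b$ pieces — in particular, verifying that the boundary terms at infinity vanish (which is where Proposition~\ref{theorem_asymptotic_velocity_field_lake} and the decay of $\nabla h_\eta$, $\nabla G_b[\omega]$ are needed) and that the lower-order term $\nabla(\ln b)\cdot\nabla h_\eta$ and the $S_b$-contributions are genuinely controlled by $\int\frac1b|\nabla h_\eta|^2$ and $I(Q_N)$ rather than by something uncontrolled. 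The moment-of-inertia hypothesis $I(Q_N)<\infty$ is precisely what tames the non-compactly-supported, $(1+|y|)$-growing contributions of $S_b$ and of $g$ at large separation.
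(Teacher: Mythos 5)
Your proposal follows essentially the same route as the paper's proof: smear the Diracs into $\widetilde{\delta}_{q_i}^{(\eta)}$, treat the resulting smooth bilinear term through the stress-energy tensor of $H_{N,\eta}=G_b\big[\tfrac1N\sum_i\widetilde{\delta}_{q_i}^{(\eta)}-\omega\big]$ with the variable-coefficient correction $\nabla(1/b)\cdot\nabla H_{N,\eta}$ and vanishing boundary terms via Proposition~\ref{theorem_asymptotic_velocity_field_lake}, bound it by $\int\frac1b|\nabla H_{N,\eta}|^2$ and hence by $|\mathcal{F}_b|$ plus errors via Proposition~\ref{proposition_pre_coerc}, and control the remainders with the $S_b$ H\"older bounds, the $m_b$ estimate, Corollary~\ref{corollary_counting} and the commutator estimates of \cite{NguyenRosenzweigSerfaty}, finally taking $\eta$ a negative power of $N$. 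The only minor deviations (the paper keeps the exact kernel $g_b$ against the smeared measures rather than also mollifying it, and the $(u(x)-u(y))$-antisymmetrisation is applied only to the Coulomb piece $\sqrt{b(x)b(y)}\nabla g(x-y)$, since $\nabla_x g_b$ itself is not antisymmetric) do not change the argument.
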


This proposition is an equivalent of \cite[Proposition~1.1]{Serfaty} or \cite[Proposition~4.1]{NguyenRosenzweigSerfaty} and the proof will follow the same steps: regularise the dirac masses, use the structure of the lake kernel to bound the regular part and control the remainders. Some terms are very similar to the ones obtained in the Coulomb case and we will use both the properties of our regularisation (see Subsection~\ref{subsection:24}) and some estimates proved in \cite{NguyenRosenzweigSerfaty} to bound them. As in the proof of Proposition~\ref{proposition_pre_coerc} some terms are specific to the lake kernel and we will use results of Section~\ref{section:2} to bound them.

\begin{proof}
Let us fix $\displaystyle{0 < \eta < \frac{1}{8}}$ and write
\begin{equation}\label{decomposition_main_proposition}
\begin{aligned}
&\iint_{(\mathbb{R}^2\times\mathbb{R}^2)\backslash\Delta} u(x)\cdot \nabla_x g_b(x,y)\dd\left(\frac{1}{N}\sum_{i=1}^N\delta_{q_i}-\omega\right)^{\otimes 2}(x,y) \\
=& \iint_{\mathbb{R}^2\times\mathbb{R}^2} u(x)\cdot \nabla_x g_b(x,y)\dd\left(\frac{1}{N}\sum_{i=1}^N\widetilde{\delta}^{(\eta)}_{q_i}-\omega\right)^{\otimes 2}(x,y) \\
&+\bigg(-\frac{1}{N}\sum_{i=1}^N\iint_{\mathbb{R}^2\times\mathbb{R}^2} u(x)\cdot \nabla_x g_b(x,y)\bigg[\dd \omega(x)\dd(\delta_{q_i}-\widetilde{\delta}^{(\eta)}_{q_i})(y) \\
&+ \dd(\delta_{q_i}-\widetilde{\delta}^{(\eta)}_{q_i})(x) \dd \omega(y)\bigg]\bigg) \\
&+\bigg(\frac{1}{N^2}\underset{1 \leq i,j \leq N}{\sum}\iint_{(\mathbb{R}^2\times\mathbb{R}^2)\backslash\Delta} u(x)\cdot \nabla_x g_b(x,y)\\
&[\dd \delta_{q_i}(x) \dd \delta_{q_j}(y) - \dd\widetilde{\delta}^{(\eta)}_{q_i}(x)\dd \widetilde{\delta}^{(\eta)}_{q_j}(y)]\bigg) \\
=:& T_1 + T_2 + T_3.
\end{aligned}
\end{equation}

Let us bound the first term. As in Section~\ref{section:5} we write 
\begin{equation*}
H_{N,\eta} := G_b\left[\frac{1}{N}\sum_{i=1}^N\widetilde{\delta}^{(\eta)}_{q_i}-\omega\right].
\end{equation*}

We claim:
\begin{claim}\label{claim_delort_type_limit_lake}
\begin{align*}
T_1 =& -\int_{\mathbb{R}^2} u(x) \cdot \nabla H_{N,\eta}(x) \nabla\left(\frac{1}{b}\right)\cdot\nabla H_{N,\eta}(x)\dd x \\
&+\int_{\mathbb{R}^2} \nabla\left(\frac{1}{2b}u\right):[H_{N,\eta},H_{N,\eta}] 
\end{align*}
\end{claim}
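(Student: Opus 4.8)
The plan is to turn the double integral defining $T_1$ into an integration by parts against the single potential $h := H_{N,\eta} = G_b\big[\tfrac1N\sum_{i=1}^N\widetilde\delta^{(\eta)}_{q_i}-\omega\big]$. Write $m := \tfrac1N\sum_{i=1}^N\widetilde\delta^{(\eta)}_{q_i}-\omega$, a compactly supported signed measure with $\int_{\R^2}m=0$ (each $\widetilde\delta^{(\eta)}_{q_i}$ and $\omega$ having total mass one). By Proposition~\ref{proposition_bs_law}, $h$ is a distributional solution of $-\div\big(\tfrac1b\nabla h\big)=m$; moreover $\nabla h$ is bounded on every compact set (its singular part comes from the $\widetilde\delta^{(\eta)}_{q_i}$, for which $\nabla_x G_b[\widetilde\delta^{(\eta)}_{q_i}]$ involves only $\nabla g^{(\eta)}$, which is bounded, together with the locally bounded $\nabla_x S_b$), and $\nabla h\in L^2$ by Lemma~\ref{nabla_G_b_bounded_H_minus_one}. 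The first step is to observe, using $\int_{\R^2}\nabla_x g_b(x,y)\,\dd m(y)=\nabla h(x)$ together with Fubini, that
\[
T_1=\int_{\R^2}u(x)\cdot\nabla h(x)\,\dd m(x).
\]

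Next I would substitute $\dd m=-\div\big(\tfrac1b\nabla h\big)\,\dd x=-\big(\tfrac1b\Delta h+\nabla(\tfrac1b)\cdot\nabla h\big)\,\dd x$, which gives
\[
T_1=-\int_{\R^2}\tfrac1b\,(u\cdot\nabla h)\,\Delta h\,\dd x-\int_{\R^2}(u\cdot\nabla h)\,\nabla\!\big(\tfrac1b\big)\!\cdot\nabla h\,\dd x .
\]
The second integral is already the first term of the claim. For the first integral I would use the identity $\div[h,h]=2\,\Delta h\,\nabla h$ recalled in the Notations to rewrite $\int_{\R^2} \tfrac1b(u\cdot\nabla h)\Delta h\,\dd x=\int_{\R^2} \tfrac{1}{2b}u\cdot\div[h,h]\,\dd x$, and then integrate by parts once more, using that $[h,h]$ is a symmetric tensor, to obtain $-\int_{\R^2} \nabla\!\big(\tfrac{1}{2b}u\big):[h,h]\,\dd x$. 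Combining the two contributions yields exactly the asserted identity. As a consistency check one may instead expand $\nabla\!\big(\tfrac{1}{2b}u\big):[h,h]$ directly, using $[h,h]_{ij}=2\partial_ih\partial_jh-|\nabla h|^2\delta_{ij}$ and $\partial_i(u_j/2b)=\tfrac1{2b}\partial_iu_j+\tfrac{u_j}{2}\partial_i(\tfrac1b)$: one recovers $\tfrac1b(\partial_iu_j)(\partial_ih)(\partial_jh)+(u\cdot\nabla h)\big(\nabla(\tfrac1b)\cdot\nabla h\big)-\tfrac12\div(\tfrac1b u)\,|\nabla h|^2$, which matches the direct integration by parts of $\int_{\R^2}\nabla(u\cdot\nabla h)\cdot\tfrac1b\nabla h\,\dd x$.

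All the manipulations above are, as written, only formal: $h$ solves an equation with a singular right-hand side, $\Delta h$ is a distribution, and the integrations by parts carry boundary terms at infinity. The rigorous argument — which is where the real work lies — proceeds by a density step patterned on the proof of Claim~\ref{coerc_T1}. One chooses smooth compactly supported functions $\mu_k$ of average zero with $\mu_k\to m$ in $\dot H^{-1}$; for each $k$ the potential $H_{\mu_k}:=G_b[\mu_k]$ is smooth and, by Proposition~\ref{theorem_asymptotic_velocity_field_lake} applied with $\omega=\mu_k$, satisfies $|H_{\mu_k}(x)|\le C(1+|x|^\delta)$ and $|\nabla H_{\mu_k}(x)|\le C|x|^{-2}$, so every integration by parts above is legitimate with vanishing boundary terms and one gets the identity with $h$ replaced by $H_{\mu_k}$. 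Passing to the limit is then routine: $\nabla H_{\mu_k}\to\nabla h$ in $L^2$ by Lemma~\ref{nabla_G_b_bounded_H_minus_one}, both sides of the identity are continuous quadratic forms in $\nabla H_{\mu_k}$ for the $L^2$ norm (with constants controlled by $\norm{u}_{W^{1,\infty}}$, $\inf b>0$, $\sup b<\infty$ and Assumption~\ref{assumption_b}), while the left-hand side $\int u\cdot\nabla H_{\mu_k}\,\dd\mu_k$, after being re-expressed via the same integrations by parts purely in terms of $\nabla H_{\mu_k}$, therefore converges — and its limit is $T_1$ since $\nabla h$ is locally bounded near $\supp m$ and $m$ is the $\dot H^{-1}$-limit of the $\mu_k$. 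The main obstacle is precisely this last identification of $T_1$ (defined by pairing the singular measure $m$ with itself through $\nabla_x g_b$) with the limit of its smooth approximants; the other genuinely new point compared with the Coulomb case is keeping track, for non-constant $b$, of the splitting of the stress-energy identity into the weighted tensor term $\nabla\!\big(\tfrac{1}{2b}u\big):[h,h]$ and the lower-order term $-(u\cdot\nabla h)\,\nabla(\tfrac1b)\cdot\nabla h$ produced by the $b$-weight.
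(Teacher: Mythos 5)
Your formal computation (substituting $-\div(\tfrac1b\nabla h)$ for the measure, splitting off the $\nabla(\tfrac1b)\cdot\nabla h$ term, rewriting $\tfrac1b(u\cdot\nabla h)\Delta h$ via $\div[h,h]=2\Delta h\,\nabla h$, and killing the boundary terms with Proposition~\ref{theorem_asymptotic_velocity_field_lake}) is exactly the paper's computation for smooth, mean-zero, compactly supported $\mu$, and your observation that the right-hand side passes to the limit because it is a quadratic form in $\nabla H_{\mu_k}$ continuous for the $L^2$ topology (via Lemma~\ref{nabla_G_b_bounded_H_minus_one}) is also the paper's. But there is a genuine gap at the step you yourself flag as "the main obstacle": you assert that $\iint u(x)\cdot\nabla_x g_b(x,y)\,\dd\mu_k^{\otimes 2}\to T_1$ "since $\nabla h$ is locally bounded near $\supp m$ and $m$ is the $\dot H^{-1}$-limit of the $\mu_k$", and this is not an argument. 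The bilinear form $(\mu,\nu)\mapsto\iint u(x)\cdot\nabla_x g_b(x,y)\,\dd\mu(x)\dd\nu(y)=\int u\cdot\nabla G_b[\nu]\,\dd\mu$ is not continuous for $\dot H^{-1}$ convergence: $\nabla G_b[\nu]$ is only in $L^2$, and pairing an $L^2$ function against a measure converging merely in $\dot H^{-1}$ (or weak-$\ast$) does not pass to the limit; this is a product of two weakly convergent quantities. Local boundedness of $\nabla H_{N,\eta}$ does not rescue this (and even the identification $T_1=\int u\cdot\nabla h\,\dd m$ is delicate, since the singular part of $m$ sits exactly on the circles across which $\nabla h$ jumps).

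This limit identification is where essentially all of the paper's proof of Claim~\ref{claim_delort_type_limit_lake} is spent, and it requires structure you never invoke. The paper takes $\mu_k=\nu_k-\omega$ with $\nu_k$ \emph{nonnegative} smooth densities of fixed compact support converging to $\tfrac1N\sum_i\widetilde\delta^{(\eta)}_{q_i}$ both in $\dot H^{-1}$ and weak-$\ast$, and splits the difference into $I_1+I_2+I_3$: $I_1$ is handled by Lemma~\ref{nabla_G_b_bounded_H_minus_one}; $I_2$ uses the standing hypothesis of Proposition~\ref{controle_terme_principal_gronwall} that $\nabla G_b[\omega]$ is continuous and bounded (a hypothesis your argument never uses, which is a warning sign); and $I_3$ is split according to $g_b(x,y)=\sqrt{b(x)b(y)}\,g(x-y)+S_b(x,y)$, with an $H^1(B(0,R))$ compactness argument for the term $g\ast(\sqrt b\,\nu_k)$, Delort's argument applied to the bounded kernel $H_u(x,y)=\tfrac12(u(x)-u(y))\cdot\nabla g(x-y)$ for the main singular part (this is precisely why the approximants must be nonnegative and why the antisymmetry of $\nabla g$ is exploited), and the Hölder estimates of Lemma~\ref{estimees_S_b_not_necessarily_symmetric} for the $S_b$ part. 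Without an argument of this type (or some substitute), your proof of the claim is incomplete: the identity is established only for the smooth approximants, not for $T_1$ itself.
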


\begin{proof}[Proof of the Claim]
This claim is similar to \cite[Lemma~4.3]{Serfaty} and we proceed the same way: Let us first fix $\mu$ smooth with compact support and average zero and write $H_\mu = G_b[\mu]$. Then
\begin{align*}
\iint_{\mathbb{R}^2\times\mathbb{R}^2} u(x)\cdot \nabla_x g_b(x,y)&\dd \mu^{\otimes 2}(x,y) \\
=& - \int_{\mathbb{R}^2} u(x)\cdot \nabla H_\mu(x) \div\left(\frac{1}{b}\nabla H_\mu\right)(x)\dd x \\
=& -\int_{\mathbb{R}^2} u(x) \cdot \nabla H_\mu(x) \nabla\left(\frac{1}{b}\right)\cdot\nabla H_\mu(x)\dd x \\
&- \int_{\mathbb{R}^2}\frac{1}{b}u\cdot \nabla H_\mu\Delta H_\mu.
\end{align*}
For the second integral of the right handside we proceed as in \cite{Serfaty} and use the stress-energy tensor defined by \eqref{definition_SE} (for more details, see \cite[Equality~(1.25)]{Serfaty} and the associated references):
\begin{equation*}
\int_{\mathbb{R}^2}\frac{1}{b}u\cdot \nabla H_\mu\Delta H_\mu = \int_{\mathbb{R}^2} \frac{1}{2b}u\cdot \div([H_\mu,H_\mu]).
\end{equation*}
Integrating over a ball of radius $R$ and integrating by parts we get
\begin{align*}
\int_{B(0,R)}\frac{1}{2b}u\cdot  \div([H_\mu,H_\mu])
=& \int_{\partial B(0,R)} \frac{1}{2b}[H_\mu,H_\mu]u\cdot \dd \vec{S} \\
&- \int_{B(0,R)}\nabla\left(\frac{1}{2b}u\right):[H_\mu,H_\mu].
\end{align*}
Using Proposition~\ref{theorem_asymptotic_velocity_field_lake} (applied to $\omega = \mu$ and $\psi = H_\mu$) we have
\begin{equation*}
\left|\int_{\partial B(0,R)} \frac{1}{2b}[H_\mu,H_\mu]u\cdot \dd \vec{S}\right| \leq \frac{C_{b,\mu}\norm{u}_{L^\infty}}{R^4}R.
\end{equation*}
Letting $R \longrightarrow \infty$ we obtain
\begin{align*}
\int_{\mathbb{R}^2} \frac{1}{2b}u\cdot \div([H_{\mu},H_{\mu}]) &= -\int_{\mathbb{R}^2} \nabla\left(\frac{1}{2b}u\right):[H_{\mu},H_{\mu}].
\end{align*}
Now if $(\mu_k)$ is a sequence of smooth functions with compact support and average zero such that
\begin{equation*}
\mu_k - \bigg(\frac{1}{N}\sum_{i=1}^N\widetilde{\delta}^{(\eta)}_{q_i} - \omega\bigg) \Tend{N}{+\infty} 0 \; \text{in} \; \dot H^{-1}
\end{equation*}
then by Lemma~\ref{nabla_G_b_bounded_H_minus_one} we have
\begin{equation*}
\nabla H_{\mu_k} \Tend{N}{+\infty} \nabla H_{N,\eta}  \; \text{in} \; L^2
\end{equation*}
and therefore since $u \in W^{1,\infty}$ and since $[H_{\mu_k},H_{\mu_k}]$ (defined by Equation \eqref{definition_SE}) is quadratic in the derivatives of $H_{\mu_k}$ we get that
\begin{equation*}
-\int_{\mathbb{R}^2} u(x) \cdot \nabla H_{\mu_k}(x) \nabla\left(\frac{1}{b}\right)\cdot\nabla H_{\mu_k}(x)\dd x 
+\int_{\mathbb{R}^2} \nabla\left(\frac{1}{2b}u\right):[H_{\mu_k},H_{\mu_k}] 
\end{equation*}
converges to
\begin{equation*}
-\int_{\mathbb{R}^2} u(x) \cdot \nabla H_{N,\eta}(x) \nabla\left(\frac{1}{b}\right)\cdot\nabla H_{N,\eta}(x)\dd x 
+\int_{\mathbb{R}^2} \nabla\left(\frac{1}{2b}u\right):[H_{N,\eta},H_{N,\eta}] 
\end{equation*}
as $k \longrightarrow +\infty$. We are only left to justify that
\begin{multline*}
\iint_{\mathbb{R}^2\times\mathbb{R}^2} u(x)\cdot \nabla_x g_b(x,y)\dd \mu_k^{\otimes 2}(x,y) \\
\Tend{k}{+\infty} \iint_{\mathbb{R}^2\times\mathbb{R}^2} u(x)\cdot \nabla_x g_b(x,y)\dd\left(\frac{1}{N}\sum_{i=1}^N\widetilde{\delta}^{(\eta)}_{q_i}-\omega\right)^{\otimes 2}(x,y).
\end{multline*}
We define
\begin{align*}
m &= \frac{1}{N}\sum_{i=1}^N\widetilde{\delta}^{(\eta)}_{q_i}.
\end{align*}
Let us consider a sequence $(\nu_k)$ of smooth probability densities with support included in a ball $B(0,R)$ independent of $k$ (containing $\supp(m)$), such that $(\nu_k - m)$ converges to zero in $\dot H^{-1}$ and for the weak-$\ast$ topology of probability measures. If we set $\mu_k = \nu_k - \omega$, then  
\begin{equation*}
\mu_k - (m-\omega) \Tend{k}{+\infty}  0 \; \text{in} \; \dot H^{-1}.
\end{equation*} 
Now we write
\begin{align*}
&\iint_{\mathbb{R}^2\times\mathbb{R}^2} u(x)\cdot \nabla_x g_b(x,y)\dd \mu_k^{\otimes 2}(x,y) \\
&- \iint_{\mathbb{R}^2\times\mathbb{R}^2} u(x)\cdot \nabla_x g_b(x,y)\dd (m-\omega)^{\otimes 2}(x,y) \\
=& \iint_{\mathbb{R}^2\times\mathbb{R}^2} u(x)\cdot \nabla_x g_b(x,y)\omega(x)\dd x \dd(m-\nu_k)(y) \\
&+ \iint_{\mathbb{R}^2\times\mathbb{R}^2} u(x)\cdot \nabla_x g_b(x,y)\omega(y)\dd y\dd(m-\nu_k)(x) \\
&+ \iint_{\mathbb{R}^2\times\mathbb{R}^2} u(x)\cdot \nabla_x g_b(x,y)\dd(\nu_k\otimes\nu_k - m\otimes m)(x,y) \\
=:& I_1 + I_2 + I_3.
\end{align*}
We have
\begin{align*}
|I_1| = \bigg|\int_{\mathbb{R}^2}  u \cdot \nabla G_b[m-\nu_k]\omega \bigg|
&\leq \norm{u}_{L^\infty}\norm{\omega}_{L^2}\norm{\nabla G_b[m-\nu_k]}_{L^2} \\
&\leq C\norm{u}_{L^\infty}\norm{\omega}_{L^2}\norm{m-\nu_k}_{\dot H^{-1}}
\end{align*}
by Lemma~\ref{nabla_G_b_bounded_H_minus_one} and therefore $I_1 \Tend{k}{+\infty}  0$. Recall that $(m-\nu_k)$ converges to zero for the weak-$\ast$ topology of probability measures. Therefore 
\begin{align*}
I_2 = \int_{\mathbb{R}^2} u\cdot \nabla G_b[\omega] \dd(m-\nu_k) \Tend{k}{+\infty}  0
\end{align*}
since $u$ and $\nabla G_b[\omega]$ are continuous and bounded by assumption. Now we want to show that $I_3$ converges to zero. Remark that writing $\mu_k = \nu_k - \omega$ and proving that $I_1$ and $I_2$ converge to zero allowed us to restrict ourself to study the convergence of
\begin{equation*}
\iint_{\mathbb{R}^2\times\mathbb{R}^2} u(x)\cdot \nabla_x g_b(x,y)\dd \nu_k(x)\dd\nu_k(y)
\end{equation*}
for $\nu_k$ nonnegative (which will be crucial for using Delort's argument below).
We use the definition of $g_b$ \eqref{definition_g_b} to write
\begin{align*}
I_3 =& \iint_{\mathbb{R}^2\times\mathbb{R}^2}  u(x)\cdot \nabla \sqrt{b}(x)\sqrt{b(y)}g(x-y)\dd(\nu_k\otimes\nu_k - m\otimes m)(x,y) \\
&+ \iint_{\mathbb{R}^2\times\mathbb{R}^2}  \sqrt{b(x)b(y)} u(x)\cdot \nabla g(x-y)\dd(\nu_k\otimes\nu_k - m\otimes m)(x,y) \\
&+ \iint_{\mathbb{R}^2\times\mathbb{R}^2}  u(x)\cdot\nabla_x S_b(x,y) \dd(\nu_k\otimes\nu_k - m\otimes m)(x,y) \\
=:& I_{3,1} + I_{3,2} + I_{3,3}.
\end{align*}
We write
\begin{equation}\label{decomposition_I_3_1}
\begin{aligned}
I_{3,1} =& \iint_{\mathbb{R}^2\times\mathbb{R}^2}  u(x)\cdot \nabla \sqrt{b}(x)\sqrt{b(y)}g(x-y) \dd(\nu_k-m)(x)\dd \nu_k(y) \\
&+ \iint_{\mathbb{R}^2\times\mathbb{R}^2}  u(x)\cdot \nabla \sqrt{b}(x)\sqrt{b(y)}g(x-y) \dd m(x) \dd(\nu_k-m)(y) \\
=& \int_{\mathbb{R}^2} (u\cdot \nabla \sqrt{b})(g\ast [\sqrt{b}\nu_k])\dd(\nu_k-m) \\
&+ \int_{\mathbb{R}^2} (u\cdot \nabla \sqrt{b})(g\ast[\sqrt{b}(\nu_k-m)])\dd m. 
\end{aligned}
\end{equation}
Recall that $B(0,R)$ is a ball containing the supports of $m$ and $\nu_k$. Consider a smooth probability density $\rho$ with support in $B(0,R)$. We define 
\begin{align*}
\chi_k &= \left(\int_{\mathbb{R}^2} \sqrt{b}\nu_k\right)\rho, \\
\chi_\infty &= \left(\int_{\mathbb{R}^2} \sqrt{b}\dd m\right)\rho 
\end{align*}
and write
\begin{equation}\label{decomposition_claim_cv_H_1_loc}
\begin{aligned}
\nabla g\ast(\sqrt{b}(\nu_k - m)) =& \nabla g\ast(\sqrt{b}\nu_k - \chi_k+\chi_\infty - \sqrt{b}m) \\
&+ \left(\int_{\mathbb{R}^2} \sqrt{b}\nu_k-\int_{\mathbb{R}^2} \sqrt{b}\dd m\right)\nabla g\ast\rho. 
\end{aligned}
\end{equation}
Now
\begin{multline*}
\norm{\nabla g\ast(\sqrt{b}\nu_k - \chi_k+\chi_\infty - \sqrt{b}m)}^2_{L^2} \\
= C\int_{\mathbb{R}^2}\frac{1}{|\xi|^2}|\widehat{\sqrt{b}\nu_k}(\xi) - \widehat{\chi_k}(\xi)+\widehat{\chi_\infty}(\xi) - \widehat{\sqrt{b}m}(\xi)|^2\dd \xi. 
\end{multline*}
Remark that $\alpha_k=\sqrt{b}\nu_k - \chi_k+\chi_\infty - \sqrt{b}m$ is a Radon measure with support included in $B(0,R)$ such that $\widehat{\alpha_k}(0) = 0$. Therefore
\begin{align*}
\left|\int_{\mathbb{R}^2} e^{-ix\cdot\xi}\dd \alpha_k(x)\right| 
&= \left|\int_{\mathbb{R}^2}(e^{-ix\cdot\xi}-1)\dd \alpha_k(x)\right| \\
&=2\left|\int_{\mathbb{R}^2}\sin\left(\frac{x\cdot\xi}{2}\right)\dd \alpha_k(x)\right| \\
&\leq CR|\xi|\int_{\mathbb{R}^2}\dd |\alpha_k|(x) \\
&\leq C_{b,R}|\xi|.
\end{align*}
It follows that for $\epsilon > 0$
\begin{align*}
\int_{|\xi| \leq \epsilon}\frac{1}{|\xi|^2}|\widehat{\alpha_k}(\xi)|^2\dd \xi \leq C_{b,R}\epsilon^2.
\end{align*}
Moreover,
\begin{align*}
\int_{|\xi| \geq \epsilon}&\frac{1}{|\xi|^2}|\widehat{\sqrt{b}\nu_k}(\xi) - \widehat{\chi_k}(\xi)+\widehat{\chi_\infty}(\xi) - \widehat{\sqrt{b}m}(\xi)|^2\dd \xi \\
&\leq C_\epsilon\left(\int_{\mathbb{R}^2}|\widehat{\chi_k}(\xi)-\widehat{\chi_\infty}(\xi)|^2\dd \xi + \int_{\mathbb{R}^2}\frac{1}{1+|\xi|^2}|\widehat{\sqrt{b}\nu_k}(\xi) - \widehat{\sqrt{b}m}(\xi)|^2\right) \\
&\leq C_\epsilon\left(\norm{\chi_k - \chi_\infty}_{L^2} + \norm{\sqrt{b}\nu_k-\sqrt{b}m}_{H^{-1}}\right) \Tend{k}{+\infty} 0
\end{align*}
since $b$ is smooth. Therefore 
\begin{align}
\underset{k\rightarrow+\infty}{\limsup}\norm{\nabla g\ast(\sqrt{b}\nu_k - \chi_k+\chi_\infty - \sqrt{b}m)}^2_{L^2} \leq C_{b,R}\epsilon^2
\end{align}
for all $\epsilon > 0$ so
\begin{equation}\label{cv_grad_L_2_claim_cv_H_1_loc}
\nabla g\ast(\sqrt{b}\nu_k - \chi_k+\chi_\infty - \sqrt{b}m) \overset{L^2}{\Tend{k}{+\infty}} 0.
\end{equation}
By Hardy-Littlewood-Sobolev inequality (see for example \cite[Theorem~1.7]{BahouriCheminDanchin}), $\nabla g\ast\rho \in L^p$  for all $2 < p < +\infty$ so 
\begin{align*}
\left(\int_{\mathbb{R}^2} \sqrt{b}\nu_k-\int_{\mathbb{R}^2} \sqrt{b}m\right)\nabla g\ast\rho \Tend{k}{+\infty} 0 \; \text{in} \; L^2(B(0,R)).
\end{align*}
Combining the upper limit with \eqref{decomposition_claim_cv_H_1_loc} and  \eqref{cv_grad_L_2_claim_cv_H_1_loc} we get that
\begin{equation*}
\nabla g\ast(\sqrt{b}\nu_k) \Tend{k}{+\infty}  \nabla g\ast(\sqrt{b}m) \; \text{in} \; L^2(B(0,R)).
\end{equation*}
Now, by convolution inequality, we have
\begin{equation}\label{convergence_g_ast_nu_L_2}
\norm{g\ast[\sqrt{b}\nu_k]}_{L^2(B(0,R))} \leq C_b\norm{g}_{L^2(B(0,2R))}\norm{\nu_k}_{L^1} \leq C_b \norm{g}_{L^2(B(0,2R))}
\end{equation}
so $(g\ast[\sqrt{b}\nu_k])$ is bounded in $H^1(B(0,R))$ which is compactly embedded in $ L^2(B(0,R))$. Therefore by \eqref{cv_grad_L_2_claim_cv_H_1_loc}, up to extraction, $(g\ast[\sqrt{b}\nu_k])$ converges to $g\ast[\sqrt{b}m] + C$ where $C$ is a constant. If $x_0 \in B(0,R)$ is at a positive distance from the supports of $\nu_k$ and $m$ then $g(x_0-\cdot)$ is continuous on the supports of $\nu_k$ and $m$ and therefore
\begin{equation*}
g\ast[\sqrt{b}\nu_k](x_0) \Tend{k}{+\infty} g\ast[\sqrt{b}m](x_0)
\end{equation*}
by dominated convergence theorem. It follows that $C = 0$, thus
\begin{equation*}
g\ast[\sqrt{b}\nu_k] \Tend{k}{+\infty}  g\ast[\sqrt{b}m] \; \text{in} \; H^1(B(0,R)).
\end{equation*}
We recall that since $b$ is smooth,
\begin{align*}
\sqrt{b}\nu_k \Tend{k}{+\infty}  \sqrt{b}m \; \text{in} \; H^{-1}.
\end{align*}
Moreover, $m \in H^{-1}$ with compact support and $u\cdot\nabla \sqrt{b} \in W^{1,\infty}$ so it follows by Decomposition \ref{decomposition_I_3_1} that
\begin{align*}
I_{3,1} \Tend{k}{+\infty} 0.
\end{align*}

Since $\nabla g$ is antisymmetric we can write
\begin{align*}
I_{3,2} =& \frac{1}{2}\iint_{\mathbb{R}^2\times\mathbb{R}^2}H_u(x,y)\dd(\sqrt{b}\nu_k)(x)\dd(\sqrt{b}\nu_k)(y)\\
&- \frac{1}{2}\iint_{\mathbb{R}^2\times\mathbb{R}^2}H_u(x,y)\dd (\sqrt{b}m)(x)\dd(\sqrt{b}m)(y)
\end{align*}
where
\begin{align*}
H_u(x,y) = \frac{1}{2}(u(x)-u(y))\cdot\nabla g(x-y).
\end{align*}
We recall that $(\sqrt{b}\nu_k)$ is a sequence of nonnegative functions with supports in $B(0,R)$  converging to $\sqrt{b}m$ in $H^{-1}$ and for the weak-$\ast$ topology of measures with finite mass. Moreover, since $u$ is Lipschitz, $H_u$ is continuous outside of the diagonal and bounded. Therefore we can use Delort's argument (see \cite[Proposition~1.2.6]{Delort} or \cite[Inequalities (3.4) and (3.5)]{Schochet2}) to prove that 
\begin{align*}
I_{3,2} \Tend{k}{+\infty} 0.
\end{align*}
Finally we write
\begin{align*}
I_{3,3} =& \int_{\mathbb{R}^2}  u(x)\cdot\left(\int_{\mathbb{R}^2}\nabla_x S_b(x,y)\dd \nu_k(y)\right)  \dd(\nu_k-m)(x)\\
&+\int_{\mathbb{R}^2}u(x)\cdot\left(\int_{\mathbb{R}^2} \nabla_x S_b(x,y) \dd m(x)\right) \dd(\nu_k-m)(y).
\end{align*}
By Proposition~\ref{estimees_S_b_not_necessarily_symmetric} $u(x)\cdot\nabla_x S_b(x,y)$ is locally Hölder with respect to both variables and therefore since $\nu_k\otimes\nu_k - m\otimes m$ has compact support we have that $I_{3,3} \Tend{k}{+\infty} 0$.
\end{proof}

It follows from Claim \ref{claim_delort_type_limit_lake} that
\begin{align*}
|T_1| \leq C_b\norm{u}_{W^{1,\infty}}\int_{\mathbb{R}^2} |\nabla H_{N,\eta}|^2.
\end{align*}
Hence by Proposition~\ref{proposition_pre_coerc} we get
\begin{equation}\label{gronwall_bound_T1}
\begin{aligned}
|T_1| \leq& C_b\norm{u}_{W^{1,\infty}}\bigg(|\mathcal{F}_b(Q_N,\omega)| + \frac{g(\eta)}{N} + I(Q_N)(\eta+N^{-1}) \\&+ \norm{\omega}_{L^1((1+|x|)\dd x)\cap L^\infty}g(\eta)\eta\bigg).
\end{aligned}
\end{equation}

Now let us split $T_2$ in three parts:
\begin{align*}
T_2 =&
-\frac{1}{N}\sum_{i=1}^N\iint_{\mathbb{R}^2\times\mathbb{R}^2} \big(u(x)\cdot \nabla_x g_b(x,y) \\
&+ u(y)\cdot \nabla_x g_b(y,x)\big)\omega(x)\dd x\dd(\delta_{q_i}-\widetilde{\delta}^{(\eta)}_{q_i})(y) \\
=& -\frac{1}{N}\sum_{i=1}^N\iint_{\mathbb{R}^2\times\mathbb{R}^2}\big(u(x)\cdot \nabla \sqrt{b}(x) \sqrt{b(y)} \\
&+ u(y)\cdot \nabla \sqrt{b}(y) \sqrt{b(x)}\big)g(x-y)\omega(x)\dd x\dd(\delta_{q_i}-\widetilde{\delta}^{(\eta)}_{q_i})(y) \\
&- \frac{1}{N}\sum_{i=1}^N \iint_{\mathbb{R}^2\times\mathbb{R}^2} \sqrt{b(x)b(y)}(u(x) - u(y))\\
&\cdot \nabla g(x-y) \omega(x)\dd x\dd(\delta_{q_i}-\widetilde{\delta}^{(\eta)}_{q_i})(y) \\
&- \frac{1}{N}\sum_{i=1}^N\iint_{\mathbb{R}^2\times\mathbb{R}^2}\big(u(x)\cdot \nabla_x S_b(x,y) \\
&+ u(y)\cdot \nabla_x S_b(y,x)\big) \omega(x)\dd x\dd(\delta_{q_i}-\widetilde{\delta}^{(\eta)}_{q_i})(y) \\
=:& -(T_{2,1} + T_{2,2} + T_{2,3}).
\end{align*}

We will bound the three terms separately:
\begin{claim}\label{bound_T21}
There exists $0 < s < 1$ such that
\begin{equation*}
|T_{2,1}| \leq C_b\norm{u}_{W^{1,\infty}}\norm{\omega}_{L^1((1+|x|)\dd x)\cap L^\infty}(1+I(Q_N))\eta^s.
\end{equation*}
\end{claim}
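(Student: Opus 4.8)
The plan is to use the structure of the difference $\delta_{q_i}-\widetilde{\delta}^{(\eta)}_{q_i}$ together with the fact that, once the $x$-integration is carried out and one divides by $\sqrt{b(y)}$, the $y$-profile appearing in $T_{2,1}$ is a globally Lipschitz function with a controlled bound. First I would perform the $x$-integration: the function $\int\big(u(x)\cdot\nabla\sqrt{b}(x)\sqrt{b(y)}+u(y)\cdot\nabla\sqrt{b}(y)\sqrt{b(x)}\big)g(x-y)\omega(x)\,\dd x$ equals $\phi^{(1)}(y)+\phi^{(2)}(y)$, where, using $g(x-y)=g(y-x)$,
\[
\phi^{(1)}(y)=\sqrt{b(y)}\,\psi^{(1)}(y),\qquad \psi^{(1)}:=g\ast\big[(u\cdot\nabla\sqrt{b})\,\omega\big],
\]
\[
\phi^{(2)}(y)=\sqrt{b(y)}\,\psi^{(2)}(y),\qquad \psi^{(2)}:=\frac{u\cdot\nabla\sqrt{b}}{\sqrt{b}}\,\big(g\ast[\sqrt{b}\,\omega]\big).
\]
By the definition \eqref{definition_delta_tilde_q} of $\widetilde{\delta}^{(\eta)}_{q_i}$ one has $\int\phi^{(k)}\,\dd\widetilde{\delta}^{(\eta)}_{q_i}=m_b(q_i,\eta)\int\psi^{(k)}\,\dd\delta^{(\eta)}_{q_i}$, so for each $i$ and $k=1,2$,
\[
\int\phi^{(k)}\,\dd(\delta_{q_i}-\widetilde{\delta}^{(\eta)}_{q_i})
=\big(\sqrt{b(q_i)}-m_b(q_i,\eta)\big)\psi^{(k)}(q_i)
+m_b(q_i,\eta)\Big(\psi^{(k)}(q_i)-\int\psi^{(k)}\,\dd\delta^{(\eta)}_{q_i}\Big).
\]

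Second, I would establish two quantitative bounds on $\psi^{(k)}$, $k=1,2$: a pointwise one,
\[
|\psi^{(k)}(x)|\le C_b\norm{u}_{W^{1,\infty}}\norm{\omega}_{L^1((1+|x|)\dd x)\cap L^\infty}(1+|x|),
\]
and a Lipschitz one, $\norm{\nabla\psi^{(k)}}_{L^\infty}\le C_b\norm{u}_{W^{1,\infty}}\norm{\omega}_{L^1\cap L^\infty}$. For the building block $g\ast h$ with $h$ equal to $(u\cdot\nabla\sqrt{b})\omega$ or $\sqrt{b}\,\omega$ (both satisfying $\norm{h}_{L^1((1+|x|)\dd x)\cap L^\infty}\le C_b\norm{u}_{W^{1,\infty}}\norm{\omega}_{L^1((1+|x|)\dd x)\cap L^\infty}$ by Assumption~\ref{assumption_b}), the pointwise bound follows by splitting $g=g\,\mathbf 1_{|\cdot|\le 1}+g\,\mathbf 1_{|\cdot|>1}$ and using $\ln r\le r$ for $r\ge 1$, and the gradient bound from $|\nabla(g\ast h)(x)|\le\frac1{2\pi}\int\frac{|h(y)|}{|x-y|}\,\dd y\le 2\pi\rho\norm{h}_{L^\infty}+\rho^{-1}\norm{h}_{L^1}$ optimised over $\rho>0$. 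One then passes to $\psi^{(k)}$ by the product and quotient rules; here Assumption~\ref{assumption_b} enters once more, both to bound $\nabla\sqrt{b}$, $D^2\sqrt{b}$, $1/\sqrt{b}$ and to observe that $\nabla\sqrt{b}$ and $D^2\sqrt{b}$ decay like $(1+|x|)^{-(4+\gamma)}$, so that the only logarithmically growing factor $g\ast[\sqrt{b}\,\omega]$ never destroys boundedness; the norm $\norm{\nabla u}_{L^\infty}$ enters only through the Lipschitz constant of $\tfrac{u\cdot\nabla\sqrt{b}}{\sqrt{b}}$ in $\psi^{(2)}$.

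Finally I would combine. By \eqref{estimee_m_b} we have $|\sqrt{b(q_i)}-m_b(q_i,\eta)|\le C_b\eta$ and $m_b(q_i,\eta)\le C_b$, and since $\delta^{(\eta)}_{q_i}$ is carried by $\partial B(q_i,\eta)$ the Lipschitz bound gives $|\psi^{(k)}(q_i)-\int\psi^{(k)}\,\dd\delta^{(\eta)}_{q_i}|\le\norm{\nabla\psi^{(k)}}_{L^\infty}\,\eta$. Inserting these into the identity above yields $|\int\phi^{(k)}\,\dd(\delta_{q_i}-\widetilde{\delta}^{(\eta)}_{q_i})|\le C_b\norm{u}_{W^{1,\infty}}\norm{\omega}_{L^1((1+|x|)\dd x)\cap L^\infty}(1+|q_i|)\,\eta$, hence
\[
|T_{2,1}|\le C_b\norm{u}_{W^{1,\infty}}\norm{\omega}_{L^1((1+|x|)\dd x)\cap L^\infty}\Big(\tfrac1N\sum_{i=1}^N(1+|q_i|)\Big)\eta.
\]
By Cauchy--Schwarz $\tfrac1N\sum_{i=1}^N(1+|q_i|)\le 1+I(Q_N)^{1/2}\le C(1+I(Q_N))$, and since $0<\eta<1$ we have $\eta\le\eta^s$ for every $s\in(0,1)$, which gives the claim. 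I do not expect a real obstacle here: the only point requiring care is to keep every estimate in terms of $\norm{\omega}_{L^1((1+|x|)\dd x)\cap L^\infty}$ and $1+|q_i|$ rather than in terms of $\supp\omega$, which is precisely what the $\ln r\le r$ splitting and the split-and-optimise bound for $\nabla(g\ast h)$ are designed to achieve; the rest is bookkeeping.
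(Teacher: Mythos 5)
Your proposal is correct, and in fact it proves the slightly stronger bound $O(\eta)$ rather than $O(\eta^s)$; but it takes a somewhat different route than the paper. The paper keeps the double integral, uses that $\widetilde{\delta}^{(\eta)}_{q_i}$ is a probability measure to split $T_{2,1}$ into four pieces, controls two of them through the Lipschitz regularity of $\sqrt{b}$ and of $u\cdot\nabla\sqrt{b}$ on $\partial B(q_i,\eta)$, and controls the two pieces involving $g(x-q_i)-g(x-y)$ by Morrey's inequality applied to $g\ast(\nabla\sqrt{b}\cdot u\,\omega)$ and $g\ast(\sqrt{b}\,\omega)$, whose gradients are only estimated in $L^p$, $p>2$, via Hardy--Littlewood--Sobolev; this is exactly where the loss $\eta^{1-\frac{2}{p}}=\eta^s$ with $s<1$ in the statement comes from. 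You instead integrate in $x$ first, note that the resulting $y$-profile is $\sqrt{b(y)}\,\psi^{(k)}(y)$ so that the $1/\sqrt{b}$ weight in $\widetilde{\delta}^{(\eta)}_{q_i}$ cancels exactly, and reduce everything to two mechanisms: $|\sqrt{b(q_i)}-m_b(q_i,\eta)|\le C_b\eta$ from \eqref{estimee_m_b}, and the global gradient bound $\norm{\nabla g\ast h}_{L^\infty}\le C\norm{h}_{L^1\cap L^\infty}$ (the estimate the paper itself invokes elsewhere, e.g. Claim (3) of Lemma~\ref{lemma_link_solutions_duerinckx} or \cite[Lemma~1]{Iftimie}), combined with the decay of $\nabla b$ and $D^2 b$ from Assumption~\ref{assumption_b} to keep the product-rule term in $\nabla\psi^{(2)}$ bounded despite the growth of $g\ast[\sqrt{b}\,\omega]$. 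The ingredients are the same as the paper's, but your choice of the sharper $L^\infty$ gradient estimate in place of Morrey plus HLS buys the full power $\eta$ and a cleaner two-term identity, while the paper's H\"older-type route is what forces the exponent $s<1$ (and mirrors how the analogous terms $T_{2,3}$ and $T_{3,3}$ are handled). One bookkeeping point: in your Lipschitz bound for $\psi^{(2)}$ the first product-rule term really requires the weighted norm $\norm{\omega}_{L^1((1+|x|)\dd x)\cap L^\infty}$ (or a logarithmic weight) because $g\ast[\sqrt{b}\,\omega]$ grows at infinity, but since that norm already appears on the right-hand side of the claim this changes nothing.
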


\begin{proof}[Proof of the claim]
Since $\widetilde{\delta}^{(\eta)}_{q_i}$ is a probability measure, we can write
\begin{align*}
T_{2,1} =& \frac{1}{N}\sum_{i=1}^N\iint_{\mathbb{R}^2\times\mathbb{R}^2} \nabla \sqrt{b}(x)\cdot u(x) \omega(x)(\sqrt{b(q_i)}g(x-q_i) \\
&- \sqrt{b(y)}g(x-y))\dd \widetilde{\delta}^{(\eta)}_{q_i}(y) \dd x \\
&+ \frac{1}{N}\sum_{i=1}^N\iint_{\mathbb{R}^2\times\mathbb{R}^2} \sqrt{b(x)}\omega(x)(\nabla \sqrt{b}(q_i)\cdot u(q_i)g(x-q_i) \\
&- \nabla \sqrt{b}(y)\cdot u(y)g(x-y))\dd \widetilde{\delta}^{(\eta)}_{q_i}(y) \dd x \\
=& \frac{1}{N}\sum_{i=1}^N\iint_{\mathbb{R}^2\times\mathbb{R}^2} (\nabla \sqrt{b}\cdot u \omega)(x)(\sqrt{b(q_i)}-\sqrt{b(y)})g(x-y)\dd \widetilde{\delta}^{(\eta)}_{q_i}(y) \dd x\\
&+ \frac{1}{N}\sum_{i=1}^N\iint_{\mathbb{R}^2\times\mathbb{R}^2} (\nabla \sqrt{b}\cdot u \omega)(x)\sqrt{b(q_i)}\\
&\times(g(x-q_i) - g(x-y))\dd \widetilde{\delta}^{(\eta)}_{q_i}(y) \dd x\\
&+ \frac{1}{N}\sum_{i=1}^N\iint_{\mathbb{R}^2\times\mathbb{R}^2} \sqrt{b(x)}\omega(x)(\nabla \sqrt{b}(q_i)\cdot u(q_i) \\
&- \nabla \sqrt{b}(y)\cdot u(y))g(x-y)\dd\widetilde{\delta}^{(\eta)}_{q_i}(y) \dd x \\
&+ \frac{1}{N}\sum_{i=1}^N\iint_{\mathbb{R}^2\times\mathbb{R}^2} \sqrt{b(x)}\omega(x)\nabla \sqrt{b}(q_i)\cdot u(q_i)\\
&\times(g(x-q_i) -g(x-y))\dd \widetilde{\delta}^{(\eta)}_{q_i}(y)\dd x.
\end{align*}

For the first integral, we use the Lipschitz regularity of $\sqrt{b}$ to bound
\begin{multline*}
\bigg|\iint_{\mathbb{R}^2\times\mathbb{R}^2} (\nabla \sqrt{b}\cdot u \omega)(x) (\sqrt{b(q_i)}-\sqrt{b(y)})g(x-y)\dd \widetilde{\delta}^{(\eta)}_{q_i}(y) \dd x\bigg| \\
\leq C_b\eta \iint_{\mathbb{R}^2\times\mathbb{R}^2}   |(\nabla \sqrt{b}\cdot u \omega)(x)g(x-y)|\dd \widetilde{\delta}^{(\eta)}_{q_i}(y)\dd x.
\end{multline*}

Moreover for $y \in \partial B(q_i,\eta)$, we have
\begin{align*}
\int_{\mathbb{R}^2} &|(\nabla \sqrt{b}\cdot u \omega)(x)g(x-y)|\dd x \\
\leq&  \int_{B(y,1)} |(\nabla \sqrt{b}\cdot u \omega)(x)g(x-y)|\dd x \\
&+ \int_{B(y,1)^c} |(\nabla \sqrt{b}\cdot u \omega)(x) g(x-y)|\dd x \\
\leq& \norm{\nabla \sqrt{b}\cdot u \omega}_{L^\infty}\norm{g}_{L^1(B(0,1))} +\int_{B(y,1)^c} |(\nabla \sqrt{b}\cdot u \omega)(x)|(|x|+|y|)\dd x \\
\leq& C_b\norm{u}_{L^\infty}\norm{\omega}_{L^\infty}(1+|q_i|)
\end{align*}
since $b$ satisfies Assumption~\ref{assumption_b}. Therefore
\begin{multline*}
\bigg|\iint_{\mathbb{R}^2\times\mathbb{R}^2} (\nabla \sqrt{b}\cdot u \omega)(x) (\sqrt{b(q_i)}-\sqrt{b(y)})g(x-y)\dd \widetilde{\delta}^{(\eta)}_{q_i}(y) \dd x\bigg| \\
\leq C_b\norm{u}_{L^\infty}\norm{\omega}_{L^\infty}(1+|q_i|)\eta.
\end{multline*}
The third integral can be bounded in the same way:
\begin{align*}
\bigg|\iint_{\mathbb{R}^2\times\mathbb{R}^2} &\sqrt{b(x)}\omega(x)(\nabla \sqrt{b}(q_i)\cdot u(q_i)- \nabla \sqrt{b}(y)\cdot u(y))g(x-y)\dd \widetilde{\delta}^{(\eta)}_{q_i}(y) \dd x\bigg|  \\
\leq& C_b\norm{u}_{W^{1,\infty}}\eta \iint_{\mathbb{R}^2\times\mathbb{R}^2} |\sqrt{b(x)}\omega(x)g(x-y)|\dd \widetilde{\delta}^{(\eta)}_{q_i}(y) \dd x \\
\leq& C_b\norm{u}_{W^{1,\infty}}\norm{\omega}_{L^1((1+|x|)\dd x)\cap L^\infty}(1+|q_i|)\eta.
\end{align*}
Summing over $N$ we get that both the first and the third line can be bounded by
\begin{equation}\label{first_bound_T21}
C_b\norm{u}_{W^{1,\infty}}\norm{\omega}_{L^1((1+|x|)\dd x)\cap L^\infty}(1+I(Q_N))\eta.
\end{equation}
Now the second integral is equal to
\begin{align*}
\frac{1}{N}\sum_{i=1}^N\sqrt{b(q_i)}\int_{\mathbb{R}^2}(g\ast(\nabla \sqrt{b}\cdot u \omega)(q_i) - g\ast(\nabla \sqrt{b}\cdot u \omega)(y))\dd \delta_{q_i}^{(\eta)}(y) \\
\end{align*}
and thus by Morrey's inequality (see \cite[Theorem~9.12]{Brezis}) its absolute value can be bounded by
\begin{equation*}
C_{b,p}\eta^{1-\frac{2}{p}}\norm{\nabla g\ast(\nabla \sqrt{b}\cdot u \omega)}_{L^p}
\end{equation*}
for any finite $p > 2$. The fourth integral can be bounded in the same way by
\begin{equation*}
C_{b,p}\eta^{1-\frac{2}{p}}\norm{\nabla g\ast(\sqrt{b}\omega)}_{L^p}.
\end{equation*}
Using Hardy-Littlewood-Sobolev inequality (see for example \cite[Theorem~1.7]{BahouriCheminDanchin}) we have
\begin{equation}\label{second_bound_T21}
C_b\eta^{1-\frac{2}{p}}\norm{\nabla g\ast(\sqrt{b}\omega)}_{L^p} \leq C_b\eta^{1-\frac{2}{p}}\norm{\omega}_{L^{\frac{2p}{p+2}}}.
\end{equation}
Combining \eqref{first_bound_T21} and \eqref{second_bound_T21} we get that
\begin{equation*}
|T_{2,1}| \leq C_b\norm{u}_{W^{1,\infty}}\norm{\omega}_{L^1((1+|x|)\dd x)\cap L^\infty}(1+I(Q_N))\eta^s
\end{equation*}
for some $0 < s < 1$.
\end{proof}

Now we bound $T_{2,2}$:
\begin{claim}\label{bound_T22}
\begin{equation*}
|T_{2,2}| \leq C_b\norm{\nabla u}_{L^\infty}\norm{\omega}_{L^1\cap L^\infty}\eta.
\end{equation*}
\end{claim}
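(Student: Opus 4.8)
The plan is to exploit the special structure of the integrand $\sqrt{b(x)b(y)}(u(x)-u(y))\cdot\nabla g(x-y)$: the antisymmetric factor $u(x)-u(y)$ combined with the singular kernel $\nabla g$ produces a bounded --- and in fact Lipschitz --- object, so that only the seminorm $\norm{\nabla u}_{L^\infty}$ enters, never $\norm{u}_{L^\infty}$. First I would use the definition \eqref{definition_delta_tilde_q} of $\widetilde{\delta}^{(\eta)}_{q_i}$ to cancel the factor $\sqrt{b(y)}$ against the weight $1/\sqrt{b}$, reducing everything to the scalar function
\[
\mathcal{G}(y) := \int_{\mathbb{R}^2} \sqrt{b(x)}\,\omega(x)\,(u(x)-u(y))\cdot\nabla g(x-y)\dd x,
\]
so that
\[
T_{2,2} = \frac{1}{N}\sum_{i=1}^N\left[(\sqrt{b(q_i)}-m_b(q_i,\eta))\mathcal{G}(q_i) + m_b(q_i,\eta)\left(\mathcal{G}(q_i) - \int_{\mathbb{R}^2}\mathcal{G}(y)\dd\delta^{(\eta)}_{q_i}(y)\right)\right].
\]

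Next I would record the pointwise bound $|(u(x)-u(y))\cdot\nabla g(x-y)| \leq C\norm{\nabla u}_{L^\infty}$, which gives $|\mathcal{G}(y)| \leq C_b\norm{\nabla u}_{L^\infty}\norm{\omega}_{L^1}$; combined with $|\sqrt{b(q_i)}-m_b(q_i,\eta)| \leq C_b\eta$ and $m_b(q_i,\eta) \leq C_b$ from \eqref{estimee_m_b}, the contribution of the first summand is already bounded by $C_b\norm{\nabla u}_{L^\infty}\norm{\omega}_{L^1}\eta$.

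The core step is to show that $\mathcal{G}$ is Lipschitz, namely
\[
|\mathcal{G}(y_1)-\mathcal{G}(y_2)| \leq C_b\norm{\nabla u}_{L^\infty}\norm{\omega}_{L^1\cap L^\infty}|y_1-y_2| \qquad \text{whenever } |y_1-y_2|\leq 1.
\]
Writing $h = |y_1-y_2|$, I would split the difference $\mathcal{G}(y_1)-\mathcal{G}(y_2)$ into a term carrying $u(x)-u(y_1)$ times $\nabla g(x-y_1)-\nabla g(x-y_2)$ and a term carrying $u(y_2)-u(y_1)$ times $\nabla g(x-y_2)$. The latter is handled directly by $|u(y_2)-u(y_1)| \leq \norm{\nabla u}_{L^\infty}h$ and the standard estimate $\int_{\mathbb{R}^2} |\omega(x)||x-y_2|^{-1}\dd x \leq C\norm{\omega}_{L^1\cap L^\infty}$ (split at $|x-y_2|=1$). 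For the former, on the far region $\{|x-y_1| \geq 2h\}$ I would use $|\nabla g(x-y_1)-\nabla g(x-y_2)| \leq Ch|x-y_1|^{-2}$ (from $|\nabla^2 g(z)|\leq C|z|^{-2}$ and the fact that the relevant segment stays away from the origin) together with $|u(x)-u(y_1)| \leq \norm{\nabla u}_{L^\infty}|x-y_1|$; on the near region $\{|x-y_1| < 2h\}$ I would again use the bounded-kernel bound for $(u(x)-u(y_1))\cdot\nabla g(x-y_1)$, and $|u(x)-u(y_1)||\nabla g(x-y_2)| \leq C\norm{\nabla u}_{L^\infty}h|x-y_2|^{-1}$ on the set $\{|x-y_2| < 3h\}$. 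In every case one integrates against $\sqrt{b}|\omega| \leq C_b|\omega|$ and uses $\norm{\omega}_{L^1\cap L^\infty}$ and $h\leq 1$. Applying this with $y_1 = q_i$ and $y_2 \in \partial B(q_i,\eta)$ gives $|\mathcal{G}(q_i) - \int_{\mathbb{R}^2}\mathcal{G}\dd\delta^{(\eta)}_{q_i}| \leq C_b\norm{\nabla u}_{L^\infty}\norm{\omega}_{L^1\cap L^\infty}\eta$; plugging this into the displayed identity and averaging over $i$ yields Claim~\ref{bound_T22}. The main obstacle is keeping the Lipschitz estimate sharp --- in particular obtaining the full power $\eta$, with no logarithmic loss --- which is precisely why the explicit near/far decomposition together with the bound $|\nabla^2 g(z)| \leq C|z|^{-2}$ is needed rather than a crude modulus-of-continuity argument.
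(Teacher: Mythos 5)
Your argument is correct, and its skeleton coincides with the paper's at the outset: like the paper, you first use \eqref{definition_delta_tilde_q} to cancel $\sqrt{b(y)}$ against the weight $1/\sqrt{b}$ and isolate the discrepancy $\sqrt{b(q_i)}-m_b(q_i,\eta)$, which both you and the paper dispatch with \eqref{estimee_m_b} and the pointwise bound $|(u(x)-u(y))\cdot\nabla g(x-y)|\leq C\norm{\nabla u}_{L^\infty}$. The difference lies in the main piece, the comparison between the point evaluation at $q_i$ and the average over $\partial B(q_i,\eta)$: the paper treats this as a black box, identifying it with the term in \cite[Equation~(4.10)]{NguyenRosenzweigSerfaty} (case $s=0$, $m=0$) and invoking the bound \cite[Equation~(4.24)]{NguyenRosenzweigSerfaty}, $C\norm{\nabla u}_{L^\infty}\norm{|\nabla|^{-1}(\sqrt{b}\omega)}_{L^\infty}\eta$, followed by \cite[Lemma~1]{Iftimie} to convert $\norm{|\nabla|^{-1}\omega}_{L^\infty}$ into $\norm{\omega}_{L^1\cap L^\infty}$; you instead prove directly that $\mathcal{G}$ is Lipschitz with constant $C_b\norm{\nabla u}_{L^\infty}\norm{\omega}_{L^1\cap L^\infty}$ via the near/far decomposition and $|\nabla^2 g(z)|\leq C|z|^{-2}$, which is exactly the $d=2$, $s=0$ content of the cited estimate. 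Your route is more elementary and self-contained (it makes transparent why there is no logarithmic loss and why only $\norm{\nabla u}_{L^\infty}$, never $\norm{u}_{L^\infty}$, appears), at the cost of redoing by hand a computation that the commutator estimates of Nguyen--Rosenzweig--Serfaty provide in all dimensions and for general Riesz exponents; the paper's choice buys brevity and consistency with the treatment of $T_{3,2}$, where the same reference is used again. Quantitatively the two proofs give the same bound, and your restriction $|y_1-y_2|\leq 1$ is harmless since $\eta<\tfrac18$ throughout the proof of Proposition~\ref{controle_terme_principal_gronwall}.
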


\begin{proof}[Proof of the claim]
Let us recall that
\begin{equation*}
\widetilde{\delta}^{(\eta)}_{q} = m_b(q,\eta)\frac{\dd \delta_{q}^{(\eta)}}{\sqrt{b}}
\end{equation*}
and thus 
\begin{align*}
&\iint_{\mathbb{R}^2\times\mathbb{R}^2} \sqrt{b(x)b(y)}(u(x) - u(y))\cdot \nabla g(x-y) \omega(x)\dd x \dd(\delta_{q_i}-\widetilde{\delta}^{(\eta)}_{q_i})(y) \\
=& m_b(q_i,\eta)\iint_{\mathbb{R}^2\times\mathbb{R}^2} \sqrt{b(x)}(u(x) - u(y))\cdot \nabla g(x-y) \omega(x)\dd x \dd(\delta_{q_i}-\delta_{q_i}^{(\eta)})(y) \\
&+ \left(1- \frac{m_b(q_i,\eta)}{\sqrt{b(q_i)}}\right)\int_{\mathbb{R}^2} \sqrt{b(x)b(q_i)}(u(x)-u(q_i))\cdot \nabla g(x-q_i)\omega(x)\dd x.
\end{align*}
The first integral is exactly the term defined in \cite[Equation~(4.10)]{NguyenRosenzweigSerfaty} with $s=0$ and $m=0$ (remark that we can choose $m=0$ since no extension procedure is needed for $s=0$ and $d=2$, for more details we refer to the introduction of \cite[Section~4]{NguyenRosenzweigSerfaty}). It can be bounded by the right hand side of \cite[Equation~(4.24)]{NguyenRosenzweigSerfaty} :
\begin{align*}
C\norm{\nabla u}_{L^\infty}\norm{|\nabla|^{-1}(\sqrt{b}\omega)}_{L^\infty}\eta
&\leq C_b\norm{\nabla u}_{L^\infty}\norm{|\nabla|^{-1}\omega}_{L^\infty}\eta \\
&\leq C_b\norm{\nabla u}_{L^\infty}\norm{\omega}_{L^1\cap L^\infty}\eta.
\end{align*}
A proof of the last inequality can be found for example in \cite[Lemma~1]{Iftimie}. Now by \eqref{estimee_m_b} and the Lipschitz regularity of $u$ we can bound the second line by
\begin{equation*}
C_b\norm{\nabla u}_{L^\infty}\norm{\omega}_{L^1}\eta.
\end{equation*}
Combining the two upper equations we get
\begin{equation*}
|T_{2,2}| \leq C_b\norm{\nabla u}_{L^\infty}\norm{\omega}_{L^1\cap L^\infty}\eta.
\end{equation*}
\end{proof}

\begin{claim}\label{bound_T23}
\begin{equation*}
|T_{2,3}| \leq C_{b,s}\norm{u}_{W^{1,\infty}}\norm{\omega}_{L^1((1+|x|)\dd x)}(1+I(Q_N))\eta^s.
\end{equation*}
\end{claim}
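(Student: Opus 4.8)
\textbf{Proof plan for Claim~\ref{bound_T23}.}

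The plan is to decompose $T_{2,3}$ according to its two summands and reduce everything to the Hölder estimates of Lemma~\ref{estimees_S_b_not_necessarily_symmetric}. Write
\begin{align*}
T_{2,3} &= \frac1N\sum_{i=1}^N\int_{\mathbb{R}^2}\Phi_i\,\dd(\delta_{q_i}-\widetilde{\delta}^{(\eta)}_{q_i}) + \frac1N\sum_{i=1}^N\int_{\mathbb{R}^2}\Psi\,\dd(\delta_{q_i}-\widetilde{\delta}^{(\eta)}_{q_i}),
\end{align*}
where $\Phi_i(y):=\int_{\mathbb{R}^2}u(x)\cdot\nabla_x S_b(x,y)\,\omega(x)\dd x$ and $\Psi(y):=u(y)\cdot\int_{\mathbb{R}^2}\nabla_x S_b(y,x)\,\omega(x)\dd x$. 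Since $\delta_{q_i}$ and $\widetilde{\delta}^{(\eta)}_{q_i}$ are probability measures and $\widetilde{\delta}^{(\eta)}_{q_i}$ is supported on $\partial B(q_i,\eta)$, each sum is bounded by $\frac1N\sum_i\sup_{y\in\partial B(q_i,\eta)}|\Phi_i(q_i)-\Phi_i(y)|$ (resp.\ with $\Psi$); and because $\eta<1$ we have $\partial B(q_i,\eta)\subset B(q_i,1)$, so only \emph{local} Hölder control of $S_b$ is needed.

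For the first sum I would use the first Hölder estimate of Claim~(2) of Lemma~\ref{estimees_S_b_not_necessarily_symmetric}, applied in the second slot of $\nabla_x S_b$ with base point $q_i$: for $y\in B(q_i,1)$, $|\nabla_x S_b(x,y)-\nabla_x S_b(x,q_i)|\le C_{b,s}(1+|q_i|)|y-q_i|^s$. Integrating against $|u|\,|\omega|$ gives $|\Phi_i(q_i)-\Phi_i(y)|\le C_{b,s}\norm{u}_{L^\infty}(1+|q_i|)\norm{\omega}_{L^1}\eta^s$, and summing over $i$ together with Cauchy--Schwarz, $\frac1N\sum_i(1+|q_i|)\le 1+I(Q_N)^{1/2}\le 2(1+I(Q_N))$, yields the required bound for this piece.

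For the second sum I would split $\Psi(q_i)-\Psi(y)=(u(q_i)-u(y))\cdot V(q_i)+u(y)\cdot(V(q_i)-V(y))$ with $V(y):=\int_{\mathbb{R}^2}\nabla_x S_b(y,x)\,\omega(x)\dd x$. The first contribution is controlled by $|u(q_i)-u(y)|\le\norm{\nabla u}_{L^\infty}\eta$ combined with the pointwise bound $|\nabla_x S_b(y,x)|\le\norm{\nabla_x S_b(\cdot,x)}_{L^\infty}\le C_b(1+|x|)$ from Claim~(1) with $p=\infty$, giving $|V(q_i)|\le C_b\norm{\omega}_{L^1((1+|x|)\dd x)}$. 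The second contribution uses the second Hölder estimate of Claim~(2), this time in the \emph{first} slot: $|\nabla_x S_b(q_i,x)-\nabla_x S_b(y,x)|\le C_{b,s}(1+|x|)\eta^s$, so $|V(q_i)-V(y)|\le C_{b,s}\norm{\omega}_{L^1((1+|x|)\dd x)}\eta^s$. Collecting these and absorbing $\eta\le\eta^s$ (valid since $0<\eta,s<1$) bounds the second sum by $C_{b,s}\norm{u}_{W^{1,\infty}}\norm{\omega}_{L^1((1+|x|)\dd x)}\eta^s$, which is dominated by the target. Adding the two estimates gives Claim~\ref{bound_T23}.

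The only genuine obstacle is bookkeeping: one must keep careful track of which argument of $S_b$ each estimate of Lemma~\ref{estimees_S_b_not_necessarily_symmetric} controls (the constants grow linearly in the modulus of the \emph{base} point of the estimate, not in $|x|$), and it is precisely the pointwise bound on $\nabla_x S_b(y,\cdot)$ entering $V$ that forces the weighted norm $\norm{\omega}_{L^1((1+|x|)\dd x)}$ rather than the plain $L^1$ norm. Unlike $T_{2,1}$ and $T_{2,2}$ there is no singular convolution kernel to handle here, only the regular function $S_b$, so no cancellation or finer structure is required.
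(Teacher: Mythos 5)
Your proposal is correct and follows essentially the same route as the paper: the same three-way splitting of $F(q_i)-F(y)$ (variation of $\nabla_x S_b$ in its second slot, variation of $u$, variation of $\nabla_x S_b$ in its first slot), the same use of Claims (1) and (2) of Lemma~\ref{estimees_S_b_not_necessarily_symmetric} with the constants tracked in the correct argument, and the same final step via Cauchy--Schwarz on $\frac1N\sum_i(1+|q_i|)$ and absorption of $\eta\le\eta^s$. Nothing essential is missing.
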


\begin{proof}[Proof of the claim]
We write $T_{2,3}$ as
\begin{align*}
T_{2,3} 
=& \frac{1}{N}\sum_{i=1}^N\bigg(\iint_{\mathbb{R}^2\times\mathbb{R}^2} \omega(x)u(x)\cdot(\nabla_x S_b(x,q_i) - \nabla_x S_b(x,y))\dd \widetilde{\delta}^{(\eta)}_{q_i}(y) \dd x \\
&+ \iint_{\mathbb{R}^2\times\mathbb{R}^2} \omega(x)(u(q_i) - u(y))\cdot \nabla_x S_b(q_i,x) \dd \widetilde{\delta}^{(\eta)}_{q_i}(y) \dd x \\
&+ \iint_{\mathbb{R}^2\times\mathbb{R}^2} \omega(x) u(y)\cdot(\nabla_x S_b(q_i,x) - \nabla_x S_b(y,x))\dd \widetilde{\delta}^{(\eta)}_{q_i}(y) \dd x\bigg).
\end{align*}
Using Claims $(1)$ and $(2)$ of Lemma~\ref{estimees_S_b_not_necessarily_symmetric}, we get that for some $0 < s < 1$,
\begin{align*}
|T_{2,3}| \leq& \frac{1}{N}\sum_{i=1}^N\bigg(C_{b,s}\norm{u}_{L^\infty}\norm{\omega}_{L^1}(1+|q_i|)\eta^s \\
&+ \norm{\nabla u}_{L^\infty}\norm{\omega}_{L^1((1+|x|)\dd x)}\eta \\
&+ \norm{u}_{L^\infty}\norm{\omega}_{L^1((1+|x|)\dd x)}\eta^s\bigg) \\
\leq& C_{b,s}\norm{u}_{W^{1,\infty}}\norm{\omega}_{L^1((1+|x|)\dd x)}(1+I(Q_N))\eta^s.
\end{align*}
\end{proof}

Combining Claims~\ref{bound_T21}, \ref{bound_T22} and \ref{bound_T23} we get that
\begin{equation}\label{gronwall_bound_T2}
|T_2| \leq C_{b,s}\norm{u}_{W^{1,\infty}}\norm{\omega}_{L^1((1+|x|)\dd x)\cap L^\infty}(1+I(Q_N))\eta^s.
\end{equation}

Now let us write $T_3$ as

\begin{align*}
T_3 
=& \frac{1}{N^2}\sum_{1\leq i,j \leq N} \iint_{(\mathbb{R}^2\times\mathbb{R}^2)\backslash\Delta} u(x)\cdot \nabla_x g_b(x,y) \\
&(\dd \delta_{q_i}(x) \dd \delta_{q_j}(y) - \dd\widetilde{\delta}^{(\eta)}_{q_i}(x)\dd \widetilde{\delta}^{(\eta)}_{q_j}(y)) \\
=&\frac{1}{N^2}\sum_{1\leq i,j \leq N} \iint_{(\mathbb{R}^2\times\mathbb{R}^2)\backslash\Delta} u(x)\cdot \nabla \sqrt{b}(x) \sqrt{b(y)}g(x-y) \\
&(\dd \delta_{q_i}(x) \dd \delta_{q_j}(y) - \dd\widetilde{\delta}^{(\eta)}_{q_i}(x)\dd \widetilde{\delta}^{(\eta)}_{q_j}(y)) \\
&+ \frac{1}{N^2}\sum_{1\leq i,j \leq N} \iint_{(\mathbb{R}^2\times\mathbb{R}^2)\backslash\Delta}\sqrt{b(x)b(y)}u(x)\cdot \nabla g(x-y) \\
&(\dd \delta_{q_i}(x) \dd \delta_{q_j}(y) - \dd\widetilde{\delta}^{(\eta)}_{q_i}(x)\dd \widetilde{\delta}^{(\eta)}_{q_j}(y)) \\
&+ \frac{1}{N^2}\sum_{1\leq i,j \leq N} \iint_{(\mathbb{R}^2\times\mathbb{R}^2)\backslash\Delta}u(x)\cdot \nabla_x S_b(x,y) \\
&(\dd \delta_{q_i}(x) \dd \delta_{q_j}(y) - \dd\widetilde{\delta}^{(\eta)}_{q_i}(x)\dd \widetilde{\delta}^{(\eta)}_{q_j}(y)) \\ 
=:& T_{3,1} + T_{3,2} + T_{3,3}.
\end{align*}

We bound the first term:
\begin{claim}\label{gronwall_bound_T31}
\begin{multline*}
|T_{3,1}| \leq C_b\norm{u}_{L^\infty}|\mathcal{F}_b(Q_N,\omega)|  + C_b\norm{u}_{W^{1,\infty}}\bigg(\frac{g(\eta)}{N} \\ 
+ I(Q_N)(\eta+N^{-1}) + \norm{\omega}_{L^1((1+|x|)\dd x)\cap L^\infty}g(\eta)\eta\bigg).
\end{multline*}
\end{claim}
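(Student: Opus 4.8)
Our plan is to follow the same scheme as in the proof of Claim~\ref{coerc_T2} and of \cite[Proposition~4.1]{NguyenRosenzweigSerfaty}: express $T_{3,1}$ as the difference of a point-mass pairing and a smeared pairing, observe that on the circles $\partial B(q_i,\eta)$ the weight $m_b(q_i,\eta)/\sqrt{b}$ almost cancels the factor $\sqrt{b(y)}$, thereby reducing matters to the scalar kernel $g$ multiplied by the smooth, rapidly decaying functions $f(x):=u(x)\cdot\nabla\sqrt{b}(x)$ and $h(y):=\sqrt{b(y)}$, and then split the remaining difference into a \emph{main} part, estimated pair by pair by $(g-g^{(\eta)})(q_i-q_j)+C\mathbf{1}_{\{|q_i-q_j|\le 2\eta\}}$, and lower-order remainders of size $\eta$ weighted by $\nabla f$ and $\nabla h$, which Assumption~\ref{assumption_b} renders summable.

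First I would isolate the diagonal terms $i=j$ of the smeared pairing; using $\int g(\cdot-z)\,\dd\delta^{(\eta)}_{q_i}(z)=g^{(\eta)}(\cdot-q_i)=g(\eta)$ on $\partial B(q_i,\eta)$ (Proposition~\ref{proposition_regularisation}) and $m_b(q_i,\eta)^2\le C_b$, they contribute at most $C_b\norm{u}_{L^\infty}g(\eta)/N$. For $i\neq j$ I would replace $\widetilde{\delta}^{(\eta)}_{q_i}$ by $\delta^{(\eta)}_{q_i}$ at the cost of a factor $|m_b(q_i,\eta)/\sqrt{b}-1|\le C_b\eta$ coming from \eqref{estimee_m_b}, the resulting errors being handled as the remainders below. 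Since $\delta^{(\eta)}$ is a probability measure one then writes $f(q_i)h(q_j)g(q_i-q_j)-\iint f(x)h(y)g(x-y)\,\dd\delta^{(\eta)}_{q_i}\dd\delta^{(\eta)}_{q_j}$ as $\iint\bigl[f(q_i)h(q_j)g(q_i-q_j)-f(x)h(y)g(x-y)\bigr]\dd\delta^{(\eta)}_{q_i}(x)\dd\delta^{(\eta)}_{q_j}(y)$ and splits the bracket into $(f(q_i)-f(x))h(q_j)g(q_i-q_j)$, $f(x)(h(q_j)-h(y))g(q_i-q_j)$ and $f(x)h(y)(g(q_i-q_j)-g(x-y))$. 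The first two prefactors are $\le C_b\norm{u}_{W^{1,\infty}}\eta(1+|q_i|)^{-(4+\gamma)}$, resp. $(1+|q_j|)^{-(4+\gamma)}$; for the third, one extracts the core $f(q_i)h(q_j)\bigl(g(q_i-q_j)-\int g^{(\eta)}(x-q_j)\,\dd\delta^{(\eta)}_{q_i}(x)\bigr)$ and, using that $g^{(\eta)}$ is superharmonic and that $g^{(\eta)}(\cdot-q_j)$ varies by only $O(1)$ on $\partial B(q_i,\eta)$, a case split according to whether $|q_i-q_j|$ is $\ge 2\eta$, lies in $[\eta,2\eta)$, or is $<\eta$ gives that this core is bounded by $C_b\norm{u}_{L^\infty}\bigl((g-g^{(\eta)})(q_i-q_j)+C\mathbf{1}_{\{|q_i-q_j|\le 2\eta\}}\bigr)$.

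Summing the core over $i\neq j$ and dividing by $N^2$, the contribution of $(g-g^{(\eta)})(q_i-q_j)$ is controlled by $C_b(|\mathcal{F}_b(Q_N,\omega)|+g(\eta)/N+I(Q_N)(\eta+N^{-1})+\norm{\omega}_{L^1((1+|x|)\dd x)\cap L^\infty}g(\eta)\eta)$ via Proposition~\ref{proposition_pre_coerc} (recall $g-g^{(\eta)}\ge 0$), and the count of close pairs is controlled by the same quantity via Corollary~\ref{corollary_counting} with $\epsilon=2\eta$ (together with $g(2\eta)\le g(\eta)$). For the $O(\eta)$ remainders one bounds $\frac{1}{N^2}\sum_{i\neq j}(1+|q_i|)^{-(4+\gamma)}|g(q_i-q_j)|$ by splitting $|g|\le (g-g^{(\eta)})+|g^{(\eta)}|$, using $|g^{(\eta)}(x)|\le C(g(\eta)+|x|)$ for $|x|\ge\eta$, $\frac{1}{N^2}\sum_{i\neq j}(|q_i|+|q_j|)\le C(1+I(Q_N))$, and once more Proposition~\ref{proposition_pre_coerc}; multiplying by $\eta<1$ keeps everything within the stated error budget. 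Collecting the diagonal term, the $m_b$-corrections and the three split pieces yields Claim~\ref{gronwall_bound_T31}.

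The one genuinely delicate point I expect is the pair-by-pair estimate of the core of the $g$-variation term: a crude bound of each close pair by $g(\eta)$ would, after Corollary~\ref{corollary_counting}, produce an unacceptable factor $g(\eta)$ in front of $\mathcal{F}_b(Q_N,\omega)$. Obtaining instead the sharp bound $(g-g^{(\eta)})(q_i-q_j)+C\mathbf{1}_{\{|q_i-q_j|\le 2\eta\}}$ — so that $\sum_{i\neq j}$ lands exactly on the two quantities appearing on the left-hand side of Proposition~\ref{proposition_pre_coerc} — is where the superharmonicity of $g^{(\eta)}$, the $O(1)$ oscillation of $g^{(\eta)}(\cdot-q_j)$ along $\partial B(q_i,\eta)$, and the identity $\int g(\cdot-z)\,\dd\delta^{(\eta)}_{q_j}(z)=g^{(\eta)}(\cdot-q_j)$ are all used. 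Everything else is bookkeeping of lower-order remainders, which is routine thanks to the $(1+|x|)^{-(4+\gamma)}$ decay of $\nabla\sqrt{b}$ and $D^2\sqrt{b}$ from Assumption~\ref{assumption_b}.
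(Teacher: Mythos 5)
Your argument follows essentially the same route as the paper's: the diagonal terms are handled with the identity \eqref{egalite_convolution_g_eta}; the off-diagonal ``core'' is reduced, via that same identity and $\norm{\nabla g^{(\eta)}}_{L^\infty}\leq C\eta^{-1}$, to $(g-g^{(\eta)})(q_i-q_j)+C\mathbf{1}_{\{|q_i-q_j|\leq 2\eta\}}$, which is then summed using Proposition~\ref{proposition_pre_coerc} and Corollary~\ref{corollary_counting} with $\epsilon=2\eta$; and the $m_b$- and Lipschitz-corrections give the $O(\eta g(\eta))$ remainders, exactly as in the paper's decomposition into $T_{3,1,1}$ and $S_1,\dots,S_5$. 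The one substantive difference is the ordering of the splits: you freeze the coefficients $u\cdot\nabla\sqrt{b}$ and $\sqrt{b}$ against the bare kernel $g(q_i-q_j)$ (and replace $\widetilde{\delta}^{(\eta)}_{q_i}$ by $\delta^{(\eta)}_{q_i}$) before smoothing, so your remainders contain $\eta\,N^{-2}\sum_{i\neq j}|g(q_i-q_j)|$ and its smeared analogue, forcing a second appeal to Proposition~\ref{proposition_pre_coerc}; this works and stays within the error budget, but it inserts a term $C_b\,\eta\,\norm{u}_{W^{1,\infty}}|\mathcal{F}_b(Q_N,\omega)|$, i.e.\ you prove the claim with $\norm{u}_{W^{1,\infty}}$ rather than $\norm{u}_{L^\infty}$ in front of the modulated energy. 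That weaker constant is harmless for Proposition~\ref{controle_terme_principal_gronwall} (which only uses $\norm{u}_{W^{1,\infty}}$), whereas the paper avoids it by passing from $g$ to $g^{(\eta)}$ via \eqref{egalite_convolution_g_eta} \emph{before} introducing the $m_b$ and Lipschitz corrections, so that its remainders $S_3,S_4,S_5$ only ever involve $g^{(\eta)}$, bounded by $g(\eta)$ up to moment terms.
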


\begin{proof}[Proof of the claim]
We write
\begin{align*}
T_{3,1} 
=& -\frac{1}{N^2} \sum_{i=1}^N  \iint_{\mathbb{R}^2\times\mathbb{R}^2}u(x)\cdot \nabla \sqrt{b}(x) \sqrt{b(y)}g(x-y) \dd \widetilde{\delta}^{(\eta)}_{q_i}(x)\dd \widetilde{\delta}^{(\eta)}_{q_i}(y)  \\
&+ \frac{1}{N^2}\sum_{1\leq i \neq j \leq N}\iint_{(\mathbb{R}^2\times\mathbb{R}^2)\backslash\Delta}u(x)\cdot \nabla \sqrt{b}(x) \sqrt{b(y)}g(x-y) \\
&(\dd \delta_{q_i}(x) \dd \delta_{q_j}(y) - \dd\widetilde{\delta}^{(\eta)}_{q_i}(x)\dd \widetilde{\delta}^{(\eta)}_{q_j}(y))  \\
=:& T_{3,1,1} + T_{3,1,2}.
\end{align*}
By the definition of $\widetilde{\delta}_{q_i}^{(\eta)}$ \eqref{definition_delta_tilde_q} we have 
\begin{align*}
T_{3,1,1} =& -\frac{1}{N^2}\sum_{i=1}^N m_b(q_i,\eta)^2\iint_{\mathbb{R}^2\times\mathbb{R}^2}u(x)\cdot \frac{\nabla \sqrt{b}(x)}{\sqrt{b(x)}}g(x-y)\dd \delta_{q_i}^{(\eta)}(x)\dd \delta_{q_i}^{(\eta)}(y) \\
=& -\frac{1}{N^2}\sum_{i=1}^N m_b(q_i,\eta)^2 \int_{\mathbb{R}^2} \frac{u(x)\cdot\nabla \sqrt{b}(x)}{\sqrt{b}(x)}g^{(\eta)}(x-q_i)\dd \delta_{q_i}^{(\eta)}(x)
\end{align*}
by Claim \eqref{egalite_convolution_g_eta}. It follows by Assumption \ref{assumption_b} that
\begin{equation}\label{boundT311}
|T_{3,1,1}| \leq \frac{C_b\norm{u}_{L^\infty}g(\eta)}{N}.
\end{equation} 
Now we write
\begin{align*}
T_{3,1,2} =& \frac{1}{N^2}\sum_{1\leq i \neq j \leq N}\bigg((u\cdot \nabla \sqrt{b})(q_i) \sqrt{b(q_j)}g(q_i-q_j) \\
&- \iint_{\mathbb{R}^2\times\mathbb{R}^2}(u\cdot \nabla \sqrt{b})(x)\sqrt{b(y)}g(x-y)\dd\widetilde{\delta}^{(\eta)}_{q_i}(x)\dd \widetilde{\delta}^{(\eta)}_{q_j}(y) \bigg)\\
=&  \frac{1}{N^2}\sum_{1\leq i \neq j \leq N}\bigg((u\cdot \nabla \sqrt{b})(q_i) \sqrt{b(q_j)}g(q_i-q_j) \\
&- m_b(q_j,\eta)\int_{\mathbb{R}^2}(u\cdot \nabla \sqrt{b})(x)g^{(\eta)}(x-q_j)\dd\widetilde{\delta}^{(\eta)}_{q_i}(x)\bigg)
\end{align*}
by the definition of $\widetilde{\delta}_{q_i}^{(\eta)}$ \eqref{definition_delta_tilde_q} and Claim \eqref{egalite_convolution_g_eta}. Now,
\begin{align*}
T_{3,1,2} =& \frac{1}{N^2}\sum_{1\leq i \neq j \leq N}(u\cdot \nabla \sqrt{b})(q_i) \sqrt{b(q_j)}(g(q_i-q_j)-g^{(\eta)}(q_i-q_j)) \\
&+ \frac{1}{N^2}\sum_{1\leq i \neq j \leq N}(u\cdot \nabla \sqrt{b})(q_i) \sqrt{b(q_j)}\\
&\times\int_{\mathbb{R}^2}(g^{(\eta)}(q_i-q_j)-g^{(\eta)}(x-q_j))\dd\delta^{(\eta)}_{q_i}(x) \\
&+ \frac{1}{N^2}\sum_{1\leq i \neq j \leq N}(u\cdot \nabla \sqrt{b})(q_i) \sqrt{b(q_j)}\int_{\mathbb{R}^2}g^{(\eta)}(x-q_j)\dd(\delta^{(\eta)}_{q_i}-\widetilde{\delta}^{(\eta)}_{q_i})(x) \\
&+ \frac{1}{N^2}\sum_{1\leq i \neq j \leq N}\sqrt{b(q_j)}\\
&\times\int_{\mathbb{R}^2}((u\cdot \nabla \sqrt{b})(q_i) - (u\cdot \nabla \sqrt{b})(x))g^{(\eta)}(x-q_j)\dd\widetilde{\delta}^{(\eta)}_{q_i}(x) \\
&+ \frac{1}{N^2}\sum_{1\leq i \neq j \leq N}(\sqrt{b(q_j)}-m_b(q_j,\eta))\\
&\times\int_{\mathbb{R}^2}(u\cdot \nabla \sqrt{b})(x)g^{(\eta)}(x-q_j)\dd\widetilde{\delta}^{(\eta)}_{q_i}(x) \\
=&: S_1 + S_2 + S_3 + S_4 + S_5.
\end{align*}
Since $g - g^{(\eta)}$ is nonnegative we can bound
\begin{equation}\label{bound_S_1}
\begin{aligned}
|S_1| \leq& C_b\norm{u}_{L^\infty}\frac{1}{N^2} \sum_{1\leq i \neq j \leq N}(g(q_i-q_j)-g^{(\eta)}(q_i-q_j)) \\
\leq&  C_b\norm{u}_{L^\infty}|\mathcal{F}_b(Q_N,\omega)|  + C_b\norm{u}_{L^\infty}\bigg(\frac{g(\eta)}{N} \\ 
&+ I(Q_N)(\eta+N^{-1}) + \norm{\omega}_{L^1((1+|x|)\dd x)\cap L^\infty}g(\eta)\eta\bigg)
\end{aligned}
\end{equation}
by Proposition \ref{proposition_pre_coerc}. 
Now remark that if $|q_i - q_j| \geq 2\eta$ and $x \in \partial B(q_i,\eta)$, \begin{align*}
|q_j - x| \geq |q_i-q_j| - |q_i-x| \geq 2\eta - \eta \geq \eta
\end{align*} 
and it follows by Claim \eqref{egalite_convolution_g_eta} that 
\begin{align*}
\int_{\mathbb{R}^2}g^{(\eta)}(x-q_j)\dd\delta^{(\eta)}_{q_i}(x) &= \int_{\mathbb{R}^2}g(x-q_j)\dd\delta^{(\eta)}_{q_i}(x) \\
&= g^{(\eta)}(q_i-q_j).
\end{align*}
Hence we can write
\begin{equation*}
S_2 = \frac{1}{N^2}\underset{|q_i - q_j| \leq 2\eta}{\sum_{1\leq i \neq j \leq N}}(u\cdot \nabla \sqrt{b})(q_i) \sqrt{b(q_j)}\int_{\mathbb{R}^2}(g^{(\eta)}(q_i-q_j)-g^{(\eta)}(x-q_j))\dd\delta^{(\eta)}_{q_i}(x).
\end{equation*}
Notice that if $|q_i - q_j| \leq 2\eta$ and $x \in \partial B(q_i,\eta)$, then
\begin{equation*}
|g^{(\eta)}(q_i-q_j)-g^{(\eta)}(x-q_j)| \leq \norm{\nabla g^{(\eta)}}_{L^\infty}\eta = C\eta^{-1}\eta \leq C.
\end{equation*}
Therefore,
\begin{equation}\label{bound_S_2}
\begin{aligned}
|S_2| \leq& \frac{C_b \norm{u}_{L^\infty}}{N^2} |\{(q_i,q_j) ; |q_i - q_j| \leq 2\eta\}| \\
\leq& C_b \norm{u}_{L^\infty}|\mathcal{F}_b(Q_N,\omega)| + C_b \norm{u}_{L^\infty}\bigg(\frac{g(2\eta)}{N} \\
&+ I(Q_N)(2\eta+N^{-1}) + \norm{\omega}_{L^1((1+|x|)\dd x)\cap L^\infty}g(2\eta)2\eta\bigg)
\end{aligned}
\end{equation}
by Corollary \ref{corollary_counting} applied to $\epsilon = 2\eta$.

By definition of $\widetilde{\delta}_{q_i}^{(\eta)}$ \eqref{definition_delta_tilde_q} we can write
\begin{align*}
S_3 =& \frac{1}{N^2}\sum_{1\leq i \neq j \leq N}(u\cdot \nabla \sqrt{b})(q_i) \sqrt{b(q_j)} \\
&\times\int_{\mathbb{R}^2}g^{(\eta)}(x-q_j)\left(1-\frac{m_b(q_i,\eta)}{\sqrt{b(x)}}\right)\dd\delta^{(\eta)}_{q_i}(x)
\end{align*}
and therefore
\begin{equation*}
|S_3| \leq \frac{C_b\norm{u}_{L^\infty}g(\eta)}{N}\sum_{i=1}^N\int_{\mathbb{R}^2}\left|\frac{m_b(q_i,\eta)}{\sqrt{b(x)}}-1\right|\dd\delta^{(\eta)}_{q_i}(x).
\end{equation*}
For $x \in \partial B(q_i,\eta)$, we have
\begin{align*}
\left|\frac{m_b(q_i,\eta)}{\sqrt{b(x)}}-1\right| &\leq C_b\left|m_b(q_i,\eta)^{-1}-\frac{1}{\sqrt{b(x)}}\right| \\
&\leq C_b\left|\int_{\mathbb{R}^2}\frac{\dd \delta_{q_i}^{(\eta)}(y)}{\sqrt{b(y)}}-\frac{1}{\sqrt{b(x)}}\right| \\
&\leq C_b\eta
\end{align*}
since $b$ is Lipschitz by Assumption \ref{assumption_b}. It follows that
\begin{equation}\label{bound_S_3}
|S_3| \leq C_b\norm{u}_{L^\infty}g(\eta)\eta.
\end{equation}
Now by regularity of $u$, $b$ and Proposition \ref{proposition_regularisation}, we have
\begin{align*}
|S_4| + |S_5| \leq C_b\norm{u}_{W^{1,\infty}}\eta g(\eta).
\end{align*} 
Combining the upper inequality with \eqref{boundT311}, \eqref{bound_S_1}, \eqref{bound_S_2} and \eqref{bound_S_3} we obtain Claim~\ref{gronwall_bound_T31}.
\end{proof}

For the third term we have the following bound:
\begin{claim}\label{gronwall_bound_T33}
For $s$ small enough, we have 
\begin{equation*}
|T_{3,3}| \leq C_{b,s}\norm{u}_{W^{1,\infty}}(1+I(Q_N))\eta^s.
\end{equation*}
\end{claim}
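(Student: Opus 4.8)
The plan is to exploit the global Hölder regularity of $\nabla_x S_b$ proved in Lemma~\ref{estimees_S_b_not_necessarily_symmetric}: because $\nabla_x S_b$ has no singularity on the diagonal, removing $\Delta$ costs nothing here and, unlike for $T_{3,1}$, no modulated-energy or close-particle-counting terms will appear. First I would split the sum over $1\le i,j\le N$ into the off-diagonal part $i\ne j$ and the diagonal part $i=j$. For $i=j$ the product measure $\delta_{q_i}\otimes\delta_{q_i}$ is supported on $\Delta$ and $\widetilde{\delta}^{(\eta)}_{q_i}\otimes\widetilde{\delta}^{(\eta)}_{q_i}$ charges $\Delta$ with zero mass, so that contribution equals $-\frac{1}{N^2}\sum_{i=1}^N\iint_{\mathbb{R}^2\times\mathbb{R}^2} u(x)\cdot\nabla_x S_b(x,y)\dd\widetilde{\delta}^{(\eta)}_{q_i}(x)\dd\widetilde{\delta}^{(\eta)}_{q_i}(y)$; by Claim~(1) of Lemma~\ref{estimees_S_b_not_necessarily_symmetric} (which gives $\norm{\nabla_x S_b(\cdot,y)}_{L^\infty}\le C_b(1+|y|)$) and $y\in B(q_i,\eta)\subset B(q_i,1)$, this is bounded by $\frac{C_b}{N}\norm{u}_{L^\infty}(1+I(Q_N))$, a lower-order term which I would either carry along as an extra $N^{-1}$ error (exactly as is done for the analogous $T_1$, $T_2$ and $T_{3,1}$ estimates) or absorb into $\eta^s$ once $\eta$ is fixed as a negative power of $N$ in Section~\ref{section:7}.

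For the off-diagonal part I fix $i\ne j$; since $\widetilde{\delta}^{(\eta)}_{q_i}$ and $\widetilde{\delta}^{(\eta)}_{q_j}$ are probability measures, the $(i,j)$-summand equals
\[
\iint_{\mathbb{R}^2\times\mathbb{R}^2}\big[u(q_i)\cdot\nabla_x S_b(q_i,q_j)-u(x)\cdot\nabla_x S_b(x,y)\big]\,\dd\widetilde{\delta}^{(\eta)}_{q_i}(x)\,\dd\widetilde{\delta}^{(\eta)}_{q_j}(y),
\]
and I would split the integrand as $[u(q_i)-u(x)]\cdot\nabla_x S_b(q_i,q_j)+u(x)\cdot[\nabla_x S_b(q_i,q_j)-\nabla_x S_b(x,q_j)]+u(x)\cdot[\nabla_x S_b(x,q_j)-\nabla_x S_b(x,y)]$. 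On the supports $x\in\partial B(q_i,\eta)$, $y\in\partial B(q_j,\eta)$ and with $\eta<1$, the three pieces are bounded respectively by $C_b\norm{\nabla u}_{L^\infty}(1+|q_j|)\eta$ (Lipschitz bound on $u$ together with Claim~(1) of Lemma~\ref{estimees_S_b_not_necessarily_symmetric}), by $C_{b,s}\norm{u}_{L^\infty}(1+|q_j|)\eta^s$ (the global-in-$x$ bound $|\nabla_x S_b(\cdot,q_j)|_{\mathcal{C}^{0,s}(\mathbb{R}^2)}\le C_{b,s}(1+|q_j|)$), and by $C_{b,s}\norm{u}_{L^\infty}(1+|q_j|)\eta^s$ (the local-in-$y$ bound $|\nabla_x S_b(x,\cdot)|_{\mathcal{C}^{0,s}(B(q_j,1))}\le C_{b,s}(1+|q_j|)$), all for $0<s<s_0$. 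Hence each $(i,j)$-summand is $\le C_{b,s}\norm{u}_{W^{1,\infty}}(1+|q_j|)\eta^s$, so the off-diagonal part is at most $\frac{C_{b,s}}{N^2}\norm{u}_{W^{1,\infty}}\eta^s\sum_{i\ne j}(1+|q_j|)\le C_{b,s}\norm{u}_{W^{1,\infty}}\eta^s\cdot\frac1N\sum_{j=1}^N(1+|q_j|)$, and by Cauchy--Schwarz $\frac1N\sum_{j}(1+|q_j|)\le 1+I(Q_N)^{1/2}\le 2(1+I(Q_N))$. Adding the diagonal contribution yields the claim for any $s\in(0,s_0)$.

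The key thing to get right — more bookkeeping than obstacle — is that the only regularity of $S_b$ available is on its first-variable derivative, so in every telescoping difference the Hölder or Lipschitz estimate must be applied either in the first slot (where Lemma~\ref{estimees_S_b_not_necessarily_symmetric} gives a global bound) or in the second slot restricted to $B(q_j,1)$ (where the lemma gives a local bound), which is precisely why the decomposition above is arranged as it is and why one needs $\eta<1$; the resulting linear growth factor $(1+|q_j|)$ is then controlled, after summation in $j$, by $1+I(Q_N)$, exactly the role played by the moment-of-inertia bound throughout the paper.
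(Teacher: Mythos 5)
Your proof is correct and follows essentially the same route as the paper: a three-term telescoping of $u(q_i)\cdot\nabla_x S_b(q_i,q_j)-u(x)\cdot\nabla_x S_b(x,y)$ estimated via the $L^\infty$ bound, the global Hölder bound in the first variable and the local Hölder bound in the second variable from Lemma~\ref{estimees_S_b_not_necessarily_symmetric}, with $\frac{1}{N}\sum_j(1+|q_j|)$ absorbed into $1+I(Q_N)$. The only difference is cosmetic: you isolate the $i=j$ terms explicitly, producing an extra $\mathcal{O}(N^{-1}(1+I(Q_N)))$ contribution that the paper's rewriting silently incorporates as well, and which is indeed harmless since $\eta$ is ultimately chosen as a negative power of $N$, so that $N^{-1}\le\eta^{s}$ for $s\le 1/2$.
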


\begin{proof}
We write
\begin{align*}
T_{3,3} 
=& \frac{1}{N^2}\sum_{1 \leq i,j \leq N} u(q_i)\cdot \nabla_x S_b(q_i,q_j) \\
&- \frac{1}{N^2}\sum_{1 \leq i,j \leq N}\iint_{\mathbb{R}^2\times\mathbb{R}^2}u(x) \cdot \nabla_x S_b(x,y)\dd \widetilde{\delta}_{q_i}^{(\eta)}(x)\dd \widetilde{\delta}_{q_j}^{(\eta)}(y) \\
=& \frac{1}{N^2}\sum_{1 \leq i,j \leq N} \iint_{\mathbb{R}^2\times\mathbb{R}^2} \bigg(u(q_i)\cdot \nabla_x S_b(q_i,q_j) - u(q_i)\cdot \nabla_x S_b(q_i,y) \\
&+ u(q_i)\cdot \nabla_x S_b(q_i,y) - u(q_i)\cdot \nabla_x S_b(x,y) \\
&+ u(q_i) \cdot \nabla_x S_b(x,y) - u(x) \cdot \nabla_x S_b(x,y)\bigg)\dd \widetilde{\delta}_{q_i}^{(\eta)}(x)\dd \widetilde{\delta}_{q_j}^{(\eta)}(y).
\end{align*}
Therefore,
\begin{align*}
|T_{3,3}| \leq& \frac{1}{N^2}\sum_{1 \leq i,j \leq N} \norm{u}_{L^\infty}|\nabla_x S_b(q_i,\cdot)|_{\mathcal{C}^{0,s}(B(q_j,1))}\eta^s \\
&+ \frac{1}{N^2}\sum_{1 \leq i,j \leq N} \norm{u}_{L^\infty}\eta^s\int_{\mathbb{R}^2}|\nabla_x S_b(\cdot,y)|_{\mathcal{C}^{0,s}}\dd \widetilde{\delta}_{q_j}^{(\eta)}(y) \\
&+ \frac{1}{N^2}\sum_{1 \leq i,j \leq N}\iint_{\mathbb{R}^2\times\mathbb{R}^2} \norm{u}_{W^{1,\infty}}\eta|\nabla_x S_b(x,y)|\dd \widetilde{\delta}_{q_i}^{(\eta)}(x)\dd \widetilde{\delta}_{q_j}^{(\eta)}(y).
\end{align*}
By Proposition~\ref{estimees_S_b_not_necessarily_symmetric}, for $s$ small enough we have
\begin{align*}
|T_{3,3}| \leq& \frac{C_{b,s}}{N^2}\sum_{1 \leq i,j \leq N} \norm{u}_{L^\infty}|(1+|q_j|)\eta^s \\
&+ \frac{C_{b,s}}{N^2}\sum_{1 \leq i,j \leq N} \norm{u}_{L^\infty}\eta^s(1+|q_j|) \\
&+ \frac{C_b}{N^2}\sum_{1 \leq i,j \leq N} \norm{u}_{W^{1,\infty}}\eta(1+|q_j|) \\
\leq& C_{b,s}\norm{u}_{W^{1,\infty}}(1+I(Q_N))\eta^s.
\end{align*}
\end{proof}

We are only remained to bound $T_{3,2}$:
\begin{claim}\label{gronwall_bound_T32}
For $\epsilon > 2\eta$ small enough, we have
\begin{equation*}
\begin{aligned}
|T_{3,2}| \leq& \frac{C_b}{N}\norm{\nabla u}_{L^\infty} + \frac{C_b\eta\norm{\nabla u}_{L^\infty}}{\epsilon} + C_b\norm{\nabla u}_{L^\infty}\bigg(|\mathcal{F}_b(Q_N,\omega)| +\frac{g(\epsilon)}{N} +\eta \\
&+ I(Q_N)(\epsilon+N^{-1}) + \norm{\omega}_{L^1((1+|x|)\dd x)\cap L^\infty}g(\epsilon)\epsilon\bigg).
\end{aligned}
\end{equation*}
\end{claim}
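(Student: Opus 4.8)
The plan is as follows. Since $(x,y)\mapsto\sqrt{b(x)b(y)}$ is symmetric while $\nabla g$ is antisymmetric, relabelling $i\leftrightarrow j$ and $x\leftrightarrow y$ in each of the two sums defining $T_{3,2}$ lets us replace $u(x)\cdot\nabla g(x-y)$ by the symmetrised kernel $H_u(x,y):=\frac12\bigl(u(x)-u(y)\bigr)\cdot\nabla g(x-y)$ already used in the proof of Claim~\ref{claim_delort_type_limit_lake}, so that, writing $\Phi(x,y):=\sqrt{b(x)b(y)}\,H_u(x,y)$,
\[
T_{3,2}=\frac{1}{N^2}\sum_{1\le i,j\le N}\iint_{(\mathbb{R}^2\times\mathbb{R}^2)\backslash\Delta}\Phi(x,y)\bigl(\dd\delta_{q_i}(x)\dd\delta_{q_j}(y)-\dd\widetilde\delta^{(\eta)}_{q_i}(x)\dd\widetilde\delta^{(\eta)}_{q_j}(y)\bigr).
\]
Because $u$ is Lipschitz, $|H_u(x,y)|\le\tfrac1{4\pi}\norm{\nabla u}_{L^\infty}$, hence $\Phi$ is bounded by $C_b\norm{\nabla u}_{L^\infty}$ everywhere and continuous off $\Delta$. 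In the diagonal terms $i=j$ the part $\delta_{q_i}\otimes\delta_{q_i}$ is carried by $\Delta$ and drops out, while the part $\widetilde\delta^{(\eta)}_{q_i}\otimes\widetilde\delta^{(\eta)}_{q_i}$ is $\le C_b\norm{\nabla u}_{L^\infty}$ in absolute value (boundedness of $\Phi$, probability measure); summing, these contribute at most $\tfrac{C_b}{N}\norm{\nabla u}_{L^\infty}$.

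For the off-diagonal terms I would split according to whether $|q_i-q_j|<\epsilon$ or $|q_i-q_j|\ge\epsilon$. For the close pairs, $\Phi(q_i,q_j)$ and $\iint\Phi\,\dd\widetilde\delta^{(\eta)}_{q_i}\dd\widetilde\delta^{(\eta)}_{q_j}$ are each bounded by $C_b\norm{\nabla u}_{L^\infty}$ (boundedness of $\Phi$, and $\widetilde\delta^{(\eta)}_{q}$ is a probability measure), so their total contribution is at most
\[
\frac{C_b\norm{\nabla u}_{L^\infty}}{N^2}\,\bigl|\{(i,j)\ :\ i\ne j,\ |q_i-q_j|\le\epsilon\}\bigr|,
\]
and Corollary~\ref{corollary_counting} applied with this $\epsilon$ bounds the counting factor by $|\mathcal{F}_b(Q_N,\omega)|+C_b\bigl(g(\epsilon)N^{-1}+I(Q_N)(\epsilon+N^{-1})+\norm{\omega}_{L^1((1+|x|)\dd x)\cap L^\infty}g(\epsilon)\epsilon\bigr)$, that is, exactly the bracketed terms in the statement.

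For the far pairs, assuming (as we may, since $\epsilon$ will be chosen with $\eta/\epsilon$ small) that $\epsilon\ge4\eta$, whenever $x\in\partial B(q_i,\eta)$ and $y\in\partial B(q_j,\eta)$ one has $|x-y|\ge|q_i-q_j|-2\eta\ge|q_i-q_j|/2\ge\epsilon/2$, so $\Phi$ is $C^1$ on $B(q_i,\eta)\times B(q_j,\eta)$; differentiating $\Phi=\sqrt{b(x)b(y)}\cdot\tfrac12(u(x)-u(y))\cdot\nabla g(x-y)$ and using $|\nabla\sqrt b|\le C_b$, $|u(x)-u(y)|\le\norm{\nabla u}_{L^\infty}|x-y|$, $|\nabla g(x-y)|\lesssim|x-y|^{-1}$ and $|D^2 g(x-y)|\lesssim|x-y|^{-2}$, the terms hitting $\nabla\sqrt b$ or $\nabla u$ are $O(\norm{\nabla u}_{L^\infty})$ while those hitting $\nabla g$ or $D^2g$ are $O(\norm{\nabla u}_{L^\infty}|q_i-q_j|^{-1})$ (the cancellation $|u(x)-u(y)|\lesssim|x-y|$ taming the $D^2g$ term), whence $\norm{\nabla\Phi}_{L^\infty(B(q_i,\eta)\times B(q_j,\eta))}\le C_b\norm{\nabla u}_{L^\infty}(1+\epsilon^{-1})$. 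Since the $\widetilde\delta^{(\eta)}$'s are probability measures supported in these balls,
\[
\Bigl|\Phi(q_i,q_j)-\iint\Phi\,\dd\widetilde\delta^{(\eta)}_{q_i}\dd\widetilde\delta^{(\eta)}_{q_j}\Bigr|\le 2\eta\,\norm{\nabla\Phi}_{L^\infty(B(q_i,\eta)\times B(q_j,\eta))}\le C_b\norm{\nabla u}_{L^\infty}\bigl(\eta+\eta\epsilon^{-1}\bigr),
\]
and summing over the at most $N^2$ far pairs gives a contribution $\le C_b\norm{\nabla u}_{L^\infty}(\eta+\eta\epsilon^{-1})$. Adding the three contributions (diagonal, close, far) yields the claimed bound.

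The main obstacle is the far-pair estimate: producing a gradient bound on $\Phi$ that is uniform over $B(q_i,\eta)\times B(q_j,\eta)$ and scales correctly, which requires carefully sorting the terms of $\nabla\Phi$ into the $O(1)$ ones (responsible for the $\eta$ term) and the $O(|q_i-q_j|^{-1})$ ones (responsible for the $\eta/\epsilon$ term) and exploiting the Lipschitz cancellation in $u(x)-u(y)$. Once this is in place everything else reduces to the boundedness of $H_u$ and to Corollary~\ref{corollary_counting}, closely following the Coulomb computation of \cite[Section~4]{NguyenRosenzweigSerfaty}.
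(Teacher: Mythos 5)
Your proposal is correct, and it reaches the stated bound by a more self-contained route than the paper. The paper performs the same symmetrization (using the antisymmetry of $\nabla g$ to pass to $k_u(x,y)=(u(x)-u(y))\cdot\nabla g(x-y)$), but it then strips the $1/\sqrt{b}$ weights off the smeared measures, using $|m_b(q,\eta)-\sqrt{b(q)}|\leq C_b\eta$ to produce a remainder $T_{3,2,2}=O(\norm{\nabla u}_{L^\infty}\eta)$, and it bounds the remaining unweighted term $T_{3,2,1}$ by quoting the decomposition (4.26) and inequalities (4.27), (4.28), (4.31) of \cite{NguyenRosenzweigSerfaty} (with $s=0$, $m=0$) before applying Corollary~\ref{corollary_counting}. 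You instead keep the weighted measures $\widetilde{\delta}^{(\eta)}_{q_i}$ throughout and redo the core comparison directly: the diagonal terms via the uniform bound $|H_u|\leq C\norm{\nabla u}_{L^\infty}$, the close pairs via the same uniform bound together with Corollary~\ref{corollary_counting}, and the far pairs via a mean value estimate based on $|\nabla \Phi(x,y)|\leq C_b\norm{\nabla u}_{L^\infty}(1+|x-y|^{-1})$, which is precisely the content of the quoted estimates of \cite{NguyenRosenzweigSerfaty} in this $s=0$, $d=2$ case; your computation of $\nabla\Phi$ (the Lipschitz cancellation in $u(x)-u(y)$ taming the $D^2 g$ term) is the right one. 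What your route buys is independence from the external reference and no need for the $m_b$ versus $\sqrt{b}$ comparison, since the weights ride along inside $\Phi$; what the paper's route buys is brevity and uniformity with the general Riesz framework. One minor mismatch: your far-pair step needs $|x-y|\geq |q_i-q_j|-2\eta\geq \epsilon/2$, i.e. $\epsilon\geq 4\eta$, while the claim is stated for $\epsilon>2\eta$; this is harmless — either state the claim for $\epsilon\geq 4\eta$ (the final application takes $\eta=N^{-2}$, $\epsilon=N^{-1}$), or split close/far at distance $2\epsilon$ and apply Corollary~\ref{corollary_counting} with $2\epsilon$, which only changes constants.
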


\begin{proof}
Let us denote
\begin{equation*}
k_u(x,y) = (u(x) - u(y))\cdot \nabla g(x-y)
\end{equation*}
and remark that
\begin{equation}\label{bound_ku}
|k_u(x,y)| \leq C\norm{\nabla u}_{L^\infty}.
\end{equation}
Since $\nabla g$ is antisymmetric we can write $T_{3,2}$ as
\begin{multline*}
T_{3,2} = \\ \frac{1}{2N^2}\sum_{1\leq i,j \leq N} \iint_{(\mathbb{R}^2\times\mathbb{R}^2)\backslash\Delta}\sqrt{b(x)b(y)}k_u(x,y)\dd(\delta_{q_i} + \widetilde{\delta}^{(\eta)}_{q_i})(x)\dd(\delta_{q_j} - \widetilde{\delta}^{(\eta)}_{q_j})(y).
\end{multline*}
Using the definition of $\widetilde{\delta}^{(\eta)}_{q_i}$ \eqref{definition_delta_tilde_q} we can write
\begin{align*}
\dd(\delta_{q_i} + \widetilde{\delta}^{(\eta)}_{q_i})(x)&\dd(\delta_{q_j} - \widetilde{\delta}^{(\eta)}_{q_j})(y) \\
=& \dd\bigg(\delta_{q_i} + \frac{m_b(q_i,\eta)}{\sqrt{b}}\delta_{q_i}^{(\eta)}\bigg)(x)\dd\bigg(\delta_{q_j} - \frac{m_b(q_j,\eta)}{\sqrt{b}}\delta_{q_j}^{(\eta)}\bigg)(y) \\
=& \frac{m_b(q_i,\eta)m_b(q_j,\eta)}{\sqrt{b(x)b(y)}}\dd(\delta_{q_i} + \delta_{q_i}^{(\eta)})(x)\dd(\delta_{q_j} - \delta_{q_j}^{(\eta)})(y) \\
&+ \left(1-\frac{m_b(q_i,\eta)m_b(q_j,\eta)}{\sqrt{b(q_i)b(q_j)}}\right)\dd\delta_{q_i}(x)\dd\delta_{q_j}(y) \\
&+ \frac{m_b(q_i,\eta)}{\sqrt{b(q_i)}}\left(1-\frac{m_b(q_j,\eta)}{\sqrt{b(q_j)}}\right)\dd\delta_{q_i}^{(\eta)}(x)\dd\delta_{q_j}(y) \\\\
&+ \frac{m_b(q_j,\eta)}{\sqrt{b(y)}}\left(\frac{m_b(q_i,\eta)}{\sqrt{b(q_i)}}-1\right)\dd\delta_{q_i}(x)\dd\delta_{q_j}^{(\eta)}(y).
\end{align*}
We will use some inequalities proved in \cite{NguyenRosenzweigSerfaty} and Corollary~\ref{corollary_counting} to control the first line, but let us begin by controling the three last remainders. Using the bound \eqref{bound_ku} and \eqref{estimee_m_b} we can bound
\begin{align*}
T_{3,2,2} :=& \frac{1}{2N^2}\sum_{1\leq i,j \leq N}\iint_{(\mathbb{R}^2\times\mathbb{R}^2)\backslash\Delta} \sqrt{b(x)b(y)}k_u(x,y) \\
&\bigg(\left(1-\frac{m_b(q_i,\eta)m_b(q_j,\eta)}{\sqrt{b(q_i)b(q_j)}}\right)\dd\delta_{q_i}(x)\dd\delta_{q_j}(y) \\
&+ \frac{m_b(q_i,\eta)}{\sqrt{b(q_i)}}\left(1-\frac{m_b(q_j,\eta)}{\sqrt{b(q_j)}}\right)\dd\delta_{q_i}^{(\eta)}(x)\dd\delta_{q_j}(y) \\
&+ \frac{m_b(q_j,\eta)}{\sqrt{b(y)}}\left(\frac{m_b(q_i,\eta)}{\sqrt{b(q_i)}}-1\right)\dd\delta_{q_i}(x)\dd\delta_{q_j}^{(\eta)}(x)\bigg).
\end{align*}
by
\begin{equation}\label{gronwall_bound_T322}
|T_{3,2,2}| \leq C_b\norm{\nabla u}_{L^\infty}\eta.
\end{equation}
We are remained to bound
\begin{align*}
T_{3,2,1} :=& \frac{1}{2N^2}\sum_{1\leq i,j \leq N} m_b(q_i,\eta)m_b(q_j,\eta) \\
&\times\iint_{(\mathbb{R}^2\times\mathbb{R}^2)\backslash\Delta} k_u(x,y) \dd(\delta_{q_i} + \delta_{q_i}^{(\eta)})(x)\dd(\delta_{q_j} - \delta_{q_j}^{(\eta)}(y).
\end{align*}
Using decomposition (4.26) and inequalities (4.27), (4.28) and (4.31) of \cite{NguyenRosenzweigSerfaty} with $s=0$ and $m=0$ (remark that we can choose $m=0$ since no extension procedure is needed for $s=0$ and $d=2$, for more details we refer to the introduction of \cite[Section~4]{NguyenRosenzweigSerfaty}), we get that for any small parameter $\epsilon > 2\eta$,
\begin{equation*}
|T_{3,2,1}| \leq  \frac{C_b}{N}\norm{\nabla u}_{L^\infty} + \frac{C_b\norm{\nabla u}_{L^\infty}}{N^2}|\{(q_i,q_j) ; |q_i - q_j| \leq \epsilon\}| + \frac{C\eta\norm{\nabla u}_{L^\infty}}{\epsilon}.
\end{equation*}
Using Corollary~\ref{corollary_counting}, we get that
\begin{equation}\label{gronwall_bound_T321}
\begin{aligned}
T_{3,2,1} \leq& \frac{C_b}{N}\norm{\nabla u}_{L^\infty} + \frac{C_b\eta\norm{\nabla u}_{L^\infty}}{\epsilon} + C_b\norm{\nabla u}_{L^\infty}\bigg(|\mathcal{F}_b(Q_N,\omega)| \\
&+\frac{g(\epsilon)}{N} + I(Q_N)(\epsilon+N^{-1}) + \norm{\omega}_{L^1((1+|x|)\dd x)\cap L^\infty}g(\epsilon)\epsilon\bigg).
\end{aligned}
\end{equation}
And we get Claim \ref{gronwall_bound_T32} by combining \eqref{gronwall_bound_T321} with \eqref{gronwall_bound_T322}.
\end{proof}

We finish the proof of Proposition \ref{controle_terme_principal_gronwall} using Decomposition \eqref{decomposition_main_proposition},
Inequalities \eqref{gronwall_bound_T1}, \eqref{gronwall_bound_T2} and Claims~\ref{gronwall_bound_T31}, \ref{gronwall_bound_T33} and \ref{gronwall_bound_T32}. That gives
\begin{align*}
\bigg|\iint_{(\mathbb{R}^2\times\mathbb{R}^2)\backslash\Delta} &u(x)\cdot \nabla_x g_b(x,y)\dd\left(\frac{1}{N}\sum_{i=1}^N\delta_{q_i}-\omega\right)^{\otimes 2}(x,y)\bigg| \\
\leq& C_b\norm{u}_{W^{1,\infty}}\bigg(|\mathcal{F}_b(Q_N,\omega)| + \frac{g(\eta)}{N} + I(Q_N)(\eta+N^{-1}) \\ 
&+ \norm{\omega}_{L^1((1+|x|)\dd x)\cap L^\infty}g(\eta)\eta\bigg)\\
&+ C_{b,s}\norm{u}_{W^{1,\infty}}\norm{\omega}_{L^1((1+|x|)\dd x)\cap L^\infty}(1+I(Q_N))\eta^s\\
&+ C_b\norm{u}_{L^\infty}\mathcal{F}_b(Q_N,\omega)  + C_b\norm{u}_{W^{1,\infty}}\bigg(\frac{g(\eta)}{N} \\ 
&+ I(Q_N)(\eta+N^{-1}) + \norm{\omega}_{L^1((1+|x|)\dd x)\cap L^\infty}g(\eta)\eta\bigg)\\
&+ C_{b,s}\norm{u}_{W^{1,\infty}}(1+I(Q_N))\eta^s \\
&+\frac{C_b}{N}\norm{\nabla u}_{L^\infty} + \frac{C_b\eta\norm{\nabla u}_{L^\infty}}{\epsilon} + C_b\norm{\nabla u}_{L^\infty}\bigg(|\mathcal{F}_b(Q_N,\omega)| \\
&+\frac{g(\epsilon)}{N} + \eta + I(Q_N)(\epsilon+N^{-1}) + \norm{\omega}_{L^1((1+|x|)\dd x)\cap L^\infty}g(\epsilon)\epsilon\bigg).
\end{align*}
Choosing $\epsilon = N^{-1}$ and $\eta = N^{-2}$, and since $\norm{\omega}_{L^1((1+|x|)\dd x)\cap L^\infty}$ is bounded by below (because $\omega$ is a probability density) we get that
\begin{align*}
\bigg|\iint_{(\mathbb{R}^2\times\mathbb{R}^2)\backslash\Delta} &u(x)\cdot \nabla_x g_b(x,y)\dd\left(\frac{1}{N}\sum_{i=1}^N\delta_{q_i}-\omega\right)^{\otimes 2}(x,y)\bigg| \\
\leq& C_b\norm{u}_{W^{1,\infty}}|\mathcal{F}_b(Q_N,\omega)| \\
&+ C_b(1+\norm{u}_{W^{1,\infty}})\norm{\omega}_{L^1((1+|x|)\dd x)\cap L^\infty}(1+I(Q_N))N^{-\beta}
\end{align*}
for some $0 < \beta < 1$.
\end{proof}

\section{Mean-field limit}\label{section:7}

In this section we prove the mean-field limit Theorem~\ref{MFL_theorem}. For this purpose let us first prove the following estimates:

\begin{theorem}
If $\omega$ is a weak solution of \eqref{lake_equation_vorticity} with initial datum $\omega_0$ (in the sense of Definition \ref{definition_weak_solution}) that satisfies Assumption~\ref{assumption_omega} and if $I_N(0)$ is bounded, there exists a constant 
\begin{equation*}
A := A\left(b,T,\norm{u}_{L^\infty([0,T],W^{1,\infty})},\norm{\omega}_{L^\infty([0,T],L^1((1+|x|)\dd x)\cap L^\infty)},\underset{N}{\sup} \; I_N(0)\right)
\end{equation*}
such that for every $t \in [0,T]$,
\begin{equation}\label{gronwall_bound_alpha_regime}
|\mathcal{F}_{b,N}(t)| \leq A(|\mathcal{F}_{b,N}(0)| + (1+|E_N(0)|)(N^{-\beta} + |\alpha_N - \alpha|)).
\end{equation}
If $\overline{\omega}$ is a weak solution of \eqref{transport_equation} with initial datum $\omega_0$ (in the sense of Definition \ref{definition_weak_solution_transport}) that satisfies Assumption~\ref{assumption_omega} and if $\overline{I_N}(0)$ is bounded, there exists a constant 
\begin{align*}
B :=& B\bigg(b, T,\norm{\overline{\omega}}_{L^\infty([0,T],L^1((1+|x|)\dd x)\cap L^\infty)}, \\
&\norm{\nabla g*\overline{\omega}}_{L^\infty([0,T],W^{1,\infty})},\underset{N}{\sup} \;\overline{I_N}(0)\bigg)
\end{align*}
such that for every $t \in [0,T]$,
\begin{equation}\label{gronwall_bound_rescaled_regime}
|\overline{\mathcal{F}}_{b,N}(t)|  \leq B(|\overline{\mathcal{F}}_{b,N}(0)| + (1+|\overline{E_N}(0)|)(N^{-\beta} + \alpha_N^{-1})).
\end{equation}
\end{theorem}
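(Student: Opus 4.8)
The plan is to differentiate $\mathcal{F}_{b,N}$ and $\overline{\mathcal{F}}_{b,N}$ using Propositions~\ref{time_derivative_F_N} and~\ref{time_derivative_F_N_resc}, bound the leading quadratic term by Proposition~\ref{controle_terme_principal_gronwall}, control the remaining lower-order terms with the inertia bounds \eqref{bound_I_N} and \eqref{bound_I_N_resc} of Proposition~\ref{proposition_pv_well_posed}, and close by Grönwall's lemma. Since both modulated energies are Lipschitz in time (Propositions~\ref{time_derivative_F_N} and~\ref{time_derivative_F_N_resc}), it suffices to estimate their almost-everywhere derivatives and then integrate.

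\emph{First regime.} I would set $U(t,\cdot) := u(t,\cdot) - \alpha\frac{\nabla^\bot b}{b}$. By Assumption~\ref{assumption_omega} one has $\nabla G_b[\omega]\in L^\infty([0,T],W^{1,\infty})$, hence $u=-\frac1b\nabla^\bot G_b[\omega]\in L^\infty([0,T],W^{1,\infty})$, and by Assumption~\ref{assumption_b} the field $\nabla^\bot b/b$ lies in $W^{1,\infty}$; thus $U\in L^\infty([0,T],W^{1,\infty})$ with norm controlled by $b$, $\alpha$ and $\norm{u}_{L^\infty([0,T],W^{1,\infty})}$. The first term in $\frac{d}{dt}\mathcal{F}_{b,N}$ is exactly $2\iint_{(\mathbb{R}^2\times\mathbb{R}^2)\setminus\Delta} U(t,x)\cdot\nabla_x g_b(x,y)\,d(\omega_N(t)-\omega(t))^{\otimes2}(x,y)$ (note $(\omega-\omega_N)^{\otimes2}=(\omega_N-\omega)^{\otimes2}$), so Proposition~\ref{controle_terme_principal_gronwall} bounds it by $C_b\norm{U}_{W^{1,\infty}}|\mathcal{F}_{b,N}(t)|+C(1+I_N(t))N^{-\beta}$. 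For the second term I would split $d(\omega-\omega_N)(y)$: the $\omega$-piece equals $2(\alpha_N-\alpha)\int\frac{\nabla^\bot b(x)}{b(x)}\cdot\nabla G_b[\omega](t,x)\,d\omega_N(t,x)=2(\alpha_N-\alpha)\int\nabla b(x)\cdot u(t,x)\,d\omega_N(t,x)$, which is $O(|\alpha_N-\alpha|)$ since $\nabla G_b[\omega]=b\,u^\bot$ and $u\in L^\infty$; the $\omega_N$-piece equals $-\frac{2(\alpha_N-\alpha)}{N^2}\sum_{i\neq j}\frac{\nabla^\bot b(q_i)}{b(q_i)}\cdot\nabla_x g_b(q_i,q_j)$, and since $\nabla^\bot b\cdot\nabla b=0$ kills the first summand of $\nabla_x g_b$, the very computation of \eqref{derivative_E_N}--\eqref{bound_E_N_2} gives the bound $C_b|\alpha_N-\alpha|(1+I_N(t))$. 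Inserting $I_N(t)\le e^{C_b(1+\alpha_N)T}(|E_N(0)|+I_N(0)+1)$ from \eqref{bound_I_N} and using that $(\alpha_N)$ (being convergent) and $(I_N(0))$ are bounded, one arrives at $\frac{d}{dt}|\mathcal{F}_{b,N}(t)|\le C_1|\mathcal{F}_{b,N}(t)|+C_2(1+|E_N(0)|)(N^{-\beta}+|\alpha_N-\alpha|)$ with $C_1,C_2$ of the form allowed for $A$, and Grönwall's lemma yields \eqref{gronwall_bound_alpha_regime}.

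\emph{Rescaled regime.} The first term in $\frac{d}{dt}\overline{\mathcal{F}}_{b,N}$ is handled as above: Proposition~\ref{controle_terme_principal_gronwall} applied with the $W^{1,\infty}$ field $-\nabla^\bot b/b$ (of norm $\le C_b$, pure $b$-data) bounds it by $C_b|\overline{\mathcal{F}}_{b,N}(t)|+C(1+\overline I_N(t))N^{-\beta}$. For the extra term I would set $w:=\nabla G_b[\overline\omega]/b$, which by Assumption~\ref{assumption_omega} lies in $L^\infty([0,T],W^{1,\infty})$ with norm controlled through the decomposition \eqref{definition_g_b} by $b$, $\norm{\nabla g*\overline\omega}_{L^\infty([0,T],W^{1,\infty})}$ and $\norm{\overline\omega}_{L^\infty([0,T],L^1((1+|x|)dx)\cap L^\infty)}$, and write $\frac{2}{N^2\alpha_N}\sum_{i\neq j}w(t,\overline q_i)\cdot\nabla_x g_b(\overline q_i,\overline q_j)=\frac{2}{\alpha_N}\iint_{(\mathbb{R}^2\times\mathbb{R}^2)\setminus\Delta}w(t,x)\cdot\nabla_x g_b(x,y)\,d\overline\omega_N^{\otimes2}$. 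Using $\int\nabla_x g_b(x,\cdot)\,d\overline\omega=\nabla G_b[\overline\omega](t,x)$ I would decompose $\iint_{\setminus\Delta}w\cdot\nabla_x g_b\,d\overline\omega_N^{\otimes2}=\iint_{\setminus\Delta}w\cdot\nabla_x g_b\,d(\overline\omega_N-\overline\omega)^{\otimes2}+2\int w\cdot\nabla G_b[\overline\omega]\,d\overline\omega_N-\int w\cdot\nabla G_b[\overline\omega]\,\overline\omega$, bounding the first integral by Proposition~\ref{controle_terme_principal_gronwall} and the last two by $3\norm{w}_{L^\infty}\norm{\nabla G_b[\overline\omega]}_{L^\infty}$. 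This produces $\frac{d}{dt}|\overline{\mathcal{F}}_{b,N}(t)|\le (C_b+C\alpha_N^{-1})|\overline{\mathcal{F}}_{b,N}(t)|+C(1+\overline I_N(t))(N^{-\beta}+\alpha_N^{-1})$; inserting $\overline I_N(t)\le e^{C_b(1+\alpha_N^{-1})T}(|\overline E_N(0)|+\overline I_N(0)+1)$ from \eqref{bound_I_N_resc} (with $(\alpha_N^{-1})$ and $(\overline I_N(0))$ bounded) and applying Grönwall — the exponent staying bounded because $\alpha_N\to+\infty$ — gives \eqref{gronwall_bound_rescaled_regime}.

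\emph{Main obstacle.} Most of the work is bookkeeping; the two points requiring care are first checking that the drift fields $u-\alpha\nabla^\bot b/b$, $\nabla^\bot b/b$ and $\nabla G_b[\overline\omega]/b$ lie in $W^{1,\infty}$ — this is exactly where Assumptions~\ref{assumption_b} and~\ref{assumption_omega} and the elliptic estimates of Section~\ref{section:2} enter — and second the algebraic rearrangement turning the $\omega_N\otimes\omega_N$ and $\omega_N\otimes\omega$ (resp.\ $\overline\omega_N\otimes\overline\omega_N$) pieces into the canonical $(\cdot_N-\cdot)^{\otimes2}$ form of Proposition~\ref{controle_terme_principal_gronwall} plus pointwise-bounded cross terms, while keeping all constants dependent only on the quantities listed in $A$ and $B$. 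The single genuinely delicate estimate is the $\omega_N\otimes\omega_N$ part of the forcing-mismatch term in the first regime: since $\nabla_x g_b$ is singular on the diagonal it cannot be controlled pointwise, and one must instead exploit the orthogonality $\nabla^\bot b\cdot\nabla b=0$ together with the Lipschitz and antisymmetry cancellations of \eqref{derivative_E_N}--\eqref{bound_E_N_2}, exactly as in the proof of Proposition~\ref{proposition_pv_well_posed}.
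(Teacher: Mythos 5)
Your proposal is correct and follows the same backbone as the paper: differentiate via Propositions~\ref{time_derivative_F_N} and~\ref{time_derivative_F_N_resc}, feed the quadratic $(\cdot_N-\cdot)^{\otimes2}$ term into Proposition~\ref{controle_terme_principal_gronwall}, control the remainders with the moment/energy bounds of Proposition~\ref{proposition_pv_well_posed}, and close by Grönwall. The differences lie in how you treat the remainder terms. In the first regime you collapse the $\omega$-part of the forcing-mismatch term through the identity $\tfrac{\nabla^\bot b}{b}\cdot\nabla G_b[\omega]=\nabla b\cdot u$ and the boundedness of $u$, whereas the paper splits the kernel into its three pieces and estimates each ($\nabla g\ast(\sqrt b\,\omega)$ via an Iftimie-type bound, $\nabla_x S_b$ via Lemma~\ref{estimees_S_b_not_necessarily_symmetric}); your route is a valid and slightly cleaner shortcut, and the $\omega_N$-part is handled exactly as in the paper (orthogonality $\nabla^\bot b\cdot\nabla b=0$ plus \eqref{bound_E_N_1}--\eqref{bound_E_N_2}). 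In the rescaled regime you route the self-interaction term through Proposition~\ref{controle_terme_principal_gronwall} by adding and subtracting $\overline\omega$, while the paper bounds it directly term by term from the kernel decomposition, using the antisymmetry cancellation of \eqref{bound_E_N_2} for the $\nabla g$ piece and the estimate \eqref{bound_IN2} on $\tfrac1{N^2}\sum_{i\neq j}|g(\overline{q_i}-\overline{q_j})|$, which requires the bound on $\overline{E_N}(t)$; your route trades that energy bound for the requirement $w=\nabla G_b[\overline\omega]/b\in W^{1,\infty}$, which Assumption~\ref{assumption_omega} provides, so both work.

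One intermediate bound in your rescaled argument is mis-stated, though it does not break the proof. In the expansion of $\overline\omega_N^{\otimes2}$, only the cross term with $\overline\omega$ in the $y$-slot equals $\int w\cdot\nabla G_b[\overline\omega]\,\dd\overline\omega_N$; the other cross term is $\int\big(\int w(x)\cdot\nabla_x g_b(x,y)\,\overline\omega(x)\dd x\big)\dd\overline\omega_N(y)$, and since $|g(x-y)|$ grows logarithmically and $\norm{\nabla_x S_b(\cdot,y)}_{L^\infty}\leq C_b(1+|y|)$ (Lemma~\ref{estimees_S_b_not_necessarily_symmetric}), it is not bounded by $\norm{w}_{L^\infty}\norm{\nabla G_b[\overline\omega]}_{L^\infty}$ but rather by $C_b\norm{w}_{L^\infty}\norm{\overline\omega}_{L^1((1+|x|)\dd x)\cap L^\infty}(1+\overline{I_N}(t))$. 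This is still of the form $C(1+\overline{I_N}(t))\alpha_N^{-1}$ already present in your differential inequality, so the Grönwall step and \eqref{gronwall_bound_rescaled_regime} go through unchanged once that estimate is corrected.
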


\begin{proof}
By Proposition~\ref{time_derivative_F_N}, we have that for almost every $t \in [0,T]$,
\begin{align*}
&\frac{\dd}{\dd t}\mathcal{F}_{b,N}(t) \\
=& 2\iint_{(\mathbb{R}^2\times\mathbb{R}^2)\backslash\Delta} \left(u(t,x) - \alpha\frac{\nabla^\bot b(x)}{b(x)}\right)\cdot \nabla_x g_b(x,y)\dd(\omega(t)-\omega_N(t))^{\otimes 2}(x,y) \\
&+ 2(\alpha_N - \alpha)\iint_{(\mathbb{R}^2\times\mathbb{R}^2)\backslash\Delta} \frac{\nabla^\bot b(x)}{b(x)}\cdot \nabla_x g_b(x,y) \dd \omega_N(t,x) \dd(\omega(t) - \omega_N(t))(y) \\
=:& L_1 + 2(\alpha_N - \alpha)L_2.
\end{align*}
Using Proposition~\ref{controle_terme_principal_gronwall}, we have
\begin{align*}
|L_1| \leq& C_b\norm{u-\alpha\frac{\nabla^\bot b}{b}}_{L^\infty([0,T],W^{1,\infty})}|\mathcal{F}_b(Q_N,\omega)| \\
&+ C_b\left(1+\norm{u-\alpha\frac{\nabla^\bot b}{b}}_{L^\infty([0,T],W^{1,\infty})}\right)\\
&\times\norm{\omega}_{L^\infty([0,T],L^1((1+|x|)\dd x)\cap L^\infty)}
(1+I_N(t))N^{-\beta}.
\end{align*}
By Proposition~\ref{proposition_pv_well_posed}, we have
\begin{align*}
I_N(t) &\leq C_{b,T}(1+|E_N(0)| + I_N(0))
\end{align*}
since $(\alpha_N)$ is bounded (here we consider the case $\alpha_N \Tend{N}{+\infty} \alpha$). Therefore,
\begin{equation}\label{gronwall_alpha_regime_L1}
\begin{aligned}
|L_1| \leq& C_b\big(1+\norm{u}_{L^\infty([0,T],W^{1,\infty})}\big)|\mathcal{F}_b(Q_N,\omega)| + C_{b,T}\left(1+\norm{u}_{L^\infty([0,T],W^{1,\infty})}\right)\\
&\times\norm{\omega}_{L^\infty([0,T],L^1((1+|x|)\dd x)\cap L^\infty)}
(1+I_N(0)+|E_N(0)|)N^{-\beta}.
\end{aligned}
\end{equation}
Now
\begin{align*}
L_2 =& \frac{1}{N}\sum_{i=1}^N\frac{\nabla^\bot b(q_i)}{b(q_i)} \\
&\cdot\bigg[\int_{\mathbb{R}^2\backslash\{q_i\}} \sqrt{b(q_i)b(y)}\nabla g(q_i - y)\dd\bigg(\omega(t)-\frac{1}{N}\sum_{j=1}^N\delta_{q_j(t)}\bigg) \\
&+ \int_{\mathbb{R}^2\backslash\{q_i\}} \nabla_x S_b(q_i,y)\dd\bigg(\omega(t)-\frac{1}{N}\sum_{j=1}^N\delta_{q_j(t)}\bigg)\bigg] \\
=:& L_{2,1} + L_{2,2} + L_{2,3}
\end{align*}
with
\begin{equation}\label{gronwall_alpha_regime_L21}
\begin{aligned}
|L_{2,1}| &= \bigg|\frac{1}{N}\sum_{i=1}^N\frac{\nabla^\bot b(q_i)}{\sqrt{b(q_i)}}\cdot\int_{\mathbb{R}^2} \nabla g(q_i - y)\sqrt{b(y)}\omega(t,y)\dd y\bigg| \\
&\leq C_b\norm{\omega}_{L^\infty([0,T],L^1\cap L^\infty)}
\end{aligned}
\end{equation}
(for the last inequality see for example  \cite[Lemma~1]{Iftimie}). For the second term
\begin{align*}
L_{2,2} &= -\frac{1}{N^2}\sum_{i=1}^N\underset{j \neq i}{\sum_{j=1}^N} \sqrt{b(q_i)b(q_i)}\frac{\nabla^\bot b(q_i)}{b(q_i)}\cdot \nabla g(q_i-q_j).
\end{align*}
We can bound it as in \eqref{bound_E_N_2} to get 
\begin{equation}\label{gronwall_alpha_regime_L22}
|L_{2,2}| \leq C_b.
\end{equation}
For the last term, we use Claim $(1)$ of Lemma~\ref{estimees_S_b_not_necessarily_symmetric} to get
\begin{align*}
|L_{2,3}| &= \bigg|\frac{1}{N}\sum_{i=1}^N\frac{\nabla^\bot b(q_i)}{b(q_i)}\cdot\int_{\mathbb{R}^2} \nabla_x S_b(q_i,y)\dd\bigg(\omega(t)-\frac{1}{N}\underset{j\neq i}{\sum_{j=1}^N}\delta_{q_j(t)}\bigg)(y)\bigg| \\
&\leq C_b \int_{\mathbb{R}^2} (1+|y|)\dd\bigg(\omega(t)+\frac{1}{N}\underset{j\neq i}{\sum_{j=1}^N}\delta_{q_j(t)}\bigg)(y) \\
&\leq C_b(\norm{\omega}_{L^\infty([0,T],L^1((1+|x|)\dd x))} + I_N(t)) \\
&\leq C_{b,T}(\norm{\omega}_{L^\infty([0,T],L^1((1+|x|)\dd x))} + 1+I_N(0) + |E_N(0)|)
\end{align*}
by Proposition~\ref{proposition_pv_well_posed}. Combining the upper inequality with \eqref{gronwall_alpha_regime_L1}, \eqref{gronwall_alpha_regime_L21} and \eqref{gronwall_alpha_regime_L22} we get that for almost every $t \in [0,T]$,
\begin{align*}
&\left|\frac{\dd}{\dd t}\mathcal{F}_{b,N}(t)\right| \\
\leq& C_b(1+\norm{u}_{L^\infty([0,T],W^{1,\infty})})|\mathcal{F}_{b,N}(t)| + C_b\left(1+\norm{u}_{L^\infty([0,T],W^{1,\infty})}\right)\\
&\times\norm{\omega}_{L^\infty([0,T],L^1((1+|x|)\dd x)\cap L^\infty)}
(1+I_N(0)+|E_N(0)|)N^{-\beta} \\
&+ C_{b,T}|\alpha_N - \alpha|\bigg(\norm{\omega}_{L^\infty([0,T],L^1((1+|x|)\dd x)\cap L^\infty)} +1+ I_N(0) + |E_N(0)|\bigg).
\end{align*}
Therefore there exists a constant $A$ depending only on the quantities $b$, $T$, $\norm{u}_{L^\infty([0,T],W^{1,\infty})}$, $\norm{\omega}_{L^\infty([0,T],L^1((1+|x|)\dd x)\cap L^\infty)}$ and $I_N(0)$ (which is uniformly bounded in $N$ by assumption) such that for almost every $t \in [0,T]$,
\begin{equation*}
\left|\frac{\dd}{\dd t}\mathcal{F}_{b,N}(t)\right| \leq A(|\mathcal{F}_{b,N}(t)|+(1+|E_N(0)|)(N^{-\beta}+|\alpha_N - \alpha|)).
\end{equation*}
By Grönwall's lemma (up to taking another constant $A$ depending on the same quantities) we get \eqref{gronwall_bound_alpha_regime}.

Now let us study the rescaled regime where $\alpha_N \Tend{N}{+\infty} +\infty$. By Proposition~\ref{time_derivative_F_N_resc} we have
\begin{align*}
\frac{\dd}{\dd t}\overline{\mathcal{F}}_{b,N}(t) =& -2\iint_{(\mathbb{R}^2\times \mathbb{R}^2)\backslash \Delta} \frac{\nabla^\bot b(x)}{b(x)}\cdot \nabla_x g_b(x,y)\dd(\overline{\omega}(t)-\overline{\omega}_N(t))^{\otimes 2}(x,y) \\
&+ \frac{2}{N^2\alpha_N} \sum_{i=1}^N\underset{j\neq i}{\sum_{j=1}^N} \frac{v(t,\overline{q_i})}{b(\overline{q_i})}\cdot\nabla_x g_b(\overline{q_i},\overline{q_j}) \\
=:& L_1 + L_2.
\end{align*}
The first term can be bounded by Proposition~\ref{controle_terme_principal_gronwall}:
\begin{equation}\label{gronwall_rescaled_regime_L1}
\begin{aligned}
|L_1| \leq&  C_b\norm{\frac{\nabla b}{b}}_{W^{1,\infty}}|\overline{\mathcal{F}}_{b,N}(t)| + C_b\bigg(1+\norm{\frac{\nabla b}{b}}_{W^{1,\infty}}\bigg)\\
&\times \norm{\overline{\omega}}_{L^\infty([0,T],L^1((1+|x|)\dd x)\cap L^\infty)}(1+I(\overline{Q_N}))N^{-\beta} \\
\leq& C_b|\overline{\mathcal{F}}_{b,N}(t)| + C_{b,T}\norm{\overline{\omega}}_{L^\infty([0,T],L^1((1+|x|)\dd x)\cap L^\infty)}\\
&\times(1 +\overline{I_N}(0)+|\overline{E_N}(0)|)N^{-\beta}
\end{aligned}
\end{equation}
where we used Proposition~\ref{proposition_pv_well_posed} in the last inequality. We split the second line in three terms:
\begin{align*}
L_2 =& \frac{2}{N^2\alpha_N} \sum_{i=1}^N\underset{j\neq i}{\sum_{j=1}^N} \frac{v(t,\overline{q_i})}{b(\overline{q_i})}\cdot \frac{\nabla b(\overline{q_i})}{2\sqrt{b(\overline{q_i})}}\sqrt{b(\overline{q_j})}g(\overline{q_i} - \overline{q_j}) \\
&+ \frac{2}{N^2\alpha_N} \sum_{i=1}^N\underset{j\neq i}{\sum_{j=1}^N} \frac{v(t,\overline{q_i})}{b(\overline{q_i})}\cdot\nabla g(\overline{q_i} - \overline{q_j})\sqrt{b(\overline{q_i})b(\overline{q_i})} \\
&+\frac{2}{N^2\alpha_N}\sum_{i=1}^N\underset{j\neq i}{\sum_{j=1}^N} \frac{v(t,\overline{q_i})}{b(\overline{q_i})}\cdot \nabla_x S_b(\overline{q_i},\overline{q_j}) \\
=:& L_{2,1} + L_{2,2} + L_{2,3}.
\end{align*}
We can bound the first term by
\begin{align*}
|L_{2,1}| \leq \frac{C_b}{N^2\alpha_N}\norm{v}_{L^\infty}\sum_{i=1}^N\underset{j\neq i}{\sum_{j=1}^N}|g(\overline{q_i}-\overline{q_j})|
\end{align*}
and applying Lemma~\ref{lemma_link_solutions_duerinckx} we get
\begin{equation*}
\norm{v}_{L^\infty} = \norm{\nabla G_b[\overline{\omega}]}_{L^\infty} \leq C_b\norm{\overline{\omega}}_{L^\infty([0,T],L^1\cap L^\infty)}. 
\end{equation*}
We can bound $\displaystyle{\sum_{i=1}^N\underset{j\neq i}{\sum_{j=1}^N}|g(\overline{q_i}-\overline{q_j})|}$ as we did for Inequality \eqref{bound_IN2} to get
\begin{equation*}
|L_{2,1}| \leq C_b\norm{\overline{\omega}}_{L^\infty([0,T],L^1\cap L^\infty)}(1+|\overline{E_N}|+\overline{I_N})\alpha_N^{-1}.
\end{equation*}
The second term $L_{2,2}$ can be bounded as in \eqref{bound_E_N_2} to get
\begin{equation*}
|L_{2,2}| \leq C_b(1+\norm{v}_{L^\infty([0,T],W^{1,\infty})})\alpha_N^{-1}
\end{equation*}
and the last term can be bounded directly using Claim (1) of Lemma~\ref{estimees_S_b_not_necessarily_symmetric}:
\begin{equation*}
|L_{2,3}| \leq C_b\norm{v}_{L^\infty}(1+\overline{I_N})\alpha_N^{-1} \leq C_b\norm{\overline{\omega}}_{L^\infty([0,T],L^1\cap L^\infty)}(1+\overline{I_N})\alpha_N^{-1}.
\end{equation*}
Combining these three inequalities with \eqref{gronwall_rescaled_regime_L1} and using Proposition~\ref{proposition_pv_well_posed} to bound $\overline{I_N}$ we get that for almost every $t \in [0,T]$,
\begin{align*}
\left|\frac{\dd}{\dd t}\overline{\mathcal{F}}_{b,N}(t)\right| \leq&
 C_b|\overline{\mathcal{F}}_{b,N}(t)| + C_b(\norm{\overline{\omega}}_{L^\infty([0,T],L^1((1+|x|)\dd x)\cap L^\infty)}\\
 &+\norm{v}_{L^\infty([0,T],W^{1,\infty})})\\
 &\times(1+|\overline{I_N}(0)|+|\overline{E_N}(0)|)(N^{-\beta} + \alpha_N^{-1}).
\end{align*}
And therefore there exists a constant $B$ depending only on the quantities $b$, $T$,$\norm{\omega}_{L^\infty([0,T],L^1((1+|x|)\dd x)\cap L^\infty)}$ and $\overline{I_N}(0)$ (which is uniformly bounded in $N$ by assumption) such that for almost every $t \in [0,T]$,
\begin{align*}
\left|\frac{\dd}{\dd t}\overline{\mathcal{F}}_{b,N}(t)\right| \leq&
B(|\overline{\mathcal{F}}_{b,N}(t)| + (1+|\overline{E_N}(0)|)(N^{-\beta} + \alpha_N^{-1})).
\end{align*}
By Grönwall's lemma (up to taking another constant $B$ depending on the same quantities) we get \eqref{gronwall_bound_rescaled_regime}.
\end{proof}

\begin{proof}[Proof of Theorem~\ref{MFL_theorem}]
By Corollary \ref{corollary_weak_star_cv}, weak-$\ast$ convergence and convergence of the interaction energy gives that $(\mathcal{F}_{b,N}(0))$ and $(\overline{\mathcal{F}_{b,N}}(0))$ converge to zero. Using convergence of the interaction energy we also get that $|E_N(0)|$ and $|\overline{E_N}(0)|$ are bounded. Thus by Inequalities \eqref{gronwall_bound_alpha_regime} and \eqref{gronwall_bound_rescaled_regime} we get that for any $t \in [0,T]$ $(\mathcal{F}_{b,N}(t))$ and $(\overline{\mathcal{F}_{b,N}}(t))$ converge to zero and the theorem follows by Corollary~\ref{corollary_weak_star_cv}.
\end{proof}

\end{document}